      \newcommand{\href}[2]{#2}
\newcommand{\rotmz}{{\rho}_{\mathrm{mz}}}
\newcommand{\roterg}{{\rho}_{\mathrm{erg}}}
\newcommand{\rotmeas}{{\rho}_{\mathrm{m}}}
\newcommand{\wind}{\mc{W}}
\renewcommand{\fill}{\operatorname{Fill}}
\newcommand{\ine}{\operatorname{Ine}}
\newcommand{\ess}{\operatorname{Ess}}
\newcommand{\fess}{\mc{C}}
\newcommand{\cV}{\mathcal{V}}
\newcommand{\cH}{\mathcal{H}}
\newcommand{\length}{\operatorname{length}}
\newcommand{\diamup}{\mc{D}}
\newcommand{\orb}{\mc{O}}
\newcommand{\homeo}{\mathrm{Homeo}}
\newcommand{\norm}[1]{\left\|{#1}\right\|}
\newcommand{\snorm}[1]{\|{#1}\|}
\DeclareMathOperator{\inter}{\rm{int}}
\DeclareMathOperator{\bd}{\partial}
\DeclareMathOperator{\cl}{cl}
\DeclareMathOperator{\diam}{\rm{diam}}
\DeclareMathOperator{\conv}{\rm{Conv}}
\DeclareMathOperator{\fix}{\rm{Fix}}
\newcommand{\mc}{\mathcal}
\newcommand{\ol}{\overline}
\renewcommand{\hat}{\widehat}
\newcommand{\til}{\widetilde}
\newcommand{\R}{\mathbb{R}}\newcommand{\N}{\mathbb{N}}
\newcommand{\Z}{\mathbb{Z}}\newcommand{\Q}{\mathbb{Q}}
\newcommand{\T}{\mathbb{T}}
\newcommand{\D}{\mathbb{D}}\newcommand{\A}{\mathbb{A}}
\renewcommand{\SS}{\mathbb{S}}
\newcommand{\sm}{\setminus}
\newcommand{\id}{\mathrm{Id}}
\newcommand{\deck}{\operatorname{deck}}
\newcommand{\ie}{i.e.\ }
\newcommand{\eg}{e.g.\ }
\newtheorem{theorem}{Theorem}[section] 
\newtheorem{corollary}[theorem]{Corollary}
\newtheorem{lemma}[theorem]{Lemma}
\newtheorem{proposition}[theorem]{Proposition}
\newtheorem*{proposition*}{Proposition}
\newtheorem{claim}{Claim}
\newtheorem*{theorem*}{Theorem}
\newtheorem*{claim*}{Claim}
\newtheorem{theoremain}{Theorem}
\newtheorem{corollarymain}[theoremain]{Corollary}
\theoremstyle{definition}
\theoremstyle{remark}
\newtheorem{remark}[theorem]{Remark}
\title[Fully essential dynamics]{Fully essential dynamics for area-preserving surface homeomorphisms}
\author{Andres Koropecki}
\address{Universidade Federal Fluminense, Instituto de Matem\'atica e Estat\'\i stica, Rua M\'ario Santos Braga S/N, 24020-140 Niteroi, RJ, Brasil}
\email{ak@id.uff.br}
\author{Fabio Armando Tal}
\address{Instituto de Matem\'atica e Estat\'\i stica, Universidade de S\~ao Paulo, Rua do Mat\~ao 1010, Cidade Universit\'aria, 05508-090 S\~ao Paulo, SP, Brazil}
\email{fabiotal@ime.usp.br}
\thanks{The first author was partially supported by FAPERJ-Brasil and CNPq-Brasil. The second author was partially supported by FAPESP and CNPq-Brasil}
\begin{document}

\begin{abstract} 
We study the interplay between the dynamics of area-preserving surface homeomorphisms homotopic to the identity and the topology of the surface. We define fully essential dynamics and generalize the results previously obtained on strictly toral dynamics to surfaces of higher genus. Non-fully essential dynamics are, in a way, reducible to surfaces of lower genus, while in the fully essential case the dynamics is decomposed into a disjoint union of periodic bounded disks and a complementary invariant externally transitive continuum $C$.  When the Misiurewicz-Ziemian rotation set has non-empty interior the dynamics is fully essential, and the set $C$ is (externally) sensitive on initial conditions and realizes all the rotational dynamics. As a fundamental tool we introduce the notion of homotopically bounded sets and we prove a general boundedness result for invariant open sets when the fixed point set is inessential.
\end{abstract}

\maketitle
\setcounter{tocdepth}{2}
\tableofcontents
\section*{Introduction}

An important aspect of the theory of dynamical systems on manifolds is the study of the restrictions and unique characteristics that the topology and geometry of the ambient space imposes on the dynamics of homeomorphisms and diffeomorphisms of the manifold. Given a homeomorphism $f$ of a topological manifold $M$, there are several ways to try and understand this interplay between the dynamical features of $f$ and the topology, and the foremost instrument is the study of the actions induced by $f$ on the homotopy and homology groups of $M$. In the case of two-dimensional dynamics, this sole instrument can provide a great wealth of information, as is the case where the dynamics is in the homotopy class of a Pseudo-Anosov map. The study of these induced actions, however, is a blunt instrument as it cannot differentiate between homotopic homeomorphisms. In particular, for the relevant case of dynamics homotopic to the identity, this study provides no information (although studying homotopy relative to fixed point sets has been productive, \eg \cite{handel-annulus, llibre-mackay}).

For one and two dimensional manifolds, rotation theory has provided comprehension of the role of the ambient space by means of the notions of rotation vectors and rotation sets. The classical case of the Poincar\'e rotation number for homeomorphisms of the circle and its importance is now well known, and the study of rotation theory for oriented two dimensional manifolds has drawn increased attention in the last decades, with significant progress (see for example \cite{m-z, franks-reali, llibre-mackay, handel-annulus, pollicott, franks-rot, zanata-vertical, davalos}), although most of this progress is restricted to maps of the torus or the annulus. In this work, we intend to further develop the work started in \cite{kt-strictly} and study the relationship, usually under some sort of conservative or nonwandering hypothesis, between dynamics and topology from yet a different view, that of the dynamically essential and fully essential points. We use this notion in an attempt to identify, for a given closed oriented surface $S$ of genus $g$, what are the homeomorphisms homotopic to the identity that interact with the whole homology of $S$, in which case we say that the map has fully essential dynamics, and we are interested in the special phenomena that must always be present for these maps.

In \cite{kt-strictly}, the authors introduced the notion of \emph{strictly toral homeomorphism}, which loosely speaking are maps whose dynamics cannot be understood in terms of the dynamics of a planar or annular homeomorphism; in other words the dynamics ``uses'' all the topology of $\T^2$. Concrete examples of strictly toral dynamics are homeomorphisms which have a rotation set with nonempty interior.
It turns out that strictly toral homeomorphisms have many interesting properties, described in terms of of a set $\ine(f)$ which is a union of homotopically bounded periodic disks (``periodic islands'') and a complementary compact invariant set $\fess(f)$ (``chaotic region'') which is connected, externally transitive, and contains all the interesting rotational dynamics. This description improved considerably one previously studied by J\"ager \cite{jager-elliptic}. The proofs of these properties rely heavily on a boundedness result for invariant open topological disks, a fundamental lemma that has seen other interesting applications \cite{kt-annular, kt-pseudo, kt-transitive}.  To better understand this boundedness result, let us introduce some concepts.

Let $S$ be a closed orientable surface endowed with a Riemannian metric and $\pi\colon \hat{S}\to S$ its universal covering map.  If $U$ is an open topological disk, we denote by $\diamup(U)$ its \emph{covering diameter}, defined as the diameter in $\hat{S}$ of any connected component of $\pi^{-1}(U)$. We say $U$ is homotopically bounded if $\diamup(U)<\infty$.
It was the question, motivated by an example \cite{kt-example}, of when an area-preserving homeomorphism can exhibit homotopically unbounded invariant topological disks that led to the fundamental lemma of \cite{kt-strictly}.

In this article, we extend the results from \cite{kt-strictly} to surfaces of higher genus, and we also improve the results for the case of the torus. As before, an important step is to understand when homotopically unbounded invariant topological disks can arise. Our first result addresses this question under general hypotheses. 
An \emph{inessential} open subset $U\subset S$ is one such that every loop in $U$ is homotopic to a point in $S$. A general set is inessential if it has an inessential neighborhood (see Section \ref{sec:essential} for more details). A \emph{cross-cut} of an open topological disk $U\subset S$ is a simple arc contained in $U$ joining two points of $\bd U$ (and if these points are equal, one assumes that the two open sets bounded by the closure of this arc intersect $\bd U$); see \ref{sec:cross-cuts} for a more precise definition. A set $X$ is wandering if $f^n(X)\cap X=\emptyset$ for all $n>0$.

\begin{theoremain}\label{th:main-disk} For any homomorphism $f\colon S\to S$ homotopic to the identity with an inessential set of fixed points there exists $M>0$ such that any open $f$-invariant topological disk without wandering cross-cuts has covering diameter at most $M$.
\end{theoremain}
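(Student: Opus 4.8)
The plan is to prove the theorem in the following quantitative, contrapositive form. Fix an identity isotopy $(f_t)_{t\in[0,1]}$ for $f$ of finite width $C$, so that the corresponding lift $\hat f\colon\hat S\to\hat S$ commutes with every deck transformation and satisfies $d_{\hat S}(\hat f,\id)\le C$; I claim there is a constant $M_0=M_0(S,C)$ such that every $f$-invariant open topological disk $U$ with $\diamup(U)>M_0$ admits a wandering cross-cut. Let $\til U$ be a connected component of $\pi^{-1}(U)$. Since $U$ is simply connected, $\pi|_{\til U}\colon\til U\to U$ is a homeomorphism; hence $\til U$ is an open disk in $\hat S$ with $\diam(\til U)=\diamup(U)$, it has no wandering cross-cut for $\hat f$ (one would project to a wandering cross-cut of $U$), and, as $\hat f(\til U)$ is again a component of $\pi^{-1}(U)$, there is a deck transformation $T$ (canonical up to conjugacy) with $\hat f(\til U)=T\til U$. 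The argument splits according to whether $T$ is trivial.

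Suppose first $T\neq\id$. Since $\hat f$ commutes with the deck group, $\hat f^n(\til U)=T^n\til U$ for all $n$, and these disks are pairwise disjoint (the deck group is torsion-free). Passing to the annular covering $A_T=\hat S/\gen{T}$, the set $\bigcup_nT^n\til U$ descends to an $f_T$-invariant open topological disk $V\subset A_T$ whose preimage components are shifted by exactly one deck step under the natural lift of $f_T$; in other words $V$ ``rotates once around $A_T$''. The frontier of $V$ in $A_T$ separates the two ends of the annulus and is $f_T$-invariant, and I would analyse this configuration on $A_T$: either a recurrence/intersection argument produces a wandering cross-cut of $V$ (whose projection is a wandering cross-cut of $U$), or the rotational behaviour forces $f_T$-periodic points along the frontier of $V$ whose $\pi$-images meet an essential loop of $S$ and hence make $\fix(f)$ essential, a contradiction. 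Either way we are done, so from now on $T=\id$ and $\hat f$ preserves $\til U$.

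Now $\hat f$ is $C$-close to the identity and preserves the open disk $\til U$, and we must bound $\diam(\til U)$. Because $\fix(f)$ is inessential, passing to a maximal identity isotopy $I$ (whose fixed set is still inessential) and applying Le Calvez's equivariant foliated version of Brouwer's theorem yields an oriented singular foliation $\mc F$ of $S\sm\fix(I)$ transverse to $I$; its lift $\hat{\mc F}$ is a nonsingular foliation of $\hat S\sm\pi^{-1}(\fix(I))$ by proper lines, and the positively transverse trajectory joining $\hat x$ to $\hat f(\hat x)$ is controlled by $C$. Consider such a transverse trajectory $\hat\Gamma$ through a point of $\til U$; since $T=\id$ it remains in $\til U$. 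The assumption that $U$ has no wandering cross-cut forces the prime-end circle homeomorphism of $f|_U$ to have no wandering interval, hence to be conjugate to a rotation; from this one extracts enough recurrence to make $\hat\Gamma$ (or a nearby transverse trajectory) accumulate on itself and so contain, or be approximated by, a closed transverse loop inside $\til U$. In the simply connected surface $\hat S$ such a loop is inessential and bounds a disk carrying at least one singularity of $\hat{\mc F}$, i.e.\ a point of $\pi^{-1}(\fix(I))$. The key point is that if $\diam(\til U)>M_0(S,C)$ then the transverse trajectory, which lives inside the single lift $\til U$, must traverse a homologically ``large'' collection of fundamental domains before it can close up, which forces $\pi(\fix(I))$ to meet an essential loop of $S$, contradicting the inessentiality of $\fix(f)$. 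Hence $\diam(\til U)\le M_0$, which is the desired uniform bound.

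The step I expect to be hardest is the last one: converting a lower bound on $\diamup(U)$ into genuine recurrence of a transverse trajectory and then into homological spreading of the fixed point set. The delicate case is when the prime-end rotation number of $f|_U$ is irrational, so that orbits inside $U$ need not be recurrent even though the boundary dynamics is minimal; there the transverse foliation, the displacement bound $d_{\hat S}(\hat f,\id)\le C$, and the inessentiality of $\fix(f)$ must be combined with care, and it is precisely this interplay that also delivers the \emph{uniformity} of $M_0$ rather than mere finiteness of $\diamup(U)$ for each individual $U$.
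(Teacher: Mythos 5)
There is a genuine gap, and it sits exactly where you flag it. Your case split according to whether the deck transformation $T$ with $\hat f(\til U)=T\til U$ is trivial, and your appeal to Le Calvez's transverse foliations in the contractible case, do match the skeleton of the paper's argument; but the mechanism you propose for closing each case is not supplied and, as sketched, would not work. In the contractible case, the claim that ``no wandering cross-cut forces the prime-end circle homeomorphism to have no wandering interval, hence to be conjugate to a rotation'' is not correct as stated (in the rational rotation number case no-wandering-intervals does not give a rotation, and the passage from wandering cross-cuts to wandering prime-end intervals is itself delicate), and the final step --- converting $\diam(\til U)>M_0$ into recurrence of a transverse trajectory and then into essentiality of $\fix(I)$ --- is precisely the content of the theorem and is never argued. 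The paper does not use any recurrence inside $U$ at all. Instead it fixes a compact fundamental domain $Q$ whose boundary is covered by finitely many leaf neighborhoods $V_{\Gamma_1},\dots,V_{\Gamma_m}$ (from Proposition \ref{pro:perto-corta}), takes a fixed point $z_0\in U$ (which exists by Proposition \ref{pro:nwcc-fix}, using the no-wandering-cross-cut hypothesis), and runs a counting argument with linking numbers: by the planar Linking Lemma \ref{lem:linking-plane}, each deck translate $TU$ meeting $V_{\Gamma_i}$ either swallows the compact filled limit set of $\Gamma_i$ (at most one $T$ for $\alpha$ and one for $\omega$), or meets $\Gamma_i$ only in wandering components (at most two $T$, using that cross-cuts not killed by singularities are nonwandering), or is linked to $\Gamma_i$, in which case $Tz_0$ lies in a fixed relatively compact set $E'$ (Proposition \ref{pro:link-bound}), which can happen for at most $N_0$ translates. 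This bounds the number of translates of $Q$ met by $U$, hence $\diam(\til U)$. Moreover, to even have a Brouwer--Le Calvez pair for $f$ itself (Jaulent only provides one on $S\sm X_*$), the paper first collapses $\fill(X_*)$ to a totally disconnected set and proves (Proposition \ref{pro:slice}, Corollary \ref{coro:slice}) that a large invariant disk survives this surgery with the nonwandering property intact except for cross-cuts ending in the collapsed set; your proposal omits this reduction entirely.

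In the non-contractible case your proposed dichotomy --- either a wandering cross-cut appears in the annular quotient or ``the rotational behaviour forces periodic points along the frontier whose images make $\fix(f)$ essential'' --- is not justified and is not how the argument goes. The paper's route (Proposition \ref{pro:twist}) is structural: set $g=T\hat f$, quotient $\D\cup\Delta$ by $\langle T^2\rangle$ to get a compact annulus on which $g$ induces a map agreeing with the order-two rotation induced by $T$ on the boundary, glue the two boundary circles to obtain a torus, check that the induced homeomorphism is isotopic to the identity with inessential fixed point set, and then apply the already-proved contractible case on the torus; a separate argument (via Proposition \ref{pro:nwcc-nwcs}) is needed to rule out wandering cross-cuts of the projected disk whose endpoints land on the glued circle, and a trapping argument keeps the disk away from that circle so that diameters upstairs and downstairs are comparable. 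None of these steps can be extracted from the dichotomy you assert, so both halves of your outline currently rest on unproved claims.
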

This is a generalization of \cite[Theorem B]{kt-strictly} in several aspects. Firstly, it applies to higher genus surfaces; secondly, it assumes a weaker nonwandering-type condition, and finally for the case of $\T^2$ it removes a ``non-annular'' hypothesis that was present in the previous result. Theorem \ref{th:main-disk} also holds for surfaces with boundary; see Theorem \ref{th:main-disk-bd}.

It is clear that the hypothesis on $\fix(f)$ cannot be removed, the identity map providing a trivial counter-example (see \cite{kt-example} for a more interesting one). The hypothesis of having no wandering cross-cuts is a weak form of nonwandering condition. It will always hold if $f$ is area-preserving, or even if $f$ is nonwandering in $U$. The theorem  does not hold if one removes this condition; for instance, the flow on the torus obtained by suspension of a Denjoy example on the circle (with a single wandering interval) has a homotopically unbounded invariant topological disk (namely, the orbit of the wandering interval by the flow) and has no singularities; so its time-$1$ map provides an example where the conclusion of Theorem \ref{th:main-disk} fails (but, of course, this example has wandering cross-cuts).

If $U$ is connected but not necessarily simply connected, we define a notion of covering diameter $\diamup(U)$ which measures how much the set ``deviates'' from a surface with boundary of similar type, and we say that $U$ is homotopically bounded if $\diamup(U)$ is finite. The precise definition of $\diamup(U)$ is given in Section \ref{sec:covering-diameter}, but for the moment we may avoid these details by mentioning that $U$ being homotopically bounded is equivalent to saying that there exists a compact surface with (rectifiable) boundary $S_0\subset U$ such that if $\hat{U}$ is a connected component of $\pi^{-1}(U)$ and $\hat{S}_0$ is the component of $\pi^{-1}(S_0)$ contained in $\hat{U}$, then the distance of any point of $\hat{U}$ to $\hat{S}_0$ is bounded by a uniform constant. For instance Figure \ref{fig:rotbad} illustrates an homotopically unbounded set.

\begin{theoremain}\label{th:main-gen} 
Let $f\colon S\to S$ be an area-preserving homeomorphism homotopic to the identity such that $\fix(f)$ is inessential. Then any connected open $f$-invariant set is homotopically bounded.
\end{theoremain}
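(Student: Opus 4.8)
The plan is to deduce Theorem~\ref{th:main-gen} from Theorem~\ref{th:main-disk} and, for surfaces with boundary, from Theorem~\ref{th:main-disk-bd}. Since $f$ is area-preserving it is nonwandering, so (as noted after Theorem~\ref{th:main-disk}) no $f$-invariant open topological disk admits a wandering cross-cut; hence Theorem~\ref{th:main-disk} applies to every $f$-invariant open topological disk.

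\emph{Reduction to filled sets.} Fill in the inessential connected components of $S\sm U$ to obtain $U^{*}=\fill(U)$. As $f$ is homotopic to the identity it preserves the essential/inessential classification of these components, so $U^{*}$ is $f$-invariant, and $U\subseteq U^{*}$ induces the same image $G\le\pi_{1}(S)$ on $\pi_{1}$. Unwinding the definition of covering diameter (Section~\ref{sec:covering-diameter}), for a connected open invariant $V$ with $\pi_{1}$-image $G$ the condition $\diamup(V)<\infty$ is equivalent to the component $\hat V$ of $\pi^{-1}(V)$ lying in a bounded neighbourhood of some set on which $G$ acts cocompactly; this is inherited by open subsets with the same $G$, so $\diamup(U^{*})<\infty$ implies $\diamup(U)<\infty$. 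We may thus assume $U=\fill(U)$, so that $S\sm U=E_{1}\cup\dots\cup E_{n}$ with each $E_{k}$ an essential continuum. Two disjoint isotopic essential continua in $S\sm U$ would cobound an annulus whose interior would be a nonempty clopen subset of $U$, impossible by connectedness; so the $E_{k}$ are pairwise non-isotopic, hence $n<\infty$, and since $f$ is isotopic to the identity it fixes each $E_{k}$.

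\emph{The easy cases.} If $S=\SS^{2}$ the universal cover is compact and the claim is trivial; assume $S$ has positive genus. If $U$ is inessential it is simply connected — otherwise it would contain an essential simple closed curve $\ell$, which is inessential in $S$ and hence bounds a disk $D_{\ell}\subseteq S$ with $D_{\ell}\not\subseteq U$, so $D_{\ell}$ contains some $E_{k}$, contradicting that $E_{k}$ is essential — hence $U$ is an open topological disk and $\diamup(U)\le M$ by Theorem~\ref{th:main-disk}. If $U$ is essential and $G$ has finite index in $\pi_{1}(S)$, then $\hat S/G$ is a closed surface containing $\hat U/G$ as an open subset, hence bounded, so $\diamup(U)<\infty$.

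\emph{The main case and the main obstacle.} There remains the case $U$ essential and $G$ of infinite index, hence free: $\bar S:=\hat S/G$ is a noncompact finite-type surface, $f$ acting on it through its identity-isotopic lift $\bar f$, and $\diamup(U)<\infty$ is equivalent to $\hat U/G$ having finite diameter in $\bar S$. Assume not. The plan is to extract from this unboundedness an $f$-invariant open topological \emph{disk} $R\subseteq U$ with $\diamup(R)=\infty$ — a ``spiralling strip'' of $U$ winding onto one of the $E_{k}$ — which contradicts Theorem~\ref{th:main-disk}. Carrying this out is the crux and the main difficulty: one first localizes the unboundedness of $\hat U/G$ to a single end of $\bar S$, i.e.\ a single $E_{k}$, around which $U$ winds without bound; then, using the structure of essential continua together with the absence of wandering cross-cuts and the inessentiality of $\fix(f)$, one produces an arc $\beta\subset U$ with endpoints on $\partial U$ whose forward iterates are pairwise disjoint and cut off an infinite nested sequence of regions accumulating on $E_{k}$, so that the union of these regions with the arcs $f^{j}(\beta)$ is an $f$-invariant topological disk; finally, a bookkeeping of covering diameters through $\bar S$ and $\hat S$ shows this disk has infinite covering diameter, contradicting the uniform bound of Theorem~\ref{th:main-disk}. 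Everything else is formal manipulation of coverings; for surfaces with boundary the argument is identical with Theorem~\ref{th:main-disk-bd} in place of Theorem~\ref{th:main-disk}.
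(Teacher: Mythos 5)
Your preliminary reductions are fine (filling $U$ costs nothing, cf.\ Proposition~\ref{pro:diam-fill}; a filled inessential invariant set is an open disk handled by Theorem~\ref{th:main-disk}, since area preservation excludes wandering cross-cuts; the finite-index case is immediate), but the entire content of the theorem sits in your ``main case'', and there you only describe a plan --- and the plan, as described, cannot be carried out. First, nothing guarantees that a homotopically unbounded invariant set $U$ contains \emph{any} $f$-invariant open topological disk: if $f|_U$ behaved like an irrational rotation of an essential (unbounded) annulus, there would be no invariant disk $R\subset U$ at all, so no contradiction with Theorem~\ref{th:main-disk} can be extracted from objects that are invariant under $f$ and contained in $U$. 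Second, the specific structure you propose to build --- an arc $\beta\subset U$ whose forward iterates are pairwise disjoint and cut off an infinite nested sequence of regions accumulating on $E_k$ --- is impossible under the standing hypotheses: the region between two consecutive iterates of $\beta$ would be a wandering open set of positive area, contradicting the fact that $f$ preserves area. This is precisely the Denjoy-suspension picture that the hypotheses exclude; since the object never exists, it cannot serve as the intermediate step of a proof by contradiction, and its nonexistence does not by itself bound $\diamup(U)$. So the crux --- converting unboundedness of $U$ at an end into a contradiction --- is missing.

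The paper's proof goes a genuinely different way, and the difference is essential: it modifies the \emph{map} rather than hunting for invariant structure of $f$ inside $U$. After filling $U$, for each topological end one picks a loop $\gamma$ in a collar of that end, close enough that $\gamma$ and $f(\gamma)$ lie in the same collar, and composes $f$ with an area-preserving Schoenflies-type homeomorphism supported in an annulus, so that the resulting map $g$ is still area-preserving, homotopic to the identity, has inessential fixed point set, leaves $U$ invariant, and now fixes $\gamma$, hence leaves the corresponding end collar invariant. Cutting near this circle and collapsing it to a point turns the collar into a $g$-invariant open topological disk in a compact surface with boundary, to which Theorem~\ref{th:main-disk-bd} applies; the resulting bound is transported back to $S$ via Propositions~\ref{pro:cover-disk}, \ref{pro:diam-sub} and \ref{pro:diam-partition}, and summing over the finitely many ends (Proposition~\ref{pro:ends}) bounds $\diamup(U)$. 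Note that area preservation is used twice: to exclude wandering cross-cuts for the \emph{modified} map, and to perform the modification without destroying that property. The bounded disk one ultimately uses is invariant for $g$, not for $f$ --- this freedom to do surgery on the dynamics is exactly what your sketch lacks.
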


In the $C^r$-generic area-preserving setting, a similar (but finer) description was given in \cite{kln}; Theorem \ref{th:main-gen} indicates that this generic description is not too distant from the general case (except when the fixed point set is too large).

Theorem \ref{th:main-gen} is also the main tool that allows us to generalize to surfaces of higher genus virtually all of the results of \cite{kt-strictly} for area-preserving maps. This is done through the notion of dynamically essential and inessential points introduced in that work, which we develop further in this paper.

Given a homeomorphism $f\colon S\to S$, a point $x\in S$ is said to be \emph{dynamically inessential} if there is a neighborhood $V$ of $x$ such that $\orb_f(V):= \bigcup_{n\in \Z} f^n(V)$ is an inessential subset of $S$. The set of all such points is denoted $\ine(f)$. Any $x\in \ess(f):=S\sm \ine(f)$ is called a \emph{dynamically essential} point. 
A fully essential set is one whose complement is inessential. We say that $x$ is dynamically \emph{fully essential} if $\orb_f(V)$ is a fully essential set for every neighborhood $V$ of $x$. We denote the set of all dynamically fully essential points by $\fess(f)$. 

If $f\colon S\to S$ is homotopic to the identity, we say that $f$ has \emph{uniformly bounded displacement} if there exists $M>0$ such that some lift $\hat{f}\colon\hat{S}\to \hat{S}$ of $f$ satisfies $d(\hat{f}^n(z), z)\leq M$ for all $z\in \hat{S}$ and $n\in \Z$.

Finally, an invariant set $K\subset S$ is externally transitive if for any $U,V$ neighborhoods (in $S$) of points of $K$ there exists $n\in \Z$ such that $f^n(U)\cap V\neq \emptyset$. If the set of all $n\in \Z$ with this property has bounded gaps, $K$ is called \emph{externally syndetically transitive}.

\begin{theoremain}\label{th:essine} If $f\colon S\to S$ is an area-preserving homeomorphism homotopic to the identity of a hyperbolic surface, then one of the following properties hold:
\begin{itemize}
\item[(1)] There is $n>0$ such that $\fix(f^n)$ is essential;
\item[(2)] The map $f$ has uniformly bounded displacement;
\item[(3)] There is a homotopically bounded invariant open connected set $U\subset S$ which is essential but not fully essential;
\item[(4)] The set $\fess(f)$ is fully essential, connected, and externally syndetically transitive. Moreover, $\ess(f)=\fess(f)$, and $\ine(f)$ is the union of a family of pairwise disjoint homotopically bounded periodic open topological disks. 
\end{itemize}
\end{theoremain}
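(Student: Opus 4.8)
The plan is to assume that neither (1) nor (2) holds and to deduce that (3) or (4) holds; so from now on $\fix(f^n)$ is inessential for every $n\geq 1$ and $f$ has unbounded displacement. Three ingredients will be used repeatedly. First, since $f$ is area-preserving no $f^k$-invariant open set can have a wandering cross-cut, so Theorem \ref{th:main-disk} applies to every power $f^k$ and yields a uniform $M>0$ such that every $f^k$-invariant open topological disk has covering diameter at most $M$; and Theorem \ref{th:main-gen} applies to every $f^k$, so every $f^k$-invariant connected open subset of $S$ is homotopically bounded. Second, the elementary fact that two fully essential subsets of $S$ must intersect (if $A\cap B=\emptyset$ and $B$ is fully essential, then $A\subset S\sm B$ is inessential), together with the equivariance of the filling operation $\fill(\cdot)$ under homeomorphisms. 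Third, an area/recurrence argument: every connected component $W$ of $\ine(f)$ — and, more generally, of any $f$-orbit-saturated invariant open set — must be periodic, since otherwise the iterates $f^n(W)$, $n\in\Z$, would be pairwise disjoint connected components of equal positive Lebesgue measure, impossible on the finite-area surface $S$.

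Fix a component $W$ of $\ine(f)$, of period $p$. Then $W$ is $f^p$-invariant, connected and open, hence homotopically bounded; and if $W$ is moreover inessential then $\fill(W)$ is an $f^p$-invariant open topological disk (filling is equivariant), again homotopically bounded, hence of covering diameter at most $M$ by Theorem \ref{th:main-disk}. A point-pushing argument shows that the bounded complementary components of such a $W$ again consist of dynamically inessential points, so that if \emph{every} component of $\ine(f)$ is inessential then $\ine(f)$ is exactly a union of pairwise disjoint homotopically bounded periodic open topological disks; and any such (open) set is inessential, so $\fess(f)=S\sm\ine(f)$ is then fully essential.

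Next I would show that if (3) fails then the remaining assertions of (4) hold. Assuming there is no $f$-invariant connected open set that is essential but not fully essential, I claim that (a) every component of $\ine(f)$ is inessential and (b) $\ess(f)=\fess(f)$. For (a): if some component $W$ of $\ine(f)$, of period $p$, were essential, consider the $f$-invariant open essential set $\orb_f(\fill(W))=\bigcup_{i=0}^{p-1}\fill(f^i(W))$; if its translates overlap then it is connected (two fully essential pieces must meet), and it is then either not fully essential — giving a set as in (3), a contradiction — or fully essential, in which case a localization argument contradicts $W\subset\ine(f)$; the disjoint case is reduced to these. For (b): if $x\in\ess(f)\sm\fess(f)$, choose a connected neighbourhood $V$ with $\orb_f(V)$ essential but not fully essential; its components are periodic by the area argument, and either all of them are inessential — so the point-pushing argument of the previous paragraph puts $x$ in $\ine(f)$, contradicting $x\in\ess(f)$ — or one of them is essential, again producing a set as in (3). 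Hence when (3) fails, $\ine(f)$ is a union of pairwise disjoint homotopically bounded periodic open topological disks and $\ess(f)=\fess(f)$. Then $\fess(f)$ is connected, since removing from the connected surface $S$ the pairwise disjoint homotopically bounded open disks of $\ine(f)$ cannot disconnect it (each disk being homotopically bounded, any two points of $\fess(f)$ are joined by an arc skirting them, using that a compact subsurface carrying all of $\pi_1(S)$ lies in $\fess(f)$).

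It remains to prove external syndetic transitivity of $\fess(f)$. External transitivity is easy: given open $U,V$ meeting $\fess(f)$, pick $x\in U\cap\fess(f)$, $y\in V\cap\fess(f)$ and small balls $U_0\ni x$, $V_0\ni y$ contained in $U,V$; by definition of $\fess(f)$ the invariant open sets $\orb_f(U_0)$ and $\orb_f(V_0)$ are fully essential, hence intersect, so $f^n(U_0)\cap V_0\neq\emptyset$ for some $n$ and thus $f^n(U)\cap V\neq\emptyset$. The delicate point — and what I expect to be the main obstacle — is to upgrade this to a \emph{syndetic} set of return times, i.e. to bound the gaps between such $n$. Here one must use that $W=\orb_f(U_0)$ is not only fully essential but also homotopically bounded (Theorem \ref{th:main-gen}) and invariant: combined with Poincaré recurrence for the area-preserving map $f$ and with the failure of bounded displacement, this should force the orbit of $V_0$ to re-enter $W$ with uniformly bounded gaps. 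This syndeticity step is where the hyperbolic geometry of $S$ and the boundedness theorems are used most heavily, while everything else is bookkeeping built on Theorems \ref{th:main-disk} and \ref{th:main-gen} and the area estimate.
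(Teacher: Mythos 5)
The decisive gap is in your step (a), at the subcase you dispatch with ``a localization argument contradicts $W\subset\ine(f)$''. If a component $W$ of $\ine(f)$ is essential and $\orb_f(\fill(W))$ (or $W$ itself) turns out to be \emph{fully} essential, there is simply no contradiction with $W\subset\ine(f)$ obtainable by soft/local reasoning: for a map with uniformly bounded displacement (think of the time-one map of a small flow, or near-identity maps) one can have $\ine(f)=S$, a single fully essential component, and nothing goes wrong. So any argument ruling out this scenario \emph{must} use the failure of (2), and your outline never invokes the unbounded-displacement hypothesis at this point. This is exactly the heart of the paper's proof: assuming $\ine(f)$ is fully essential, one takes a loop $\gamma\subset\hat S$ bounding a disk $Q$ with $\pi(Q)=S$ and $\pi(\gamma)\subset\ine(f)$, covers $\gamma$ by finitely many homotopically bounded lifted disks $U_i$ of periodic components of $\ine(f)$ (Corollary \ref{cor:island}, which needs not-(1)), writes $\hat f^{\,n}(U_i)=T_iU_i$, shows $T_i$ is locally constant along $\gamma$ (boundedness of the $U_i$ plus the fact that distinct deck transformations of a hyperbolic surface drift apart), hence constant equal to some $T$, then uses a fixed point of the natural lift inside $Q$ to force $T=\id$, and concludes that $\hat f^{\,n}$, hence $\hat f$, has uniformly bounded displacement, contradicting not-(2). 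Nothing in your proposal replaces this argument, and without it the dichotomy ``(3) fails $\Rightarrow$ every component of $\ine(f)$ is inessential'' is unproved; the same omission also undermines your step (b) and the claim that $\ess(f)=\fess(f)$ is fully essential.

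Conversely, the step you single out as the main obstacle, syndeticity of the external transitivity, is actually the easy part and your proposed tools (Poincar\'e recurrence plus unbounded displacement) are not what is needed. In the paper this is Proposition \ref{pro:trans}(3), a soft argument valid for any homeomorphism: given neighborhoods $U,V$ of points of $\fess(f)$, a compactness argument produces $N$ such that $U'=\bigcup_{n=-N}^{N}f^n(U)$ and $V'=\bigcup_{n=-N}^{N}f^n(V)$ are already fully essential, and since any two fully essential open sets intersect, $f^{2Nk}(U')\cap V'\neq\emptyset$ for every $k$, which bounds the gaps by $4N$. Two smaller points: Theorem \ref{th:main-disk} gives a bound depending on the map, so there is no single $M$ uniform over all powers $f^k$ (fortunately item (4) only requires each periodic disk to be homotopically bounded, with bound depending on its period); and both in your step (a) and in your step (b) the essential-but-not-fully-essential component you produce is only $f^p$-invariant, so some care (or a restatement of (3) allowing periodic sets, as the paper implicitly does) is needed before declaring a contradiction with (3). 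Finally, the connectedness of $\fess(f)$ is proved in the paper dynamically (the unique fully essential component is invariant and must meet $\orb_f(U)$ for every neighborhood $U$ of every point of $\fess(f)$); your purely topological justification via ``arcs skirting the disks'' is not an argument as stated, even though the conclusion can be salvaged.
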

We say that $f$ has \emph{fully essential dynamics} when neither case (1), (2) nor (3) from Theorem \ref{th:essine} holds (note that these three cases may overlap).
Also as a consequence of Theorem \ref{th:main-disk} we obtain a slightly improved version of \cite[Theorem A]{kt-strictly}, providing a statement similar to the theorem above for the case of $\T^2$ (see Theorem \ref{th:torus}).

Non-fully essential dynamics may be thought as ``reducible'' dynamics in some sense. Specifically, case (1) of Theorem \ref{th:essine} is a degenerate situation where the set of non-fixed points of $f^n$, where the dynamics is non-trivial, is an open (possibly disconnected) surface of lower genus. Case (2) implies that there is a finite-to-one map from an open set in the plane onto $S$  which semiconjugates the dynamics of a planar homeomorphism with that of $f$ (indeed, in this case there exists a bounded open topological disk in the universal cover of $S$ which is invariant by a lift of $f$ and contains a fundamental domain). Finally, in case (3) one may consider a compact surface with boundary $S_0$ which is a neighborhood of $S\sm U$. Any orbit of $f$ either remains entirely in $U$ (which can be thought as a subset of a surface of lower genus) or in one connected component of $S_0$ (which again is a surface of lower genus, although not necessarily invariant). 

Let us now relate the concept of fully essential dynamics to the rotation theory viewpoint. Generalizing the notion of Poincar\'e rotation number, homological rotation sets can be defined in several ways for surfaces of positive genus as a subset of the first homology group $H_1(S,\R)$; see for instance \cite{pollicott, franks-rot}.  For the case of the torus, the most widely adopted definition is due to Misiurewicz and Ziemian \cite{m-z}. In \cite[Theorem C]{kt-strictly} it was shown that an area preserving homeomorphism of $\T^2$ whose rotation set has nonempty interior has fully essential dynamics.
We discuss the definitions of rotation vectors of points and invariant measures as well as rotation sets in Section \ref{sec:rot}, where we also introduce the Misiurewicz-Ziemian rotation set $\rotmz(f)\subset H_1(S,\R)$ for surfaces of higher genus, which is the definition that most closely resembles the rotation set of a toral homeomorphism. We also define the rotation set over a subset $U\subset S$, denoted $\rotmz(f,U)$.
Many of the dynamical consequences of the rotation set for toral homeomorphisms fail to hold on surfaces of higher genus due to the fact that, being a homological object, the rotation set misses information when the fundamental group is no longer abelian (for example, a rotation along an essential homologically trivial loop is undetected by the rotation set). 
Nonetheless, we are able to fully generalize the results from \cite{kt-strictly}.

We say that an $f$-invariant set $K\subset S$ is \emph{externally sensitive on initial conditions} if there is a constant $c>0$ such that, for any $x\in K$ and any neighborhood $U\subset S$ of $x$ there is $n>0$ such that $\diam(f^n(U))\geq c$.
A rational subspace of $H_1(S,\R)$ is a subspace generated by elements of $H_1(S,\Z)$. The next theorem says that unless the rotation set is considerably small, the dynamics will be fully essential. We note that the hypothesis for the rotation set automatically holds if the rotation set has nonempty interior:

\begin{theoremain}\label{th:interior-essential} If $S$ is hyperbolic and $f\colon S\to S$ is an area-preserving homeomorphism homotopic to the identity. Suppose that $\rotmz(f)$ is not contained in a union of $g+1$ rational proper subspaces of $H_1(S, \R)$. Then:
\begin{itemize}
\item[(1)] $\fess(f)=\ess(f)$, which is a fully essential continuum, and $\ine(f)$ is a disjoint union of periodic bounded disks;
\item[(2)] $\fess(f)$ is externally syndetically transitive and sensitive on initial conditions;
\item[(3)] If $U$ is any neighborhood of  $x\in \fess(f)$, then $\rotmz(f,U)=\rotmz(f)$. Moreover, there is a constant $c>0$ depending only on $\rotmz(f)$ such that 
	$$\liminf_{n\to\infty} \diamup(f^n(U))/n \geq c$$
\end{itemize}
In particular, $f$ has fully essential dynamics. 
\end{theoremain}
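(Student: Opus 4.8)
The plan is to deduce everything from Theorem~\ref{th:essine} together with the rotation-set hypothesis, which is designed precisely to rule out cases (1), (2) and (3). First I would argue that none of the degenerate alternatives can occur. If $\fix(f^n)$ were essential for some $n>0$ (case (1)), then $f^n$ fixes a point and acts trivially enough that the complement of an essential curve of fixed points is an invariant open subsurface of strictly lower genus; every invariant measure would be supported there, so its homological rotation vector would lie in the image of $H_1$ of that subsurface, which is a proper rational subspace, and summing over the at most $g$ such pieces (plus the zero vector) would confine $\rotmz(f)$ to a union of $g+1$ rational proper subspaces, a contradiction. If $f$ had uniformly bounded displacement (case (2)), then every rotation vector of an invariant measure would be zero, so $\rotmz(f)=\{0\}$, again contradicting the hypothesis (a single proper subspace suffices). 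If case (3) held, with $U$ essential but not fully essential, homotopically bounded, and invariant, then $S\sm U$ contains an essential curve; the dynamics of any orbit is confined either to $U$ or to a component of a compact subsurface neighborhood $S_0$ of $S\sm U$, each of which has smaller genus, and as in case (1) the supports of invariant measures are confined, forcing $\rotmz(f)$ into a controlled union of proper rational subspaces — this is where the precise count $g+1$ is needed, and verifying it is the main technical obstacle, since one must bound the number and genus of the reducing pieces in terms of $g$. Granting these, Theorem~\ref{th:essine} forces case (4), which gives part (1) immediately: $\fess(f)=\ess(f)$ is fully essential, connected, externally syndetically transitive, and $\ine(f)$ is a disjoint union of homotopically bounded periodic open disks. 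That $\fess(f)$ is a continuum (compact connected) follows because $\ess(f)$ is closed, connected, and its complement is a union of periodic disks, hence bounded.

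For part (2), external syndetic transitivity is already in case (4); it remains to prove external sensitivity on initial conditions. Here I would invoke Theorem~\ref{th:main-gen}: for $x\in\fess(f)$ and a neighborhood $U$, the orbit $\orb_f(U)$ is open, invariant and fully essential, hence \emph{not} homotopically bounded; by the contrapositive of Theorem~\ref{th:main-gen} applied to components (or directly, since $\orb_f(U)$ being fully essential means a component is essential in a way a homotopically bounded set cannot be), some iterate $f^n(U)$ must have diameter bounded below by a uniform constant $c$ coming from the constant $M$ in Theorem~\ref{th:main-disk}/\ref{th:main-gen} — if every $f^n(U)$ had small diameter the union could be enclosed in a homotopically bounded set. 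Making this last implication quantitative and uniform in $x$ (so that $c$ does not depend on $x$ or $U$) is the delicate point, and it relies on the uniformity of $M$ in Theorem~\ref{th:main-disk}.

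For part (3), fix $x\in\fess(f)$ and a neighborhood $U$. The inclusion $\rotmz(f,U)\subset\rotmz(f)$ is automatic. For the reverse, I would use external syndetic transitivity together with a realization/shadowing argument: given any rotation vector $v\in\rotmz(f)$, it is realized (or approximated, by the structure of the Misiurewicz--Ziemian set) by orbits whose homological displacement is controlled; syndetic transitivity of $\fess(f)$ lets one connect such orbit segments to segments that pass through $U$ with uniformly bounded correction to the homology class, so that long orbit segments starting in $U$ realize the same asymptotic rotation vector; passing to the limit yields $v\in\rotmz(f,U)$, and since $v$ was arbitrary, $\rotmz(f,U)=\rotmz(f)$. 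Finally, for the linear lower bound on $\diamup(f^n(U))$: because $\rotmz(f,U)=\rotmz(f)$ is not contained in the prescribed union of rational subspaces, it contains (by the same reasoning used to exclude cases (1)--(3)) a vector $v$ whose lift has at least linear growth in the universal cover, and by definition of $\rotmz(f,U)$ there is a point of $U$ whose orbit has homological displacement growing like $nv$; lifting, this orbit moves a distance $\gtrsim n\lVert v\rVert$ in $\hat S$, so a connected component of $\pi^{-1}(f^n(U))$ — or rather the image of a fixed component of $\pi^{-1}(U)$ under the lift of $f^n$ — contains two points at distance $\gtrsim cn$, giving $\diamup(f^n(U))\geq cn$ for a constant $c$ depending only on a lower bound for the norms of the relevant rotation vectors in $\rotmz(f)$. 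Taking $\liminf$ gives the claimed inequality; the ``in particular'' statement is then immediate since cases (1), (2), (3) have all been excluded. The main obstacle throughout is the passage from ``fully essential orbit'' to quantitative unboundedness with constants depending only on $\rotmz(f)$, which is where Theorems~\ref{th:main-disk} and~\ref{th:main-gen} do the heavy lifting.
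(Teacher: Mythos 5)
Your overall architecture matches the paper's (rule out cases (1)--(3) of Theorem~\ref{th:essine} using the rotation hypothesis, read off part (1) and syndetic transitivity from case (4), then prove (3) by a uniform comparison of displacements), but the way you exclude cases (1) and (3) has a genuine gap. You argue via supports of invariant measures: orbits (hence ergodic measures) are confined to invariant subsurfaces, so their rotation vectors lie in the corresponding subspaces. That only controls $\roterg(f)$, hence $\rotmeas(f)=\conv(\rotmz(f))$ --- and containment in a \emph{union} of proper subspaces is not inherited by the set $\rotmz(f)$ from its ergodic vectors, nor does it pass through convex hulls. The paper's Figure~\ref{fig:rotbad} is exactly a counterexample to your inference: all ergodic rotation vectors lie in $\cV(U)$, yet $\rotmz(f)$ contains a whole segment joining $[b_1]$ to $[b_2]$ outside $\cV(U)$. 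What is actually needed is a pointwise, all-times estimate $\Phi_f^n(x)\in B_r(\cV(U))$ for every $x$ and $n$; this is the content of Lemma~\ref{lem:desvio-hyp} (which crucially uses that $U$ is \emph{homotopically bounded}, via Lemma~\ref{lem:hyp-isotopy}) for case (3), and of the harder Lemma~\ref{lem:desvio-fix} for case (1), where boundedness is unavailable and one must instead exploit that $\bd U$ is pointwise fixed, using a Brouwer-theoretic argument (Lemma~\ref{lem:brouwer-free}) to place the boundary classes $v_i$ in $\cH(U)$ and then an $\epsilon$--$\delta$ bookkeeping near $\bd U$. So the ``main technical obstacle'' is not the count $g+1$ (that is the easy Proposition~\ref{pro:ends}), but precisely these displacement lemmas, which your measure-theoretic shortcut does not replace. (Cases (2) and the fully essential $\fix(f^n)$ subcase are fine: bounded displacement and Lemma~\ref{lem:rot0}-type arguments do give $\rotmz(f)=\{0\}$.)

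There are also gaps in your parts (2) and (3). For (3), ``syndetic transitivity lets one connect orbit segments through $U$ with bounded homological correction'' is not an argument for a homeomorphism (there is no shadowing to concatenate orbit segments); the paper's Lemma~\ref{lem:localrot-uniform} proves the stronger uniform statement $\{\Phi_f^n(x):x\in S\}\subset B_r(\{\Phi_f^n(y):y\in U\})$ for all $n$ by a covering argument with a fundamental-domain disk $Q$ whose boundary lies over the fully essential set $\bigcup_{|i|\le m}f^i(U)$. For the linear growth, one fast orbit in $U$ does not bound $\diamup(f^n(U))$ from below --- the whole lifted disk could translate rigidly; you need, for every $n$, \emph{two} points of $U$ whose time-$n$ displacements differ by roughly $n\,\diam(\rotmz(f))$, which is exactly what the uniform lemma gives when applied to points realizing two extremal vectors of $\conv(\rotmz(f))$ (realized by ergodic measures, \S\ref{sec:rot-meas}), together with the stable-norm comparison of homology class and length. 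Finally, your route to sensitivity (``if every $f^n(U)$ had small diameter the union could be enclosed in a homotopically bounded set'') is false as an implication: a union of uniformly small iterates can perfectly well be fully essential (think of a small disk swept around an essential annulus). The paper instead deduces sensitivity directly from the linear growth in part (3), since $\diamup(U)$ equals the ordinary diameter for small disks and $\diam(\rotmz(f))>0$ because $0\in\rotmz(f)$ and the hypothesis forces a nonzero vector.
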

In fact, part (3), which essentially says that the ``local'' rotation set near any point of $\fess(f)$ is equal to the rotation set of $f$, holds in general, even when $\rotmz(f)$ has empty interior, and in a stronger ``uniform'' way; see Lemma \ref{lem:localrot-uniform}. 
The constant $c$ in part (3) is in fact the diameter of $\rotmz(f)$ under the stable norm on the first homology group of $S$ (see Section \ref{sec:rot}).

When the dynamics is fully essential, there is an important connection between the set of fully essential points and the realization of rotation vectors by periodic orbits and ergodic measures (again, see Section \ref{sec:rot} for definitions):

\begin{theoremain}\label{th:reali-ess} If $S$ is hyperbolic and $f\colon S\to S$ be an area-preserving homeomorphism homotopic to the identity with fully essential dynamics. Then,
\begin{itemize}
\item[(1)] For each periodic point $p$ of $f$, there is a periodic point of $f$ in $\fess(f)$ with the same rotation vector. Moreover, there exists a periodic point in $\fess(f)$ which is Nielsen equivalent to $p$.
\item[(2)] Every ergodic probability measure with a non-rational rotation vector is supported in $\fess(f)$.
\end{itemize}
\end{theoremain}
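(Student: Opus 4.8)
\emph{Part (2).} This part is purely ergodic-theoretic and I would do it first. Let $\mu$ be an ergodic $f$-invariant probability measure whose rotation vector $\rho(\mu)$ is non-rational. Since $f$ has fully essential dynamics, case (4) of Theorem~\ref{th:essine} applies: $\ine(f)$ is $f$-invariant and equals a disjoint union of homotopically bounded periodic open disks, while $\fess(f)=\ess(f)=S\sm\ine(f)$ is closed. By ergodicity $\mu(\ine(f))\in\{0,1\}$; if it is $0$ then $\mu$ is supported in $\fess(f)$ and we are done, so suppose $\mu(\ine(f))=1$. The components of $\ine(f)$ form at most countably many finite $f$-periodic orbits of disks, each $f$-invariant, so ergodicity forces $\mu$ to be carried by one such orbit $D\cup f(D)\cup\cdots\cup f^{m-1}(D)$, $f^m(D)=D$. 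Fix a lift $\hat f$ of $f$ and a component $\hat D$ of $\pi^{-1}(D)$, which has finite diameter because $D$ is homotopically bounded; by equivariance there is a deck transformation $T_D$ with $\hat f^{jm}(\hat D)=T_D^{\,j}\hat D$ for all $j\in\Z$. For $\mu$-a.e.\ $z\in D$ the rotation vector of $z$ exists (Birkhoff; the displacement cocycle is bounded), and for a lift $\hat z\in\hat D$ one has $\hat f^{jm}(\hat z)\in T_D^{\,j}\hat D$, hence $d(\hat f^{jm}(\hat z),T_D^{\,j}\hat z)\le\diam(\hat D)$; evaluating the rotation vector along the subsequence $n=jm\to\infty$ shows it, and therefore $\rho(\mu)$, equals $\tfrac1m[T_D]\in H_1(S,\Q)$ (here $[T_D]$ is the homology class determined by $T_D$), contradicting non-rationality. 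So $\mu(\ine(f))=0$, which establishes (2).

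\emph{Part (1): reduction.} If $p\in\fess(f)$ there is nothing to do, so assume $p\in\ine(f)$; then $p$ lies in a component $D$ of $\ine(f)$, a homotopically bounded periodic open disk of some period $m$, and $m$ divides the period $k$ of $p$, say $k=m\ell$, because distinct components of $\ine(f)$ are disjoint. Choose $\hat f$ and a component $\hat D$ of $\pi^{-1}(D)$ with $\hat f^m(\hat D)=T_D\hat D$, and put $\hat g:=T_D^{-1}\hat f^m$, a lift of $f^m$ fixing $\hat D$ setwise. Following lifts of the $f$-orbit of $p$ through the disjoint lifts of $D$ gives $\hat f^{k}(\hat p)=T_D^{\ell}\hat p$ for every lift $\hat p$ of $p$; hence $\rho(p)=\tfrac1m[T_D]$, and (using that $\hat f$ commutes with the deck group) $\hat p$ is a fixed point of $\hat g^{\ell}=T_D^{-\ell}\hat f^{k}$, i.e.\ $p$ represents the Nielsen class of the lift $\hat g^{\ell}$ of $f^{k}$. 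It therefore suffices to find a fixed point $\hat z$ of $\hat g$ in $\pi^{-1}(\fess(f))=\hat S\sm\bigsqcup_i\hat D_i$, where the $\hat D_i$ are the disjoint lifts of the components of $\ine(f)$: then $q:=\pi(\hat z)$ is a periodic point in $\fess(f)$ with $\hat f^k(\hat z)=T_D^{\ell}\hat z$, so $\rho(q)=\rho(p)$, and $\hat z\in\fix(\hat g^{\ell})$, so $q$ is Nielsen equivalent to $p$. In particular the first assertion of (1) follows from the second.

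\emph{Part (1): the core step.} We need a fixed point of $\hat g$ avoiding all the $\hat D_i$. As $\partial\hat D$ projects into $\partial D\subseteq\fess(f)$, the natural target is a periodic point of $\hat g$ on $\partial\hat D$, and a plausible route is: pass to the prime-end compactification of the open disk $\hat D$ and let $\alpha$ be the rotation number of the induced boundary-circle homeomorphism of $\hat g$. If $\alpha$ is rational, classical prime-ends theory together with the Cartwright--Littlewood theorem yields a periodic point of $\hat g$ on $\partial\hat D$, lying in $\pi^{-1}(\fess(f))$ (after replacing $\hat g$ by a suitable power and $k$ by a common multiple, which alters neither the relevant Nielsen classes nor rotation vectors). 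If $\alpha$ is irrational, one must exploit that $\hat g$ preserves the lifted area, finite and of full support on $\hat D$, and argue --- in the spirit of the area-preserving prime-ends theory of Koropecki--Le~Calvez--Nassiri, and using the structural description of $\ine(f)$ obtained earlier --- that a component of $\ine(f)$, being a maximal periodic island whose frontier lies in $\fess(f)$, cannot carry an irrational prime-ends rotation number, so that this case does not occur.

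\emph{The main obstacle} is precisely this core step --- ensuring the periodic point furnished by the planar fixed-point theorems lands in $\fess(f)$ rather than inside a periodic disk, equivalently controlling the prime-ends dynamics of the components of $\ine(f)$. If the prime-ends dichotomy proves delicate, I would turn to an equivariant fixed-point-index argument: the centralizer of $T_D$ in the deck group is infinite cyclic, so $\hat g$ fixes infinitely many translates $T^n\hat D$ of $\hat D$ aligned along an axis, each invariant translate carrying total fixed-point index $1$ for the identity-isotopic homeomorphism $\hat g$; descending to the annular cover $\hat S/\langle T_0\rangle$ and invoking the annular Poincar\'e--Birkhoff and Brouwer theory for area-preserving maps should then force additional fixed points of index $\le 0$, which necessarily lie in $\pi^{-1}(\fess(f))$. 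A third possibility is to run the forcing theory of Le~Calvez--Tal on the closed transverse trajectory $\Gamma$ of $p$: a transverse self-intersection of $\Gamma$ yields a rotational horseshoe inside $\fess(f)$ realizing $\rho(p)$ by periodic orbits, while the absence of self-intersections places $p$ inside a periodic disk, returning us to the prime-ends analysis.
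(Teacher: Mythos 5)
Your part (2) is correct and is essentially the paper's argument: the paper phrases it through Lemma \ref{lem:ine-rational} (the rotation set over a homotopically bounded invariant disk is a single integer homology class), so an ergodic measure whose support meets $\ine(f)$ has rational rotation vector. Your reduction at the start of part (1) is also fine. The genuine gap is the core step of part (1). Your primary route insists on producing the new periodic point on $\partial\hat D$ via prime ends, and in the irrational case you simply assert that a component of $\ine(f)$ ``cannot carry an irrational prime-ends rotation number''. That assertion is unjustified and false in general: one can have fully essential dynamics together with a maximal periodic island bounded by an invariant circle of irrational rotation number (an elliptic island whose outermost invariant curve survives), and then, precisely by the Koropecki--Le Calvez--Nassiri theory you invoke, $\partial D$ contains \emph{no} periodic point at all. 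The periodic point Nielsen equivalent to $p$ still exists, but elsewhere in $\fess(f)$, so any argument localized at $\partial D$ cannot work. Your fallback sketches are not carried out, and even the rational subcase has loose ends (you replace $k$ by a multiple, and applying prime-end/Cartwright--Littlewood results to the lift $\hat g$ requires a nonwandering-type hypothesis on the relevant region that you have not verified).

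The paper's proof is instead a global index computation, close in spirit to your ``second possibility'' but with the two missing ingredients supplied. Suppose no point of the Nielsen class $E_T=\pi(\fix(T^{-1}\hat f^{\,n}))$ of $p$ lies in $\fess(f)$. Then the compact set $E_T$ is contained in $\ine(f)$ and is covered by finitely many $f^n$-invariant components $D_1,\dots,D_m$ of $\ine(f)$; each lift $\hat D_i$ meeting $\fix(T^{-1}\hat f^{\,n})$ is invariant under $G=T^{-1}\hat f^{\,n}$, bounded (Theorem \ref{th:main-gen}), carries nonwandering dynamics (area preservation plus injectivity of $\pi|_{\hat D_i}$), and has no fixed points of $G$ on its boundary, since that boundary projects into $\fess(f)$. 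A lemma of Koropecki then gives fixed point index $1$ on each $\hat D_i$, so the Nielsen class $E_T$ has index $m>0$; but Lefschetz--Nielsen theory for a homeomorphism isotopic to the identity gives index $0$ for $T\neq\id$ and $\chi(S)\leq 0$ for $T=\id$, a contradiction. The two facts your sketch lacks are exactly these: the index-$1$ lemma for bounded invariant disks of nonwandering homeomorphisms without boundary fixed points, and the vanishing (or non-positivity) of the Nielsen-class index, which together make the argument global over the whole Nielsen class rather than tied to the single disk containing $p$.
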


As a final application, we give a characterization of transitivity.
\begin{corollarymain}\label{cor:trans} Suppose that $f\colon S\to S$ is an area-preserving homeomorphism homotopic to the identity and assume that $f$ has fully essential dynamics. Then $f$ is transitive if and only if there are no homotopically bounded periodic disks, and moreover in that case $f$ is syndetically transitive.
\end{corollarymain}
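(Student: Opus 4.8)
The plan is to read everything off Theorem~\ref{th:essine}. Since $f$ has fully essential dynamics, none of cases~(1)--(3) of that theorem occur, so case~(4) holds: $\ess(f)=\fess(f)$ is a fully essential continuum which is externally syndetically transitive, and $\ine(f)=S\setminus\fess(f)$ is a pairwise disjoint union of homotopically bounded periodic open topological disks. Thus the corollary reduces to the statement that this family of disks is empty if and only if $f$ is transitive, in which case $f$ is syndetically transitive.

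The implication ``no homotopically bounded periodic disks $\Rightarrow$ syndetic transitivity'' is immediate: if there are no such disks then $\ine(f)=\emptyset$, hence $\fess(f)=S$, and the external syndetic transitivity of $\fess(f)$ from case~(4) says exactly that for any two non-empty open sets $U,V\subset S$ the set $\{n\in\Z:f^n(U)\cap V\neq\emptyset\}$ has bounded gaps; in particular it is non-empty. As $S$ is a compact metric space without isolated points, this is precisely the statement that $f$ is syndetically transitive, and in particular transitive.

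For the converse I would argue by contradiction, assuming $f$ is transitive but $\ine(f)$ contains a homotopically bounded periodic disk $D$ of period $k$. Then $W:=\bigcup_{i=0}^{k-1}f^i(D)$ is a non-empty $f$-invariant open subset of $\ine(f)$. Transitivity provides a point with dense forward orbit; such an orbit must enter $W$ and, by invariance, stay there, and since deleting finitely many points from a dense set of a space without isolated points leaves it dense, the corresponding tail is dense and contained in $W$. Hence $W$ is dense, so $S\setminus W$ is nowhere dense; as any periodic disk disjoint from $W$ would be a non-empty open subset of $S\setminus W$, it follows that $\ine(f)=W$ and therefore $\fess(f)=S\setminus W=\bd W\subseteq\bigcup_{i=0}^{k-1}\bd(f^i(D))$. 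To reach a contradiction I would pass to the universal cover $\pi\colon\hat S\to S$. There $\pi^{-1}(W)$ is a \emph{dense} union of open topological disks of diameter at most $M:=\max_{0\le i<k}\diamup(f^i(D))<\infty$, permuted by a chosen lift $\hat f$ of $f$ (which commutes with the deck transformations). Fixing a component $\hat W_0$ of $\pi^{-1}(D)$, the lift $\hat f^k$ sends it to $T_0\hat W_0$ for a unique deck transformation $T_0$, and $\hat f^{kn}(\hat W_0)=T_0^n\hat W_0$. If $T_0$ is trivial, then the $\hat f$-orbit of $\hat W_0$ consists of the finitely many bounded disks $\hat W_0,\hat f(\hat W_0),\dots,\hat f^{k-1}(\hat W_0)$, and by equivariance the same holds for every deck-translate; hence all $\hat f$-orbits in $\pi^{-1}(W)$ remain in uniformly bounded regions, so $\hat f$ has uniformly bounded displacement on $\pi^{-1}(W)$ for all iterates, and therefore on all of $\hat S$ by density and continuity. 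This means $f$ has uniformly bounded displacement, i.e.\ case~(2) of Theorem~\ref{th:essine} holds, contrary to the hypothesis. If $T_0$ is non-trivial it is a hyperbolic element of $\pi_1(S)$ and $\hat f^{kn}(\hat W_0)=T_0^n\hat W_0$ escapes to infinity along its axis; then every point $\pi(\hat z)$ with $\hat z\in\bd\hat W_0$ — which lies in $\fess(f)$ since $\pi(\bd\hat W_0)\subseteq\bd D$ — and, applying the same analysis to each component, every point of $\fess(f)$, has an orbit fellow‑travelling a deck‑translate of the axis of $T_0$ with deviation at most $M$. Invoking Lemma~\ref{lem:localrot-uniform} (the local rotation set near any point of $\fess(f)$ equals $\rotmz(f)$), this forces $\rotmz(f)$ to lie in the rational line spanned by the homology class of $T_0$; together with transitivity this should produce a homotopically bounded, $f$-invariant, open connected set that is essential but not fully essential — an $f$-orbit of a suitable annular neighbourhood, visible after descending to the annular cover $\hat S/\langle T_0\rangle$ — so that case~(3) holds, again a contradiction. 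In either case $\ine(f)=\emptyset$, and transitivity, indeed syndetic transitivity, follows from the first part.

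The routine pieces are the reduction to case~(4) and the easy implication. The main obstacle is the final step: ruling out that a homotopically bounded periodic disk has a dense orbit. The dichotomy on the lifted component is a ``reducibility'' alternative, and while the case of trivial $T_0$ reduces cleanly to an occurrence of case~(2), turning the non‑trivial case into a genuine appearance of case~(2) or~(3) — rather than an informal rigidity statement — is where one must use the rotation‑theoretic machinery of the paper: the uniform local rotation set of Lemma~\ref{lem:localrot-uniform} and the analysis of maps whose rotation set is confined to a rational proper subspace.
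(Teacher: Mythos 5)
The first direction and your ``trivial $T_0$'' branch are fine and are essentially the paper's argument (uniformly bounded displacement on a dense invariant set, hence everywhere, contradicting the exclusion of case~(2) of Theorem~\ref{th:essine}). The genuine gap is the ``non-trivial $T_0$'' branch of the converse. The step ``this forces $\rotmz(f)$ to lie in the rational line spanned by $[T_0]$, and together with transitivity this should produce a homotopically bounded invariant open set which is essential but not fully essential, so case~(3) holds'' is not supported by anything in the paper: Lemma~\ref{lem:desvio-hyp} and Theorem~\ref{th:rational} go in the opposite direction (reducible dynamics constrains the rotation set), and no converse of that kind is available — the paper explicitly stresses that on higher-genus surfaces the homological rotation set misses information (note also that $[T_0]$ could even vanish in $H_1(S,\Z)$ if $T_0$ lies in the commutator subgroup, in which case your ``rational line'' is $\{0\}$ and no annular structure can be extracted from it). The descent to the annular cover $\hat S/\langle T_0\rangle$ and the extraction of a bounded invariant essential set is precisely the statement that would need a proof, and you acknowledge it is left informal; as written, the converse is therefore incomplete.

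The paper closes exactly this hole without any case split, and that is the idea you are missing. Since $f$ is transitive and the disk $D$ has period $k$, the closed set $\bigcup_{i=0}^{k-1}f^i(\ol{D})$ is all of $S$; hence $K=\bigcup_{i=0}^{k-1}\hat f^{\,i}(\ol{\hat W_0})$ is compact (homotopical boundedness plus bounded displacement of the natural lift) and $\pi(K)=S$. The natural lift $\hat f$ has a fixed point ($S$ hyperbolic, $f$ homotopic to the identity — the same fact is used in the proof of Theorem~\ref{th:essine}), and since the deck translates of $K$ cover $\hat S$ and $\hat f$ commutes with $\deck(\pi)$, some fixed point of $\hat f$ lies in $K$. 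From $\hat f^{\,k}(\hat W_0)=T_0\hat W_0$ one gets $\hat f^{\,kn}(K)=T_0^nK$ for all $n$, so this fixed point lies in $T_0^nK$ for every $n$, and compactness of $K$ forces $T_0=\id$. Thus your problematic case never occurs, and your trivial-case argument finishes the proof. A minor additional remark: your identification $\fess(f)=S\sm W$ presupposes $D\subset\ine(f)$, which the statement does not grant (it only gives you \emph{some} homotopically bounded periodic disk); but since that identification is only invoked in the defective branch, it disappears together with it.
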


As in \cite{kt-strictly}, we expect the bound on the covering diameter of periodic disks in Theorem \ref{th:interior-essential}(1) to be uniform, although even in the case of the torus we are only able to obtain a bound that depends on the period of the disks. However, for $C^{1+\alpha}$-diffeomorphisms of $\T^2$, a theorem of Addas-Zanata provides a uniform bound \cite{zanata-disk}.
We note that, if $f$ is area-preserving, the sole hypothesis that $\fix(f^n)$ is inessential for every $n>0$ already implies that all periodic topological disks are homotopically bounded (due to Theorem \ref{th:main-gen}), but this bound  may fail to be independent of the periods of the disks without additional hypotheses. We present an example of this fact in Section \ref{sec:example}.

Let us say a few words about the proofs, along with an outline of the article.
In Section \ref{sec:prelim} we introduce the preliminary notation and terminology, along with some useful lemmas. In particular, a key concept in this work is the covering diameter which is defined in \ref{sec:covering-diameter}. An important tool in our proofs is the equivariant Brouwer theory developed by Le Calvez \cite{lecalvez-equivariant}. 
Section \ref{sec:linking} introduces the main results about Brouwer-Le Calvez foliations, along with the notion of Linking of open topological disks, which plays a central role in the proof of Theorem \ref{th:main-disk}. We prove two Linking Lemmas which are similar but improved versions of results already used in \cite{kt-strictly, kt-pseudo}.

Section \ref{sec:main-disk} is devoted to the proof of Theorem \ref{th:main-disk}. The proof is comprised by three main parts. The first part is a reduction to the case where the fixed point set is totally disconnected. This is necessary in order to have a convenient setting to apply the Brouwer-Le Calvez theory and linking lemmas. The main idea is to fill-in and collapse the components of $\fix(f)$, but a careful analysis is needed to maintain the hypotheses after this process.
This is done in \S\ref{sec:fix-collapse}. The second part, in \S\ref{sec:contractible}, considers the case where the invariant disk $U$ lifts to a disk invariant by the ``natural'' lift of $f$ to the universal covering (obtained by lifting a given isotopy from the identiy). We call this the \emph{contractible} case. The proof of this part relies on the tools introduced in Section \ref{sec:linking}. The third and final part of the proof concerns the case where $U$ is not invariant by the natural lift. This case is only relevant for the case where $S$ is a hyperbolic surface, and to deal with it we reduce it to a problem on the torus by considering the quotient of the universal cover $\D\to S$ by some adequate deck transformation (which leads to an annulus), compactifying this annulus with two boundary circles and then ``gluing'' those two circles. Then we may apply the results from the previous sections to conclude the proof.
In Section \ref{th:main-disk-bd} we prove a version of Theorem \ref{th:main-disk} for surfaces with boundary, which will be helpful in the following section.

Section \ref{sec:main-gen} contains the proof of Theorem \ref{th:main-gen}, which is done by means of a suitable ``surgery'', which involves modifying $f$ at a neighborhood of each topological end of $U$ to obtain an invariant circle, then ``cutting'' along those  circles and collapsing them to obtain one invariant topological disk for each topological end of $U$. Applying Theorem \ref{th:main-disk} to these disk and translating the results, we conclude that $U$ is homotopically bounded.

Section \ref{sec:essine} studies the key notion of dynamically essential and inessential points and their consequences.
In particular, it includes the proof of Theorem \ref{th:essine}, a version of the same theorem for the torus (Theorem \ref{th:torus}), and a proof of Corollary \ref{cor:trans}.

Section \ref{sec:rot} introduces the notion of rotation vectors and rotation sets for points and for invariant measures. We adopt a definition in the vein of Misiurewicz and Ziemian \cite{m-z}, modifying the definition of pointwise rotation set given by Franks in \cite{franks-rot}. In \S\ref{sec:displacement}, we study the restrictions imposed to the rotation set by the existence of essential but not fully essential invariant open sets, among other properties. In \S\ref{sec:shadow} we obtain a useful general result that shows that for any neighborhood $U$ of a point of $\fess(f)$, the rotational behavior (in homology) of any orbit of $f$ is accompanied by orbits of $U$. This is the main result required to show that the local rotation set at points of $\fess(f)$ is equal to the global rotation set. We also show that the linear growth of $\diamup(f^n(U))$ is bounded below by the diameter of the rotation set. With these results, the proof of Theorem \ref{th:interior-essential} is done in a a few lines.  Finally, in \S\ref{sec:reali-ess} we deal with the realization of rotation vectors by periodic orbits and invariant measures in $\fess(f)$, proving Theorem \ref{th:reali-ess}.

\section{Preliminaries}
\label{sec:prelim}

In this article, $S$ will always denote (unless stated otherwise) a connected orientable surface of nonpositive Euler characteristic $\chi(S)\leq 0$ (in most cases, closed), and $\pi\colon \hat{S}\to S$ denotes its universal covering map. We assume that $S$ is endowed with a Riemannian metric, and $\hat{S}$ is endowed with the lifted metric, so that the group of covering transformations $\deck(\pi)$ consists of isometries. Note that $\hat{S}$ is homeomorphic to $\R$ since $\chi(S)\leq 0$.
The notation $d(\cdot, \cdot)$ will be used for the distance in $\hat{S}$ and also for the induced distance in $S$ (with a small abuse of notation).

An \emph{arc} in $S$ is a continuous map $\sigma\colon I\to S$, where $I$ is an interval (often assumed to be $[0,1]$). When no ambiguity is likely, we abuse the notation and denote its image $\{\sigma(t):t\in I\}$ by $\sigma$ as well.
A \emph{loop} is an arc $\gamma\colon[0,1]\to S$ such that $\gamma(0)=\gamma(1)$. A \emph{simple} arc is an injective arc, and a \emph{simple} loop is a loop which is injective except at the endpoints of its domain. 

Let $\homeo(S)$ be the space of homeomorphisms of $S$. An isotopy $\mc{I}=(f_t)_{t\in [0,1]}$ from $f_0$ to $f_1$ is a continuous arc in $\homeo(S)$ joining $f_0$ to $f_1$.
We denote by $\homeo_0(S)$ the connected component of the identity in $\homeo(S)$, \ie the homeomorphisms isotopic to the identity. By \cite{hamstrom}, if $S$ is hyperbolic (\ie $\chi(S)<0$) the space $\homeo_0(S)$ is homotopically trivial. Thus, if $\mc{I}$ and $\mc{I}'$ are two isotopies from the identity to a homeomorphism $f$, then there is a homotopy with fixed endpoints in $\homeo_0(S)$ from $\mc{I}$ to $\mc{I}'$. We remark that this is no longer true if $S$ is a torus.

The isotopy $\hat{\mc{I}}=(\hat{f}_t)_{t\in [0,1]}$ obtained by lifting $\mc{I}$ with basepoint $\hat{f}_0=\id$ is called the natural lift of $\mc{I}$. The map $\hat{f}=\hat{f}_1$ is called the natural lift of $f$ associated to the isotopy $\mc{I}$. Natural lifts of a homeomorphism are characterized by the property of commuting with all deck transformations.

If $S=\T^2$, the natural lift $\hat{f}$ depends on the chosen isotopy from the identity to $f$. However, if $S$ is hyperbolic (\ie $\chi(S)<0$), it depends only on $f$ and not on the chosen isotopy, so we call $\hat{f}$ \emph{the} natural lift of $f$.

\subsection{Topological ends or ideal boundary}
\label{sec:ends}

We briefly describe the compactification by topological ends of an open connected set $U\subset S$, following \cite{richards}. A \emph{boundary representative} of $U$ is a sequence $P_1\supset P_2\supset\cdots$ of connected unbounded (\ie not relatively compact in $U$) open subsets of $U$ such that $\bd_U P_n$ is compact for each $n$ and for any compact set $K\subset U$, there is $n_0>0$ such that $P_n\cap K=\emptyset$ if $n>n_0$ (here we denote by $\bd_U P_n$ the boundary of $P_n$ in $U$). Note that the last condition is equivalent to saying that $\bigcap_{n\in \N} \cl_U P_n=\emptyset$. Two boundary representatives $(P_i)_{i\in \N}$ and $(P_i')_{i\in \N}$ are said to be equivalent if for any $n>0$ there is $m>0$ such that $P_m\subset P_n'$, and vice-versa. The \emph{ideal boundary} $\mc{E}$ of $U$ is defined as the set of all equivalence classes of boundary representatives, which are called topological ends of $U$ or ideal boundary points. The space $U_* = U\cup \mc{E}$ may be endowed with the topology generated by open sets in $U$ together with sets of the form $V \cup V'$, where $V$ is an open set in $U$ such that $\bd_U V$ is compact, and $V'$ denotes the set of elements of $\mc{E}$ which have some boundary representative $(P_i)_{i\in \N}$ such that $P_i\subset V$ for all $i\in \N$. Defined in this way, $U_*$ is a compact space called the \emph{ends compactification} or \emph{ideal completion} of $U$. 

The fact that $U\subset S$ which is a closed orientable surface implies that $U_*$ is a closed orientable surface as well (of genus at most equal to the genus of $S$). Any homeomorphism $f\colon U\to U$ extends to a homeomorphism $f_*\colon U_*\to U_*$, which preserves orientation if $f$ does. The set $\mc{E}\subset U_*$ is always compact and totally disconnected.

A regular topological end is an isolated element of $\mc{E}$. For such an end $\mathfrak{e}$ one can always find a topological disk $D$ bounded by a simple loop $\gamma$ such that $\ol{D}\cap \mc{E}=\{\mathfrak{e}\}$. An annulus $A=D\sm \{\mathfrak{e}\}\subset U$ obtained in this form is called a \emph{collar} of the topological end $\mathfrak{e}$ in $U$.

We may always find an increasing sequence of compact surfaces with boundary $K_n\subset U$ such that $\bigcup_n K_n=U$. If the number of connected components of $U\sm K_n$ is finite (and so eventually constant equal to some number $m$) then $U$ has exactly $m$ topological ends and each component of $U\sm K_n$ is (if $n$ is large enough) a collar of some topological end.

\subsection{Essential, inessential and filled sets}\label{sec:essential}

Assume $S$ is closed (and $\chi(S)\leq 0$). If $U\subset S$ is an open set, we say that $U$ is \emph{inessential} if every loop contained in $U$ is homotopically trivial in $S$. This is equivalent to saying that the inclusion map $U\to S$ is homotopic to a constant, or that $U$ is contained in a disjoint union of open topological disks.
If $E\subset S$ is an arbitrary set, we say that $E$ is inessential if some open neighborhood $U$ of $E$ is inessential. 
An \emph{essential} set is one that is not inessential, and a \emph{fully essential} set is a set whose complement is inessential.
An essential open or closed set always has an essential connected component, and if the set is fully essential this component is unique. We note however that the notion of being essential for arbitrary sets is more subtle; for instance it is not difficult to construct an essential set whose every connected component is inessential. 
 
Given a set $E\subset S$, its $\emph{filling}$ is the set $\fill(E)$ consisting of the union of $E$ with all connected components of $S\sm E$ which are inessential. A \emph{filled} set is one that is equal to its own filling. Some basic properties of the filling are:
\begin{itemize}
\item $\fill(\fill(E))=\fill(E)$
\item If $E$ is connected then so is $\fill(E)$;
\item if $E$ is inessential, then so is $\fill(E)$;
\item if $E$ is fully essential, then $\fill(E)=S$;
\item if $E$ is $f$-invariant by a homeomorphism $f\colon S\to S$, then so is $\fill(E)$.
\end{itemize}

Whenever one has a filled compact set which is inessential, one may collapse its connected components to obtain a totally disconnected set (see Proposition \ref{pro:collapse}). This will be useful to simplify the fixed point set in our proofs.

If $S=\R^2$, we define $\fill(E)$ as the union of $E$ with all bounded connected components of its complement. In this setting, it is also true that $\fill(\fill(E))=\fill(E)$, and if $E$ is invariant by a given homeomorphism then so is $\fill(E)$. Note that a compact filled set in $\R^2$ is a non-separating planar continuum. 
\begin{remark}\label{rem:Q}
We will often make use of the following fact: if $U\subset S$ is open and fully essential, given any bounded open topological disk $D\subset \hat{S}$, there exists a topological disk $Q$ bounded by a simple loop $\gamma$ such that $\pi(\gamma)\subset U$ and $D\subset Q$. One way of doing this is as follows: being open and fully essential, $U$ contains a fully essential subset $\gamma_0$ which is the image of a loop, which we may assume (by choosing it polygonal, for instance) is locally simple and has finitely many self-intersections. 
This implies that $S\sm \gamma_0$ has finitely many connected components, each of which is a topological disk bounded by a loop contained in $\gamma_0$. As a consequence, the set $\mc{C}$ consisting of closures of connected components of $\hat{S}\sm \pi^{-1}(\gamma_0)$ covers $\hat{S}$ and is locally finite. Since $\ol{D}$ is bounded, $\ol{D}$ is covered by finitely many elements $V_1, \dots, V_m$ of $\mc{C}$. Then we may choose $Q$ to be the connected component of the interior of $\fill(\bigcup_{i=1}^m V_m)$ containing $D$.
\end{remark}

\begin{proposition}\label{pro:ends} If $g\geq 1$ is the genus of $S$ and $U\subset S$ is a filled connected essential open set, then $U$ has at most $2g$ topological ends and $S\sm U$ has at most $g$ connected components. In addition, if $f\colon S\to S$ is homotopic\footnote{Recall that in a closed orientable surface, two homeomorphisms are homotopic if and only if they are isotopic \cite{epstein}.} to the identity and $f(U)=U$, then each topological end of $U$ is fixed.
\end{proposition}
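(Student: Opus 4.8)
The plan is to derive (1) and (2) together from a compact-core computation with Euler characteristics, and to derive (3) from an orientation argument.

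\emph{Parts (1) and (2).} Since $U$ is filled, every connected component of $K:=S\sm U$ is essential; equivalently $K$ is filled, its only complementary component being the essential connected set $U$. Granting for the moment that $K$ has finitely many components, I would take a connected compact subsurface $W_0\subset U$ with $\partial W_0$ a finite union of circles, large enough to be a \emph{compact core} of $U$; then $U\sm\inter(W_0)$ is a disjoint union of collars, one per end of $U$, so $W_0$ has exactly $m$ boundary circles, where $m$ denotes the number of ends. Each component $N_l$ of $S\sm\inter(W_0)$ is a union of collars together with the single component of $K$ they accumulate onto (the limit of a nested boundary representative of an end is connected), so each $N_l$ contains exactly one component of $K$; in particular the number $k$ of the $N_l$ equals the number of components of $K$. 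Now $N_l$ is not a disk: a component of $K$ contained in the interior of a disk would be inessential, contradicting that $K$ is filled. Since a non-disk connected compact subsurface of a positive-genus surface is essential (capping its boundary by disks would produce an embedded sphere or contradict its genus), each $N_l$ is essential, hence its genus $h_l$ and number of boundary circles $b_l$ satisfy $h_l+b_l\ge 2$ and $\chi(N_l)\le 0$. Writing $S=W_0\cup N_1\cup\dots\cup N_k$ glued along $\partial W_0$, with $\sum_l b_l=m$, the identity $\chi(S)=\chi(W_0)+\sum_l\chi(N_l)$ reads $k=h_{W_0}+\sum_l h_l+m-g$, where $h_{W_0}$ is the genus of $W_0$; together with $\sum_l(h_l+b_l)\ge 2k$ this forces $g\ge h_{W_0}+k\ge k$, which is (2). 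For (1), the same identity gives $\sum_l\chi(N_l)=-2g+2h_{W_0}+m$, and $\chi(N_l)\le 0$ yields $m\le 2g-2h_{W_0}\le 2g$.

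\emph{Finiteness.} The step I expect to be the main obstacle is precisely the claim that $K$ has finitely many components (equivalently, that $U$ has finite type), which makes the compact core and the arrangement above available. Here the hypotheses ``$U$ connected'' and ``$U$ filled'' must be used together: an infinite family of complementary pieces would essentially have to be parallel and accumulating, which would disconnect $U$. Concretely, for each component $C$ of $K$ the filling $\fill(C)$ is non-separating (if $S\sm\fill(C)$ were disconnected, a component not containing $U$ would be an essential open set strictly larger than its own filling, a contradiction), and the sets $\fill(C)$ are pairwise disjoint; applying the Euler characteristic count of the previous paragraph to finitely many of them at a time then bounds the number of components of $K$ by $g$.

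\emph{Part (3).} Let $f_*\colon U_*\to U_*$ be the extension of $f$ to the ends compactification; it permutes the (finitely many) ends. For an end $e$ choose a boundary circle $\gamma_e\subset U$ of a collar of $e$. First I would show $\gamma_e$ is essential in $S$: if it bounded a disk $D$ in $S$, then either the collar of $e$ lies in $D$, producing an inessential complementary piece past $e$, or $D$ meets $K$ and hence contains a whole component of $K$, necessarily inessential since it lies in the interior of $D$; both contradict that $K$ is filled. Now suppose $f$ sends an end $e$ to a different end $e'$. As $f$ preserves orientation and carries collars to collars, $f(\gamma_e)$ is isotopic in $U$ to $\gamma_{e'}$ with matching orientations, while $f\simeq\id$ gives that $f(\gamma_e)$ is freely homotopic in $S$ to $\gamma_e$; hence $\gamma_e$ and $\gamma_{e'}$ are disjoint, freely homotopic, essential simple closed curves, so they cobound an annulus $A\subset S$. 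Tracking the position of the two collars relative to $A$ and using again that $f$ preserves orientation, a swap of $e$ and $e'$ would force $\gamma_e$ to be freely homotopic to its own inverse; but no non-trivial element of the fundamental group of an orientable surface is conjugate to its inverse, a contradiction. Therefore $f_*$ fixes every end, which is (3).
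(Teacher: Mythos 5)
There is a genuine gap, and it sits at the foundation of your argument for part (1). Your Euler-characteristic computation presupposes that $U$ has finitely many ends: only then does a compact core $W_0$ with exactly one collar per end (hence exactly $m$ boundary circles) exist, and only then are the pieces $N_l$ available. You acknowledge a finiteness obstacle, but the statement you then address is the finiteness of the number of components of $K=S\sm U$, asserting it is ``equivalently, that $U$ has finite type.'' That equivalence is not justified and is not a formality: the number of ends can strictly exceed the number of complementary components (e.g.\ in genus $2$, the complement of a connected set formed by two disjoint non-separating circles joined by an arc is connected, filled, essential, and has three ends), and bounding the number of ends is precisely the nontrivial content of part (1). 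Your ``finiteness'' paragraph, which moreover invokes ``the Euler characteristic count of the previous paragraph,'' therefore runs in a circle: the count needs the core, the core needs finitely many ends, and finitely many ends is what was to be proved. The paper avoids this by never using a core: it exhausts $U$ by compact subsurfaces, replaces each by its filling inside $S$ (here filledness of $U$ is what guarantees $\fill(U_k)\subset U$, since an inessential complementary component of $U_k$ cannot contain an essential component of $K$), and bounds the number of boundary circles of each filled subsurface by $2g$ via $\chi(U_k)\geq\chi(S)$; the bound on the number of ends (and, by a decreasing-intersection argument, on the number of components of $S\sm U$) then follows from the exhaustion. Your computation with $W_0$ and the $N_l$ is fine \emph{after} finiteness is known (note only that your auxiliary claim ``a non-disk compact subsurface of a positive-genus surface is essential'' is false --- inessential annuli exist --- but you only need ``not a disk'' for $\chi(N_l)\le 0$ and $h_l+b_l\ge 2$), but as written part (1) is not established.

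Two further points on part (3). First, your proof that $\gamma_e$ is essential omits the case where the disk $D$ bounded by $\gamma_e$ lies in $U$ on the side opposite the collar; that case is excluded not by filledness but by essentiality of $U$ (it would force $U_*\simeq\SS^2$ with a single end, making $U$ an inessential open disk), so it is fixable but needs saying. Second, the decisive step --- ``tracking the position of the two collars relative to $A$ \dots{} would force $\gamma_e$ to be freely homotopic to its own inverse'' --- is exactly the delicate part and is only asserted. Whether the collar-side orientations of $\gamma_e$ and $\gamma_{e'}$ compare as parallel or anti-parallel across the annulus $A\subset S$ depends on the configuration (the paper splits into the cases $A\subset U$ and $S\sm U\subset A$ and runs the orientation bookkeeping in $U_*\simeq\SS^2$, where the two curves bound disjoint disks); your ``conjugate to its inverse'' obstruction is morally the same contradiction the paper reaches, but the case analysis that makes it valid is missing.
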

\begin{proof}
Given any filled essential surface with boundary $U_0\subset U$, if $b$ is the number of boundary components of $U_0$ and $g_0$ is its genus, then one has $2-2g_0-b = \chi(U_0)\geq \chi(S) = 2-2g$, so $b<=2g-2g_0\leq 2g$. In addition, since $U_0$ is filled, if $V$ is a connected component of $S\sm U_0$ with genus zero, then $V$ has at least two boundary components and therefore $S\sm V$ has genus strictly smaller than $S$. Hence each connected component of $S\sm U_0$ contributes at least $1$ to the genus of $S$, and we deduce that there are at most $g$ connected components in $S\sm U_0$.
Since $U$ can be written as an increasing union of filled surfaces with boundary $(U_k)$, each with at most $2g$ boundary components, the first claim follows. In addition, since $S\sm U$ is obtained by a decreasing intersection of the corresponding components of $S\sm U_k$, which are at most $g$ for each $k$, it follows that $S\sm U$ has at most $g$ components. 

If $m\leq 2g$ is the number of topological ends of $U$, the surface $U_0$ may be chosen such that $U\sm U_0$ is a disjoint union of $m$ topological annuli (one collar of each end of $U$). Let $U_*=U\sqcup\{\mathfrak{e}_1,\dots, \mathfrak{e}_m\}$ denote the ends compactification of $U$, which is a closed orientable surface. Each connected component of $U_*\sm U_0$ is a closed topological disk $D_i$ containing exactly one end $\mathfrak{e}_i$. The map $f$ extends to an orientation-preserving homeomorphism $f_*$ permuting the topological ends of $U$. Suppose that some end is not fixed, for instance $f_*(\mathfrak{e}_1)=\mathfrak{e}_2$. Then there exists a positively oriented simple loop $\gamma$ contained in $D_1$ and bounding a disk $D_1'\subset D_1$ such that $f_*(D_1')\subset D_2$. Since $\gamma$ and $f(\gamma)$ are isotopic in $S$ and disjoint, they bound some topological annulus $A\subset S$ such that they are isotopic in $\ol{A}$. Since $\bd A\subset U$, either $A\subset U$ or $S\sm A\subset U$. If $A\subset U$, then $U$ has exactly two ends which are permuted by $f_*$ and $\gamma$ is isotopic to $f(\gamma)$ in $U$, so these two loops bound disjoint disks $D_1$ and $f_*(D_1)$ in $U^*$. This implies that the loops $\gamma$ and $f(\gamma)$ have reversed orientation in $U_*\simeq \mathbb{S}^2$, a contradiction.
If, on the other hand, $S\sm U\subset A$, then some essential connected component of $\bd U$ separates $A$, and so $D_1\sm \{\mathfrak{e}_1\}$ and $f(D_1)\sm \{\mathfrak{e}_2\}$ are contained in $A$, and by a similar argument this contradicts the preservation of orientation of $f_*$ (since $D_1$ is on the right-hand side of $\gamma$ and $f(D_1)$ is on the left-hand side of $f(\gamma)$, or vice-versa).
\end{proof}

\subsection{Covering distance}\label{sec:covering-diameter}

In this subsection we assume that $S$ is a compact surface with or without boundary, endowed with a Riemannian metric. No assumption on $\chi(S)$ is made.
Let $W\subset S$ be an arcwise connected set. The \emph{covering distance} $d_W(x,y)$ between two points $x,y\in W$ is the smallest possible length of a rectifiable arc in $S$ joining $x$ to $y$ and homotopic with fixed endpoints (in $S$) to some arc contained in $W$. Note that if $W\subset W'$, then $d_{W}(x,y)\geq d_{W'}(x,y)$ for any $x,y\in W$.

An alternative but equivalent definition of this distance is as follows: if $\hat{W}$ is a connected component of $\pi^{-1}(W)$, then $\pi|_{\hat{W}}\colon \hat{W}\to W$ is a covering map. The covering distance is precisely the distance induced in $W$ by projecting the distance from the covering; in other words: 
$$d_W(x,y)=\inf\bigg\{d(\hat{x},\hat{y}) :\hat{x}\in \pi^{-1}(x)\cap \hat{W},\, \hat{y}\in \pi^{-1}(y)\cap \hat{W}\bigg\}.$$
Note that the infimum is always attained, and it is independent of the chosen connected component $\hat{W}$, as any other component is mapped to $\hat{W}$ by a deck transformation (which is an isometry of $\hat{S}$).

We denote by $\diam_D(X)$ the diameter of a set $X$ with respect to a metric $D$.
The \emph{covering diameter} of $W$ is $\diamup(W)=\diam_{d_W}(W)$, \ie 
$$\diamup(W) = \sup \{d_W(x,y): x,y\in W\}.$$
We will say that $W$ is homotopically bounded if $\diamup(W)$ is finite. 
Note that $\diamup(W)$ depends on the underlying surface $S$ (and its metric). If it is necessary to avoid ambiguity we use the notation $\diamup_S(W)$ to emphasize this.
In the case that $W$ is inessential, $\diamup(W)$ is the same as the diameter in $\hat{S}$ of any connected component of $\pi^{-1}(W)$. Note that the covering diameter of any compact surface with rectifiable boundary is finite.
\begin{remark}
The covering distance depends on the underlying metric of $S$; however since $S$ is compact, any two Riemannian metrics are equivalent and thus the covering distances and diameters are equivalent. In other words, there is a constant $c\geq 1$ such that if $d_W'$ denotes the covering metric on $W$ induced by a different underlying metric on $S$, then $c^{-1}d_W(x,y)\leq d_W'(x,y)\leq cd_W(x,y)$ for all $x,y\in W$. In particular, if $\diamup'(W)$ denotes the covering diameter in the new metric, then $c^{-1}\diamup'(W) \leq \diamup(W)\leq c\diamup(W)$. 
\end{remark}

\begin{proposition}\label{pro:diam-partition} If $W_1,\dots W_k$ are arcwise connected sets and $W=\bigcup_{i=1}^k W_i$ is arcwise connected, then $\diamup(W)\leq \sum_{i=1}^k \diamup(W_i)$.
\end{proposition}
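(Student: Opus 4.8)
The plan is to fix $x,y\in W$ and bound $d_W(x,y)$ above by $\sum_{i=1}^k\diamup(W_i)$; taking the supremum over $x,y$ then gives the assertion, since $\diamup(W)=\sup\{d_W(p,q):p,q\in W\}$. We may assume every $\diamup(W_i)$ is finite, otherwise there is nothing to prove. Throughout I would use freely that $d_W$ satisfies the triangle inequality (the concatenation at a point of $W$ of two arcs, each homotopic with fixed endpoints to an arc in $W$, is again such an arc) and that $d_W(p,q)\le d_{W_i}(p,q)\le\diamup(W_i)$ whenever $p,q\in W_i$, because $W_i\subset W$.

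The crucial observation is that every arc $\sigma\colon[0,1]\to W$ is continuous when regarded as a map into the metric space $(W,d_W)$, not merely into $S$. To see this, fix $t_0\in[0,1]$ and $\epsilon>0$, and pick $\rho\in(0,\epsilon)$ small enough that the metric ball $B=B(\sigma(t_0),\rho)$ is an embedded disk, hence simply connected. By continuity of $\sigma$ into $S$ there is $\delta>0$ such that $\sigma$ maps the closed interval with endpoints $t_0$ and $t$ into $B$ whenever $|t-t_0|<\delta$. For such $t$, the minimizing geodesic $\gamma$ from $\sigma(t_0)$ to $\sigma(t)$ inside $B$ has $\length(\gamma)<2\rho<\epsilon$, and since $\gamma$ and the subarc of $\sigma$ joining $\sigma(t_0)$ to $\sigma(t)$ both lie in the simply connected set $B$, they are homotopic with fixed endpoints in $S$; as the latter is an arc in $W$, this yields $d_W(\sigma(t_0),\sigma(t))\le\length(\gamma)<\epsilon$. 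Since $[0,1]$ is compact, $\sigma$ is in fact uniformly continuous into $(W,d_W)$.

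With this in hand I would argue as follows. Fix $x,y\in W$ and, using that $W$ is arcwise connected, an arc $\sigma$ in $W$ from $x$ to $y$; fix $\epsilon>0$. For each $i$ the function $\psi_i(t):=\inf\{d_W(\sigma(t),w):w\in W_i\}$ is continuous, being the composition of $\sigma$ with the $1$-Lipschitz map $d_W(\,\cdot\,,W_i)$ on $(W,d_W)$, and $\min_i\psi_i(t)=0$ for all $t$ because $\sigma(t)$ lies in some $W_i$. Hence the open sets $\{\psi_i<\epsilon\}$, $1\le i\le k$, cover $[0,1]$, and one may choose a partition $0=t_0<\dots<t_n=1$ of mesh smaller than a Lebesgue number of this cover; then each $[t_{j-1},t_j]$ lies in some $\{\psi_{c_j}<\epsilon\}$, so $\sigma(t_{j-1})$ and $\sigma(t_j)$ are both within $d_W$-distance $\epsilon$ of $W_{c_j}$. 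In general, if $\sigma(t_a)$ and $\sigma(t_b)$ are within $d_W$-distance $\epsilon$ of a common $W_c$, then $d_W(\sigma(t_a),\sigma(t_b))\le 2\epsilon+\diamup(W_c)$. From $t_0,\dots,t_n$ I extract a subsequence greedily: being at $t_q$ with $q<n$, let $c$ be the label $c_{q+1}$ of the interval just after $t_q$ and jump to $t_{q'}$ with $q':=\max\{j:c_j=c\}$ (note $q'>q$). This produces points $0=s_0<s_1<\dots<s_L=1$ with labels $d_1,\dots,d_L$; since after jumping with label $c$ one never again meets an interval labelled $c$, the $d_\ell$ are pairwise distinct (so $L\le k$), and $\sigma(s_{\ell-1}),\sigma(s_\ell)$ lie within $d_W$-distance $\epsilon$ of $W_{d_\ell}$. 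Therefore
\[
d_W(x,y)\;\le\;\sum_{\ell=1}^{L}d_W\bigl(\sigma(s_{\ell-1}),\sigma(s_\ell)\bigr)\;\le\;\sum_{\ell=1}^{L}\bigl(2\epsilon+\diamup(W_{d_\ell})\bigr)\;\le\;2k\epsilon+\sum_{i=1}^{k}\diamup(W_i),
\]
and letting $\epsilon\to0$ gives $d_W(x,y)\le\sum_{i=1}^k\diamup(W_i)$, as desired.

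The one point requiring care — and the main obstacle — is that the $W_i$ are neither assumed open nor assumed to meet pairwise: their nerve can be disconnected even though $W$ is arcwise connected, so one cannot simply reorder the $W_i$ and take successive unions of overlapping pieces. The observation that an arc in $W$ is automatically $d_W$-continuous is precisely what sidesteps this, since it lets us replace the relation ``$\sigma(t)\in W_i$'' by the open relation ``$\sigma(t)$ is $d_W$-close to $W_i$''; what remains is an open-cover argument plus the combinatorial bookkeeping that ensures each $\diamup(W_i)$ contributes to the sum at most once.
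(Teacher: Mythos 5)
Your proof is correct and rests on the same key fact as the paper's one-line argument, namely that $d_W(x,y)\le d_{W_i}(x,y)\le\diamup(W_i)$ for $x,y\in W_i$; the paper simply declares that the claim "follows directly" from this, leaving the chaining along an arc implicit, and your $d_W$-continuity of arcs plus the Lebesgue-number cover and greedy extraction of distinct labels is a rigorous way to carry out exactly that implicit step, correctly handling the fact that the $W_i$ need not pairwise intersect. (The only cosmetic slip is the bound $2\rho<\epsilon$: the minimizing geodesic from $\sigma(t_0)$ to $\sigma(t)$ has length less than $\rho<\epsilon$, so the conclusion stands as written.)
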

\begin{proof} The definition of covering distance easily implies that $d_W(x,y) \leq d_{W_i}(x,y)$ if $x,y\in W_i$. Thus the diameter of $W_i$ using the $d_W$ metric is bounded above by its diameter using the $d_{W_i}$ metric, which is $\diamup(W_i)$, and the claim follows directly form this fact.
\end{proof}

\begin{proposition}\label{pro:diam-fill} If $U\subset S$ is an open connected set and $\chi(S)\leq 0$, then  $$\diamup(\fill(U))\geq \diamup(U).$$ 
\end{proposition}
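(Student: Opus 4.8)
The plan is to reduce the statement to the equality of covering distances
\[
 d_{\fill(U)}(x,y)=d_U(x,y)\qquad\text{for all }x,y\in U .
\]
Granting this, since $U\subseteq\fill(U)$ one gets $\diamup(\fill(U))\ge\sup_{x,y\in U}d_{\fill(U)}(x,y)=\sup_{x,y\in U}d_U(x,y)=\diamup(U)$, which is the claim. The inequality $d_{\fill(U)}\le d_U$ on $U\times U$ is immediate from $U\subseteq\fill(U)$, so only the reverse needs proof, and for that I pass to the universal cover $\pi\colon\hat S\to S$. Fix a component $\hat U$ of $\pi^{-1}(U)$ and let $\widehat{FU}$ be the component of $\pi^{-1}(\fill(U))$ containing $\hat U$. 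Using the description $d_W(x,y)=\inf\{d(\hat x,\hat y):\hat x\in\pi^{-1}(x)\cap\hat W,\ \hat y\in\pi^{-1}(y)\cap\hat W\}$ with $(W,\hat W)=(U,\hat U)$ and with $(W,\hat W)=(\fill(U),\widehat{FU})$, the desired equality follows once we show that $\widehat{FU}\cap\pi^{-1}(U)=\hat U$, because then, for $x,y\in U$, the two infima run over the same set of pairs of lifts.

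To prove $\widehat{FU}\cap\pi^{-1}(U)=\hat U$ I split into cases. If $U$ is inessential then so is $\fill(U)$; both lift homeomorphically, $\pi^{-1}(\fill(U))$ is a disjoint union of topological disks each meeting $\pi^{-1}(U)$ in a single copy of $U$, and the identity is clear. If $U$ is essential, I use: (i) every component of $\pi^{-1}(U)$ is unbounded in $\hat S$ — since $U$ is connected these components form one $\deck(\pi)$-orbit, the stabilizer of $\hat U$ is the nontrivial image of $\pi_1(U)\to\pi_1(S)$, and every nontrivial deck transformation is a fixed-point-free isometry with unbounded orbits; and (ii) each component $V$ of $S\setminus U$ that lies in $\fill(U)$ is compact (closed in $S$) and inessential, hence lifts homeomorphically, and each of its (compact, connected) lifts, being disjoint from $\pi^{-1}(U)$, is exactly a bounded component of $\hat S\setminus\pi^{-1}(U)$. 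Consequently $\pi^{-1}(\fill(U))\subseteq\fill_{\hat S}(\pi^{-1}(U))$, where $\fill_{\hat S}$ denotes filling in $\hat S\cong\R^2$.

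Now, by (i), for any component $\hat U'\ne\hat U$ of $\pi^{-1}(U)$ we have $\fill_{\hat S}(\hat U)\cap\hat U'=\emptyset$ (an unbounded set cannot lie in a bounded complementary component of $\hat U$), so $\fill_{\hat S}(\hat U)\cap\pi^{-1}(U)=\hat U$; and since $\fill_{\hat S}(\hat U)$ is connected and contains $\hat U$, it is contained in $\widehat{FU}$. The reverse inclusion $\widehat{FU}\subseteq\fill_{\hat S}(\hat U)$ is a planar-topology statement: each bounded hole of $\pi^{-1}(U)$ added by filling is a compact, non-separating subset of $\hat S$ that touches only one component of $\pi^{-1}(U)$ — here one uses that $V$, being a component of $S\setminus U$ with $U$ connected, does not separate $S$, hence neither do its lifts — so the connected set $\widehat{FU}$ cannot escape $\fill_{\hat S}(\hat U)$ into a different lift of $U$. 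Combining the two inclusions gives $\widehat{FU}\cap\pi^{-1}(U)=\fill_{\hat S}(\hat U)\cap\pi^{-1}(U)=\hat U$, as needed.

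I expect this last, planar-topology step to be the main obstacle: one has to control the connected set $\widehat{FU}$ inside $\fill_{\hat S}(\pi^{-1}(U))$ without these sets being manifolds, and the essential leverage is the compactness of the lifted inessential complementary pieces (which confines each to an isolated bounded hole surrounded by exactly one lift of $U$) together with the unboundedness of the lifts of $U$. A useful auxiliary point to isolate first is that $S\setminus U$ has only finitely many essential components — by the same Euler characteristic count as in the proof of Proposition~\ref{pro:ends} — so that $\fill(U)$ is in fact open and $\pi^{-1}(\fill(U))=\fill_{\hat S}(\pi^{-1}(U))$, which makes the connectedness discussion cleaner.
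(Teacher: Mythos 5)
Your reduction is sound: for $x,y\in U$ the lower bound $d_{\fill(U)}(x,y)\ge d_U(x,y)$ is what is needed, and it is indeed equivalent to the statement that the component $\widehat{FU}$ of $\pi^{-1}(\fill(U))$ containing $\hat U$ satisfies $\widehat{FU}\cap\pi^{-1}(U)=\hat U$ (which in turn is equivalent to the fact the paper invokes in its one-line proof, namely that an arc in $\fill(U)$ with endpoints in $U$ is homotopic rel endpoints in $S$ to an arc in $U$). The inessential case, claim (i) (lifts of an essential connected open set are unbounded), claim (ii) (lifts of the inessential complementary components are compact and are full bounded components of $\hat S\setminus\pi^{-1}(U)$), and the identity $\fill_{\hat S}(\hat U)\cap\pi^{-1}(U)=\hat U$ are all correct; so is the parenthetical fact that a component $V$ of $S\setminus U$ does not separate $S$ when $U$ is connected.

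The gap is exactly where you suspected it: the inclusion $\widehat{FU}\subseteq\fill_{\hat S}(\hat U)$, i.e.\ the claim that a compact lift $\hat V$ of a filled component can meet the closure of only one component of $\pi^{-1}(U)$. The justification you offer does not establish it, for two reasons. First, ``$V$ does not separate $S$, hence neither do its lifts'' is not automatic: whether the compact homeomorphic copy $\hat V$ separates the plane is governed by the intrinsic topology of $V$ (its \v{C}ech $H^1$), not by whether $V$ separates $S$; nothing in the argument rules out, say, circle-like features in $V$ without a further argument (which would again have to use that $V$ is a non-separating \emph{component} of $S\sm U$). Second, and more importantly, even granting that $\hat V$ is non-separating, non-separation does not prevent two distinct components of $\pi^{-1}(U)$ from both accumulating on $\hat V$: an arc in the plane is non-separating yet is touched by the closures of many disjoint open sets. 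So the sentence ``the connected set $\widehat{FU}$ cannot escape $\fill_{\hat S}(\hat U)$ into a different lift of $U$'' is not a consequence of what precedes it --- it \emph{is} the statement to be proved, and it is exactly the content of the proposition (and of the paper's one-line justification). The same issue affects the earlier assertion $\fill_{\hat S}(\hat U)\subseteq\widehat{FU}$, which silently requires that every bounded hole of $\hat U$ project into an inessential component of $S\sm U$ (though that direction is not actually needed for your goal). A genuine proof of the missing step has to use more than non-separation: for instance, since $V$ is an inessential \emph{component} of the compact set $S\sm U$, a \v{S}ura--Bura type argument produces an inessential open set $\Omega\supset V$ with $\Omega\cap(S\sm U)$ clopen in $S\sm U$ and $\bd\Omega\subset U$, and one then has to show that the pieces of $U$ adjacent to a given lift of such an $\Omega$ all lie in one component of $\pi^{-1}(U)$, using the connectedness of $U$ and the non-separation of complementary components. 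As written, your proof stops short of this, so the crucial step remains an assertion rather than a proof.
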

\begin{proof} It follows from the fact that any arc contained in $\fill(U)$ is homotopic with fixed endpoints in $S$, to an arc contained in $U$.
\end{proof}

The next proposition follows easily from the definitions:
\begin{proposition}\label{pro:diam-sub} If $S'\subset S$ is a compact surface with boundary, then for any arcwise connected set $W\subset S'$, $\diamup_{S'}(W)\geq \diamup_{S}(W)$.
\end{proposition}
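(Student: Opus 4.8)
The plan is to unwind the definition of the covering distance directly and observe that passing from the ambient surface $S$ to the smaller surface $S'$ only shrinks the collection of admissible arcs appearing in the infimum, hence can only increase the distance. Write $d_W^S(x,y)$ and $d_W^{S'}(x,y)$ for the covering distance between $x,y\in W$ computed with ambient surface $S$ and $S'$ respectively. Fix $x,y\in W$ and look at the infimum defining $d_W^{S'}(x,y)$: it ranges over rectifiable arcs $\sigma$ in $S'$ from $x$ to $y$ that are homotopic, with fixed endpoints, to some arc contained in $W$, where the homotopy takes place in $S'$. I would argue that every such $\sigma$ is also an admissible competitor for $d_W^{S}(x,y)$. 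Indeed, since $S'\subset S$, the arc $\sigma$ is a rectifiable arc in $S$ joining $x$ to $y$; the homotopy witnessing that $\sigma$ is homotopic rel endpoints to an arc contained in $W$ takes place inside $S'\subset S$, hence is a fortiori a homotopy in $S$; and, because the metric on $S'$ is the restriction of the Riemannian metric on $S$, the length of $\sigma$ is the same whether it is measured in $S'$ or in $S$.

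Consequently the infimum defining $d_W^{S}(x,y)$ is taken over a (possibly strictly) larger family of arcs, all of the same lengths, so $d_W^{S}(x,y)\le d_W^{S'}(x,y)$ for every pair $x,y\in W$. Taking the supremum over all $x,y\in W$ on both sides gives $\diamup_S(W)=\diam_{d_W^S}(W)\le\diam_{d_W^{S'}}(W)=\diamup_{S'}(W)$, which is precisely the assertion.

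There is essentially no obstacle here; the only points deserving a word of care are that the metric on $S'$ is the restricted Riemannian metric (so arc lengths are unaffected by the change of ambient surface) and that a homotopy taking place in $S'$ is also a homotopy in $S$ — both immediate from the conventions fixed in \ref{sec:covering-diameter}. One could alternatively phrase the comparison via the covering-space description of $d_W$, restricting the universal cover of $S$ (resp.\ of $S'$) over $W$, but the direct comparison of the two infima is the shortest route.
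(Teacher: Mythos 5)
Your argument is correct and is exactly the intended one: the paper states this proposition without proof as following easily from the definitions, and your observation that every competitor arc for $d^{S'}_W(x,y)$ (same length, homotopy in $S'\subset S$) is also a competitor for $d^{S}_W(x,y)$ is precisely that easy argument. Nothing to add.
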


In the opposite direction we have the following:
\begin{proposition}\label{pro:cover-disk} If $U\subset S$ is an open homotopically bounded topological disk, and $D\subset U$ is a closed topological disk with rectifiable boundary then $U\sm D$ is homotopically bounded in $S\sm \inter{D}$.
\end{proposition}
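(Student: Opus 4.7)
I propose to prove $\diamup_{S\sm \inter D}(U\sm D)<\infty$ by constructing, for every pair $x,y\in U\sm D$, a rectifiable arc in $U\sm D$ from $x$ to $y$ whose length is bounded uniformly in $x,y$. Any such arc lies in $U\sm D\subset S\sm \inter D$ and is trivially homotopic (in $S\sm \inter D$) to itself --- an arc in $U\sm D$ --- hence its length furnishes an upper bound for $d_{U\sm D}(x,y)$.

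Fix a connected component $\hat U$ of $\pi^{-1}(U)$ with $\diam_{\hat S}(\hat U)=\diamup(U)=:M$, and let $\hat D\subset \hat U$ be the corresponding lift of $D$, whose boundary $\partial \hat D$ is rectifiable of length $L=\length(\partial D)$. For $x,y\in U\sm D$ there are unique lifts $\hat x, \hat y\in \hat U\sm \hat D$, since $\pi|_{\hat U}$ is a homeomorphism onto $U$.

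The construction proceeds in two steps. First, choose any rectifiable arc $\hat \beta$ in $\hat U$ from $\hat x$ to $\hat y$; since $\hat U$ is a bounded, simply-connected open subset of the complete Riemannian surface $\hat S$, one verifies that its intrinsic diameter $\tilde M$ is finite, so we may take $|\hat \beta|\leq \tilde M$. Second, apply the \emph{detour trick}: if $\hat \beta$ meets $\hat D$, let $s_0<s_1$ be the first and last times $\hat \beta$ crosses $\partial \hat D$, and replace the subarc $\hat \beta|_{[s_0,s_1]}$ by an arc along $\partial \hat D$ from $\hat \beta(s_0)$ to $\hat \beta(s_1)$, pushed slightly into $\hat U\sm \hat D$ so as to stay in that open annulus; by rectifiability of $\partial \hat D$, this replacement has length at most $L+\epsilon$ for any chosen $\epsilon>0$. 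The resulting arc $\hat \alpha$ lies in $\hat U\sm \hat D$ with $|\hat \alpha|\leq \tilde M+L+\epsilon$.

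Projecting $\hat \alpha$ via $\pi$ yields a rectifiable arc in $U\sm D$ from $x$ to $y$ of the same length, so $d_{U\sm D}(x,y)\leq \tilde M+L+\epsilon$. Letting $\epsilon\to 0$ and taking the supremum over $x,y\in U\sm D$ gives $\diamup_{S\sm\inter D}(U\sm D)\leq \tilde M+L<\infty$, as required.

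The principal technical obstacle is the finiteness of the intrinsic diameter $\tilde M$ of $\hat U$: although $\diamup(U)\leq M$ gives extrinsic control in $\hat S$, it does not immediately imply that any two points of $\hat U$ can be joined by a rectifiable arc within $\hat U$ of uniformly bounded length. This must be established using that $\hat U$ is a bounded simply-connected open subset of the complete Riemannian surface $\hat S$, with compact closure, so that two points are always joinable by a rectifiable arc inside $\hat U$ whose length is controlled (though the bound one obtains need not be merely $M$, but some finite quantity depending also on the geometry of $\hat U$ in $\hat S$).
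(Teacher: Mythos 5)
Your reduction in the first paragraph is fine as far as it goes: an arc of bounded length lying entirely in $U\sm D$ would indeed bound $d_{U\sm D}(x,y)$ in the ambient surface $S\sm \inter D$. The genuine gap is the step you yourself flag and then assert away: the claim that $\hat{U}$ has finite \emph{intrinsic} diameter because it is a bounded simply connected open subset of $\hat{S}$ with compact closure. This is false. Homotopical boundedness only gives extrinsic control of $\hat{U}$ inside $\hat{S}$; it gives no control whatsoever on lengths of arcs forced to stay inside $\hat{U}$. A standard comb domain shows this: take $(0,1)^2$ minus a sequence of closed vertical slits at $x=1/n$, attached alternately to the bottom and the top edge and reaching past the midline. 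This is a bounded open topological disk (and can be placed in a chart of $S$, so it is inessential and its lift is an isometric copy), yet the intrinsic distance between points squeezed between consecutive slits tends to infinity, so no uniform $\tilde M$ exists; placing a small round disk $D$ in it shows the same for $U\sm D$. So your construction cannot get started, and in fact the stronger statement you are trying to prove (bounded intrinsic diameter of $U\sm D$) is simply not true under the hypotheses. This is precisely why the covering distance is defined via arcs in the ambient surface that are only \emph{homotopic} rel endpoints to arcs in the set, rather than via arcs in the set itself.

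For contrast, the paper never produces short arcs inside $U\sm D$. It works in $\hat{S}$: letting $U_0$ be the bounded lift of $U$ and $D_0\subset U_0$ the lift of $D$, it finds a closed topological disk $W$ with rectifiable boundary containing $\fill(\ol{U_0})$ and disjoint from every other component of $\pi^{-1}(D)$ (this uses that distinct translates $TU_0$ miss $\fill(\ol{U_0})$). Then $W\sm \inter D_0$ is a compact annulus bounded by two rectifiable curves, so any two points of it are joined by arcs of uniformly bounded length inside it, and since $U_0\sm D_0$ is an essential annulus in $W\sm \inter D_0$, such an arc between lifted points $\hat{x},\hat{y}\in U_0\sm D_0$ is homotopic rel endpoints, within $W\sm \inter D_0$, to an arc contained in $U_0\sm D_0$. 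Because $W\sm\inter D_0$ avoids all other lifts of $D$, this homotopy projects into $S\sm \inter D$, which is exactly what the definition of $d_{U\sm D}$ requires. If you want to repair your argument, you must replace the intrinsic-diameter claim by an argument of this homotopical type; the detour along $\partial \hat{D}$ (whose length control also needs more care than ``pushed slightly'') does not address the real difficulty.
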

\begin{proof} If $S$ is a sphere the claim is obvious, so we may assume that $\hat{S}$ is homeomorphic to a subset of $\R^2$.

Let $U_0$ be a connected component of $\pi^{-1}(U)$, which is bounded by our hypothesis, and let $D_0$ be the connected component of $\pi^{-1}(D)$ in $U_0$. Note that since $U$ is an open  topological disk, for any deck transformation $T\neq \id$ one has $TU_0\cap \ol{U_0}=\emptyset$. Moreover, if $K=\fill(\ol{U_0})$, then  $TU_0\cap K=\emptyset$, since otherwise $TU_0$ would be contained in the union of all bounded connected components of $\hat{S}\sm \ol{U_0}$, which would imply $TK= \fill(\ol{TU_0})\subset K$ contradicting the boundedness of $K$. In particular $K$ is disjoint from $\pi^{-1}(D)\sm D_0$, and since $K$ is filled there exists a closed topological disk $W$, which may be chosen with rectifiable boundary, such that $K\subset W$ and $W$ is disjoint from $\pi^{-1}(D)\sm D_0$.
Since $W\sm \inter D_0$ is an annulus bounded by two rectifiable loops, there is a constant $M>0$ such that any pair of points of $W$ can be joined by an arc in $W$ of length at most $M$. Given $x,y\in U\sm D$, choose the corresponding lifts $\hat{x}, \hat{y}\in U_0\sm D_0$, and let $\gamma$ be an arc in $W$ joining $\hat{x}$ to $\hat{y}$ of length at most $M$. We claim that there exists an arc $\gamma'$ in $U_0\sm D_0$ joining $\hat{x}$ to $\hat{y}$ and homotopic with fixed endpoints in $W$ to $\gamma$. Indeed, $U_0\sm D_0$ is a topological annulus which is essential in $W$, so if $\sigma$ is any arc in $U_0\sm D_0$ joining $\hat{y}$ to $\hat{x}$, then the loop $\gamma*\sigma$ is homotopic to some loop $\eta$ contained in $U_0\sm D_0$ with base point $\hat{x}$. This implies that $\gamma'=\eta*\sigma^{-1}$ is homotopic with fixed endpoints in $W$ to $\gamma$. 
Since $\pi(W)\subset S\sm \inter D$, this implies that the covering diameter in $S\sm \inter D$ of $U\sm D$ is at most $M$, as claimed.
\end{proof}

\subsection{Some properties of hyperbolic surfaces}\label{sec:hyperbolic}

In this subsection we state some technical lemmas about isotopies on hyperbolic surfaces that will be useful ahead; but first we recall some general facts about hyperbolic surfaces that we will need. For details, see \cite{casson} or \cite{farb-margalit}.

Given an essential loop $\gamma:[0,1]\to S$, an \emph{extended lift} of $\gamma$ is an arc $\Gamma\colon \R \to \D$ obtained by concatenation of the arcs $\hat{\gamma}^n$, where $\hat{\gamma}\colon [0,1]\to \hat{D}$ is any lift of $\gamma$.

Suppose that $S$ is a hyperbolic closed surface (\ie $\chi(S)<0$). Then we may identify its universal covering $\hat{S}$ with the Poincar\'e disk $\D$ endowed with the hyperbolic metric. Any nontrivial deck transformation $T\in \deck(\pi)$ is a hyperbolic isometry, and extends to the ``boundary at infinity'' $\bd \D$ to a map which has exactly two fixed points. These fixed points are the endpoints of some $T$-invariant geodesic $\Gamma_T$ of $\D$. For any $z\in \ol{\D}$, the sequence  $T^n(z)$ converges to one endpoint of $\Gamma_T$ as $n\to -\infty$ and to the other one as $n\to \infty$. Any subarc of $\Gamma$ joining a point $z$ to $Tz$ projects into $S$ to an essential loop $\gamma_T$ which is the unique geodesic in its free isotopy class. If $\gamma$ is a loop in $S$ freely homotopic to $\gamma_T$, then any extended lift $\Gamma$ remains a bounded distance away from $\Gamma_T$ (as a subset of $\D$), and therefore it has the same endpoints in $\bd \D$ as $\Gamma_T$. 

These facts imply that two extended lifts of an essential loop coincide if and only if they share the same endpoints in $\bd \D$. In addition, if $R\in \deck(\pi)$ is a deck transformation that commutes with $T$, then $\Gamma_R=\Gamma_T$, and the group of all deck transformations fixing the same two points of $\bd \D$ is cyclic (generated by $T$ if we assume that $\gamma_T$ is in the homotopy class of a simple loop).

If $\hat{f}$ is the natural lift of a homeomorphism $f\colon S\to S$ isotopic to the identity, then $\hat{f}$ extends continuously to $\ol{\D}$ fixing every point of $\bd \D$. 

\begin{lemma}\label{lem:annulus-lift} Suppose $S$ is closed and hyperbolic, and $f\colon S\to S$ is a homeomorphism isotopic to the identity. If $A\subset S$ is a topological annulus such that $A$ contains an essential simple loop $\gamma$ and its image by $f$, then any connected component $\hat{A}$ of $\pi^{-1}(A)$ contains a connected component of $\pi^{-1}(\gamma)$ and its image by the natural lift of $f$.
\end{lemma}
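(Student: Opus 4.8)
The plan is to recognize each connected component $\hat A$ of $\pi^{-1}(A)$ as (a copy of) the universal covering of $A$, and then to pin down the relevant lifts of $\gamma$ and of $f(\gamma)$ by their endpoints on $\bd\D$.

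First I would record two facts about $A$. Since $\gamma$ is essential in $S$ it is essential in $A$; and since $f\simeq\id$, the loop $f(\gamma)$ is simple and freely homotopic in $S$ to $\gamma$, hence essential in $S$, so it cannot bound a disk in $A\subset S$ and is therefore essential in $A$ as well. Thus $\gamma$ and $f(\gamma)$ are both essential simple loops in the annulus $A$, so each is isotopic in $A$ to a core curve of $A$ and in particular generates $\pi_1(A)\cong\Z$. Now fix a connected component $\hat A$ of $\pi^{-1}(A)$. As $[\gamma]$ generates $\pi_1(A)$ and has infinite order in the torsion-free group $\deck(\pi)$, a loop of $A$ lifts to a loop of $\D$ only when it is nullhomotopic in $A$; hence $\pi|_{\hat A}\colon\hat A\to A$ is the universal covering of $A$, $\hat A$ is simply connected, and $\mathrm{Stab}(\hat A)=\langle T\rangle$, where $T\in\deck(\pi)$ is the deck transformation associated to $\gamma$ relative to a lift contained in $\hat A$. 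Let $\Gamma_T$ be the $T$-invariant geodesic, whose two endpoints on $\bd\D$ are the fixed points of $T$ acting on $\bd\D$; recall also that $\gamma$ is freely homotopic in $S$ to the closed geodesic $\gamma_T$ associated to $T$.

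Next I would analyze $\hat\gamma:=\pi^{-1}(\gamma)\cap\hat A$ and $\hat\delta:=\pi^{-1}(f(\gamma))\cap\hat A$. Because $[\gamma]$ generates $\pi_1(A)$ and $\pi|_{\hat A}$ is the universal covering of $A$, the preimage $\hat\gamma$ is connected; being also clopen in $\pi^{-1}(\gamma)$ (a standard argument using that $\hat A$ is a connected component of the open set $\pi^{-1}(A)$ and that $\gamma\subset A$), it is a connected component of $\pi^{-1}(\gamma)$, that is, an extended lift of $\gamma$. Since $\gamma$ is freely homotopic to $\gamma_T$, this extended lift stays a bounded distance from $\Gamma_T$, so its endpoints on $\bd\D$ are the two fixed points of $T$. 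The identical argument applied to $f(\gamma)$ — which also generates $\pi_1(A)$ and which is freely homotopic in $S$ to $\gamma$, hence to $\gamma_T$ — shows that $\hat\delta$ is a connected component of $\pi^{-1}(f(\gamma))$ contained in $\hat A$, namely an extended lift of $f(\gamma)$ whose endpoints on $\bd\D$ are again the two fixed points of $T$.

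Finally I would bring in the natural lift $\hat f$ of $f$. From $\pi\circ\hat f=f\circ\pi$ one gets $\hat f(\pi^{-1}(\gamma))=\pi^{-1}(f(\gamma))$, and since $\hat f$ is a homeomorphism of $\D$ it carries the connected component $\hat\gamma$ onto a connected component of $\pi^{-1}(f(\gamma))$, that is, onto an extended lift of $f(\gamma)$. As $\hat f$ extends to $\ol{\D}$ as the identity on $\bd\D$, this extended lift $\hat f(\hat\gamma)$ has the same endpoints on $\bd\D$ as $\hat\gamma$, namely the two fixed points of $T$. Since two extended lifts of the essential loop $f(\gamma)$ with the same endpoints must coincide, $\hat f(\hat\gamma)=\hat\delta\subset\hat A$. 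Hence $\hat A$ contains the connected component $\hat\gamma$ of $\pi^{-1}(\gamma)$ together with its image $\hat f(\hat\gamma)$ under the natural lift, which is the assertion of the lemma. The step I expect to require the most care is the middle one: checking that $\pi^{-1}(\gamma)\cap\hat A$, and likewise $\pi^{-1}(f(\gamma))\cap\hat A$, consists of exactly one connected component of the relevant preimage — which is precisely where one exploits that $A$ is an annulus and that $\gamma,f(\gamma)$ are simple and essential, so that they generate $\pi_1(A)$ — together with the bookkeeping that identifies the endpoints of those components with the fixed points of $T$. Once this is in place, the endpoint-matching in the last step is routine, given the recalled behavior of natural lifts on $\bd\D$ and the fact that an extended lift is determined by its endpoints.
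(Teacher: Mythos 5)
Your proof is correct and follows essentially the same route as the paper's: both pin down the extended lifts of $\gamma$ and $f(\gamma)$ contained in $\hat{A}$, observe that they share their endpoints on $\bd \D$ because $A$ is an annulus, and then combine the fact that the natural lift fixes $\bd \D$ pointwise with the fact that an extended lift of an essential loop is determined by its endpoints to conclude that $\hat{f}(\hat{\gamma})$ is the extended lift of $f(\gamma)$ lying in $\hat{A}$. The only difference is that you supply the details (via viewing $\hat{A}$ as the universal cover of $A$ with stabilizer $\langle T\rangle$) behind the paper's opening assertion that all these lifts in $\hat{A}$ have the same pair of endpoints at infinity.
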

\begin{proof}
Since $A$ is an annulus, every extended lift of an essential loop in $A$ has the same pair of endpoints in $\bd \D$. Let $\Gamma\subset \hat{A}$ be the extended lift of $\gamma$ in $\hat{A}$. Since $f(\gamma)\subset A$, there is an extended lift $\Gamma'$ of $f(\gamma)$ in $\hat{A}$, and $\Gamma' = T\hat{f}(\Gamma)$ for some deck transformation $T$. Since $\hat{f}$ extends to $\bd \D$ fixing every point, it follows that $T\hat{f}(\Gamma)$ and $\hat{f}(\Gamma)$ are two extended lifts of the same loop sharing the same endpoints, therefore they coincide.
\end{proof}

\begin{lemma}\label{lem:hyp-isotopy} Suppose $S$ is closed and hyperbolic and let $(f_t)_{t\in [0,1]}$ be an isotopy from the identity to $f=f_1$. Let $U\subset S$ be a connected open set, and suppose that $U_0\subset U$ is a filled connected open set such that $f(U_0)\subset U$ and some essential loop in $U_0$ is homotopic in $U_0$ to its own image. Then for every $x\in U_0$ the arc $(f_t(x))_{x\in [0,1]}$ is isotopic with fixed endpoints in $S$ to an arc contained in $U$.
\end{lemma}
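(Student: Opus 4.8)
The plan is to reduce the statement to a single inclusion between components of coverings, and to use that a filled connected essential set is $\pi_1$-injective. Let $\hat f=\hat f_1$ be the natural lift of the isotopy $(f_t)$; it commutes with every deck transformation and extends to $\ol{\D}$ fixing $\bd\D$ pointwise. Fix a connected component $\hat U_0$ of $\pi^{-1}(U_0)$ and let $\hat U$ be the component of $\pi^{-1}(U)$ containing $\hat U_0$; write $\mathrm{Stab}(\hat U_0)=\{g\in\deck(\pi):g\hat U_0=\hat U_0\}$, and similarly for $\hat U$. I will use the standard fact that an arc $\sigma$ from $x$ to $y$ in $S$ is isotopic with fixed endpoints (in $S$) to an arc contained in $U$ if and only if, for one (hence any) lift $\hat\sigma$, the two endpoints of $\hat\sigma$ lie in a common connected component of $\pi^{-1}(U)$: one direction projects a path joining the endpoints inside that component, the other lifts the arc contained in $U$. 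Applying this to $\sigma=(f_t(x))_{t\in[0,1]}$, whose lift starting at a lift $\hat x\in\hat U_0$ of $x$ is $(\hat f_t(\hat x))_{t}$ and ends at $\hat f(\hat x)$, and since $\hat x\in\hat U$, the conclusion for this $x$ amounts to $\hat f(\hat x)\in\hat U$. Now $\hat f(\hat U_0)$ is connected and $\pi(\hat f(\hat U_0))=f(U_0)\subset U$, so it lies in a single component $\hat U'$ of $\pi^{-1}(U)$; it therefore suffices to prove $\hat U'=\hat U$, and by equivariance of $\hat f$ this settles the claim for every $x\in U_0$.

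Next, $U_0$ is $\pi_1$-injective. If some simple loop of $U_0$, essential in $U_0$, were null-homotopic in $S$, it would bound a disk $D\subset S$; any connected component of $S\sm U_0$ contained in $D$ would be inessential, contradicting that $U_0$ is filled, so $D\subset U_0$ and the loop would be null-homotopic in $U_0$ — a contradiction. Hence $\pi_1(U_0)$ injects into $\pi_1(S)=\deck(\pi)$, so $\mathrm{Stab}(\hat U_0)\cong\pi_1(U_0)$, and this subgroup is root-closed in $\deck(\pi)$ (a standard property of $\pi_1$-injective subsurfaces: $g^n\in\mathrm{Stab}(\hat U_0)$ with $n\ge1$ implies $g\in\mathrm{Stab}(\hat U_0)$). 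By hypothesis there is an essential loop $\gamma\subset U_0$ with basepoint $z$, together with a free homotopy in $U_0$ from $\gamma$ to $f\circ\gamma$; in particular $f(\gamma)\subset U_0$. Lifting $\gamma$ from a chosen $\hat z\in\hat U_0$ produces the deck transformation $T$ with $\hat\gamma(1)=T\hat z$; then $T\ne\id$ (as $\gamma$ is essential) and $T\in\mathrm{Stab}(\hat U_0)\subset\mathrm{Stab}(\hat U)$, so by root-closure the primitive root $T_0$ of $T$ also lies in $\mathrm{Stab}(\hat U_0)\subset\mathrm{Stab}(\hat U)$; in particular $T_0\hat U=\hat U$.

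To finish it is enough to show $\hat f(\hat z)\in\hat U$, since then the connected set $\hat f(\hat U_0)$ meets $\hat U$ and hence $\hat U'=\hat U$. Let $\beta$ be the basepoint track of the homotopy in $U_0$ from $\gamma$ to $f\circ\gamma$ — a path from $z$ to $f(z)$ contained in $U_0$ — and let $\hat w\in\hat U_0$ be the endpoint of the lift of $\beta$ starting at $\hat z$. The relation $\gamma\simeq\beta*(f\circ\gamma)*\bar\beta$ as based loops at $z$, lifted from $\hat z$, shows that the lift of $f\circ\gamma$ starting at $\hat w$ ends at $T\hat w$. On the other hand $\hat f\circ\hat\gamma$ is a lift of $f\circ\gamma$ running from $\hat f(\hat z)$ to $\hat f(T\hat z)=T\hat f(\hat z)$; writing $\hat f(\hat z)=S\hat w$ and comparing these two lifts of $f\circ\gamma$ gives $ST\hat w=TS\hat w$, hence $ST=TS$. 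Since $\deck(\pi)$ is a torsion-free closed orientable surface group, the centralizer of $T\ne\id$ is the maximal cyclic subgroup $\langle T_0\rangle$, so $S=T_0^{k}$ for some $k\in\Z$; as $T_0\hat U=\hat U$, we conclude $\hat f(\hat z)=S\hat w\in S\hat U=\hat U$.

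The only non-formal ingredient is the root-closedness of $\mathrm{Stab}(\hat U_0)$, which is exactly where the hypothesis that $U_0$ is filled is used. It is genuinely needed because $\gamma$ is not assumed simple, so $T$ may be a proper power and one must recover the primitive translation along the axis of $T$ from the filled set $U_0$ rather than from $\gamma$. If one prefers not to cite the subsurface fact, root-closedness can be obtained by passing to the cyclic cover $\D/\langle T_0\rangle$ (an annulus), where it reduces to the assertion that a connected open essential subset of an annulus carries the full fundamental group; that in turn follows from an order-preservation argument applied to the pairwise disjoint essential pieces that would otherwise occur in the preimage of $U_0$ in the universal cover of that annulus.
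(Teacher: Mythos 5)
Your proof is correct, but it takes a genuinely different route from the paper's. The paper's proof extracts from the free homotopy in $U_0$ an annulus containing $\gamma$ and $f(\gamma)$ and then applies Lemma \ref{lem:annulus-lift}: all extended lifts of essential loops in a component of the preimage of the annulus share the same pair of endpoints on $\bd \D$, and since the natural lift extends to $\ol{\D}$ fixing $\bd\D$ pointwise, $\hat{f}(\hat{U}_0)$ must meet $\hat{U}$; only your reduction step (lifting the isotopy arc and projecting a homotopy inside $\hat U$) coincides with the paper's final paragraph. You instead argue algebraically: comparing the lift of $f\circ\gamma$ obtained through the basepoint track of the homotopy with $\hat f\circ\hat\gamma$, you produce a deck transformation carrying $\hat w$ to $\hat f(\hat z)$ that commutes with $T$, invoke that centralizers of nontrivial elements of a hyperbolic surface group are the maximal cyclic subgroups, and then use root-closedness of $\mathrm{Stab}(\hat U_0)$ (via filledness and $\pi_1$-injectivity, or your annulus-cover sketch) to get $T_0\hat U=\hat U$ and hence $\hat f(\hat z)\in\hat U$. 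So hyperbolicity enters for you through cyclic centralizers rather than through the circle at infinity. What each buys: the paper's argument is shorter given Lemma \ref{lem:annulus-lift}, but that lemma is stated for a simple essential loop and producing an embedded annulus containing both $\gamma$ and $f(\gamma)$ is not immediate when $\gamma$ is non-simple; your argument handles a non-simple $\gamma$ directly and in fact gives the slightly stronger conclusion $\hat f(\hat z)\in\hat U_0$, at the price of citing two standard-but-nontrivial facts (detection of $\pi_1$-injectivity by simple closed curves, and root-closedness of the stabilizer), whose sketches you give are adequate and completable; indeed your annulus-cover argument for root-closedness works for any open connected $U_0$, so filledness is less essential to your route than you claim. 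One cosmetic point: you reuse the letter $S$ for a deck transformation, which clashes with the notation for the surface.
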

\begin{proof}
If $\gamma$ is an essential loop in $U_0$ and $f(\gamma)$ is homotopic to $\gamma$ in $U_0$, then there is a topological annulus $A\subset U_0$ containing both $\gamma$ and $f(\gamma)$.
Let $\hat{U}$ be a connected component of $\pi^{-1}(U)$. Then by Lemma \ref{lem:annulus-lift}, any connected component $\hat{U}_0$ of $\pi^{-1}(U_0)$ contained in $\hat U$ intersects $\hat{f}(\hat{U}_0)$, where $\hat{f}$ is the natural lift of $f$. Since $\hat{f}(\hat{U}_0)$ is contained in some connected component of $\pi^{-1}(U)$, it follows that $\hat{f}(\hat{U}_0)\subset \hat{U}$. Thus, if $x\in U_0$, the lift of $(f_t(x))_{t\in [0,1]}$ with basepont $\hat{x}\in \hat{U}$ is homotopic to an arc contained in $\hat{U}$ connecting $\hat{x}$ to $\hat{f}(x)$, and the result follows by projecting this homotopy.
\end{proof}

\begin{lemma}\label{lem:periodic-inter} Suppose $S$ is closed and hyperbolic, $f\colon S\to S$ is a homeomorphism isotopic to the identity, and $U\subset S$ a connected essential open set such that $f^n(U)=U$ for some $n>0$. Then $f(U)\cap U\neq \emptyset$. Moreover, every connected component of $\pi^{-1}(U)$ intersects its own image by the natural lift of $f$. 
\end{lemma}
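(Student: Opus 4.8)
The plan is to prove the stronger ``moreover'' assertion — that every connected component $\hat U$ of $\pi^{-1}(U)$ meets its image under the natural lift $\hat f$ of $f$ — and then project by $\pi$ to obtain $f(U)\cap U\supseteq \pi(\hat f(\hat U)\cap\hat U)\neq\emptyset$. So I would fix a component $\hat U$, assume for a contradiction that $\hat f(\hat U)\cap\hat U=\emptyset$, and reduce the situation to an annulus. Since $U$ is essential it contains an essential simple closed curve $\gamma$, whose associated deck transformation $T$ is nontrivial and (being carried by a simple loop) primitive in $\pi_1(S)$, and satisfies $T\hat U=\hat U$; the extended lift $\Gamma$ of $\gamma$ contained in $\hat U$ is then a $T$-invariant line. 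I would pass to the open annulus $A_T:=\hat S/\gen{T}$ with covering map $\pi_T\colon\hat S\to A_T$ factoring $\pi$, compactified to $\overline{A_T}=A_T\cup C_+\cup C_-$ by two circles. As $\hat f$ commutes with $T$, it descends to a lift $f_T$ of $f$ that extends to a homeomorphism $\overline{f_T}$ of $\overline{A_T}$ fixing $C_+\cup C_-$ pointwise (because $\hat f$ fixes $\bd\D$ pointwise, identifying $\hat S$ with the Poincar\'e disk $\D$). Setting $\check U:=\pi_T(\hat U)$, the $\gen{T}$-invariance of $\hat U$ gives $\pi_T^{-1}(\check U)=\hat U$ and, using that $\hat f$ commutes with $T$, $\pi_T^{-1}(f_T(\check U))=\hat f(\hat U)$; so $\check U$ is a connected essential open subset of $A_T$ and the contradiction hypothesis becomes $f_T(\check U)\cap\check U=\emptyset$.

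The crucial step — and the one I expect to be the main obstacle — is the claim that $\hat f^{m}(\hat U)=\hat U$ for some $m\geq1$ (equivalently $f_T^{m}(\check U)=\check U$). The idea: $\hat f^{n}$ is the natural lift of $f^{n}$, hence commutes with every deck transformation; since $f^{n}(U)=U$, the set $\hat f^{n}(\hat U)$ is again a component of $\pi^{-1}(U)$, say $R\hat U$, and commutation with $\mathrm{Stab}(\hat U)$ forces $\mathrm{Stab}(\hat U)\subseteq R\,\mathrm{Stab}(\hat U)\,R^{-1}$; running the same argument for $\hat f^{-n}$ (and using $\hat f^{-n}\hat f^{n}=\id$) gives the reverse inclusion, so $R$ normalizes $\mathrm{Stab}(\hat U)$. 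Because $U$ is essential, $\mathrm{Stab}(\hat U)$ is a nontrivial subgroup of the surface group $\pi_1(S)$, and such a subgroup has finite index in its normalizer; hence some power $R^{j}$ lies in $\mathrm{Stab}(\hat U)$ and $\hat f^{jn}(\hat U)=R^{j}\hat U=\hat U$. The delicate point here is precisely this group-theoretic input (finite index of a nontrivial subgroup in its normalizer in $\pi_1(S)$); in the case $\mathrm{Stab}(\hat U)$ is cyclic it is elementary — $\pi_1(S)$ is torsion free and centralizers of hyperbolic elements are cyclic — and in general it follows from standard facts about subgroups of surface groups (quasiconvexity and commensurator estimates).

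Granting the claim, the contradiction comes from a linking argument with curves marching to one end of the annulus. Put $\bar\gamma:=\pi_T(\Gamma)\subseteq\check U$, an essential simple closed curve of $A_T$, and $\gamma_i:=f_T^{i}(\bar\gamma)\subseteq f_T^{i}(\check U)$. Since $f_T(\check U)$ is disjoint from $\check U$, consecutive curves $\gamma_i$, $\gamma_{i+1}$ are disjoint (apply $f_T^{i}$), and since $\overline{f_T}$ fixes each boundary circle of $\overline{A_T}$ the curves move monotonically: labeling the ends so that $\gamma_1$ lies in the component $N_+(\gamma_0)$ of $A_T\sm\gamma_0$ adjacent to $C_+$, one checks that $\gamma_{i+1}\subseteq N_+(\gamma_i)$ for every $i$ with the $N_+(\gamma_i)$ strictly nested, so that all the $\gamma_i$ are pairwise disjoint and $\gamma_1$ separates $\gamma_0$ from every $\gamma_j$ with $j\geq2$. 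Now if $\hat f(\hat U)\neq\hat U$ the least $m$ given by the claim is at least $2$; then $\gamma_0=\bar\gamma\subseteq\check U$ and $\gamma_m=f_T^{m}(\bar\gamma)\subseteq f_T^{m}(\check U)=\check U$ both lie in the connected set $\check U$, which is disjoint from $\gamma_1$ (as $\gamma_1\subseteq f_T(\check U)$) — contradicting that $\gamma_1$ separates $\gamma_0$ from $\gamma_m$. This forces $\hat f(\hat U)\cap\hat U\neq\emptyset$, and projecting gives $f(U)\cap U\neq\emptyset$.
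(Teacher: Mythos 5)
The second half of your argument (the quotient annulus $A_T$, the monotone nesting of the curves $\gamma_i$ forced by the fact that $\overline{f_T}$ fixes both boundary circles pointwise, and the final separation contradiction) is correct, and it is essentially the paper's own mechanism read in $\hat S/\gen{T}$: the paper works directly in $\D$ with a component $W$ of $\D\sm\Gamma$ satisfying $\hat f(W)\subset W\subset \hat f^{-1}(W)$. The genuine gap is exactly at the step you flag as crucial, namely that $\hat f^{\,m}(\hat U)=\hat U$ for some $m\geq 1$. Your justification rests on the assertion that a nontrivial subgroup of $\pi_1(S)$ has finite index in its normalizer, and that assertion is false: any nontrivial normal subgroup of infinite index (the commutator subgroup, or the kernel of a surjection $\pi_1(S)\to\Z$) has infinite index in its normalizer, which is all of $\pi_1(S)$. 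The fallback you invoke (quasiconvexity plus commensurator estimates) is a statement about finitely generated, hence quasiconvex, subgroups; but $\mathrm{Stab}(\hat U)$ is essentially the image of $\pi_1(U)\to\pi_1(S)$ for an arbitrary connected open essential set $U$, and nothing in the hypotheses makes this image finitely generated — it can be an infinitely generated free group, and you have not excluded that it fails to be quasiconvex or even that it is normal of infinite index. So from ``$R$ normalizes $\mathrm{Stab}(\hat U)$'' you cannot conclude that some power of $R$ lies in $\mathrm{Stab}(\hat U)$, and the claim $\hat f^{\,jn}(\hat U)=\hat U$ is not established (the cyclic case you mention is indeed elementary, but it is not the general case).

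For comparison, the paper obtains the stronger fact $\hat f^{\,n}(\hat U)=\hat U$ (no passage to a further power) by purely topological means, avoiding any group-theoretic finiteness claim: after replacing $U$ by $\fill(U)$, the topological ends of $U$ are fixed by $f^n$ (Proposition \ref{pro:ends}), so one may choose an essential simple loop $\gamma\subset U$ with $\gamma\cup f^n(\gamma)$ contained in an annulus $A\subset U$; Lemma \ref{lem:annulus-lift}, whose proof uses only that the natural lift fixes $\bd\D$ pointwise (so an extended lift of $\gamma$ and its image by $\hat f^{\,n}$ share their endpoints at infinity), then places an extended lift of $\gamma$ and its $\hat f^{\,n}$-image in the same component of $\pi^{-1}(A)$, and since every component of $\pi^{-1}(U)$ contains such a component of $\pi^{-1}(A)$, this gives $\hat f^{\,n}(\hat U)=\hat U$ for every component $\hat U$. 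If you replace your stabilizer/normalizer step by an argument of this type, the rest of your proof goes through as written.
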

\begin{proof} It suffices to consider the case where $U$ is filled. Applying Lemma \ref{lem:annulus-lift} to $f^n$, we see that if $\hat{U}$ is a connected component of $\pi^{-1}(U)$ and $\hat{f}$ is the natural lift of $f$, then $\hat{f}^n(\hat{U})$ intersects $\hat{U}$, so $\hat{f}^n(\hat{U})=\hat{U}$. Suppose that $\hat{f}(\hat{U})\cap \hat{U}=\emptyset$. If $\Gamma\subset \hat{U}$ is an extended lift of some simple loop in $U$ which is essential in $S$, then $\hat{f}(\Gamma)$ is disjoint from $\Gamma$ and they connect the same two points of $\bd \D$. From the fact that $\hat{f}$ preserves orientation and (its extension) fixes $\bd \D$, we see that one connected component $W$ of $\D\sm \Gamma$ satisfies $\hat{f}(W)\subset W \subset \hat{f}^{-1}(W)$ (See Figure \ref{fig:W}). This implies that $\hat{U}$ is contained in the topological disk $D = \hat{f}^{-1}(W)\sm \hat{f}(\ol{W})$ bounded by $\hat{f}(\Gamma)$ and $\hat{f}^{-1}(\Gamma)$. But $\hat{f}^n(\hat{U})\subset \hat{U} \subset D$ whereas, if $n>1$, $\hat{f}^n(D)\subset \hat{f}(W)$ which is disjoint from $\hat{U}$. Thus $\hat{f}(\hat{U})\cap \hat{U}\neq \emptyset$.
\end{proof}

\begin{figure}
\centering
\begin{minipage}{.45\textwidth}
  \centering
  \includegraphics[width=.9\linewidth]{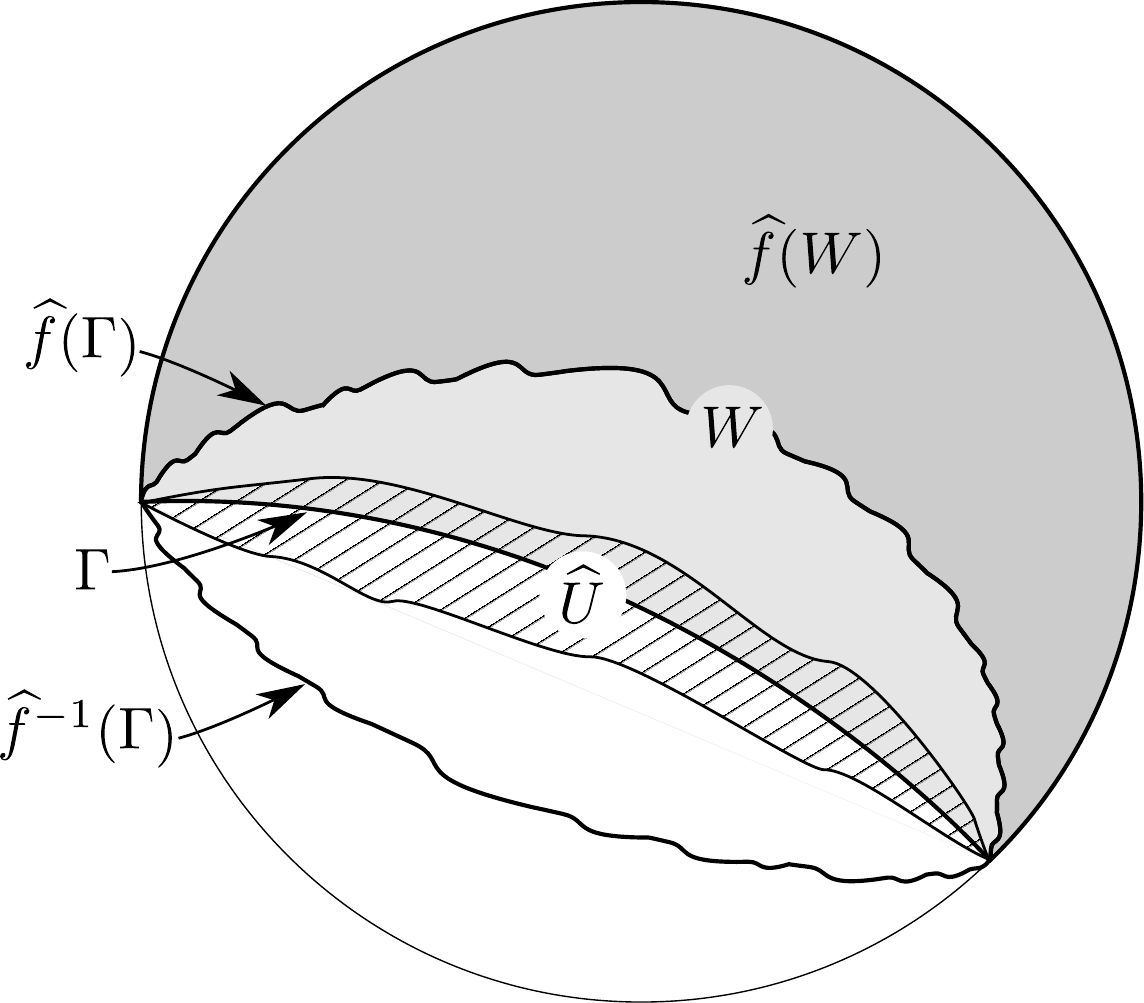}
\caption{}
	\label{fig:W}
\end{minipage}%
\hspace{.05\textwidth}
\begin{minipage}{.45\textwidth}
  \centering
  \includegraphics[width=.9\linewidth]{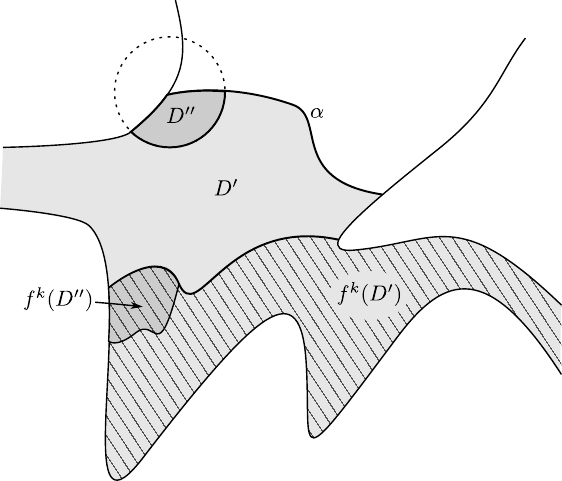}
  \caption{}
    \label{fig:prime-end}
\end{minipage}
\end{figure}

\subsection{Cross-cuts, cross-sections}\label{sec:cross-cuts}

Let $U\subset S$ be an open topological disk. 
A \emph{cross-cut} of $U$ is an arc $\sigma\colon (0,1)\to U$ such that $\lim_{t\to 0^+}\sigma(t)\in \bd U$ and $\lim_{t\to 1^-}\sigma(t)\in \bd U$ with the additional property that both connected components of $U\sm \sigma$ have more than one boundary point in $\bd U$ (this condition amounts to excluding the possibility of $\sigma$ being a closed loop bounding a disk entirely contained in $U$). For instance, any arc in $U$ joining two distinct points of $\bd U$ (not including the endpoints) is a cross-cut.
A \emph{cross-section} of $U$ is any one of the two connected components of $U\sm \sigma$. 

Note that the notion of cross-cut depends on the ambient surface: $\sigma$ is a cross-cut of $U$ as a subset of $S$, but it is no longer a cross-cut of $U$ in the surface $S\sm \{p,q\}$, where $p,q$ are the endpoints of $\sigma$.
We will often make use of the following simple observation: if $U\subset S$ is an open topological disk and $\hat{U}\subset \hat{S}$ is a connected component of $\pi^{-1}(U)$, then any cross-cut of $\hat{U}$ projects to a cross-cut of $U$. In particular if $U$ is invariant by a homeomorphism $f\colon S\to S$ and has no wandering cross-cuts, and if $\hat{f}$ is a lift that leaves $\hat{U}$ invariant, then $\hat{U}$ has no wandering cross-cuts by $\hat{f}$.

Note that any cross-section contains cross-cuts, and so whenever there is a wandering cross-section there is also a wandering cross-cut. We state the following partial converse of that fact for future reference:

\begin{proposition}\label{pro:nwcc-nwcs} Let $f\colon S\to S$ be an orientation-preserving homeomorphism with an invariant open topological disk $U$. If there exists a wandering cross-cut of $U$ which has a non-periodic endpoint, then there exists a wandering cross-section. 
\end{proposition}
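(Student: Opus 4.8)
The plan is to pass from the wandering cross-cut to the two cross-sections it bounds: if one of those is already wandering we are done, and otherwise we extract from the combinatorics of the iterated cross-cuts a wandering ``band'' bounded by two of them, and then thicken this band into a genuine cross-section. The non-periodicity of the endpoint is used only at the last step.

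Concretely, let $\sigma$ be the given wandering cross-cut with non-periodic endpoint $p\in\partial U$, and put $\sigma_n=f^n(\sigma)$. For $k\neq 0$ one has $f^k(\sigma)\cap\sigma=\emptyset$ (the hypothesis for $k>0$; for $k<0$, apply $f^k$ to $\sigma\cap f^{-k}(\sigma)=\emptyset$), so the $\{\sigma_n\}_{n\in\Z}$ form a family of pairwise disjoint cross-cuts of $U$, shifted by $f$ via $\sigma_n\mapsto\sigma_{n+1}$; moreover $\sigma_n$ has $f^n(p)$ as an endpoint, the $f^n(p)$ being pairwise distinct, and $f^n(\sigma)\neq\sigma$ for $n\neq 0$ so no cross-section of $\sigma$ is periodic. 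Write $A,B$ for the two cross-sections of $\sigma$; if one of them is wandering we are finished, so assume neither is.

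Next I would analyze where the successive $\sigma_n$ fall relative to $\sigma$ and which cross-section of $\sigma_1$ equals $f(A)$ (and dually for $f^{-1}$); this is a finite, elementary case analysis governed by orientation-preservation, whose upshot is that the failure of $A$ and $B$ to be wandering forces the existence of a connected open ``band'' $G\subset U$, bounded in $U$ by two of the cross-cuts $\sigma_i$ and $\sigma_j$ (the component of $U\setminus(\sigma_i\cup\sigma_j)$ whose closure meets both), whose $f$-iterates $\{f^k(G)\}_{k\in\Z}$ are pairwise disjoint — i.e. $G$ is wandering. (In the language of the tree dual to the pairwise disjoint family $\{\sigma_n\}$, on which $f$ acts by shifting edges: $f$ acts as a translation along an axis, the elliptic alternative being ruled out because a fixed vertex would be an $f$-invariant subdisk of $U$ bordered by infinitely many disjoint cross-cuts, whose complementary cross-sections again yield a wandering band; and $G$ is a region between two consecutive edges of the axis.) Having $G$, it now suffices to find a cross-cut $\tau\subset G$ such that some component $V$ of $U\setminus\tau$ is contained in $G$: then $V$ is a cross-section contained in the wandering set $G$, hence wandering. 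Such a $\tau$ is obtained by joining two accessible points of $\partial U$ lying on the portion of $\partial U$ that $G$ sees from a single side, which is possible as soon as $\overline G$ meets $\partial U$ in more than the (at most four) endpoints of $\sigma_i,\sigma_j$ — equivalently, as soon as $\sigma_i$ and $\sigma_j$ do not determine the same unordered pair of prime ends of $U$.

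The remaining — and technically hardest — case is that $\sigma_i$ and $\sigma_j$ land at the very same pair of prime ends $\{x,y\}$. Passing to the Carath\'eodory compactification $\overline U\cong\overline{\D}$, the map $f$ extends to $\hat f$ restricting to an orientation-preserving homeomorphism of the prime-end circle; since $f$ shifts the $\sigma_n$ by one, $\hat f$ either fixes or transposes $x$ and $y$, and a transposition is incompatible with the pairwise disjoint arcs $\hat\sigma_n$ (all joining $x$ to $y$) being linearly ordered and shifted by one. Hence $\hat f$ fixes $x$ and $y$, every $\sigma_n$ joins $x$ to $y$, and they are arranged monotonically $\cdots<\sigma_{-1}<\sigma_0<\sigma_1<\cdots$. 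This is exactly where non-periodicity of $p$ enters: $x$ is then accessible at the infinitely many distinct points $f^n(p)$, and using two of these accessible points one constructs inside $G$ a cross-cut $\tau$ of ``loop at $x$'' type cutting off a sub-disk of $G$ — a wandering cross-section — or else one derives a contradiction by tracking where the $f^n(p)$ accumulate on $\partial U$. I expect this degenerate analysis to be the main obstacle; note too that the hypothesis cannot be dropped, as with all endpoints periodic there are wandering cross-cuts with no wandering cross-section.
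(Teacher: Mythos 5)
Your overall strategy is the same as the paper's: locate a wandering ``band'' bounded by two iterates of the cross-cut, then carve a cross-section of $U$ out of it near the non-periodic endpoint. But as written there are two genuine gaps. First, the existence of the wandering band $G$ is only asserted (``a finite, elementary case analysis'', the dual-tree picture); this is exactly where an argument is needed, and the paper supplies it in a few lines: with $D$ the cross-section of $\alpha$ containing $f(\alpha)$ and $D'$ the other one, take the minimal $k>0$ with $f^k(D')\cap D'\neq\emptyset$; since $f^k(\alpha)\cap\alpha=\emptyset$, one of $D'\subset f^k(D')$, $f^k(D')\subset D'$, $D\subset f^k(D')$ must hold, and the last is ruled out because it forces $D'\cap f(D')\neq\emptyset$, hence $k=1$, and then $f(\alpha)$ would be disjoint from $D$, contradicting the choice of $D$. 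In the remaining (nested) cases the band $W=D'\setminus\overline{f^k(D')}$ (or its analogue) is wandering, and proving that its iterates are pairwise disjoint is precisely the minimality argument your sketch skips.

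Second, and more seriously, the step where the hypothesis is actually used --- producing a cross-section of $U$ inside the band --- is not completed. Your sufficient criterion (a cross-cut $\tau\subset G$ one of whose complementary components lies in $G$) is fine, but the construction of $\tau$ from ``two accessible points on the portion of $\partial U$ that $G$ sees'' is not justified: extra points of $\overline{G}\cap\partial U$ need not be accessible from $G$, and one must also ensure the component cut off avoids $\sigma_i\cup\sigma_j$; and in the degenerate case where the two bounding cross-cuts land at the same pair of prime ends you explicitly leave the argument as ``one constructs \dots or else one derives a contradiction'', i.e.\ unresolved --- you yourself call it the main obstacle. The paper sidesteps this dichotomy by localizing at the non-periodic endpoint $x$ of $\alpha$ itself: since $x$ is not periodic, a sufficiently small disk $B$ around $x$ meets $W$ while avoiding $\overline{f^k(D')}$, and inside $B\cap W$ one builds the desired cross-section (this local construction is the content of the reference to \cite{kln} and Figure \ref{fig:prime-end}). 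So the idea and the place where non-periodicity enters are right, but both load-bearing steps are missing, and the second one is exactly what a proof of this proposition has to supply.
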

\begin{proof} Let $\alpha$ be a wandering cross-cut with some non-periodic endpoint, and let $D$ be the component of $U\sm \alpha$ (cross-section) which contains $f(\alpha)$, so that $f(D)\cap D\neq \emptyset$, and let $D'$ the remaining component. If $D'$ is wandering, we are done. Otherwise, let $k>0$ be the smallest integer such that $f^k(D')\cap D'\neq \emptyset$. Since $f^k(\bd_U D')$ is disjoint from $\bd_U D'$, it follows that either $D'\subset f^k(D')$, or $f^k(D')\subset D'$, or $D\subset f^k(D')$. 
The latter case is not possible: if $D\subset f^k(D')$, then since $f(D)\cap D\neq \emptyset$ we have $f^k(D')\cap f(f^k(D'))\neq \emptyset$, and therefore $D'\cap f(D')\neq \emptyset$. This implies that $k=1$; but then $D\subset f(D')$, and so $f(\alpha)=\bd_U f(D')$ is disjoint from $D$, contradicting our assumption. Thus, the only possibilities are $f^k(D')\subset D'$ or $D'\subset f^k(D')$. Assume $f^k(D')\subset D'$ (the other case is analogous). Then $W=D'\sm \ol{f^k(D')}$ is a wandering open set, and if $x\in \bd U$ is an endpoint of $\alpha$ which is not periodic, a small enough disk $B$ around $x$ intersects $W$ and is disjoint $\ol{f^k(D')}$ from which it is easy to construct a cross-section $D''$ contained in $W$ (see Figure \ref{fig:prime-end}, or see \cite{kln} for more details).
\end{proof}

\subsection{A lemma from Brouwer theory}

We will use the following classical result from Brouwer theory; see for instance \cite{brown,franks-gen}.
\begin{lemma}\label{lem:brouwer-free} If $f\colon \R^2\to \R^2$ is an orientation preserving homeomorphism without fixed points and $\gamma$ is a compact arc such that $f(\gamma)\cap\gamma=\emptyset$, then $f^n(\gamma)\cap \gamma=\emptyset$ for all $n\neq 0$.
\end{lemma}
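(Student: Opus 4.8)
The plan is to reduce the statement to a configuration forbidden by the basic combinatorial input of Brouwer plane translation theory --- the nonexistence of ``periodic disk chains'' for fixed-point-free orientation-preserving homeomorphisms of the plane --- which in the simplest case is exactly the lemma as quoted in \cite{brown,franks-gen}. First I would normalize the problem. Since $f^{-1}$ is again an orientation-preserving homeomorphism of $\R^2$ without fixed points and satisfies $f^{-1}(\gamma)\cap\gamma=\emptyset$ (this being equivalent to $f(\gamma)\cap\gamma=\emptyset$), it suffices to prove the conclusion for positive powers; the negative powers follow by applying the result to $f^{-1}$. Next, since $\gamma$ and $f(\gamma)$ are disjoint compact subsets of $\R^2$ and $\gamma$, being an arc, has a neighbourhood basis of open topological disks, I would fix an open topological disk $U$ with $\gamma\subset U$, $\ol U$ compact, and $f(\ol U)\cap\ol U=\emptyset$; since $\gamma\subset U$, it is then enough to show $f^n(U)\cap U=\emptyset$ for all $n\geq 1$.

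Suppose this fails and let $n\geq 2$ be minimal with $f^n(U)\cap U\neq\emptyset$ (necessarily $n\geq 2$, since $f(U)\cap U=\emptyset$). By minimality the disks $U,f(U),\dots,f^{n-1}(U)$ are pairwise disjoint while $f^n(U)$ meets $U$; in other words the free disk $U$ together with the return at time $n$ constitutes a periodic disk chain (of length one) for the fixed-point-free map $f$. Such a chain cannot exist: this is precisely the content of Brouwer's lemma, or equivalently of the nonexistence of periodic disk chains, for which I would cite \cite{brown,franks-gen}. This contradiction shows $f^n(U)\cap U=\emptyset$ for all $n\geq 1$, and together with the first reduction this proves the lemma.

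The substance of the invoked lemma --- and the step I expect to be the real obstacle in a self-contained argument --- is an index/winding-number computation. As $f$ has no fixed points, the displacement $z\mapsto f(z)-z$ is a continuous nowhere-vanishing vector field on the plane, so it has a well-defined winding number along any loop; this winding number is additive over concatenations and homotopy invariant, and a loop along which it is nonzero must surround a zero of the field, which is impossible. The difficulty is to build, from an orbit segment joining some $x\in U$ to its return point $f^n(x)\in U$ (a segment passing through the intermediate pairwise disjoint disks $f(U),\dots,f^{n-1}(U)$), together with an arc inside $U$ closing it up, a null-homotopic loop whose displacement winding number is nonetheless forced to equal $\pm 1$ by the orientation-preservation of $f$ and the mutual disjointness of the disks; keeping track of the local indices, of the cyclic order in which the orbit revisits the disks, and of a desingularization of the resulting (generally non-simple) closed curve is where the work lies. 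Alternatively one can bypass this computation by appealing to Brouwer's Plane Translation Theorem to produce a Brouwer line $L$ disjoint from $\ol U$ and from $f(\ol U)$ and separating them: letting $\Delta$ be the open half-plane bounded by $L$ that contains $f(\ol U)$, the Brouwer-line property gives $f(\ol\Delta)\subset\Delta$, hence $f^k(\ol U)\subset\Delta$ for every $k\geq 1$ while $\ol U$ is disjoint from $\ol\Delta$, so $f^k(U)\cap U=\emptyset$ for all $k\geq 1$, and again the first reduction completes the argument.
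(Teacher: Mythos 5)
Your proposal is correct and in substance coincides with the paper, which offers no proof of this lemma but states it as the classical Brouwer translation-arc result and cites \cite{brown,franks-gen}; your reduction (handle $n<0$ via $f^{-1}$, thicken $\gamma$ to a free open disk $U$ with $f(\overline{U})\cap\overline{U}=\emptyset$, then invoke the nonexistence of periodic disk chains from those same references) is a valid way of phrasing that citation. The only caveat concerns your parenthetical alternative: obtaining from the Plane Translation Theorem a Brouwer line disjoint from both $\overline{U}$ and $f(\overline{U})$ and separating them is not immediate from the standard statement and would itself need an argument, but since that is an aside and not your main route, it does not affect the correctness of the proof.
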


\section{Linking lemmas}
\label{sec:linking}

In this section we recall some facts about dynamically transverse foliations introduced by Le Calvez \cite{lecalvez-equivariant} and a result of Jaulent \cite{jaulent} which allows us to apply those results in our setting. Then we define linking numbers of open invariant disks with respects to singularities of the foliation and show a linking lemma which plays a key role in the proof of our main result. Similar results were used in \cite{kt-strictly} and \cite{kt-pseudo}, but we need more refined versions here.

\subsection{Oriented foliations, limit sets}

If $\Gamma$ is a regular orbit of a topological flow on $\SS^2$, the \emph{filled} $\omega$-limit of $\Gamma$ is the set $\ol{\omega}(\Gamma)$ consisting of the union of $\omega(\Gamma)$ with all connected components of the complement of $\omega(\Gamma)$ which are disjoint from $\Gamma$. Note that $\omega(\Gamma)$ is always compact and invariant by the flow. 
The \emph{filled} $\alpha$-limit $\ol{\alpha}(\Gamma)$ is defined analogously, replacing $\omega$ by $\alpha$ in the definition.
The filled $\alpha$ and $\omega$-limits are always connected and non-separating.

An oriented foliation $\mc{F}$ with singularities on $S$ consists of a closed set $X\subset S$ together with a nonsingular oriented foliation on $S\sm X$. Any such foliation coincides with the orbits of a topological flow on $S$ whose singularities are precisely the elements of $X$ (see, for instance \cite{flow-foliation}).
Thus we may define the $\omega$ and $\alpha$ limit sets of leaves of the foliation and their filled versions using the corresponding flow.  

If $\Gamma$ is an orbit of a topological flow on $\R^2$, we may define its $\omega$ and $\alpha$-limit sets and their filled counterparts by considering the flow induced on the one-point compactification $\R^2\sqcup\{\infty\}\simeq \SS^2$ and removing the point $\infty$ from the corresponding sets. Note that in this case, the filled $\alpha$ or $\omega$-limit sets may fail to be non-separating (in the case that they contain $\infty$ after the compactification), but if either of the sets is compact then it will be non-separating as well.

An arc $\gamma$ is \emph{positively transverse} to an orbit $\Gamma$ of a topological flow (or a leaf of an oriented foliation) if $t\mapsto \gamma(t)$ locally crosses $\Gamma$ from left to right. If $\gamma$ is topologically transverse to every orbit of the flow (or every leaf of the foliation) that it intersects, we say that $\gamma$ is positively transverse to the flow (or foliation).

\subsection{Brouwer-Le Calvez Foliations}
Let $S$ be an orientable surface (not necessarily compact), and  $\pi\colon \hat{S}\to S$ the universal covering of $S$. Let $\mc{I}=(f_t)_{t\in [0,1]}$ be an isotopy from $f_0=\id_S$ to some homeomorphism $f_1=f$. The \emph{natural lift} of $\mc{I}$ is the isotopy $\hat{\mc{I}}=(\hat{f}_t)_{t\in [0,1]}$ obtained by lifting the isotopy $\mc{I}$ starting with $\hat{f}_0=\id_{\hat{S}}$. The map $\hat{f}=\hat{f}_1$ is a lift of $\hat{f}$ which commutes with every deck transformation. Any such map is called a \emph{natural lift} of $f$.

Let $X= \fix(\mc{I}):= \{x\in S: f_t(x)=x\text{ for all } t\in [0,1]\}$ be the fixed point set of the isotopy, and assume $\mc{F}$ is an oriented foliation of $S$ with singularities at $X$.
We say that the isotopy $\mc{I}$ is \emph{dynamically transverse} to $\mc{F}$ if for each $x\in S$, the arc $(f_t(x))_{x\in [0,1]}$ is homotopic with fixed endpoints in $S\sm X$, to an arc that is positively transverse to $\mc{F}$. In this case, it is also said that $\mc{F}$ is dynamically transverse to $\mc{I}$, and we call $(\mc{I}, \mc{F})$ a 
\emph{Brouwer-Le Calvez pair} for $f$. For further details see \cite{lecalvez-equivariant}.

If $\hat{X}=\pi^{-1}(X)$, then the isotopy $\hat{\mc{I}}$ fixes $\hat{X}$ pointwise. If $\mc{F}$ is dynamically transverse to $\mc{I}$, then the lifted foliation $\hat{\mc{F}}$ (with singularities in $\hat{X}$) of $\hat{S}$ is also dynamically transverse to $\hat{\mc{I}}$. 
Thus, $(\hat{\mc{I}}, \hat{\mc{F}})$ is a Brouwer-Le Calvez pair for $\hat{f}$, which we call the natural lift of $(\mc{I}, \mc{F})$ to the universal covering of $S$.

When $S\sm \fix(\mc{I})$ is connected, the dynamically transverse foliation $\mc{F}$ has the property that if $\til{\mc{F}}$ is its lift to the universal covering of $S\sm \fix(\mc{I})$ and $(\til{f}_t)_{t\in [0,1]}$ is the natural lift of the isotopy $\mc{I}|_{S\sm \fix(\mc{I})}$ then $\til{\mc{F}}$ is a foliation by \emph{Brouwer lines} for $\til{f}=\til{f}_1$. This means that each leaf $\til{\Gamma}$ of $\til{\mc{F}}$ is a properly embedded line which separates the covering space into two connected components: the \emph{left side}, which contains $\til{f}^{-1}(\Gamma)$, and the \emph{right side} which contains $\til{f}(\Gamma)$. Note that $\til{f}$ is a \emph{Brouwer homeomorphism}, \ie it preserves orientation and has no fixed points.

We will use the following result from \cite{jaulent} (stated in a more general way there):
\begin{proposition}\label{pro:jaulent} If $S$ is an orientable surface (not necessarily compact) and $f\colon S\to S$ is a homeomorphism isotopic to the identity and $\hat{f}\colon \hat{S}\to \hat{S}$ is a natural lift of $f$, then there exists a closed set $X_*\subset \fix(f)$ and a Brouwer-Le Calvez pair $((f_t^*)_{t\in [0,1]}, \mc{F}_*)$ associated to $f|_{S\sm X_*}$ in $S\sm X_*$ such that $\mc{F}_*$ has no singularities, and for each $z\in S\sm X_*$ the lift of $(f_t^*(z))_{t\in[0,1]}$ with base point $\hat{z}$ has endpoint $\hat{f}(\hat{z})$. 
\end{proposition}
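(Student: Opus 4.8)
The plan is to obtain the statement by combining two deep results --- Jaulent's theorem on the existence of maximal identity isotopies and Le Calvez's equivariant foliated version of the Brouwer translation theorem --- the only new ingredient being the (easy) observation that the construction can be carried out without altering the prescribed natural lift $\hat f$. First I would fix a reference isotopy $\mc{I}=(f_t)_{t\in[0,1]}$ from $\id_S$ to $f$ whose lift to $\hat S$ starting at $\id_{\hat S}$ is exactly $\hat f$; such an isotopy exists because, by hypothesis, $\hat f$ is a natural lift of $f$.

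Second, I would run Jaulent's maximization. Consider the collection $\mc{P}$ of pairs $(X,I')$ where $X\subseteq\fix(f)$ is closed with $\fix(\mc{I})\subseteq X$, and $I'$ is an identity isotopy for $f$ that fixes $X$ pointwise and is equivalent to $\mc{I}$ away from $X$ in Jaulent's sense; in particular, for every $z\in S\sm X$ the trajectory $t\mapsto f'_t(z)$ is homotopic with fixed endpoints in $S$ to $t\mapsto f_t(z)$. Equip $\mc{P}$ with Jaulent's partial order, under which $(X_1,I_1)\preceq(X_2,I_2)$ when $X_1\subseteq X_2$ and the two isotopies agree off $X_2$ up to homotopy of trajectories with fixed endpoints in $S\sm X_1$. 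Then $(\fix(\mc{I}),\mc{I})\in\mc{P}$, so $\mc{P}\ne\emptyset$; and every chain $(X_\alpha,I_\alpha)_\alpha$ admits an upper bound, obtained by setting $X_*=\cl\bigl(\bigcup_\alpha X_\alpha\bigr)$ and building a limiting isotopy by a careful reparametrization-and-gluing procedure, controlled near the accumulating singular set. Zorn's lemma then yields a $\preceq$-maximal pair $(X_*,I^*)$, with $X_*$ a closed subset of $\fix(f)$.

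Third, I would read off the two remaining assertions. Since $(X_*,I^*)\succeq(\fix(\mc{I}),\mc{I})$, for each $z\in S\sm X_*$ the trajectory $t\mapsto f^*_t(z)$ is homotopic rel endpoints in $S$ to $t\mapsto f_t(z)$, so their lifts from a common basepoint $\hat z$ have the same endpoint, namely $\hat f(\hat z)$: this gives the last clause. Moreover maximality forces $\fix(I^*)=X_*$, since a point $z\in S\sm X_*$ fixed by every $f^*_t$ could be adjoined to $X_*$ without changing $I^*$, producing an element of $\mc{P}$ strictly above $(X_*,I^*)$ --- impossible. I would then apply Le Calvez's equivariant foliated Brouwer theorem to the homeomorphism $f|_{S\sm X_*}$ of the open orientable surface $S\sm X_*$, equipped with the identity isotopy $I^*|_{S\sm X_*}$, which is a maximal identity isotopy by the maximality of $(X_*,I^*)$. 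That theorem produces an oriented foliation $\mc{F}_*$ of $S\sm X_*$, dynamically transverse to $I^*|_{S\sm X_*}$, whose singular set equals $\fix(I^*|_{S\sm X_*})=\fix(I^*)\sm X_*=\emptyset$; hence $\mc{F}_*$ is nonsingular and $(I^*|_{S\sm X_*},\mc{F}_*)$ is the desired Brouwer-Le Calvez pair.

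The genuine difficulty lies entirely inside the cited inputs rather than in the bookkeeping: the chain condition in Jaulent's theorem requires a delicate construction of the limiting isotopy (and the verification that homotopy classes of trajectories pass to the limit), while Le Calvez's foliation theorem is itself a substantial piece of equivariant Brouwer theory. Granting these, the only point to watch is that all trajectory homotopies used in the maximization are with fixed endpoints in subsets of $S$, so the natural lift $\hat f$ is preserved throughout; the non-compactness of $S$ causes no trouble, as both theorems are valid for arbitrary oriented surfaces.
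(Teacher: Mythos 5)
First, note that the paper does not prove this proposition at all: it is quoted directly from Jaulent's paper (``We will use the following result from \cite{jaulent}''), so what you have written is a reconstruction of the cited proof rather than something to compare with an argument in the text. Your overall architecture (Zorn's lemma over pairs $(X,I')$ to get a maximal pair $(X_*,I^*)$, then Le Calvez's equivariant foliation theorem applied to $f|_{S\sm X_*}$, with the lift condition preserved because all trajectory homotopies are rel endpoints) is indeed the standard route, and the observation that this pins down the prescribed natural lift $\hat f$ is correct.

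However, two points in your outline are genuinely off. First, you set up the Zorn argument in the category of isotopies of $S$ that fix $X$ pointwise. That is not Jaulent's framework: Jaulent's pairs consist of a closed set $X\subset\fix(f)$ together with an isotopy of $f|_{S\sm X}$ defined only on $S\sm X$, precisely because producing a limiting isotopy of $S$ fixing an accumulating closed set pointwise is \emph{not} a ``careful reparametrization-and-gluing procedure'' --- it is the content of the later theorem of B\'eguin, Crovisier and Le Roux, which the paper's remark immediately after the proposition explicitly declines to use. With your stronger requirement the chain-upper-bound step is unproved; with Jaulent's weaker one it is his main technical theorem, and it suffices for the statement as given (the pair is only required to live on $S\sm X_*$). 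Second, your use of maximality is too weak for the final step: to apply Le Calvez's theorem and obtain a foliation of $S\sm X_*$ with \emph{no} singularities, you need that $f|_{S\sm X_*}$ has no fixed point whose $I^*$-trajectory is a loop contractible in $S\sm X_*$ (a transverse loop can never be null-homotopic in the complement of the singular set), not merely that no point of $S\sm X_*$ is fixed by every $f^*_t$. Ruling out such contractible fixed points from maximality requires modifying $I^*$ near the offending point using the contractibility of its trajectory --- a lemma in Jaulent's paper --- and your argument (``adjoin $z$ without changing $I^*$'') only covers the easy case. Both gaps are filled by the actual content of the results you cite, but as written your reduction does not yet establish the hypotheses under which Le Calvez's theorem yields a nonsingular foliation.
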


\begin{remark}It would be useful if the isotopy fixed $X_*$ pointwise, so it would provide a Brouwer-Le Calvez pair for $f$; this is possible using a recently announced result by F. B\'eguin, S. Crovisier and F. Le Roux which improves Jaulent's result, but we will not make use of that fact, since we will ensure that $X_*$ is totally disconnected (in which case the isotopy extends naturally).
Note also that the last part of Proposition \ref{pro:jaulent} implies that the Brouwer-Le Calvez pair may be lifted to a Brouwer-Le Calvez pair in $\hat{S}\sm \pi^{-1}(X_*)$ associated to $\hat{f}|_{\hat{S}\sm \pi^{-1}(X_*)}$. 
\end{remark}

We will need the following technical fact, which says that in a Brouwer-Le Calvez pair, if a point $x$ is close enough to some leaf of the foliation then the arc obtained by following the isotopy from the preimage of $x$ to the image of $x$ necessarily crosses the given leaf.

\begin{proposition}\label{pro:perto-corta} If $S$ is an orientable surface and $(\mc{I}, \mc{F})$ is a Brouwer-Le Calvez pair with $\mc{I}=(f_t)_{t\in [0,1]}$ and $f=f_1$, then every leaf $\Gamma$ of $\mc{F}$ has a neighborhood $V_{\Gamma}$ with the property that if $z\in V_{\Gamma}$ and $\gamma$ is any arc joining $f^{-1}(z)$ to $f(z)$  homotopic with fixed endpoints in $S\sm \fix(\mc{I})$ to the concatenation of $(f_t(f^{-1}(z)))_{t\in [0,1]}$ with $(f_t(z))_{t\in [0,1]}$, then $\gamma$ intersects $\Gamma$.
\end{proposition}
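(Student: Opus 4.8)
The plan is to argue by contradiction using the local structure of an oriented foliation near a leaf. Fix a leaf $\Gamma$ of $\mc{F}$ and pick a point $p\in\Gamma$. Since $\mc{F}$ is an oriented nonsingular foliation on a neighborhood of $p$, there is a \emph{flow box} $B$ around $p$: a homeomorphism $\varphi\colon B\to(-1,1)\times(-1,1)$ carrying the leaves of $\mc{F}|_B$ to the horizontal segments $(-1,1)\times\{s\}$, oriented in the direction of increasing first coordinate. The idea is to take $V_\Gamma$ to be (a shrunken version of) $B$, or rather a neighborhood of $\Gamma$ built by concatenating such flow boxes along $\Gamma$. The key point is purely local: if $z$ lies in a sufficiently small flow box $B$ around a point of $\Gamma$, then $f^{-1}(z)$ and $f(z)$ lie on opposite sides of $\Gamma$ (in the sense of the transverse ordering given by the flow box), and any arc joining them that represents the right homotopy class must cross $\Gamma$.

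The main steps, in order, would be: (i) By continuity of $f$ and $f^{-1}$, choose the flow box $B$ around $p$ small enough that $f(B)$ and $f^{-1}(B)$ are both contained in a slightly larger flow box $B'$ around $p$ (enlarge $B'$ along $\Gamma$ only as much as needed so that $\Gamma\cap B'$ is still a single leaf-segment, i.e. $B'$ is again a flow box). (ii) For $z\in B$, the isotopy arc $(f_t(z))_{t\in[0,1]}$ is, by the defining property of a Brouwer-Le Calvez pair, homotopic with fixed endpoints in $S\sm\fix(\mc{I})$ to an arc positively transverse to $\mc{F}$; likewise $(f_t(f^{-1}(z)))_{t\in[0,1]}$ is homotopic to a positively transverse arc. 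Their concatenation $\gamma_0$ — homotopic rel endpoints in $S\sm\fix(\mc{I})$ to the $\gamma$ of the statement — is therefore homotopic to an arc $\gamma_1$ positively transverse to $\mc{F}$, joining $f^{-1}(z)$ to $f(z)$. (iii) Since a positively transverse arc to $\Gamma$ always crosses $\Gamma$ from the left side to the right side and never back, $\gamma_1$ meets $\Gamma$ an odd number of times (counting algebraically, $+1$ each time) if and only if its endpoints are on opposite sides, and meets it zero times if and only if $f^{-1}(z)$ and $f(z)$ are on the same side. So it suffices to show $f^{-1}(z)$ and $f(z)$ are on opposite sides of $\Gamma$. (iv) Here one invokes the Brouwer-line picture: lift to the universal cover of $S\sm\fix(\mc{I})$ (using Proposition \ref{pro:jaulent} to arrange a nonsingular pair, or working directly with $\til{\mc{F}}$), where the lift $\til\Gamma$ of $\Gamma$ is a Brouwer line with $\til f^{-1}(\til\Gamma)$ strictly on its left and $\til f(\til\Gamma)$ strictly on its right. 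A point $z$ in the small flow box $B$ lifts to a point $\til z$ close to $\til\Gamma$; since $f(B),f^{-1}(B)\subset B'$, the points $\til f(\til z)$ and $\til f^{-1}(\til z)$ lie in the lifted flow box over $B'$, where "left/right of $\til\Gamma$" agrees with "left/right in the flow box $B'$". Because $\til f(\til\Gamma)$ lies entirely in the open right side and $\til f(\til z)$ is in a flow box meeting $\til\Gamma$, either $\til f(\til z)$ is on the right side of $\til\Gamma$, or else $\til z$ would have to already be on the right side — and a symmetric statement (with $\til f^{-1}$) forces $\til f^{-1}(\til z)$ to the left side. In either case $\til f(\til z)$ and $\til f^{-1}(\til z)$ are separated by $\til\Gamma$, hence $f(z)$ and $f^{-1}(z)$ are separated by $\Gamma$, and by (iii) $\gamma$ crosses $\Gamma$. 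Finally, let $V_\Gamma$ be the union of the flow boxes $B$ obtained this way as $p$ ranges over $\Gamma$; this is an open neighborhood of $\Gamma$ with the required property.

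The main obstacle I expect is step (iv): making precise why a point close to a Brouwer line $\til\Gamma$ cannot have both its image and preimage on the same side. The cleanest way is probably to choose $B$ so small that $\til z$, $\til f(\til z)$, $\til f^{-1}(\til z)$ all lie in the flow-box neighborhood $\til B'$ of a single segment of $\til\Gamma$ and then argue as follows: the set $\til f^{-1}(\til\Gamma)$ separates $\til B'$ (restricted near the relevant segment), with $\til\Gamma$ on its right; so the part of $\til B'$ strictly to the left of $\til\Gamma$ is mapped by $\til f$ into the right side of $\til\Gamma$, and the part strictly to the right of $\til\Gamma$ is mapped by $\til f^{-1}$ into the left side of $\til\Gamma$. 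If $\til z$ is on the left of $\til\Gamma$ then $\til f(\til z)$ is on the right; if $\til z$ is on the right then $\til f^{-1}(\til z)$ is on the left; and if $\til z\in\til\Gamma$ then $\til f(\til z)$ is strictly right and $\til f^{-1}(\til z)$ strictly left. All three cases give the separation. Care is needed because $\til f(\til B')$ need not be contained in $\til B'$, so "the right side of $\til\Gamma$" must be understood as the right side in the whole covering space (the two sides of the Brouwer line), not merely within the flow box; matching these two notions of side on the overlap is the one slightly delicate point, handled by shrinking $B$ enough that the relevant arcs stay in $\til B'$ and noting the flow-box orientation agrees with the Brouwer-line transverse orientation on $\til\Gamma\cap\til B'$.
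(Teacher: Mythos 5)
Your overall strategy is the paper's: pass to the universal cover of $S\sm\fix(\mc I)$ (or of the component containing $\Gamma$), where the lift $\til{\Gamma}$ is a Brouwer line for the natural lift $\til{f}$, and show that for $z$ in a suitable neighborhood of $\Gamma$ the lifted points $\til{f}^{-1}(\til{z})$ and $\til{f}(\til{z})$ are separated by $\til{\Gamma}$. But the execution of this key step (your (iv)) rests on claims that are false for a general Brouwer line. The Brouwer property gives only $\til{f}(\til{\Gamma})\subset R$ and $\til{f}^{-1}(\til{\Gamma})\subset L$, where $L,R$ are the two sides of $\til{\Gamma}$; it does not give ``if $\til{z}$ is on the left of $\til{\Gamma}$ then $\til{f}(\til{z})$ is on the right'', and the inference ``$\til{f}^{-1}(\til{\Gamma})$ separates $\til{B}'$ with $\til{\Gamma}$ on its right, so the left part of $\til{B}'$ is mapped by $\til{f}$ to the right of $\til{\Gamma}$'' is not valid: points of $L$ lying beyond $\til{f}^{-1}(\til{\Gamma})$ have image still in $L$, and $\til{f}^{-1}(\til{\Gamma})$ need not even meet $\til{B}'$. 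The correct statement — the one the paper uses — needs no case analysis by side at all: since $\til{f}(\til{\Gamma})\subset R$ and $\til{f}^{-1}(\til{\Gamma})\subset L$ with $L,R$ open, the set $\til{V}_{\til{\Gamma}}=\til{f}^{-1}(R)\cap\til{f}(L)$ is an open neighborhood of $\til{\Gamma}$ on which every point $\til{z}$ satisfies $\til{f}(\til{z})\in R$ and $\til{f}^{-1}(\til{z})\in L$; one then takes $V_\Gamma$ to be its projection. Your trichotomy by side of $\til{\Gamma}$ is thus both unjustified as written and unnecessary.

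A second gap is in how you transfer the separation to the given arc $\gamma$. Steps (ii)--(iii) are phrased downstairs in terms of ``the two sides of $\Gamma$'' and crossing numbers of a positively transverse representative; but a leaf of $\mc F$ need not separate $S\sm\fix(\mc I)$ (it may be non-separating or dense), so ``same side/opposite sides of $\Gamma$'' is not well defined there, and in any case the crossing count of a transverse representative says nothing directly about the arbitrary arc $\gamma$ of the statement. What makes the argument close — and what you never state — is that $\gamma$, being homotopic with fixed endpoints in $S\sm\fix(\mc I)$ to the concatenation of the isotopy tracks, lifts through $\til{z}$ to an arc whose endpoints are exactly $\til{f}^{-1}(\til{z})$ and $\til{f}(\til{z})$, because the natural lift of the restricted isotopy joins $\til{z}$ to $\til{f}(\til{z})$. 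Once this is said, the conclusion is immediate: the lifted arc joins two points separated by the properly embedded line $\til{\Gamma}$, hence meets $\til{\Gamma}$, hence $\gamma$ meets $\Gamma$; dynamical transversality and the parity argument play no role, and Proposition \ref{pro:jaulent} is irrelevant here since the Brouwer-Le Calvez pair is already given.
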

\begin{proof} Let $X=\fix(\mc{I})$. It suffices to prove the proposition when $N=S\sm X$ is connected, since otherwise we may work with the connected component of $S\sm X$ containing $\Gamma$. Let $\tau\colon \til{N}\to N$ be the universal covering of $N$. Then as we noted earlier, the natural lift $((\til{f}_t)_{t\in [0,1]}, \til{\mc{F}})$ of the Brouwer-Le Calvez pair restricted to $N$ has the property that every leaf of $\til{\mc{F}}$ is a Brouwer line. In particular if  $\til{\Gamma}$ is a lift of $\Gamma$, then $\til{\Gamma}$ separates $\til{f}_1(\til{\Gamma})$ from $\til{f}_1^{-1}(\til{\Gamma})$. This implies that there exists a neighborhood $\til{V}_{\til{\Gamma}}$ of $\til{\Gamma}$ such that $\til{f}_1(\til{V}_{\til{\Gamma}})$ and $\til{f}_1^{-1}(\til{V}_{\til{\Gamma}})$ are separated by $\til{\Gamma}$. See Figure \ref{fig:Vgamma}. Letting $V_\Gamma=\tau(\til{V}_{\til{\Gamma}})$, if $z\in V_\Gamma$ and $\gamma$ is an arc as in the statement of the proposition, then $\gamma$ lifts to some arc $\til{\gamma}$ joining a point $\til{f}^{-1}(\til{z})\in \til{f}^{-1}(\til{V}_{\til{\Gamma}})$ to $\til{f}(\til{z})\in \til{f}(\til{V}_{\til{\Gamma}})$, so $\til{\gamma}$ must intersect $\til{\Gamma}$. This implies that $\gamma$ intersects $\tau(\til{\Gamma})=\Gamma$ as required.
\end{proof}
\begin{figure}[ht]
\includegraphics[width=0.8\linewidth]{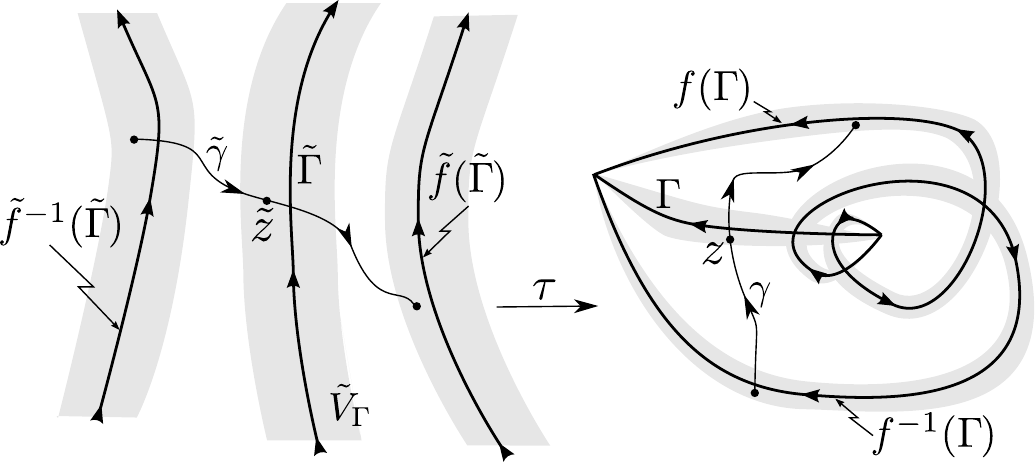}
\caption{Proof of Proposition \ref{pro:perto-corta}.}
\label{fig:Vgamma}
\end{figure}

\subsection{Linking numbers}

If $\gamma$ is a loop in $\R^2$ and $p$ is a point in the complement of $\gamma$, we write $\wind(\gamma, p)$ for the winding number of $\gamma$ around $p$.
If $\gamma$ is a loop in $\SS^2$ and $p,q$ are points in its complement, we write $\wind(\gamma, p, q)$ for the winding number of $\gamma$ around $p$ in the plane $\SS^2\sm \{q\}$, or equivalently, the algebraic intersection number $\sigma\wedge \gamma$, where $\sigma$ is any arc joining $p$ to $q$. Note that $\wind(\gamma, p, p) = 0$, and if $r\in \SS^2$ is in the complement of $\gamma$, then $\wind(\gamma, p, q) = \wind(\gamma, p, r)+\wind(\gamma, r, q)$. 

We state the following lemma for future reference. The proof is straightforward (see, for instance, \cite[Proposition 9]{tal2}).

\begin{lemma}\label{lem:loop-trap} If $\gamma$ is a loop positively transverse to an orbit $\Gamma$ of a topological flow of $\SS^2$ intersecting the orbit at least once, then $\ol{\omega}(\Gamma)$ and $\ol{\alpha}(\Gamma)$ belong to different connected components of $\SS^2\sm \gamma$. Moreover, there is $k\neq 0$ such that $\wind(\gamma, p, q) = k$ for each $p\in \ol{\omega}(\Gamma)$ and $q\in \ol{\alpha}(\Gamma)$.
\end{lemma}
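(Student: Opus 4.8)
The plan is to argue purely in $\SS^2$ using the fact that a loop positively transverse to the flow acts as a ``one-way gate''. First I would reduce to the case where $\gamma$ is a simple loop: since $\gamma$ is positively transverse to $\Gamma$ and crosses it, I can look at an arc of $\gamma$ between two consecutive crossing points and observe that all crossings have the same sign (left-to-right); a standard surgery argument lets one replace $\gamma$ by a simple loop $\gamma'$ that still crosses $\Gamma$ positively, still separates the relevant pieces, and has the same winding-number differences around points on either side. Concretely, near a self-intersection one cuts and reconnects $\gamma$ respecting transversality; because all crossings of $\Gamma$ are positive, the flow line cannot ``escape'' through a corner, so one of the resulting subloops still meets $\Gamma$ and the other bounds a disk disjoint from the part of $\Gamma$ we care about. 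Iterating, we may assume $\gamma$ is simple and hence, in $\SS^2$, bounds two disks $D^-$ and $D^+$, where the flow enters $D^+$ (the right side) and leaves $D^-$ (the left side) through $\gamma$.

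The key step is then the trapping observation: because $\gamma$ is positively transverse, every orbit that crosses $\gamma$ does so from $D^-$ into $D^+$ and never back. Hence once the forward orbit of (a point of) $\Gamma$ enters $D^+$ after the crossing, it stays in $\ol{D^+}$ for all future time, so $\omega(\Gamma)\subset \ol{D^+}$; moreover $\omega(\Gamma)$ cannot meet $\gamma$ itself, since a point of $\gamma$ has a flow-box neighborhood in which the orbit passes from $D^-$ to $D^+$ transversally and never returns, contradicting recurrence. Therefore $\omega(\Gamma)\subset D^+$, and filling in, $\ol{\omega}(\Gamma)\subset D^+$ as well (the components of $\SS^2\sm\omega(\Gamma)$ adjoined are either inside $D^+$ or equal to the one containing $D^-$, which is excluded because it meets $\Gamma$, hence is not disjoint from $\Gamma$). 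Symmetrically, running the flow backwards, $\ol{\alpha}(\Gamma)\subset D^-$. Since $D^+$ and $D^-$ are the two distinct connected components of $\SS^2\sm\gamma$, the first assertion follows.

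For the winding-number statement, recall $\wind(\gamma,p,q)$ equals the algebraic intersection number $\sigma\wedge\gamma$ for any arc $\sigma$ from $p$ to $q$; if $p,p'$ lie in the same component of $\SS^2\sm\gamma$ then $\wind(\gamma,p,q)=\wind(\gamma,p',q)$, and likewise in the second argument, so $\wind(\gamma,p,q)$ depends only on the pair of components containing $p$ and $q$. Thus there is a single integer $k$ with $\wind(\gamma,p,q)=k$ for all $p\in\ol\omega(\Gamma)\subset D^+$ and $q\in\ol\alpha(\Gamma)\subset D^-$. It remains to see $k\neq 0$: since $p$ and $q$ lie in different components of $\SS^2\sm\gamma$, any arc $\sigma$ from $p$ to $q$ must cross $\gamma$; picking $\sigma$ to be a subarc of $\Gamma$ (which runs from $\ol\alpha(\Gamma)$ to $\ol\omega(\Gamma)$ and crosses $\gamma$), all its crossings with $\gamma$ are positive because $\gamma$ is positively transverse to $\Gamma$, so the algebraic intersection number is a sum of $+1$'s, hence strictly positive; therefore $k\geq 1$. (Reversing orientation conventions would give $k\leq -1$; either way $k\neq0$.)

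The main obstacle I expect is the reduction to a simple loop and the careful bookkeeping that the surgery preserves both positive transversality to $\Gamma$ and the separation of $\ol\omega(\Gamma)$ from $\ol\alpha(\Gamma)$; everything after that is the standard flow-box ``one-way gate'' argument plus elementary intersection theory. Since the paper cites \cite[Proposition 9]{tal2} for this, one may alternatively just invoke that reference, but the self-contained argument above is the route I would take.
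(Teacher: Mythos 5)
The paper itself offers no argument for this lemma (it simply refers to \cite[Proposition 9]{tal2}), so your proposal has to stand on its own, and as written it has two genuine gaps.

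First, the reduction to a simple loop. The lemma is about $\wind(\gamma,p,q)$ for the \emph{original} loop, and cutting $\gamma$ at a self-intersection splits this quantity as a sum, $\wind(\gamma,p,q)=\wind(\gamma_1,p,q)+\wind(\gamma_2,p,q)$; discarding $\gamma_2$ is only harmless if you know its contribution vanishes, or at least has a consistent sign. Your justification --- that the discarded subloop ``bounds a disk disjoint from the part of $\Gamma$ we care about'' --- is unsubstantiated: both subloops may meet $\Gamma$, and a subloop \emph{disjoint} from $\Gamma$ can a priori still separate points of $\ol{\omega}(\Gamma)$ from points of $\ol{\alpha}(\Gamma)$ (for instance a subloop lying inside a complementary component of $\omega(\Gamma)$ that is filled into $\ol{\omega}(\Gamma)$), so proving that it contributes $0$ is essentially the constancy assertion of the lemma itself; the reduction is circular as it stands. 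There are also unaddressed technical points: a merely continuous loop may have infinitely many, non-isolated self-intersections, so the iteration need not terminate, and reconnecting at a corner that lies on $\Gamma$ can destroy positive transversality. The trapping argument and the single-sign intersection count that follow are fine, but they only yield the conclusion for the simple loop you manufactured, not for $\gamma$.

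Second, the claim that $\omega(\Gamma)$ cannot meet $\gamma$. Your flow-box argument presumes $\gamma$ is transverse to the flow near such a point and that some recurrence of $\Gamma$ is violated, but $\gamma$ is only assumed transverse to the single orbit $\Gamma$, the accumulation point need not lie on $\Gamma$, and it may be a singularity, where there is no flow box at all. In fact the disjointness can genuinely fail under the stated hypotheses: for the radial (north--south) flow on $\SS^2=\R^2\cup\{\infty\}$ with sink at the origin, take $\Gamma=\{(x,0):x>0\}$ and let $\gamma$ be the circle of radius $1$ centred at $(1,0)$, suitably oriented; then $\gamma$ meets $\Gamma$ exactly once and crosses it positively there, yet $\ol{\omega}(\Gamma)=\{0\}\subset\gamma$. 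So this step cannot be proved the way you propose; the statement has to be understood, as it is actually applied in Lemma \ref{lem:linking-sphere}, for points $p\in\ol{\omega}(\Gamma)$ and $q\in\ol{\alpha}(\Gamma)$ that are known not to lie on $\gamma$ (there $p,q$ are singularities outside $U$, while the loop consists of transverse arcs and an arc inside $U$). A route that avoids both difficulties is to drop the surgery and study the locally constant function $z\mapsto\wind(\gamma,z,q)$ on $\SS^2\sm\gamma$: along $t\mapsto\Gamma(t)$ it can change only at intersections with $\gamma$, and positive transversality forces every jump to have the same sign; combined with the trapping of the forward and backward orbit and the connectedness of the filled limit sets, this gives the constancy and $k\neq0$ directly for the original loop, and it also disposes of the finiteness of crossings that your ``sum of $+1$'s'' tacitly assumes.
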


For the remainder of this subsection, fix a Brouwer-Le Calvez pair $(\mc{I}, \mc{F})$ of an orientation preserving homeomorphism $f$ of the sphere $\SS^2$, where $\mc{I}=(f_t)_{t\in [0,1]}$ and let $X=\fix(\mc{I})$. 

If $p,q \in X$ and $z\in \fix(f)\sm \{p,q\}$, we define the \emph{linking number} $L_{\mc{I}}(z,p,q) = \wind(\gamma_z, p, q)$ where $\gamma_z$ is the loop $(f_t(z))_{t\in [0,1]}$. More generally, if $U\subset \SS^2$ is an $f$-invariant open topological disk and $p,q\in X\sm U$, we define the linking number $L_{\mc{I}}(U, p, q)$ as follows: choose any $z\in U$, any arc $\sigma$ in $U$ joining $f(z)$ to $z$, and let $\gamma$ be the loop obtained by concatenation of the arc $(f_t(z))_{t\in [0,1]}$ with $\sigma$. Then set
 $$L_{\mc{I}}(U, p, q) = \wind(\gamma, p, q).$$
It is easy to see that this number does not depend on the choice of either $z$ or $\sigma$ (see \cite[\S 5.3]{kt-strictly}).

If $r\notin U$ is another point of $X$, the properties of winding number lead to 
$$L_{\mc{I}}(U, p, q)=L_{\mc{I}}(U, p, r) + L_{\mc{I}}(U, r, q).$$

We may also define the linking for fixed points as follows: if $r\in\fix(f)$, then $L_{\mc{I}}(r, p, q)=\wind(\gamma, p, q)$, where $\gamma=(f_t(r))_{t\in [0,1]}$. Clearly, if $r\in U$ then $L_{\mc{I}}(r, p, q)=L_{\mc{I}}(U, p, q).$
In particular, we note that if $U\cap X\neq \emptyset$ then $L_{\mc{I}}(U, p, q)=0$.

Another useful property that follows from the definition is that if $\mc{I}^n$ is the isotopy from $\id$ to $f^n$ obtained by successive concatenation of $\mc{I}$, then $L_{\mc{I}^n}(U, p, q)  = nL_{\mc{I}}(U, p,q)$.

Note that for any leaf $\Gamma$ of $\mc{F}$ each set $\ol{\alpha}(\Gamma)$ and $\ol{\omega}(\Gamma)$ contains at least one point of $X$ due to the Poincar\'e-Bendixson theorem.
\begin{lemma}[Linking lemma]\label{lem:linking-sphere} Let $\Gamma$ be a leaf of $\mc{F}$, and assume that $V_\Gamma$ is a neighborhood of $\Gamma$ as in the statement of \ref{pro:perto-corta}. Then, for any $f$-invariant open topological disk  $U\subset \SS^2$ intersecting $V_\Gamma$, one of the following properties hold:
\begin{itemize}
\item[(1)] $\ol{\alpha}(\Gamma)\cap X\subset U$;
\item[(2)] $\ol{\omega}(\Gamma)\cap X\subset U$;
\item[(3)] 
\begin{itemize}
\item[(a)] $\ol{\alpha}(\Gamma)\cap X$ and $\ol{\omega}(\Gamma)\cap X$ are disjoint from $U$ and there is $k\neq 0$ such that $L(U, p,q) = k$ for all $p\in \ol{\alpha}(\Gamma)\cap X$ and $q\in \ol{\omega}(\Gamma)\cap X$, or
\item[(b)] $\Gamma\cap U\neq \emptyset$, and each connected component of $U\cap \Gamma$  is wandering. 
\end{itemize}
\end{itemize}
\end{lemma}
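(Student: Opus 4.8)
The plan is to detect the three cases by means of a single loop built from the isotopy at a point of $U\cap V_\Gamma$, which is forced to cross $\Gamma$ by Proposition \ref{pro:perto-corta}, and then to feed that loop into Lemma \ref{lem:loop-trap} together with the relation between winding numbers around singularities and the linking number $L_{\mc I}(U,\cdot,\cdot)$; the delicate cases (where $U$ meets $X$ or is swallowed by a filled limit set) are then pinned down using that $\Gamma$ accumulates on the whole of $\alpha(\Gamma)$ and $\omega(\Gamma)$, and the wandering cross‑cut statement is obtained by a Brouwer‑theory argument in the universal cover of $\SS^2\sm X$.

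Concretely, fix $z\in U\cap V_\Gamma$. By dynamical transversality the concatenation of $(f_t(f^{-1}(z)))_{t\in[0,1]}$ with $(f_t(z))_{t\in[0,1]}$ is homotopic with fixed endpoints in $\SS^2\sm X$ to a positively transverse arc $\delta$ from $f^{-1}(z)$ to $f(z)$, and by Proposition \ref{pro:perto-corta} $\delta$ meets $\Gamma$. Closing $\delta$ with an arc $\sigma\subset U$ from $f(z)$ to $f^{-1}(z)$ yields a loop $\Lambda=\delta*\sigma$. Two elementary observations are used repeatedly. First, writing the standard linking loop $\gamma_U$ at the base points $z$ and $f^{-1}(z)$ and using that $U$ is simply connected (so any two arcs in $U$ with the same endpoints are homologous rel endpoints), one sees that the class of $\Lambda$ in $H_1(\SS^2\sm\{p,q\})$ is twice that of $\gamma_U$; hence $\wind(\Lambda,p,q)=2L_{\mc I}(U,p,q)$ for every $p,q\in X\sm U$. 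Second, if $\Gamma\cap U=\emptyset$ then $\sigma$ misses $\Gamma$, so $\Lambda$ meets the orbit $\Gamma$ only along $\delta$, transversally and positively; thus $\Lambda$ is positively transverse to $\Gamma$ and intersects it, and (granting disjointness from the filled limit sets, see below) Lemma \ref{lem:loop-trap} applies to $\Lambda$.

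Suppose first $U\cap X=\emptyset$. Then $\ol\alpha(\Gamma)\cap X$ and $\ol\omega(\Gamma)\cap X$ (both nonempty by Poincar\'e--Bendixson) are automatically disjoint from $U$, so (1) and (2) cannot hold and we must reach (3). If moreover $\Gamma\cap U=\emptyset$, Lemma \ref{lem:loop-trap} gives $k\neq 0$ with $\wind(\Lambda,p,q)=k$ for all $p\in\ol\omega(\Gamma)$, $q\in\ol\alpha(\Gamma)$, and by the first observation $L_{\mc I}(U,p,q)=\mp k/2$ is a nonzero constant on $(\ol\alpha(\Gamma)\cap X)\times(\ol\omega(\Gamma)\cap X)$: this is case (3a). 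If instead $\Gamma\cap U\neq\emptyset$, I claim each component $C$ of $\Gamma\cap U$ is wandering, giving (3b). To see this, lift to the universal covering $\til N$ of $\SS^2\sm X$, on which the natural lift $\til f$ is a Brouwer homeomorphism: fix a lift $\til C$ of $C$, the component $\til U$ of the preimage of $U$ containing it, and the lift $\til\Gamma\supset\til C$ of $\Gamma$, which is a Brouwer line for $\til f$. By Lemma \ref{lem:brouwer-free}, $\til f^{\,n}(\til C)\cap\til C=\emptyset$ for all $n\neq 0$; and since $U$ is a topological disk, $\tau|_{\til U}$ is a homeomorphism onto $U$, so the only lift of $C$ contained in $\til f^{\,n}(\til U)=T_n\til U$ is $T_n\til C$, whence $f^n(C)\cap C\neq\emptyset$ would force $\til f^{\,n}(\til\Gamma)$ to cross the leaf $T_n\til\Gamma$ of $\til{\mc F}$. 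Analyzing this with the Brouwer‑line property of the (pairwise disjoint) lifts of $\Gamma$ shows that it can happen only if $\til U$ contains a full side of some translate of $\til\Gamma$, which upon projection would put $\ol\alpha(\Gamma)\cap X$ or $\ol\omega(\Gamma)\cap X$ inside $U$, contradicting $U\cap X=\emptyset$; hence $C$ is wandering.

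Finally suppose $U\cap X\neq\emptyset$. Then $L_{\mc I}(U,\cdot,\cdot)\equiv 0$, so (3a) is impossible and we must produce (1), (2) or (3b). If $\ol\omega(\Gamma)\cap X\subset U$ we are in (2) and if $\ol\alpha(\Gamma)\cap X\subset U$ we are in (1); so assume each filled limit set has a singularity outside $U$. When $\Gamma\cap U\neq\emptyset$ the covering argument of the previous paragraph applies verbatim: a non‑wandering component of $\Gamma\cap U$ forces $\til U$ to absorb a full side of a translate of $\til\Gamma$, hence forces (1) or (2); so if we are in neither, every component of $\Gamma\cap U$ is wandering — case (3b). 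When $\Gamma\cap U=\emptyset$, the loop $\Lambda$ is positively transverse to $\Gamma$; since $\Gamma$ accumulates on all of $\omega(\Gamma)$ and $\alpha(\Gamma)$ and $\sigma\subset U$, $\Lambda$ can meet $\ol\omega(\Gamma)$ or $\ol\alpha(\Gamma)$ only if $U$ is contained in one of these (necessarily filled) continua; if $\Lambda$ is disjoint from both, Lemma \ref{lem:loop-trap} and the first observation give $0=\wind(\Lambda,p,q)=2L_{\mc I}(U,p,q)\neq 0$ for suitable $p\in\ol\omega(\Gamma)\cap X$, $q\in\ol\alpha(\Gamma)\cap X$, a contradiction, so in fact $U$ lies inside $\ol\omega(\Gamma)$ or $\ol\alpha(\Gamma)$, and then the non‑separating (filled) structure of that limit set forces all of its singularities into $U$ — again (1) or (2). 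The main obstacle, where most of the work goes, is precisely this bookkeeping: establishing that $\Lambda$ may be taken disjoint from $\ol\omega(\Gamma)$ and $\ol\alpha(\Gamma)$ so that Lemma \ref{lem:loop-trap} applies as stated, and controlling the deck translates $T_n$ in the Brouwer‑line/covering argument behind (3b), especially when $U$ fails to be simply connected in $\SS^2\sm X$; both are handled via the accumulation of $\Gamma$ on its whole $\alpha$‑ and $\omega$‑limit sets and the fact that $\ol\alpha(\Gamma)$ and $\ol\omega(\Gamma)$ are filled non‑separating continua, as in the analogous lemmas of \cite{kt-strictly,kt-pseudo}.
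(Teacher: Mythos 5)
Your setup (the loop $\delta*\sigma$ through a point of $U\cap V_\Gamma$, the identity $\wind(\Lambda,p,q)=2L_{\mc I}(U,p,q)$, and the application of Lemma \ref{lem:loop-trap} when $\Gamma\cap U=\emptyset$) matches the paper's argument and is fine. The genuine gap is in the case $\Gamma\cap U\neq\emptyset$ with a nonwandering component $C$. There the lemma does \emph{not} claim this situation is impossible or that it forces (1) or (2); its conclusion is (3a), i.e.\ a nonzero constant linking number. Your covering-space argument tries to rule the situation out: you assert that $\til f^{\,n}(\til\Gamma)$ meeting $T_n\til\Gamma$ ``can happen only if $\til U$ contains a full side of some translate of $\til\Gamma$.'' This is unsubstantiated and not a consequence of the Brouwer-line property: $\til f^{\,n}(\til\Gamma)$ is not a leaf of $\til{\mc F}$ and may perfectly well cross other lifts of $\Gamma$, so no such dichotomy follows, and in any case you never produce the nonzero linking number that case (3a) demands. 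The missing mechanism is the one the paper uses: if $z\in C$ and $f^n(z)\in C$, take the subarc $\sigma\subset C\subset\Gamma$ from $f^n(z)$ to $z$, close it up with the transverse arcs $\til\gamma_z*\cdots*\til\gamma_{f^{n-1}(z)}$, perturb in a tubular neighborhood of the leaf so the resulting loop is positively transverse to $\mc F$ and homotopic to the original in $\SS^2\sm X$, and apply Lemma \ref{lem:loop-trap} to get $L_{\mc I^n}(U,p,q)\neq 0$, hence $L_{\mc I}(U,p,q)\neq 0$; this simultaneously forces $U\cap X=\emptyset$ and gives (3a).

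Two further points. First, in the subcase $U\cap X\neq\emptyset$, $\Gamma\cap U=\emptyset$, your final step claims that $U\subset\ol\omega(\Gamma)$ (or $\ol\alpha(\Gamma)$) forces all singularities of that filled limit set into $U$; this does not follow—an open disk inside a filled continuum need not contain its singular points. The correct logic is simpler: once neither (1) nor (2) holds, pick $p,q$ outside $U$, run the loop-trap argument, and conclude $L_{\mc I}(U,p,q)\neq 0$, which already contradicts $U\cap X\neq\emptyset$ (a singularity in $U$ forces all linking numbers of $U$ to vanish), so this subcase simply evaporates. Second, the ``bookkeeping'' you identify as the main obstacle—arranging $\Lambda$ disjoint from $\ol\omega(\Gamma)$ and $\ol\alpha(\Gamma)$—is not needed: Lemma \ref{lem:loop-trap} as stated only requires the loop to be positively transverse to the orbit $\Gamma$ itself and to meet it at least once, which your $\Lambda$ satisfies whenever $\sigma\subset U$ misses $\Gamma$; the disjointness from the filled limit sets is part of its conclusion, not a hypothesis you must verify.
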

Whenever property (3a) above holds we say that $U$ is \emph{linked} to $\Gamma$. Thus, either all singularities in the filled $\omega$-limit set of $\Gamma$ are in $U$, or a similar property holds for the $\alpha$-limit set, or $\Gamma$ intersects $U$ and all connected components of this intersection are wandering, or $U$ is linked to $\Gamma$.

\begin{proof}
Assume that neither (1) nor (2) hold, so that there exist $p\in \ol{\alpha}(\Gamma)\cap X\sm U$ and $q\in \ol{\omega}(\Gamma)\cap X\sm U$. 
For $z\in S\sm X$, let $\gamma_z$ denote the arc $(f_t(z))_{t\in [0,1]}$, and let $\til{\gamma}_z$ denote an arc positively transverse to the foliation $\mc{F}$ and homotopic with fixed endpoints in $S\sm X$ to $\gamma_z$. Given $z\in U\cap V_\Gamma$, we know from the property of $V_\Gamma$ that the arc $\til{\gamma}_{f^{-1}(z)}*\til{\gamma}_z$ intersects $\Gamma$ (see Proposition \ref{pro:perto-corta}). Suppose first that $\Gamma$ does not intersect $U$. Then we may choose an arbitrary arc $\sigma$ in $U$ connecting $f(z)$ to $f^{-1}(z)$ and we have that $\beta = \til{\gamma}_{f^{-1}(z)}*\til{\gamma}_z*\sigma$ is a loop positively transverse to $\Gamma$ and has at least one intersection with $\Gamma$. From Lemma \ref{lem:loop-trap} we conclude that $\wind(\beta,p,q):=k'\neq 0$. But $\beta$ is homotopic to $\gamma_{f^{-1}(z)}*\gamma_z*\sigma$ in $\mathbb{S}^2\sm X$, and therefore $L_{\mc{I}^2}(U, p,q) = \wind(\gamma_{f^{-1}(z)}*\gamma_z*\sigma, p,q) = \wind(\beta,p,q) = k'\neq 0$. Thus $L_{\mc{I}}(U, p, q)= k'/2:=k\neq 0$. This also implies that no element of $X$ can be in $U$, as that would mean that $L_{\mc{I}}(U, p, q)=0$. Since the number $k'$ is independent on the choice of $p$ and $q$, and we showed that no element of $X$ may be in $U$, we conclude that (3a) holds.

Now suppose that $\Gamma$ intersects $U$, and assume that (3b) does not hold, so some component $C$ of $\Gamma\cap U$  is nonwandering. Thus there is $n>0$ and some point $z\in C$ such that $f^n(z)\in C$. Let $\sigma$ be the subarc of $C$ connecting $f^n(z)$ to $z$, and define $\beta = \til{\gamma}_z*\til{\gamma}_{f(z)}*\cdots*\til{\gamma}_{f^{n-1}(z)}*\sigma$. The only part of $\beta$ that is not positively transverse to $\mc{F}$ is $\sigma$, which is contained in a leaf; but a small perturbation in a tubular neighborhood of $\mc{F}$ around $C$ leads to a loop $\beta'$ which is homotopic to $\beta$ in $\mathbb{S}^2\sm X$, has the same basepoint $z\in C$, and is positively transverse to $\mc{F}$. Applying Lemma \ref{lem:loop-trap} we conclude that $\wind(\beta,p,q)=k\neq 0$, and since $\beta$ is homotopic to $\gamma_z*\gamma_{f(z)}*\cdots*\gamma_{f^{n-1}(z)}*\sigma$ in $\mathbb{S}^2\sm X$, we deduce that $L_{\mc{I}^n}(U, p,q) = \wind(\beta,p,q) :=k'\neq 0$. Thus $L_{\mc{I}}(U, p, q) = k:= k'/n\neq 0$, and as before this implies that $U$ is disjoint from $X$; since the number $k'$ is independent on the choice of $p,q$, we conclude again that (3a) holds.
\end{proof}

\subsection{Linking in the plane}

Assume now that $f\colon \R^2\to \R^2$ is an orientation-preserving homeomorphism and $(\mc{I}, \mc{F})$ is a Brouwer-Le Calvez pair for $f$, with $X=\fix(\mc{I})$ totally disconnected. We may consider the map $f'\colon \SS^2 = \R^2\sqcup\{\infty\}\to \SS^2$ induced by $f$ in the one-point compactification of $\R^2$ by fixing $\infty$, and the corresponding isotopy $\mc{I}'$ fixing $\infty$ pointwise. The foliation $\mc{F}'$ of $\SS^2$ with singularities at $X\sqcup\{\infty\}$ is then dynamically transverse, so that $(\mc{I}', \mc{F}')$ is a Brouwer-Le Calvez pair.

If $p\in X$ and $z\in \fix(f)\sm p$, we may define the linking number $L_{\mc{I}}(z,p)$ as the corresponding liking number $L_{\mc{I}'}(z,p,\infty)$ on $\mathbb{S}^2$, and similarly if $U\subset \R^2$ is an open $f$-invariant topological disk and $p\in X\sm U$, we define $L_{\mc{I}}(U, p)$ as the corresponding linking number $L_{\mc{I}'}(U, p,\infty)$ on $\mathbb{S}^2$.
In this setting we have the following:

\begin{lemma}[Linking lemma, planar version]\label{lem:linking-plane} Let $\Gamma$ be a leaf of $\mc{F}$ and $V_\Gamma$ a neighborhood of $\Gamma$ as in Proposition \ref{pro:perto-corta}. Suppose that $U\subset \R^2$ is an $f$-invariant open topological disk intersecting $V_\Gamma$. Then one of the following properties hold:
\begin{itemize}
\item[(1)] $\ol{\alpha}(\Gamma)$ is compact, and either
\begin{itemize}
\item[(a)] $\emptyset\neq \ol{\alpha}(\Gamma)\cap X\subset U$, or
\item[(b)] $\ol{\alpha}(\Gamma)\cap X$ is disjoint from $U$, and there is $k\neq 0$ such that $L_{\mc{I}}(U, p) = k$ for all $p\in \ol{\alpha}(\Gamma)\cap X$.
\end{itemize}
\item[(2)] $\ol{\omega}(\Gamma)$ is compact, and either
\begin{itemize}
\item[(a)] $\emptyset\neq \ol{\omega}(\Gamma)\cap X\subset U$, or
\item[(b)] $\ol{\omega}(\Gamma)\cap X$ is disjoint from $U$, and there is $k\neq 0$ such that $L_{\mc{I}}(U, p) = k$ for all $p\in \ol{\omega}(\Gamma)\cap X$.
\end{itemize}
\item[(3)] $\Gamma\cap U\neq \emptyset$ and each of its connected components is wandering.
\end{itemize}
\end{lemma}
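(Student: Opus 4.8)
The plan is to deduce the planar Linking Lemma (Lemma~\ref{lem:linking-plane}) from the spherical one (Lemma~\ref{lem:linking-sphere}) applied to the homeomorphism $f'$ of $\SS^2 = \R^2\sqcup\{\infty\}$ with its Brouwer-Le Calvez pair $(\mc{I}',\mc{F}')$, where the singular set of $\mc{F}'$ is $X' = X\sqcup\{\infty\}$. The leaf $\Gamma$ of $\mc{F}$ is also a leaf of $\mc{F}'$, but now $\infty$ has been added as an extra singularity, so the filled $\alpha$- and $\omega$-limit sets computed in $\SS^2$ may differ from the planar ones: each of $\ol{\alpha}^{\SS^2}(\Gamma)$, $\ol{\omega}^{\SS^2}(\Gamma)$ either contains $\infty$ or does not, and in the latter case it equals the planar filled limit set, which is then compact. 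By Poincar\'e--Bendixson (applied on $\SS^2$) each of the two filled limit sets meets $X'$, and at most one of them can contain $\infty$ unless $\Gamma$ is a line from $\infty$ to $\infty$; in fact if $\infty\in \ol{\alpha}^{\SS^2}(\Gamma)$ then $\ol{\omega}^{\SS^2}(\Gamma)$ is disjoint from $\infty$ (and vice versa), because a flow orbit cannot have $\infty$ in both its filled $\alpha$- and $\omega$-limit unless it is a homoclinic-type orbit, and even then one argues via the transversality below — the key point is simply that at least one of the two filled limits is a compact subset of $\R^2$.

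With that in hand, I would run the dichotomy of Lemma~\ref{lem:linking-sphere} for $f'$, $U$ (viewed as an invariant open disk in $\SS^2$ not containing $\infty$), and $\Gamma$. Case (3b) of the spherical lemma gives directly case (3) here ($\Gamma\cap U\neq\emptyset$ with all components wandering), since wandering in $\SS^2$ and in $\R^2$ coincide for subsets of $\R^2$. In the spherical case (1), $\ol{\alpha}^{\SS^2}(\Gamma)\cap X'\subset U$; since $\infty\notin U$ this forces $\infty\notin\ol{\alpha}^{\SS^2}(\Gamma)$, hence $\ol{\alpha}^{\SS^2}(\Gamma)=\ol{\alpha}(\Gamma)$ is compact and $\emptyset\neq\ol{\alpha}(\Gamma)\cap X\subset U$, which is case (1a). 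Symmetrically the spherical case (2) gives case (2a). In the spherical case (3a), $\ol{\alpha}^{\SS^2}(\Gamma)\cap X'$ and $\ol{\omega}^{\SS^2}(\Gamma)\cap X'$ are both disjoint from $U$, and there is $k\neq 0$ with $L_{\mc{I}'}(U,p,q)=k$ for all $p\in\ol{\alpha}^{\SS^2}(\Gamma)\cap X'$, $q\in\ol{\omega}^{\SS^2}(\Gamma)\cap X'$. Now use whichever of the two filled limit sets is compact: say $\ol{\omega}^{\SS^2}(\Gamma)=\ol{\omega}(\Gamma)$ is compact in $\R^2$ and disjoint from $X$-points in $U$; then taking $q=\infty\in\ol{\alpha}^{\SS^2}(\Gamma)$ (if $\infty$ lies there) or, more cleanly, taking $p\in\ol{\alpha}^{\SS^2}(\Gamma)\cap X'$ arbitrary and writing $L_{\mc{I}'}(U,q,\infty) = L_{\mc{I}'}(U,q,p)+L_{\mc{I}'}(U,p,\infty)$ with the additivity relation, one sees $L_{\mc{I}}(U,q) = L_{\mc{I}'}(U,q,\infty)$ is independent of $q\in\ol{\omega}(\Gamma)\cap X$ and nonzero (nonzero because $L_{\mc{I}'}(U,p,q)=k\neq0$ for the relevant pair). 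That yields case (2b); and symmetrically, if instead $\ol{\alpha}^{\SS^2}(\Gamma)$ is the compact one, we get case (1b).

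The main obstacle I anticipate is the bookkeeping around the point $\infty$: making sure that at least one of the two filled limit sets is genuinely compact (i.e. avoids $\infty$), and then correctly relating the three-point sphere linking number $L_{\mc{I}'}(U,p,q)$ to the two-argument planar linking number $L_{\mc{I}}(U,\cdot)=L_{\mc{I}'}(U,\cdot,\infty)$ via the cocycle identity $L_{\mc{I}'}(U,p,q)=L_{\mc{I}'}(U,p,\infty)+L_{\mc{I}'}(U,\infty,q)$, keeping track of which points are allowed (those in $X'\sm U$) so that all the linking numbers involved are defined. The compactness claim can be handled by a short argument: the arc $\til\gamma_{f^{-1}(z)}*\til\gamma_z$ is positively transverse to $\Gamma$ and crosses it (Proposition~\ref{pro:perto-corta}), so closing it up with an arc in $U$ gives a loop positively transverse to $\Gamma$ meeting it, and then Lemma~\ref{lem:loop-trap} on $\SS^2$ already separates $\ol{\alpha}^{\SS^2}(\Gamma)$ from $\ol{\omega}^{\SS^2}(\Gamma)$ — in particular they cannot both contain $\infty$, so at least one is a compact subset of $\R^2$. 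Once that is pinned down, the rest is a direct transcription of the spherical dichotomy, so I would keep the write-up short, citing Lemma~\ref{lem:linking-sphere} for the bulk of the work.
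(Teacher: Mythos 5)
Your overall route is the same as the paper's: pass to the one-point compactification, apply Lemma \ref{lem:linking-sphere} to $(\mc{I}',\mc{F}')$ with singular set $X\sqcup\{\infty\}$, and translate back; your treatment of cases (1), (2) and (3b) of the sphere lemma is exactly what the paper does and is fine. The problem is the final step, in case (3a).

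There your selection rule is ``use whichever of the two filled limit sets is compact'', and you assert that for that set the planar linking number $L_{\mc{I}}(U,\cdot)=L_{\mc{I}'}(U,\cdot,\infty)$ is nonzero ``because $L_{\mc{I}'}(U,p,q)=k\neq 0$''. That inference fails: the additivity identity $L_{\mc{I}'}(U,p,q)=L_{\mc{I}'}(U,p,\infty)+L_{\mc{I}'}(U,\infty,q)$ only guarantees that \emph{one} of the two summands is nonzero. In the subcase where $\infty$ lies in neither filled limit set (so both are compact, and ``whichever is compact'' does not even single one out), it can perfectly well happen that $L_{\mc{I}'}(U,\infty,q)=0$ for all $q\in\ol{\omega}(\Gamma)\cap X$ while all the linking sits in $L_{\mc{I}'}(U,p,\infty)$; then the set you chose gives nothing, and the nonvanishing you claim for it is false in general, even though the lemma still holds via the other set (conclusion (1b) instead of (2b)). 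The correct order of the argument, which is what the paper does, is: split $k$ at $\infty$, and let whichever summand is nonzero decide whether you conclude (1b) or (2b); constancy in $p$ (resp.\ $q$) follows from the constancy of $k$, and compactness of the corresponding filled limit set then comes for free, since if $\infty$ belonged to it the constant value would have to equal $L_{\mc{I}'}(U,\infty,\infty)=0$. (Your preliminary compactness discussion is also unnecessary --- and the loop you sketch there need not be positively transverse when $\Gamma$ meets $U$ --- since in case (3a) the two filled limit sets are already disjoint, as a common point $r$ would force $k=L_{\mc{I}'}(U,r,r)=0$.)
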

\begin{proof}
Applying Lemma \ref{lem:linking-sphere} to the one-point compactification of $\R^2$, since $\infty$ does not belong to $U$, if cases (1) or (2) from said proposition hold then (1a) or (2a) above hold. If (3a) from Lemma \ref{lem:linking-sphere} holds, noting that
$$0\neq k=L_{\mc{I}'}(U, p,q) = L_{\mc{I}'}(U, p, \infty) + L_{\mc{I}'}(U, \infty, q)$$
for any $p\in \ol{\alpha}(\Gamma)\cap X$ and $q\in \ol{\omega}(\Gamma)\cap X$, we have that one of the two terms on the right hand side is nonzero. Assume for instance that $L_{\mc{I}'}(U, p, \infty) = k'\neq 0$. Then replacing $p$ by a different point of $\ol{\alpha}(\Gamma)\cap X$ does not change this value, so that $L_{\mc{I}}(U,p) = k'$ for all $p\in \ol{\alpha}(\Gamma)\cap X$, and in particular $\infty$ does not belong to that set (so the compactness of $\ol{\alpha}(\Gamma)$ follows at once). Hence, (1b) holds.

In the case that $L_{\mc{I}'}(U, \infty, q) = k'\neq 0$, we have $L_{\mc{I}}(U, q) = L_{\mc{I}'}(U, q, \infty) = -k'\neq 0$ and this number does not depend on the choice of $q\in \ol{\omega}(\Gamma)$, so (2b) holds.
\end{proof}

\section{Boundedness of invariant disks: proof of Theorem \ref{th:main-disk}}
\label{sec:main-disk}

\subsection{Fixed points in $U$}

We begin by noting that the condition of having no wandering cross-cuts implies that there is a fixed point in $U$.
\begin{proposition}\label{pro:nwcc-fix} Let $f\colon S\to S$ be an orientation-preserving homeomorphism of an orientable surface $S$ (not necessarily closed), and let $U\subset S$ be an $f$-invariant open topological disk without wandering cross-cuts such that $\bd U \sm \fix(f)\neq \emptyset$. Then $f$ has a fixed point in $U$.
\end{proposition}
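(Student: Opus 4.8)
The plan is to argue by contradiction. Suppose $\fix(f)\cap U=\emptyset$; since $U$ is an open topological disk and $f(U)=U$, the restriction $g:=f|_U$ is then an orientation-preserving fixed-point-free homeomorphism of a plane, i.e.\ a Brouwer homeomorphism, and I will derive the existence of a wandering cross-cut of $U$ — contradicting the hypothesis. The first ingredient is the observation that \emph{any connected open set $D\subset U$ with $f(D)\cap D=\emptyset$ is wandering}: were $n\ge 2$ the smallest integer with $f^n(D)\cap D\ne\emptyset$, one could pick $z\in D$ with $f^n(z)\in D$ and an arc $\alpha\subset D$ from $z$ to $f^n(z)$; then $f(\alpha)\subset f(D)$ is disjoint from $\alpha\subset D$, so $\alpha$ is a compact arc with $f(\alpha)\cap\alpha=\emptyset$, and Lemma~\ref{lem:brouwer-free} gives $f^n(\alpha)\cap\alpha=\emptyset$, contradicting $f^n(z)\in f^n(\alpha)\cap\alpha$. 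Hence it is enough to produce a cross-section $D$ of $U$ with $f(D)\cap D=\emptyset$: such a $D$ is a wandering cross-section, and, since every cross-section contains cross-cuts, $U$ then has a wandering cross-cut.

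To build such a $D$ I would exploit the hypothesis $\bd U\setminus\fix(f)\ne\emptyset$. Fix $p\in\bd U$ with $f(p)\ne p$ and a precompact, Jordan-bounded open disk $V\ni p$ with $\overline{f(V)}\cap\overline V=\emptyset$ (so also $\overline V\cap\fix(f)=\emptyset$ and $U\not\subset V$): a cross-section $D$ of $U$ with $D\subset V$ would satisfy $f(D)\subset f(V)$, hence $f(D)\cap D=\emptyset$, and we would be done. When $\bd U$ is locally connected near $p$ this is immediate (take an arc from a point of $U\cap V$ to $p$ inside $V$, let $q_1$ be its first exit point from $U$, and cut off a cross-section of $U$ by a short arc of $\bd V$ near $q_1$). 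In general one passes to the prime-end compactification $U^*$, a closed disk with prime-end circle $\mc E$ and extended homeomorphism $\hat g$; since $g$ has no fixed point in $U$, Brouwer's fixed point theorem forces $\hat g$ to have a fixed prime end, so the prime-end rotation number is $0$. If $\hat g$ is not the identity on $\mc E$, choose a prime end $\mathfrak e$ with $\hat g(\mathfrak e)\ne\mathfrak e$ and a cross-cut of $U$ separating $\mathfrak e$ from $\hat g(\mathfrak e)$ into opposite cross-sections $D,D'$; then for a cross-cut $\gamma$ sufficiently deep in a null chain defining $\mathfrak e$ one gets $D_\gamma\subset D$ and $f(D_\gamma)\subset D'$, so $D_\gamma$ itself is a free cross-section. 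If $\hat g$ acts trivially on $\mc E$, the fixedness of all prime ends — analyzed through the behaviour of null chains, and taking into account that $\bd U$ may fail to be locally connected — is used to conclude that either $\bd U\subset\fix(f)$ (contradicting the choice of $p$) or a wandering cross-cut appears near a moved boundary point.

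The main obstacle is precisely this last step: carrying out the prime-end analysis rigorously when the prime-end map is trivial and/or $\bd U$ is badly non-locally-connected near $p$, so as to genuinely produce the free cross-section (equivalently, the wandering cross-cut). The rest — the reduction to a Brouwer homeomorphism, the ``free $\Rightarrow$ wandering'' lemma via Lemma~\ref{lem:brouwer-free}, and the passage from a wandering cross-section to a wandering cross-cut — is routine.
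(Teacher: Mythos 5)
The gap is exactly where you place it, and it is genuine: your scheme requires a \emph{free cross-section} of $U$ contained in a small neighborhood $V$ of the non-fixed boundary point $p$, and when $\bd U$ is not locally connected near $p$ this is not something you can extract from the prime-end picture in the way you sketch. In particular, in the case where the induced map is the identity on the circle of prime ends, the dichotomy you hope for (``either $\bd U\subset\fix(f)$ or a wandering cross-cut appears near a moved boundary point'') is not automatic: it is well known that a homeomorphism can fix every prime end of an invariant disk while moving every point of $\bd U$, so fixedness of all prime ends gives no direct control on boundary points, and the step you label as ``the main obstacle'' is precisely the unproved content of the proposition in that case. As written, the argument does not close.

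The detour through prime ends is also unnecessary, and the paper avoids it entirely. You do not need a free cross-section inside $V$; a free \emph{cross-cut} suffices, and that is easy to produce: intersecting $U$ with a small circle around $p$ (chosen so small that the enclosed disk is disjoint from its image) yields a cross-cut $C$ of $U$ with $f(C)\cap C=\emptyset$. The paper then uses the hypothesis \emph{directly} rather than by contradiction: since $C$ cannot be wandering, $f^n(C)\cap C\neq\emptyset$ for some smallest $n>1$; thickening a suitable compact subarc of $C$ inside $U$ gives a free topological disk $D\subset U$ with $f^n(D)\cap D\neq\emptyset$, and Franks' free-disk lemma then produces a fixed point of $f|_U$. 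Alternatively, your own ``free $\Rightarrow$ wandering'' observation already finishes from the same free cross-cut: if $f|_U$ had no fixed point, take $z\in C$ with $f^n(z)\in C$ and the compact subarc $\alpha\subset C$ from $z$ to $f^n(z)$; then $f(\alpha)\cap\alpha\subset f(C)\cap C=\emptyset$, and Lemma \ref{lem:brouwer-free} gives $f^n(\alpha)\cap\alpha=\emptyset$, contradicting $f^n(z)\in f^n(\alpha)\cap\alpha$. In short, the Brouwer-theoretic half of your proposal is correct and is essentially the paper's mechanism; what fails is the attempt to localize a wandering cross-section near $p$, which is both the missing step and an avoidable one.
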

\begin{remark} The example in Figure \ref{fig:example-nofix} shows that the condition that $\bd U$ has at least one non-fixed point is necessary.
\end{remark}
\begin{figure}[ht]
\includegraphics[height=4cm]{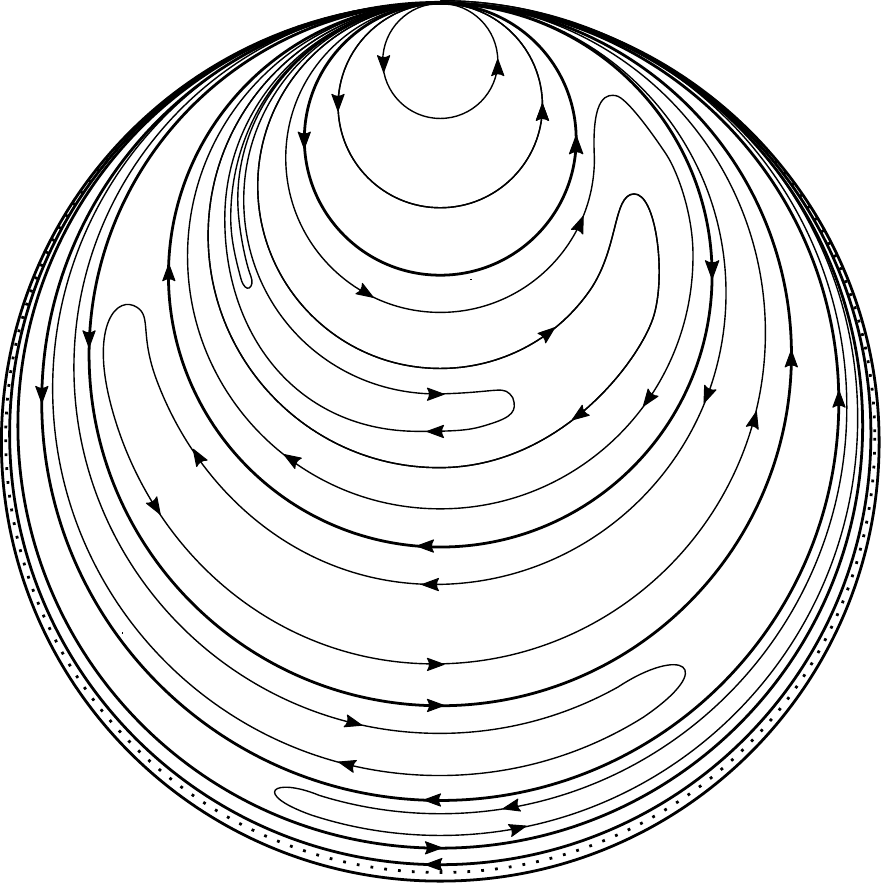}
\caption{Example with no wandering cross-cuts and no fixed points in the disk. The boundary of the disk is fixed 
pointwise.}
\label{fig:example-nofix}
\end{figure}
\begin{proof}
Intersecting $U$ with a small circle around a non-fixed point $z\in \bd U$, we may find a cross-cut $C$ of $U$ such that $f(C)\cap C=\emptyset$. From our hypothesis, $C$ must be nonwandering, so there is a smallest $n>1$ such that $f^n(C)\cap C\neq \emptyset$. Choosing a compact subset $C'$ of $C$ such that $f^n(C')\cap C'\neq \emptyset$ and choosing an appropriate neighborhood $D$ of $C'$ (contained in $U$) we obtain a free disk $D$ such that $f^n(D)\cap D\neq \emptyset$. This implies that $f|_U$ has a fixed point due to a classical lemma of Franks (see \cite[Proposition 1.3]{franks-gen}). 
\end{proof}

\subsection{A bound on the fixed points in $U$}\label{sec:fix-collapse}
In this subsection, $S$ will always denote a closed orientable surface with $\chi(S)\leq 0$.

To prove Theorem \ref{th:main-disk}, we will need to guarantee that a certain set of fixed points is totally disconnected. If the set in question is compact and inessential, we may do that by ``collapsing'' the filling of the given set, with the aid of the next proposition, which is stated in \cite[Proposition 2.6]{kt-strictly} for the case $S=\T^2$, but the exact same proof applies for an arbitrary surface:
\begin{proposition}\label{pro:collapse} Let $K$ be a compact inessential filled set and $f\colon S\to S$ a homeomorphism such that $f(K)=K$. Then there is a continuous surjection $h\colon S\to S$ and a homeomorphism $f'\colon S \to S$ such that:
\begin{itemize}
\item $h$ is homotopic to the identity;
\item $hf = f'h$;
\item $K' = h(K)$ is totally disconnected;
\item $h|_{S\sm K}\colon S\sm K \to S\sm K'$ is a homeomorphism.
\end{itemize}
\end{proposition}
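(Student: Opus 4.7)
The plan is to apply R.~L.~Moore's decomposition theorem to a decomposition of $S$ whose non-degenerate elements are the connected components of $K$. The main geometric step is to verify that each such component is a \emph{cellular continuum}, i.e.\ an intersection of a nested sequence of closed topological disks. Both hypotheses on $K$ enter here. Since $K$ is compact and inessential, it has an open neighborhood $N$ which is a disjoint union of open topological disks of $S$, and each component $K_i$ of $K$ is contained in some such disk $D_i$. I claim that $D_i \sm K_i$ is connected: otherwise, the components of $D_i \sm K_i$ other than the one meeting $\bd D_i$ would be open subsets of $D_i$, hence inessential open subsets of $S$, and each would sit inside some inessential connected component of $S \sm K$, contradicting $\fill(K)=K$. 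Thus $K_i$ is a non-separating continuum inside the disk $D_i$, which by a standard planar argument is equivalent to $K_i$ being cellular in $D_i$, and hence cellular in $S$.

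Next, let $\mathcal{G}$ be the decomposition of $S$ whose elements are the connected components of $K$ together with the singletons $\{x\}$ for $x\in S\sm K$. Compactness of $K$ together with the fact that the decomposition into components of a compact metric space is upper semicontinuous yields that $\mathcal{G}$ is an upper semicontinuous decomposition of $S$ into cellular elements. Moore's theorem then provides a quotient map $h\colon S \to S/\mathcal{G}$ where $S/\mathcal{G}$ is homeomorphic to $S$; moreover, because each non-degenerate element is cellular, $h$ can be taken to be a near-homeomorphism (a uniform limit of homeomorphisms of $S$), so in particular $h$ is homotopic to the identity and is a homeomorphism from $S\sm K$ onto $S\sm h(K)$. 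Since $f(K)=K$, $f$ permutes the components of $K$ and thus preserves $\mathcal{G}$, so it descends to a continuous bijection $f'$ of the quotient; by compactness $f'$ is a homeomorphism, and by construction $hf=f'h$.

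It remains to check that $K'=h(K)$ is totally disconnected. But $h|_K$ identifies two points of $K$ exactly when they lie in the same connected component, so $K'$ is homeomorphic to the space of connected components of $K$ endowed with the quotient topology; this space is totally disconnected since $K$ is compact Hausdorff. The main obstacle is really the application of Moore's theorem together with the assertion that the quotient map can be taken homotopic to the identity; cellularity of every non-degenerate element is precisely the geometric input that justifies this, and that is where the two hypotheses ($K$ inessential and $K$ filled) are used in tandem.
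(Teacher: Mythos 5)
The paper gives no independent proof of this proposition: it invokes \cite[Proposition 2.6]{kt-strictly}, whose proof rests on exactly the device you use (Moore's theorem, in its surface version, applied to the upper semicontinuous decomposition whose nondegenerate elements are the components of $K$), so your route is the intended one and its overall structure is sound. Two steps are stated too loosely, though both are easily repaired. First, in the cellularity argument, a bounded complementary component $V$ of $K_i$ in $D_i$ need not be disjoint from $K$ (it may well contain other components of $K$), so it need not ``sit inside'' a connected component of $S\sm K$ as you assert. The fix is to reverse the containment: $V\not\subset K$, since otherwise $\ol{V}\cup K_i$ would be a connected subset of $K$ strictly larger than the component $K_i$; so pick $x\in V\sm K$, and note that $\bd V\subset K_i\subset K$ forces the connected component of $S\sm K$ containing $x$ to be contained in $V\subset D_i$, hence inessential, contradicting $\fill(K)=K$. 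Second, ``$h$ is a near-homeomorphism, so in particular $h$ is homotopic to the identity'' is not literally valid: any homeomorphism of $S$ is trivially a near-homeomorphism without being homotopic to the identity. What the cellular-decomposition theory gives is that the quotient map $q\colon S\to S/\mathcal{G}$ is a uniform limit of homeomorphisms $q_n\colon S\to S/\mathcal{G}$; one should then fix a large $n_0$, define $h=q_{n_0}^{-1}\circ q$, observe that $h$ is uniformly close to the identity and hence homotopic to it, and set $f'=q_{n_0}^{-1}\circ\tilde{f}\circ q_{n_0}$, where $\tilde{f}$ is the homeomorphism induced on $S/\mathcal{G}$, so that $hf=f'h$. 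With these two repairs your proof is complete and coincides with the argument the paper relies on.
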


In general, if $f$ has arbitrarily large invariant open topological disks, we expect that the same would be true for $f'$; but this could fail to be the case if the set $K$ separates those disks into many uniformly bounded ones. The next proposition will allow us to address this issue in the proof of Theorem \ref{th:main-disk}.

\begin{proposition} \label{pro:slice} For any orientation preserving homeomorphism $f\colon S\to S$ with an inessential set of fixed points, there exists a constant $M_0$ such that if $U$ is any open $f$-invariant topological disk without wandering cross-cuts, then:
\begin{itemize} 
\item The covering distance in $U$ of any pair of fixed points is bounded by $M_0$. In other words, if $\hat{U}$ is a connected component of $\pi^{-1}(U)$, then $\pi^{-1}(\fix(f))\cap \hat{U}$ has diameter at most $M_0$. 
\item If $M>2M_0$ is such that $\diamup(U)>M$, then there exists a (necessarily invariant) connected component $U_0$ of $U\sm \fix(f)$ such that $\diamup(U_0)>M/2 - M_0$. 
\end{itemize}
\end{proposition}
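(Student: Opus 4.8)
\emph{Setup and reduction to the cover.} Since $S$ is closed, $\fix(f)$ is compact, and being inessential it lies in a disjoint union of finitely many open topological disks $D_1,\dots,D_m\subset S$, each lifting homeomorphically to $\hat S$; I set $C=\max_i\diamup(D_i)$, so that every connected subset of $\fix(f)$ has covering diameter at most $C$. Given $U$ as in the statement, fix a component $\hat U$ of $\pi^{-1}(U)$ and a lift $\hat f$ of $f$ with $\hat f(\hat U)=\hat U$. Since $\pi|_{\hat U}\colon \hat U\to U$ is a homeomorphism, each point of $\fix(f)\cap U$ has a unique preimage in $\hat U$ and that preimage is fixed by $\hat f$; hence $\hat F:=\pi^{-1}(\fix(f))\cap\hat U$ coincides with $\fix(\hat f|_{\hat U})$, all of its components have diameter at most $C$, and $\hat U$ has no $\hat f$-wandering cross-cuts (cross-cuts of $\hat U$ project to cross-cuts of $U$). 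The first bullet is then exactly the assertion $\diam_{\hat S}(\hat F)\le M_0$ for a constant $M_0=M_0(f)$.

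\emph{Complementary components are invariant.} Let $W$ be a connected component of $\hat U\setminus\hat F$. Its frontier in $\hat U$ lies in $\hat F=\fix(\hat f|_{\hat U})$, hence is fixed pointwise by $\hat f$. An orientation-preserving homeomorphism that fixes a closed set $F_0$ pointwise must fix every complementary component setwise: if it sent $W$ to $W'\ne W$, then $W$ and $W'$ would have the same frontier $F_0'\subset F_0$, and at a point of $F_0'$ the induced (cyclic-order-preserving) action on the local prime ends would fix all those coming from $F_0'$ and therefore, being order-preserving on the complementary arcs, be the identity --- contradicting $\hat f(W)=W'$. Projecting, every connected component of $U\setminus\fix(f)$ is $f$-invariant, which gives the parenthetical ``necessarily invariant''; note also that $\hat f|_W$ is then a fixed-point-free orientation-preserving homeomorphism of $W$.

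\emph{First bullet --- the main point.} I would establish $\diam_{\hat S}(\hat F)\le M_0$ by contradiction. First observe that no complementary component $W$ of $\hat U\setminus\hat F$ can contain a cross-cut $\hat C$ of $\hat U$ with $\hat f(\hat C)\cap\hat C=\emptyset$: such a $\hat C$ is non-wandering by hypothesis, so taking the least $n\ge 2$ with $\hat f^n(\hat C)\cap\hat C\ne\emptyset$, lifting to the universal cover of $W$ (where $\hat f|_W$ becomes a Brouwer homeomorphism) and arguing as in Proposition \ref{pro:nwcc-fix} via Lemma \ref{lem:brouwer-free} and the Franks disk lemma would produce a fixed point of $\hat f$ in $W$, which is absurd. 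Hence $\hat F$ meets every free cross-cut of $\hat U$, so it accumulates on every non-fixed prime end, and --- together with the bound $C$ on its components --- the fixed points stay within a bounded distance of the non-fixed part of $\bd\hat U$. To turn ``$\hat F$ spread arbitrarily far'' into an actual contradiction, the plan is to first collapse the components of $\fix(f)$ by Proposition \ref{pro:collapse} --- tracking that covering diameters change by at most a fixed constant, since the collapsing map is homotopic to the identity on the compact surface $S$ and hence moves points a bounded amount in $\hat S$ --- thereby reducing to a totally disconnected fixed set, and then to run a linking argument: apply Jaulent's Proposition \ref{pro:jaulent} to obtain a Brouwer--Le Calvez foliation, choose a leaf $\Gamma$ whose neighborhood $V_\Gamma$ meets $U$, use Poincar\'e--Bendixson to place singularities in its filled $\alpha$- and $\omega$-limits, and note that since $U$ contains a fixed point all linking numbers $L(U,\cdot,\cdot)$ vanish, so the ``linked'' and ``wandering cross-cut'' alternatives of the Linking Lemma are excluded and the surviving alternatives force these singularities into $U$ at a distance controlled only by the local geometry of $\mc{F}$ near $\Gamma$ --- which, fed back, contradicts the unbounded spread of $\hat F$. \emph{I expect this last step to be the main obstacle}: making the linking estimate quantitative and handling the fact that $U$ need not lift to a disk invariant under the natural lift, so that the ``contractible/non-contractible'' case distinction made later in the proof of Theorem \ref{th:main-disk} (passing, in the bad case, to the quotient annulus of a suitable deck transformation and gluing its ends) has to be carried out already here.

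\emph{Second bullet.} Granting $M_0$, suppose $M>2M_0$ and $\diamup(U)=\diam_{\hat S}(\hat U)>M$. Fix $z_0\in\hat F$; by the first bullet $\hat F$ lies in the $M_0$-ball about $z_0$. Choose $\hat x,\hat y\in\hat U$ with $d(\hat x,\hat y)>M$; then one of them, say $\hat y$, has $d(z_0,\hat y)>M/2$. Join $z_0$ to $\hat y$ by an arc in $\hat U$, let $t^*$ be the last parameter at which it meets $\hat F$, and observe that for small $\varepsilon>0$ the point at parameter $t^*+\varepsilon$ lies in the component $\hat U_0$ of $\hat U\setminus\hat F$ containing $\hat y$ and is within $M_0+\varepsilon$ of $z_0$; hence $\diam_{\hat S}(\hat U_0)>M/2-M_0$. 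Since $\pi|_{\hat U}$ is injective, $\hat U_0$ maps homeomorphically onto a component $U_0$ of $U\setminus\fix(f)$ and is itself a component of $\pi^{-1}(U_0)$, so $\diamup(U_0)=\diam_{\hat S}(\hat U_0)>M/2-M_0$; and $U_0$ is $f$-invariant by the second paragraph. (When $\diamup(U)=\infty$ the same argument applies with $\hat y$ taken arbitrarily far from $z_0$.)
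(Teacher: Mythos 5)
Your second bullet is fine once the first is granted, and the invariance of the components of $U\sm\fix(f)$ is a known fact (the paper simply cites Brown--Kister; your local ``prime end / cyclic order'' sketch as written does not actually force the identity on the complementary local components and should be replaced by that citation). The genuine gap is the first bullet, which is the heart of the proposition: what you offer there is a strategy, not a proof, and you concede yourself that the decisive step is unresolved. Concretely: (i) the claim that $\fix(f)$ lies in finitely many disks $D_i$ with $C=\max_i\diamup(D_i)<\infty$ is unjustified, since an open disk containing an inessential compact set may itself be homotopically unbounded (that is the very phenomenon the paper is about); (ii) the linking argument is mis-stated --- vanishing of $L(U,\cdot,\cdot)$ when $U$ contains a fixed point rules out the ``linked'' alternative (3a) of Lemma \ref{lem:linking-sphere}/\ref{lem:linking-plane}, but not the wandering-intersection alternative, and placing the singularities of $\ol\alpha(\Gamma)$ or $\ol\omega(\Gamma)$ inside some translate of $U$ gives no bound on the spread of $\pi^{-1}(\fix(f))\cap\hat U$ without, in effect, reproving the full diameter bound of Theorem \ref{th:main-disk}; (iii) the machinery you invoke (collapsing via Proposition \ref{pro:collapse}, Jaulent's foliation, the contractible/non-contractible dichotomy) is exactly what the paper deploys \emph{after} this proposition, and in the paper that machinery depends on it through Corollary \ref{coro:slice}, so your route is at best far heavier and at worst circular.

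The paper's actual proof of the first bullet is elementary and quantitative, and bypasses all of this. Since $\fix(f)$ is inessential one chooses a rectifiable simple loop $\gamma$ disjoint from $\fix(f)$ whose lift bounds a disk $Q_0$ with $\pi(Q_0)=S$, and builds nested disks $Q_0\subset Q_1\subset\cdots$ exhausting $\hat S$ with $\pi(\bd Q_n)\subset\gamma$. The key observation is the one you came close to but did not exploit: a cross-cut $C\subset U$ separating two fixed points of $f$ inside $U$ cannot be free, for if $f(C)\cap C=\emptyset$ then $f(D_1\cup C)\subset D_1$ for one of the two cross-sections, making $C$ a wandering cross-cut, contrary to hypothesis. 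Since $\gamma$ is compact, rectifiable and fixed-point free, every subarc of $\gamma$ of diameter less than some $c>0$ \emph{is} free; hence any cross-cut of $U$ contained in $\gamma$ and separating fixed points has length at least $c$, so there are at most $N=\length(\gamma)/c$ pairwise disjoint such cross-cuts. If a lifted fixed point $z_0\in\hat U\cap Q_0$ and another lifted fixed point $z_1\in\hat U\sm Q_n$ coexisted, each $\bd Q_i$ ($0\le i\le n-1$) would contribute a cross-cut of $\hat U$ separating them, and these project (injectively, as $U$ is a disk) to $n$ disjoint cross-cuts of $U$ in $\gamma$ separating fixed points, forcing $n\le N$; thus $\pi^{-1}(\fix(f))\cap\hat U\subset Q_{N+1}$ and $M_0=\diam(Q_{N+1})$ works uniformly in $U$. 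Your argument for the second bullet then goes through essentially as in the paper, but without this counting step the proposition is not established.
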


\begin{proof}
The fact that $\fix(f)$ is inessential implies that we can find an closed topological disk $Q_0\subset \hat{S}$ such that $\pi(Q_0)=S$ and $\bd Q_0$ is the image of a simple rectifiable loop $\hat{\gamma}$ such that $\gamma:=\pi(\hat{\gamma})$ is disjoint from $\fix(f)$ (see Remark \ref{rem:Q}). Using $Q_0$, we can obtain a sequence $(Q_n)_{n\in \N}$ of open topological disks such that $Q_n$ is contained in the interior of $Q_{n+1}$,  $\pi(\bd Q_n)\subset \gamma$, and $\bigcup_{n\in \Z} Q_n = \hat{S}$. Indeed, one may for instance define recursively $Q_{n+1}$ as the filling of the union of all sets of the form $TQ_0$ which intersect the neighborhood of size $1$ of $Q_n$ (since $Q_n$ is compact, there are finitely many such sets). Each point in the boundary of $Q_{n+1}$ is contained in $T\hat{\gamma}$ for some $T\in \deck(\pi)$, so $\pi(\bd Q_{n+1})\subset \gamma$.

Note that if $C\subset U$ is the image of a cross-cut which separates two fixed points of $f$ in $U$, then $f(C)$ intersects $C$. Indeed, suppose that $f(C)$ is disjoint from $C$. The set $U\sm C$ has two connected components $D_1, D_2$, and we assume $D_1$ is the component containing $f(C)$. Note that $f(D_1)$ intersects $D_1$ and does not contain $D_2$ (because there is a fixed point in $D_2$). This means that $f(D_1)$ is disjoint from $D_2$, since otherwise the boundary of $f(D_1)$ in $U$ (which is $f(C)$) would contain a point in $D_2$. Thus $f(D_1\cup C) \subset D_1$. This is a contradiction, as it implies that $C$ is a wandering cross-cut.

Since $\gamma$ has finite length and no point of $\gamma$ is fixed by $f$, there exists a constant $c>0$ (independent of $U$) such that every sub-arc of $\gamma$ with diameter smaller than $c$ is free (disjoint from its own image). By our previous observation, this implies that any cross-cut of $U$ contained in $\gamma$ and separating the set of fixed points of $f$ in $U$ has lenght least $c$. In particular, there cannot be more than $N = \text{length}(\gamma)/c$ pairwise disjoint cross-cuts of $U$ contained in $\gamma$, each separating the fixed point set of $f$ in $U$ (note that $N$ is independent of $U$ as well).

If $U$ contains no fixed points, there is nothing to be done. Now assume $U$ has a fixed point, and let $\hat{U}$ be a connected component of $\pi^{-1}(U)$ such that $\hat{U}\cap Q_0$ contains some element $z_0 \in \pi^{-1}(\fix(f))$. We claim that $\pi^{-1}(\fix(f))\cap \hat{U}\subset Q_{N+1}$. Indeed, for a given $n\in \N$, suppose there is $z_1\in \hat{U}\sm Q_n$ such that $\pi(z_1)$ is fixed by $f$. For each $i\in \{0,\dots, n-1\}$, The fact that $z_0 \in Q_i$ and $z_1\notin Q_i$ implies that there is a cross-cut $\hat{\alpha}_i$ of $\hat{U}$ contained in $\bd Q_i$ which separates $z_0$ from $z_1$ (See Figure \ref{fig:prop33}). Since $\pi|_{\hat{U}}$ is injective, it follows that $\alpha_i=\pi(\hat{\alpha}_i)$ is a cross-cut of $U$ contained in $\gamma$ and separating the fixed point $\pi(z_0)$ from $\pi(z_1)$. Moreover, since the $\hat{\alpha}_i$'s are pairwise disjoint and contained in $\hat U$, the set $\{\alpha_i: 0\leq i\leq n-1\}$ is a family of $n$ pairwise disjoint cross-cuts of $U$ contained in $\gamma$, each of which separates the fixed point set of $f$ in $U$. By our choice of $N$, this implies that $n\leq N$. Thus, $\pi^{-1}(\fix(f))\cap \hat{U}\subset Q_{N+1}$ as claimed.

\begin{figure}
\includegraphics[width=0.8\linewidth]{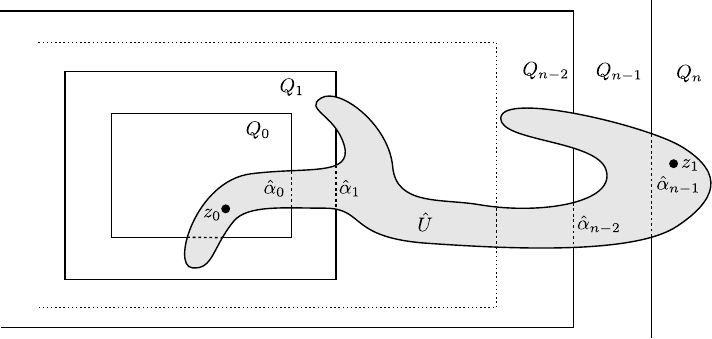}
\caption{The cross-cuts $\hat{\alpha}_i$.}
\label{fig:prop33}
\end{figure}

Note that this implies that $\diam(\pi^{-1}(\fix(f))\cap \hat{U})\leq M_0:= \diam(Q_{N+1})$, a number independent of $U$. This proves the first claim.

If $\diamup(U)>M>2M_0$, there is a point $z\in \hat{U}$ such that $d(z, z_0) > M/2$, and so $d(z,x)\geq M/2 - M_0$ for any $x\in Q_{N+1}$. Thus the connected component $\hat{U}_0$ of $z$ in $\hat{U}\sm \pi^{-1}(\fix(f))$ has diameter at least $M/2 - M_0$.  Note that $\hat{U}_0$ projects to a connected component $U_0$ of $U\sm \fix(f)$, so the second claim  claim follows (the fact that $U_0$ is invariant follows, for instance, from \cite{brown-kister}). 
\end{proof}

\begin{corollary} \label{coro:slice}
Let $f\colon S\to S$ be an orientation preserving homeomorphism with an inessential set of fixed points, $K_0\subset \fix(f)$ a compact set, and $h$ and $f'$ be the corresponding maps from Proposition \ref{pro:collapse} applied to $K=\fill(K_0)$. Then there is a constant $M_1$ such that, if there is an open $f$-invariant topological disk $U$ without wandering cross-cuts and  $\diamup(U)>M>M_1$, then there is also an open $f'$-invariant topological disk $U'$ such that $\diamup(U')> (M-M_1)/2$ and any cross-cut of $U'$ with endpoints in $\bd U' \sm h(K)$ is nonwandering. Moreover, $U'$ is the filling of the image by $h$ of some connected component of $U\sm K$.
\end{corollary}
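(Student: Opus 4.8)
The plan is to combine Proposition~\ref{pro:collapse} with Proposition~\ref{pro:slice}; the real work is (a) to control how the covering diameter changes under the collapsing map, (b) to pass from a large component of $U\sm\fix(f)$ to a large component of $U\sm K$, and (c) to transfer the ``no wandering cross-cuts'' property. I would begin by applying Proposition~\ref{pro:collapse} to the compact, inessential, filled, $f$-invariant set $K=\fill(K_0)$, obtaining $h$ and $f'$ with $hf=f'h$, $K'=h(K)$ totally disconnected, $h\simeq\id_S$, and $h|_{S\sm K}\colon S\sm K\to S\sm K'$ a homeomorphism; since $f(K)=K$ and $f'(K')=K'$, this map conjugates $f|_{S\sm K}$ to $f'|_{S\sm K'}$. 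As $h\simeq\id$ on the compact surface $S$, its natural lift $\hat h$ commutes with every deck transformation and $C:=\sup_{z\in\hat S}d(\hat h(z),z)<\infty$. For an open connected $W\subset S\sm K$, $h$ is injective on $W$, so $\hat h$ carries a component $\hat W$ of $\pi^{-1}(W)$ \emph{onto} a component of $\pi^{-1}(h(W))$; from the definition of covering distance and $d(\hat h(z),z)\le C$ one gets $d_{h(W)}(h(x),h(y))\ge d_W(x,y)-2C$, hence $\diamup(h(W))\ge\diamup(W)-2C$, and with Proposition~\ref{pro:diam-fill},
$$\diamup(\fill(h(W)))\ \ge\ \diamup(W)-2C.$$

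Next I would apply Proposition~\ref{pro:slice} to $f$, obtaining $M_0$ together with the disk $Q_0\subset\hat S$ (with $\pi(\bd Q_0)$ disjoint from $\fix(f)\supset K_0$) and the filled disks $Q_0\subset Q_1\subset\cdots$ from its proof; recall that there one shows $\pi^{-1}(\fix(f))\cap\hat U\subset Q_{N+1}$ (for $N$ as in that proof) whenever $\hat U$ is a component of $\pi^{-1}(U)$ meeting $\pi^{-1}(\fix(f))$. One must then check that $\pi^{-1}(K)\cap\hat U$ is also confined to $Q_{N'}$ for a constant $N'$ depending only on $f$ and $K_0$: the part in $\pi^{-1}(K_0)$ already lies in $Q_{N+1}$, and the filled-in disks of $K_0$ have boundary in $K_0$, which together with the fact that $U$ is inessential lets one bound, as in Proposition~\ref{pro:slice} by counting disjoint cross-cuts along $\pi(\bd Q_0)$, the portion of each lifted filled disk that actually lies inside $\hat U$. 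Granting this, choose $M_1$ (depending only on $f$, $K_0$) with $M_1>\max\{2M_0,\ 2\diam(Q_{N'})+4C,\ \diamup(K)\}$. Assume $\diamup(U)>M>M_1$. If $U$ meets $\fix(f)$, pick $\hat U$ as above, $\hat z_0\in\pi^{-1}(\fix(f))\cap\hat U\subset Q_{N'}$, and $\hat z\in\hat U$ with $d(\hat z,\hat z_0)>M/2$, so $d(\hat z,Q_{N'})>M/2-\diam(Q_{N'})$; the component $\hat U_0$ of $\hat z$ in $\hat U\sm\pi^{-1}(K)$ then has $\diam(\hat U_0)\ge M/2-\diam(Q_{N'})$, since if not $\overline{\hat U_0}$ would be disjoint from $Q_{N'}\supset\pi^{-1}(K)\cap\hat U$, making $\hat U_0$ clopen in the connected set $\hat U$, hence equal to it, which is impossible as $\diam(\hat U)=\diamup(U)>M$. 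Put $U_0=\pi(\hat U_0)$, a connected component of $U\sm K$, $f$-invariant by \cite{brown-kister}, with $\diamup(U_0)\ge M/2-\diam(Q_{N'})$. Then $U':=\fill(h(U_0))$ is an open topological disk (as $U_0\subset U$ is inessential and $U_0\subset S\sm K$, so $h(U_0)$ is open, connected and inessential), it is $f'$-invariant because $h$ conjugates $f$ to $f'$ off $K$ and $\fill$ commutes with homeomorphisms, and by the displayed inequality $\diamup(U')\ge\diamup(U_0)-2C>(M-M_1)/2$. The degenerate case $U\cap\fix(f)=\emptyset$ (equivalently, by Proposition~\ref{pro:nwcc-fix}, $\bd U\subset\fix(f)$) is easier: then either $K\cap U=\emptyset$ and one takes $U_0=U$, or $U\subset K$, which is excluded since $M_1\ge\diamup(K)$.

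It then remains to see that any cross-cut $\sigma'$ of $U'$ with both endpoints in $\bd U'\sm h(K)$ is nonwandering under $f'$. Using $\bd\fill(\,\cdot\,)\subset\bd(\,\cdot\,)$ and the inclusion $\bd h(U_0)\subset h(K)\cup h(\bd U\sm K)$ (which follows from $\bd U_0\cap U\subset K$ and $h|_{S\sm K}$ being a homeomorphism), the endpoints of $\sigma'$ lie in $h(\bd U\sm K)$. Using the conjugacy $h^{-1}$ on $S\sm h(K)$, together with the facts that $h(K)$ is totally disconnected and that the complementary components of $h(U_0)$ in $U'$ are inessential disks, one replaces $\sigma'$ by a cross-cut $\sigma$ of $U$ with the same endpoints such that $f^n(\sigma)\cap\sigma\neq\emptyset$ whenever $(f')^n(\sigma')\cap\sigma'\neq\emptyset$; since $U$ has no wandering cross-cuts, $\sigma$ is nonwandering, hence so is $\sigma'$. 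This gives all the conclusions, with $U'$ the filling of $h(U_0)$ for the component $U_0$ of $U\sm K$ constructed above.

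The step I expect to be the main obstacle is the confinement of $\pi^{-1}(K)\cap\hat U$ to a uniformly bounded region: a filled-in disk of $K_0$ can a priori be embedded in $S$ with arbitrarily large covering diameter, so one must genuinely exploit that its boundary sits in $K_0$ — trapped in $Q_{N+1}$ inside $\hat U$ — and the boundedness of $\hat U$, to rule out a lifted filled disk whose intersection with $\hat U$ reaches far from $Q_{N+1}$. A secondary technical point is the cross-cut transfer of the previous paragraph, where $\sigma'$ may pass through the collapsed set $h(K)$ or enter the filled-in regions of $h(U_0)$, and one must push $\sigma'$ off of these without changing its dynamical (non)wandering status.
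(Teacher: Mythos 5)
Your skeleton (collapse $K$ via Proposition \ref{pro:collapse}, invoke Proposition \ref{pro:slice}, transfer diameters using that a lift $\hat{h}$ of $h\simeq \id$ moves points a uniformly bounded amount, transfer the cross-cut condition through the conjugacy $h|_{S\sm K}$) is the paper's, and your final cross-cut paragraph is essentially the paper's argument. But the central step of your construction --- the confinement $\pi^{-1}(K)\cap\hat{U}\subset Q_{N'}$ for a uniform $N'$ --- is exactly what you leave unproved (``one must then check\dots'', ``Granting this\dots''), and you yourself flag it as the main obstacle. That is a genuine gap, and the claim is both unnecessary and genuinely delicate: Proposition \ref{pro:slice} only confines $\pi^{-1}(\fix(f))\cap\hat{U}$, while a filled inessential component $V$ of $S\sm K_0$ may a priori be homotopically unbounded (inessential connected open sets can have infinite covering diameter, as the Denjoy-suspension disk shows), and a component of $\hat{U}\cap\pi^{-1}(V)$ is only forced to \emph{touch} $\pi^{-1}(K_0)\cap\hat{U}\subset Q_{N+1}$, not to stay near it. The same issue undermines your degenerate case, where you exclude $U\subset K$ by demanding $M_1\geq \diamup(K)$: the finiteness of the covering diameter of $K$ (equivalently of its filled components) is precisely what is not known, and $\diamup$ is not even defined for the possibly disconnected $K$.

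The paper sidesteps all of this with a one-line observation that you could substitute for the missing step: take the large component $U_0$ of $U\sm\fix(f)$ given by Proposition \ref{pro:slice}, so $\diamup(U_0)>M/2-M_0$; since $d(\hat{h}(z),z)\leq M_2$ for all $z$, one gets $\diamup(h(U_0))\geq \diamup(U_0)-2M_2>0$ as soon as $M>M_1:=2M_0+4M_2$. Now $U_0$ is connected and disjoint from $K_0\subset\fix(f)$, so if it met $K=\fill(K_0)$ it would lie in a single inessential component of $S\sm K_0$, hence in a single connected component of $K$, and $h$ would crush it to a point --- contradicting the positive diameter of $h(U_0)$. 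Therefore $U_0$ is disjoint from $K$, lies in a component $U_1$ of $U\sm K$, and $U'=\fill(h(U_1))$ satisfies $\diamup(U')\geq\diamup(h(U_0))>M/2-M_0-2M_2=(M-M_1)/2$, with no need to locate $\pi^{-1}(K)$ inside $\hat{U}$ at all, and with the degenerate cases handled by the same collapsing argument.
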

\begin{proof}
Let $h$ be the map from Proposition \ref{pro:collapse}, and $M_0$ the constant from Proposition \ref{pro:slice}. 
Since $h$ is homotopic to the identity, there is a constant $M_2$ and a lift $\hat{h}$ of $h$ to the universal covering $\hat{S}$ such that $d(\hat{h}(z), z)<M_2$ for all $z\in \hat{S}$. 

Let $M_1 = 2M_0+4M_2$, and suppose $U$ is an $f$-invariant open topological disk without wandering cross-cuts and $\diamup(U)>M>2M_1$. Let $U_0$ be the corresponding connected component of $U\sm \fix(f)$ obtained from Proposition \ref{pro:slice}, so that $\diamup(U_0)>M/2-M_0$.
From our choice of $M_2$, it is easy to verify that $U_0':=h(U_0)$ satisfies $\diamup(U_0)'\geq \diamup(U_0)-2M_2 > M/2 - M_0 - 2M_2 > 0$. The fact that $U_0'$ has positive diameter implies that $U_0$ is disjoint from $K$, since it is already disjoint from $\fix(f)\supset K_0$ and it cannot be contained in an inessential connected component of $S\sm K_0$, as that would imply that $h(U_0) = U_0'$ is a point. 

Let $U_1$ be the connected component of $U\sm K$ containing $U_0$, and $U_1'=h(U_1)$. Since $h|_{S\sm K}$ is a homeomorphism, $U_1'$ is open, inessential and $f'$-invariant. Thus the set $U'=\fill(U_1')$ is an $f'$-invariant open topological disk with $$\diamup(U') = \diamup(U_1')\geq\diamup(U_0')>M/2-M_0-2M_2 \geq (M-M_1)/2.$$

Finally, suppose for a contradiction that there exists a wandering cross-cut $\alpha'$ of $U'$ whose endpoints are not in $h(K)$. Then $\alpha'$ must be contained $U_1'$, and its endpoints are points of $\bd U_1'\sm h(K)$. Since $h$ maps homeomorphically $S\sm K$ onto $S\sm h(K)$ conjugating the restrictions of $f$ and $f'$ to those sets, it follows that $\alpha:=h^{-1}(\alpha')$ is a wandering arc in $U_1$ with endpoints in $\bd U_1\sm K$. Since $U_1$ is a connected component of $U\sm K$ (so $\bd U_1\subset \bd U \cup K$),  this means that $\alpha$ is contained in $U$ and has endpoints in $\bd U$, so it is a wandering cross-cut for $f$ in $U$, contradicting our hypothesis.
\end{proof}

\subsection{Contractible case}\label{sec:contractible}

We begin with a simple observation: 
\begin{proposition} \label{pro:link-bound}  Let $\mc{I}=(f_t)_{t\in [0,1]}$ be an isotopy from the identity to $f\colon S\to S$, and $\hat{\mc{I}}=(\hat{f}_t)_{t\in [0,1]}$ its natural lift from the identity to $\hat{f}$ in the universal covering. If $K\subset \fix(\hat{\mc{I}})$ is a compact set, then the set $K' = \{z\in \fix(\hat{f}): L_{\hat{\mc{I}}}(z,z')\neq 0\text{ for some }z'\in K\}$ is relatively compact. 
\end{proposition}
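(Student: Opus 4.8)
The plan is to show that if a fixed point $z$ of $\hat f$ has nonzero linking number with some point $z'$ of the compact set $K\subset\fix(\hat{\mathcal I})$, then $z$ must stay within a bounded distance of $K$, the bound depending only on $\mathcal I$ and $K$. The key observation is that the linking number $L_{\hat{\mathcal I}}(z,z')$ is computed from the loop $\gamma_z=(\hat f_t(z))_{t\in[0,1]}$, which by the uniform continuity of the isotopy $\hat{\mathcal I}$ (and the fact that $\hat f$ commutes with deck transformations, so $\mathcal I$ descends to a compact surface) has diameter bounded by a constant $R$ independent of $z$. More precisely, since $\mathcal I=(f_t)$ is an isotopy on the closed (or compact) surface $S$, the displacement function $(t,x)\mapsto d(\hat f_t(\hat x),\hat x)$ is bounded, say by $R$, for all lifts; hence each loop $\gamma_z$ lies in the ball $B(z,R)$, and likewise $\gamma_{z'}$ lies in $B(z',R)$.

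First I would fix a compact arc $\sigma_0$ in $\hat S$ and more generally control the windings: $L_{\hat{\mathcal I}}(z,z')=\wind(\gamma_z,z',\infty)$ in the one-point compactification (or equivalently the algebraic intersection of $\gamma_z$ with an arc joining $z'$ to $\infty$). If $d(z,z')$ is large — specifically larger than $2R$ plus the diameter of $K$ — then the loops $\gamma_z$ and $\gamma_{z'}$ are contained in disjoint balls $B(z,R)$ and $B(z',R)$. The point $z'$ then lies in $\gamma_{z'}$, which is in the unbounded complementary component of the small loop $\gamma_z$ (its winding number about any point outside $B(z,R)$ is zero, since a loop of diameter $\le 2R$ cannot wind around a point at distance $>2R$ from it). Therefore $L_{\hat{\mathcal I}}(z,z')=\wind(\gamma_z,z')=0$, a contradiction. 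Hence every $z\in K'$ satisfies $d(z,K)\le 2R$, so $K'\subset\{z:d(z,K)\le 2R\}$, a bounded set, whence $K'$ is relatively compact.

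The one point that needs a little care is the uniform bound $R$ on the diameter of the loops $\gamma_z$. This follows because $\mathcal I$ is an isotopy of the compact surface $S$: the map $(t,x)\mapsto f_t(x)$ is continuous on $[0,1]\times S$, so $\sup_{t,x} d_S(f_t(x),x)<\infty$; lifting, and using that each $\hat f_t$ is at bounded distance from the identity in $\hat S$ (the lifted isotopy starts at the identity and the displacement of the isotopy is realized by lifting a homotopy of loops of bounded length), one gets $d_{\hat S}(\hat f_t(\hat x),\hat x)\le R$ uniformly. The main (though minor) obstacle is just phrasing this so that the bound is genuinely uniform over all $\hat x$ and depends only on $\mathcal I$; everything else is the elementary winding-number estimate that a loop of small diameter has zero winding number around far-away points, together with the additivity $L_{\hat{\mathcal I}}(z,z')=L_{\hat{\mathcal I}}(z',z)$-type symmetry used to locate $z'$ relative to $\gamma_z$.
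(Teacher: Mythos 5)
Your argument is correct and is essentially the paper's own proof: compactness of $S$ together with equivariance of the natural lift gives a uniform bound $M$ on the diameter of every isotopy loop $\gamma_z=(\hat{f}_t(z))_{t\in[0,1]}$, so $L_{\hat{\mc{I}}}(z,z')=0$ whenever $d(z,z')>M$, and hence $K'$ lies in a bounded (closed) neighborhood of the compact set $K$ and is relatively compact. The side remarks are superfluous but harmless: $\gamma_{z'}$ is a constant loop since $z'\in\fix(\hat{\mc{I}})$, and no symmetry $L_{\hat{\mc{I}}}(z,z')=L_{\hat{\mc{I}}}(z',z)$ is needed, because by definition $L_{\hat{\mc{I}}}(z,z')$ is already the winding number of $\gamma_z$ about $z'$.
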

In other words, the set of fixed points of $\hat{f}$ linking a given compact set $K$ is relatively compact.
\begin{proof}
Due to the compactness of $S$ and the equivariance of the isotopy $\hat{\mc{I}}$, there exists $M>0$ such that for any $z\in \hat{S}$, the arc $(\hat{f}_t(z))_{t\in [0,1]}$ has diameter at most $M$, so if $z'\in K$ and $z\in\fix(\hat{f})$ is such that $d(z,z')>M$ then $L_{\hat{\mc{I}}}(z,z')=0$.
\end{proof}

\begin{proposition} \label{pro:main-contractible} Let $S$ be a closed orientable surface, and $f\colon S\to S$ a homeomorphism homotopic to the identity such that $\fix(f)$ is inessential. Then there exists $M>0$ such that if $\hat{f}\colon \hat{S}\to \hat{S}$ a natural lift of $f$ and $U\subset \hat{S}$ is any open $\hat{f}$-invariant topological disk which projects injectively into an $f$-invariant disk without wandering cross-cuts, then $\diam(U)\leq M$.
\end{proposition}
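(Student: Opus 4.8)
The plan is to place ourselves in the setting of the Brouwer--Le Calvez theory, lift everything to $\hat{S}\cong\R^2$, and then argue by contradiction: an $\hat f$-invariant disk of very large diameter should be forced either to possess a wandering cross-cut or, via the planar Linking Lemma, to enclose or link a leaf of the foliation whose relevant (filled) limit set necessarily lies near a fixed point contained in the disk --- which is impossible once the disk is large. Concretely, the reduction and setup: since $S$ is compact, $\fix(f)$ is compact and inessential, so $K=\fill(\fix(f))$ is a compact inessential filled $f$-invariant set, and Proposition~\ref{pro:collapse} produces maps $h$ and $f'$ with $\fix(f')\subseteq h(K)$ totally disconnected. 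Using Proposition~\ref{pro:slice} and Corollary~\ref{coro:slice} together with a lift $\hat h$ of $h$, a large $\hat f$-invariant disk $U$ without wandering cross-cuts yields --- after passing to the filling of the $\hat h$-image of a suitable component of $U\setminus\pi^{-1}(K)$ --- a large $\hat f'$-invariant disk with the same properties; so we may assume $\fix(f)$ is totally disconnected. Then Proposition~\ref{pro:jaulent} and the remark following it provide a totally disconnected $X_*\subseteq\fix(f)$ and a Brouwer--Le Calvez pair $(\mc{I}_*,\mc{F}_*)$ for $f$ compatible with the natural lift $\hat f$; lifting it gives a Brouwer--Le Calvez pair $(\hat{\mc{I}}_*,\hat{\mc{F}}_*)$ for $\hat f$ on $\hat S$ whose singularity set is the discrete, $\deck(\pi)$-invariant set $\hat X_*=\pi^{-1}(X_*)$. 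As $\hat f$ and $\hat{\mc{I}}_*$ commute with all deck transformations and $S$ is compact, there is $C>0$ with $d(\hat f_t(z),z)\le C$ for all $z\in\hat S$, $t\in[0,1]$; and since $\hat S\cong\R^2$, the planar Linking Lemma (Lemma~\ref{lem:linking-plane}) is available.

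\emph{A fixed point and the fixed set inside $U$.} Let $M_0$ be the constant of Proposition~\ref{pro:slice}. Since $\pi|_U$ is injective, $U$ is a connected component of $\pi^{-1}(\pi(U))$; hence $\pi(U)$ is an $f$-invariant disk without wandering cross-cuts, $U$ has no wandering cross-cuts for $\hat f$, $\hat S\setminus U$ is connected, and $\pi^{-1}(\fix(f))\cap U$ --- in particular $\hat X_*\cap U$ --- has diameter at most $M_0$. As a compact totally disconnected set cannot separate $S$ and $\pi(U)$ is a proper open disk of $S$, we have $\partial(\pi(U))\not\subseteq\fix(f)$, so Proposition~\ref{pro:nwcc-fix} gives a fixed point of $f$ in $\pi(U)$, whose unique $\pi$-preimage in $U$ is --- since $\hat f$ is a natural lift --- a fixed point $\hat z_0\in\fix(\hat f)\cap U$, and $\pi^{-1}(\fix(f))\cap U\subseteq B(\hat z_0,M_0)$.

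\emph{The contradiction.} Suppose $\diam(U)>M$ for a large constant $M$ (assume first $U$ bounded; the unbounded case is handled similarly by truncating $U$ inside a large ball). Choose $w\in U$ with $d(w,\hat z_0)>M-M_0$; then $w\notin\hat X_*$, so there is a leaf $\Gamma$ of $\hat{\mc{F}}_*$ through $w$, and $w\in U\cap V_\Gamma$. Apply Lemma~\ref{lem:linking-plane} to $U$ and $\Gamma$. If each connected component of $\Gamma\cap U$ is wandering, the component containing $w$ is a wandering cross-cut of $U$, a contradiction. Otherwise one of the filled limit sets $L\in\{\overline\alpha(\Gamma),\overline\omega(\Gamma)\}$ is compact with $L\cap\hat X_*\neq\emptyset$, and either $L\cap\hat X_*\subseteq U\subseteq B(\hat z_0,M_0)$, or $U$ is linked to $\Gamma$; in the latter case $U\cap\hat X_*=\emptyset$ and, taking $\hat z_0$ as base point so that the loop defining $L_{\hat{\mc{I}}_*}(U,p)$ is just $(\hat f_t(\hat z_0))_{t\in[0,1]}$, the relation $L_{\hat{\mc{I}}_*}(U,p)=\wind\big((\hat f_t(\hat z_0))_{t\in[0,1]},p\big)\neq 0$ forces $d(p,\hat z_0)<C$ for all $p\in L\cap\hat X_*$. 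Thus in every non-contradictory case the leaf $\Gamma$ through the far point $w$ has a \emph{compact} filled limit set all of whose singularities lie in $B(\hat z_0,R)$ with $R=\max\{M_0,C\}$.

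\emph{Main obstacle.} The heart of the argument is to turn this last conclusion into a genuine bound $d(w,\hat z_0)\le M'$ with $M'$ depending only on $f$: one must exclude that the leaf $\Gamma$ makes an arbitrarily large excursion out to $w$ before limiting onto a set near $\hat z_0$. This requires a careful analysis of how $\Gamma$ enters and exits $U$, using the Brouwer-line structure of the leaves, the hypothesis that $U$ has no wandering cross-cuts (once more), and the Linking Lemmas (Lemmas~\ref{lem:linking-sphere} and \ref{lem:linking-plane}) applied to the disks cut off along the successive cross-cuts $\Gamma\cap U$, in order to show that $U$ must ``trap'' the limit set inside a bounded neighborhood of $\hat z_0$. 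Granting this, choosing $M>M_0+2R+M'$ contradicts $\diam(U)>M$, and hence $\diam(U)\le M$. The reduction in the first step and the unbounded and degenerate cases need some care but are routine given the cited results.
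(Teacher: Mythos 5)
There is a genuine gap, and you flag it yourself in the ``Main obstacle'' paragraph: your argument stops at the statement that the leaf $\Gamma$ through a far point $w\in U$ has a compact filled limit set whose singularities lie within bounded distance of $\hat z_0$, and you then ask to ``turn this into a bound on $d(w,\hat z_0)$''. Nothing in the linking data for that single leaf gives such a bound: a leaf of a Brouwer--Le Calvez foliation can make arbitrarily long excursions before limiting onto a small set, and the hypothesis on $U$ gives no control over the geometry of one leaf passing through one point of $U$. This missing step is not a routine verification; it is exactly the hard part, and the paper closes it by a structurally different argument that your proposal does not contain. Namely, the paper fixes a compact fundamental domain $Q$ with $\pi(Q)=S$ and $\bd Q$ disjoint from the singular set, covers $\bd Q$ by finitely many neighborhoods $V_{\Gamma_1},\dots,V_{\Gamma_m}$ from Proposition \ref{pro:perto-corta}, and applies the planar Linking Lemma not to $U$ with varying leaves but to each deck translate $TU$ meeting a fixed $V_{\Gamma_i}$. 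Because $\pi|_U$ is injective the translates $\{TU\}_{T\in\deck(\pi)}$ are pairwise disjoint, so the ``absorbing'' cases (1a), (2a), (3) of Lemma \ref{lem:linking-plane} can occur for at most two values of $T$ per leaf, while the linked cases (1b), (2b) force $Tz_0$ into the relatively compact set $E'$ of fixed points linking the compact limit singularities (Proposition \ref{pro:link-bound}), which bounds their number by a constant $N_0$. Counting then bounds the number of translates of $Q$ met by $U$, hence $\diam(U)$. The disjointness of the translates and the counting over deck transformations is the key idea absent from your approach; without it, the conclusion you reach about the leaf through $w$ simply does not constrain $d(w,\hat z_0)$.

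A secondary inaccuracy compounds this: after collapsing $\fill(X_*)$ via Proposition \ref{pro:collapse} and Corollary \ref{coro:slice}, the new disk need not be free of wandering cross-cuts; only cross-cuts with endpoints outside $h(K)$ are guaranteed nonwandering. So you may not ``assume $\fix(f)$ is totally disconnected'' and keep the full no-wandering-cross-cut hypothesis; the paper's Claim in the proof of Proposition \ref{pro:main-contractible} is stated with the weaker hypothesis that every wandering cross-cut has an endpoint in the singular set, and its case (3) analysis (a cross-cut component of $\Gamma\cap TU$ with a singular endpoint forces $\Gamma(t)$ to enter and remain in $TU$ for $t$ near $\pm\infty$, which again can happen for at most one translate per end) is precisely what handles this. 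In your contradiction step, ``each component of $\Gamma\cap U$ wandering'' is therefore not immediately a contradiction once the reduction has been performed.
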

\begin{proof}
We begin by noting that it suffices to find a bound $M$ for a specific lift $\hat{f}$. Indeed, if $S$ has genus $g>1$, then there is a unique natural lift (see for instance \cite[Lemma 4.3]{epstein}); whereas if $S$ is the torus there are at most finitely many lifts which have a fixed point, and Proposition \ref{pro:nwcc-fix} implies that any $\hat{f}$ admitting an invariant disk $U$ as in the statement must have a fixed point.

We will prove the following:
\begin{claim}\label{claim:td} If $f$ has a Brouwer-Le Calvez pair $(\mc{I}, \mc{F})$ with totally disconnected set of singularities $X$ whose natural lift corresponds to a Brouwer-Le Calvez pair $(\hat{\mc{I}}, \hat{\mc{F}})$ associated to $\hat{f}$, then there exists $M'>0$ such that the following property holds: if $U$ is a $\hat{f}$-invariant topological disk which projects injectively to an $f$-invariant disk $\pi(U)$, and every wandering cross-cut of $\pi(U)$ has an endpoint in $X$, then $\diam(U)\leq M'$.
\end{claim}

Before moving to the proof of the claim, we show how the proposition follows from it.
\begin{proof}[Proof of Proposition \ref{pro:main-contractible} assuming Claim \ref{claim:td}] 
Under the assumptions of the proposition, let $\mc{I}=(f_t)_{t\in [0,1]}$ be the isotopy from the identity to $f$ which lifts to an isotopy $\hat{\mc{I}}$ from the identity to $\hat{f}$. By Proposition \ref{pro:jaulent} there exists a closed set $X_*\subset \fix(f)$ and a Brouwer-Le Calvez pair $((f_t^*)_{t\in [0,1]}, \mc{F}_*)$ associated to $f|_{S\sm X_*}$ with the additional property that for any $z\in S\sm X_*$ the path $(f_t^*(z))_{t\in [0,1]}$ lifts to a path connecting $z$ to $\hat{f}(z)$. 
Let $h\colon S\to S$ and $f'$ be the maps given by Proposition \ref{pro:collapse} applied to $K = \fill(X_*)$, and let $X'=h(K)$, so $X'$ is totally disconnected. 
The isotopy $((f_t^*)_{t\in [0,1]})$ restricted to $S\sm K$ lifts to an isotopy $(\hat{f}_t^*)_{t\in [0,1]}$ in $\hat{S}\sm \pi^{-1}(K)$ from the identity to $\hat{f}_{\hat{S}\sm \pi^{-1}(K)}$, and the foliation $\mc{F}_*$ lifts to a dynamically transverse foliation $\hat{\mc{F}}_*$ without singularities in $\hat{S}\sm \pi^{-1}(K)$.

Let $\hat{h}$ be the natural lift of $h$ (obtained by lifting the homotopy from the identity to $h$). Then $\hat{f}_t'=\hat{h}\hat{f}_t\hat{h}^{-1}$ defines an isotopy $\hat{\mc{I}}'$ from the identity to some map $\hat{f}'$ in $\hat{S}\sm \hat{X}'$, where $\hat{X}'=\hat{h}(\pi^{-1}(K)) = \pi^{-1}(h(K))$. Since $\hat{X}'$ is closed and totally disconnected, the isotopy $\hat{\mc{I}}'$ (and the map $\hat{f}'$) may be extended to $\hat{S}$ by fixing $\hat{X}'$ pointwise. 
The foliation $\hat{\mc{F}}_*$ also induces a foliation without singularities $\hat{\mc{F}}' = \{h(\Gamma):\Gamma\in \hat{\mc{F}}_*\}$ of $\hat{S}\sm\hat{X}'$, and if we regard it as a foliation with singularities of $\hat{S}\sm \hat{X}'$ it is easy to verify that it is dynamically transverse to $\hat{f}'$. Thus we have a Brouwer-Le Calvez pair for $\hat{f}'$, which projects to a Brouwer-Le Calvez pair for $f'$ with singularities exactly at $X'$.

Let $M_1$ be the constant from Corollary \ref{coro:slice}, and suppose that the proposition does not hold, so for any $M$ we may find an $\hat{f}$-invariant open topological disk $\hat{U}$ which projects to an $f$-invariant disk $U$ without wandering cross-cuts and $\diamup(U)> M$. Assume $M>M_1+2M'$ (where $M'$ comes from Claim \ref{claim:td}), and let $U'$ be the $f'$-invariant disk given by Corollary \ref{coro:slice}, which is the filling of some connected component $U_1$ of the image by $h$ of $U\sm K$ and satisfies $\diamup(U')>(M-M_1)/2>M'$. Clearly $\hat{U}_1$ is $\hat{f}$-invariant, so if $\hat{U}'$ is the connected component of $\pi^{-1}(U')$ containing $\hat{U}_1$ it must also be $\hat{f}$-invariant.
Furthermore, every crosscut $C$ of $U'$ with endpoints in $S'\sm X'$ is the image by $h$ of a crosscut of $U$ and therefore is non-wandering (note that this may not be the case if $C$ has endpoints in $X'$).

Summarizing, we have found a new map $f'$ with a Brouwer-Le Calvez pair with singularities at a totally disconnected set $X'$, and a corresponding lift $\hat{f}'$ with an invariant open topological disk $\hat{U}'$ such that $U'=\pi(\hat{U}')$ has no wandering cross-cuts with endpoints in $S'\sm X'$ and $\diam(\hat{U}')>M'$. This contradicts Claim \ref{claim:td}, completing the proof Proposition \ref{pro:main-contractible}.
\end{proof}

It remains to prove Claim \ref{claim:td}. 
We may thus assume that there is a Brouwer-Le Calvez pair $(\mc{I}, \mc{F})$ for $f$ with singularities at the totally disconnected set $X$, and its natural lift $(\hat{\mc{I}}, \hat{\mc{F}})$ for $\hat{f}$ with singularities at $\hat{X}=\fix(\hat{\mc{I}})$.

Since $\hat{X}$ is totally disconnected (hence inessential), we may find a topological disk $Q\subset \hat{S}$ bounded by a loop $\gamma$ disjoint from $\hat{X}$ such that $\pi(Q) = S$ (see Remark \ref{rem:Q}). 
We will show that the number of elements $T\in \deck(\pi)$ such that $U\cap TQ\neq \emptyset$ is bounded by a constant independent of $U$.

For each leaf $\Gamma$ of $\hat{\mc{F}}$, let $V_\Gamma$ be the neighborhood given by Proposition \ref{pro:perto-corta} applied to $(\hat{\mc{I}},\hat{\mc{F}})$. Since $\bd Q$ is compact and disjoint from $\hat{X}$, we may find finitely many leaves $\Gamma_1,\dots,\Gamma_m$ of $\hat{\mc{F}}$ such that $\bd Q\subset \bigcup_{i=1}^m V_{\Gamma_i}.$
Let $E$ be the union of all sets of the form $\ol{\omega}(\Gamma_i)\cap \hat{X}$ or $\ol{\alpha}(\Gamma_i)\cap \hat{X}$ which are compact (note that a priori we do not know that all such sets are compact, but we only take the union of the compact  ones). If $E'$ is the set consisting of all points $z\in \fix(\hat{f})$ such that $L_{\hat{\mc{I}}}(z,z')\neq 0$ for some $z'\in E$, it follows from Proposition \ref{pro:link-bound} that $E'$ is relatively compact. In particular, there is an upper bound $N_0>0$ on the number of deck transformations $T\in \deck(\pi)$ such that $T(E')\cap E'\neq \emptyset$.

We need to note that there exists some fixed point $z_0\in U$. Indeed, since $U$ no wandering cross-cuts with endpoints in $\hat{S}\sm \hat{X}$, it has no wandering cross-cuts at all if seen as a subset of the surface given by the connected component of $(\hat{S}\sm \fix(\hat{f}))\cup U$ which contains $U$, so Proposition \ref{pro:nwcc-fix} tells us that $\hat{f}|_U$ has a fixed point.

Let $K$ be the closure of the set of all $p\in \hat{X}$ such that $L_{\hat{\mc{I}}}(z_0,p)\neq 0$. The set $K$ is compact, since it is contained in the union of all bounded connected components of the loop $(\hat{f}_t(z_0))_{t\in [0,1]}$ (but it could be empty, for instance if $z_0\in \hat{X})$). Since $z_0\in U$, we have from the definition of linking that $L_{\hat{\mc{I}}}(U,p) = 0$ for all $p\in \hat{X}\sm (K\cup U)$ 

Let $\Sigma_i\subset \deck(\pi)$ be the set of all deck transformations $T$ such that $TU\cap V_{\Gamma_i}\neq \emptyset$. Recall that the sets $\{TU\}_{T\in \deck(\pi)}$ are pairwise disjoint since $U$ is a disk and so is $\pi(U)$. For each $T\in \Sigma_i$ we apply Lemma \ref{lem:linking-plane} to $TU$ and study the possible cases. 
Let us denote by $\Sigma_i'$ the set of all $T\in \Sigma_i$ for which one of cases (1a), (2a) or (3) hold in Lemma \ref{lem:linking-plane} applied to $TU$. In other words, if $T\in \Sigma_i'$ then one of these properties holds:
\begin{itemize}
\item [(1a)] $\emptyset\neq \ol{\alpha}(\Gamma_i)\cap X\subset TU$;
\item [(2a)] $\emptyset \neq \ol{\omega}(\Gamma_i)\cap X\subset TU$;
\item [(3)] $\Gamma\cap TU\neq \emptyset$ and each of its connected components is wandering.
\end{itemize}
We claim that $\Sigma'_i$ has at most two elements. To see this, first note that (1a) and (2a) can only hold for one value of $T$ each, since the sets $\{TU\}_{T\in \deck(\pi)}$ are pairwise disjoint. Moreover, if (2a) holds, then $\Gamma(t)$ intersects $TU$ for arbitrarily large values of $t\in \R$, and similarly of case (1a) holds then $\Gamma(-t)$ intersects $TU$ for arbitrarily large values of $t\in \R$. 
On the other hand, if case (3) holds, then each component of $TU\cap \Gamma_i$ is wandering. Moreover, either $TU\cap \Gamma_i=\Gamma_i$, or every component $C$ of $TU\cap \Gamma_i$ is a cross-cut of $TU$. By our hypothesis, the latter implies that one of the endpoints of $C$ is a point $p\in \hat{X}\cap \bd U$. Since $\Gamma_i$ is disjoint from $X$, it follows that $\{p\}$ is either the $\omega$-limit or the $\alpha$-limit of $\Gamma_i$. In any case, there exists $t_0$ such that $\Gamma(t)\in TU$ holds either for all $t>t_0$ or for all $t<t_0$. Each of these two cases can only hold for one value of $T$, and moreover if the first case holds then (2a) cannot hold for any $T$, and similarly if the second case holds then (1a) cannot hold for any $T$. This implies that $\Sigma'_i$ has at most two elements, as claimed.

Now suppose that $T\in \Sigma_i\sm \Sigma_i'$, so case (1b) or (2b) holds for $TU$. Assume (1b) holds, so $\ol{\alpha}(\Gamma_i)\cap \hat{X}$ is compact, nonempty, and for each $p\in \ol{\alpha}(\Gamma_i)\cap \hat{X}$ we have $L_{\hat{\mc{I}}}(TU, p)\neq 0$. This implies that $L_{\hat{\mc{I}}}(Tz_0, p)\neq 0$, and since $p\in E$, the definition of $E'$ implies that $Tz_0\in E'$. An analogous argument shows that if case (2b) holds, then again $Tz_0\in E'$.
Therefore, $Tz_0\in E'$ for each $T\in \Sigma_i\sm \Sigma_i'$. But recalling that there is a bound $N_0$ on the number of $T\in \deck(\pi)$ such that $TE'\cap E'\neq\emptyset$, we conclude that $\Sigma_i$ has at most $N_0+2$ elements, for $i\in\{1, \dots, m\}$.

If for some $T\in \deck(\pi)$ we have that $TU\cap Q\neq \emptyset$, then either $TU\subset Q$ or $TU\cap \bd Q\neq \emptyset$. The first case may only happen for at most $N_1$ choices of $T$, where $N_1$ is the number of elements of $\{T\in \deck(\pi): TQ\cap Q\neq\emptyset\}$, which is finite and independent of $U$. On the other hand, the second case implies $T\in \Sigma_i$ for some $i$, and this can only happen for at most $m(N_0+2)$ different choices of $T$. Therefore we have bounded the number of elements of $\mc{T}:=\{T\in \deck(\pi): U\cap TQ\neq \emptyset\}$ by $N = N_1 + m(N_0+2)$. 

Finally, $U\subset \bigcup_{T\in \mc{T}} TQ$ and so $\diam(U)\leq N\diam(Q) := M'$, concluding the proof of the claim.
\end{proof}

\subsection{Non-contractible case}

\begin{proposition}\label{pro:twist} If $S$ is a closed hyperbolic surface and $f\colon S\to S$ is isotopic to the identity and $\hat{f}$ is its natural lift to $\hat{S}$, then for each deck transformation $T$ there exists a constant $M_T>0$ such that if $U\subset \hat{S}$ is an open topological disk such that $\hat{f}(U) = TU$ and $U$ projects injectively to an $f$-invariant disk without wandering cross-cuts, then $\diam(U)<M_T$.
\end{proposition}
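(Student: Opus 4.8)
The plan is to reduce to the already-proven ``contractible'' case by passing to a quotient annulus and gluing it into a torus, following the outline in the introduction. If $T=\id$ the statement is exactly Proposition~\ref{pro:main-contractible}, so assume $T\neq\id$. Since $S$ is hyperbolic we take $\hat S=\D$, so $T$ is a hyperbolic isometry with axis $\Gamma_T$ and two fixed points $p^{\pm}\in\bd\D$. Because $U$ projects injectively to $S$, the translates $\{RU:R\in\deck(\pi)\}$ are pairwise disjoint, so the covering map $q\colon\D\to A:=\D/\gen{T}$ onto the open annulus $A$ is injective on $U$; as $\hat f$ commutes with $T$ it descends to $\bar f\colon A\to A$ with $\bar f(q(U))=q(\hat f(U))=q(TU)=q(U)$, so $W:=q(U)$ is an open $\bar f$-invariant topological disk, and since $\hat f(T^{n}U)=T^{n+1}U$ the lift of $\bar f$ that leaves $U$ invariant is the ``once-twisted'' lift $T^{-1}\hat f$. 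Adjoining to $A$ the two circles obtained by quotienting the arcs of $\bd\D\sm\{p^{\pm}\}$ by $\gen{T}$ produces a compact annulus $\bar A$ with $\bd\bar A=C_1\sqcup C_2$; since $\hat f$ and every map of the natural isotopy extend to $\ol\D$ fixing $\bd\D$ pointwise, $\bar f$ extends to $\bar A$ fixing $\bd\bar A$ pointwise, through an isotopy from the identity that also fixes $\bd\bar A$. Gluing $C_1$ to $C_2$ yields a torus $\T^2$, a homeomorphism $\tilde f$ isotopic to the identity that fixes pointwise the essential circle $\delta$ (the image of $C_1\cup C_2$), and a $\tilde f$-invariant open topological disk $\tilde W\subset\T^2\sm\delta$ (the image of $W$).

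Next I would bound $\tilde W$ by running the Brouwer--Le Calvez and Linking Lemma machinery of Section~\ref{sec:linking} on $\T^2$, essentially as in the proof of Claim~\ref{claim:td}. The apparent obstruction is that $\delta\subset\fix(\tilde f)$ is \emph{essential}; the way around this is that $\tilde W$ is disjoint from $\delta$ and nullhomotopic, so it lifts to a disk $\hat W_0$ in the universal cover $\R^2\to\T^2$, and the lift $\hat{\tilde f}$ of $\tilde f$ fixing $\hat W_0$ (the one corresponding to $T^{-1}\hat f$) \emph{translates} the lifts of $\delta$ rather than fixing them---precisely because $U$ was the once-twisted lift of $W$---so $\fix(\hat{\tilde f})$ projects to a totally disconnected subset of $\fix(\tilde f)\sm\delta$. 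Hence Proposition~\ref{pro:jaulent}, applied to $\tilde f$ with this lift, furnishes a Brouwer--Le Calvez pair for $\tilde f$ with totally disconnected singular set $X_{*}$ (disjoint from $\delta$) whose natural lift corresponds to $\hat{\tilde f}$. Moreover any cross-cut of $\tilde W$ with both endpoints off $\delta$ pulls back through $U\subset\D$ and projects to a cross-cut of $\pi(U)$ in $S$, which is non-wandering by hypothesis; so every wandering cross-cut of $\tilde W$ has an endpoint in $X_{*}\cup\delta$. These are exactly the ingredients of the counting argument of Claim~\ref{claim:td} (treating $\delta$ on the same footing as the singular set, which only forces us to discard one extra case of Lemma~\ref{lem:linking-sphere}), which then bounds $\hat W_0$ in $\R^2$ independently of $U$. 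Combined with a complementary argument ensuring that $\tilde W$ stays a definite, $U$-independent distance away from $\delta$ and that, back in $\D$, $U$ does not accumulate on $\bd\D$ along $\Gamma_T$ (equivalently, $W$ does not accumulate on the core geodesic of $A$), this places $W$ inside a fixed compact subset of $A$; translating back through the covering-distance formalism of \S\ref{sec:covering-diameter}---$U$ being a single component of $q^{-1}(W)$ on which $q$ restricts to an isometry onto $W$---gives $\diam(U)\le M_T$ with $M_T$ depending only on $f$ and $T$ (enlarging $M_T$ to absorb the degenerate case $W\cap\fix(\bar f)\neq\emptyset$ if necessary).

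I expect the genuine difficulty to lie entirely in the middle paragraph. Verifying that the twisted lift $\hat{\tilde f}$ has totally disconnected fixed set and that Jaulent's construction can be arranged to leave $\delta$ singularity-free should be routine but needs care; the real work is to show, still within the Linking Lemma framework, that the invariant disk $\tilde W$ cannot approach the fixed circle $\delta$---and correspondingly that $U$ cannot escape to $\bd\D$ along $\Gamma_T$---\emph{uniformly in $U$}. This is precisely the point at which the ``non-annular''-type restrictions that appeared as hypotheses in the earlier toral results have to be eliminated by hand, and it is the crux of the proof.
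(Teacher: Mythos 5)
Your reduction to a torus via a quotient annulus is the right general strategy, but the way you set it up creates the very obstruction that the paper's construction is designed to avoid, and the steps you flag as "the crux" are exactly the ones left unproven. By quotienting $\D\cup\Delta$ by $\langle T\rangle$ and extending $\bar f$ by the identity on the two boundary circles, you produce a torus map $\tilde f$ that fixes an \emph{essential} circle $\delta$ pointwise, so $\fix(\tilde f)$ is essential and Proposition~\ref{pro:main-contractible} cannot be invoked; you then try to salvage this by rerunning the machinery of Claim~\ref{claim:td} "treating $\delta$ on the same footing as the singular set." Two concrete problems arise there. First, your assertion that Proposition~\ref{pro:jaulent} "furnishes a Brouwer--Le Calvez pair with totally disconnected singular set" is false: Jaulent only gives a closed set $X_*\subset\fix$; total disconnectedness is obtained in the paper by the collapsing procedure (Proposition~\ref{pro:collapse}, Corollary~\ref{coro:slice}), which itself needs the fixed set to be inessential -- precisely what fails in your torus. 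Second, the uniform statement that $\tilde W$ stays a definite, $U$-independent distance from $\delta$ (equivalently, that $U$ cannot escape toward $\bd\D$ along the axis of $T$) is not an afterthought to be "combined" at the end; without it the argument gives no bound at all, and you offer no mechanism for it, since near your pointwise-fixed circle $\delta$ the map $\tilde f$ gives you nothing to work with.

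The paper sidesteps both issues with one device: it quotients by $\langle T^2\rangle$ (not $\langle T\rangle$) and works with the twisted map $g=T\hat f$, which coincides with $T$ on $\Delta$. In the quotient annulus the boundary dynamics is then conjugate to a \emph{rotation by $1/2$}, hence fixed-point free, so after gluing the resulting torus map $\til g'$ has $\fix(\til g')$ disjoint from the glued circle and inessential, and Proposition~\ref{pro:main-contractible} applies directly -- no reworking of the linking lemmas is needed. Moreover the half-rotation on the boundary is exactly what produces the missing uniform estimate: since any invariant disk without wandering cross-cuts contains a fixed point (Proposition~\ref{pro:nwcc-fix}), an arc from that fixed point into a small neighborhood of the glued circle would have iterates of large diameter, forcing all such disks into a fixed compact annulus $A_0$ away from the circle, where the two metrics are comparable. (The same phenomenon is visible in your picture as the translation of the lifts of $\delta$ by the twisted lift, so your route could in principle be repaired along these lines, but as written the repair is not carried out.) Finally, note that the paper also has to verify with some care that $\tau(U)$ has no wandering cross-cuts when a cross-cut endpoint lands on the boundary circle, using Proposition~\ref{pro:nwcc-nwcs}; your corresponding claim that every wandering cross-cut of $\tilde W$ has an endpoint in $X_*\cup\delta$ glosses over this point as well.
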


\begin{proof}
For notational convenience, we will prove the statement for $T^{-1}$ instead of $T$. The case $T=\id$ was already considered in Proposition \ref{pro:main-contractible}, so we assume $T\neq \id$.
Identifying $\hat{S}$ with the Poincar\'e disk $\D$ (see Section \ref{sec:hyperbolic}), since $\hat{f}$ is a natural lift it extends to $\ol{\D}$ fixing the unit circle pointwise (we keep the notation $\hat{f}$ for this extension). The deck transformation $T$ also extends to $\ol{\D}$, having two distinct fixed points $p,q\in\bd \D$. Consider the map $g=T\hat{f}$. Note that $g$ commutes with $T$  and coincides with $T$ in $\bd \D$. Let $\Delta = \bd{\D}\sm \{p,q\}$, denote by $\til{\A}$ the closed annulus $\D\cup\Delta/\langle T^2 \rangle$, and let $\tau\colon (\ol{\D}\sm\{p,q\})\to \til{\A}$ be the projection. Let $\til{g}$ be the homeomorphism induced by $g$ on $\til{\A}$ and $\til{T}$ the map induced by $T$ (which is conjugate to a rotation by $1/2$). Note that $\til{g}$ coincides with $\til{T}$ on the boundary $\bd \til{\A} = \tau(\Delta)$. Consider a torus $\til{\T}^2$ obtained by identifying the two boundary circles in a way that if $z'$ is identified with $z$ then $\til{T}z$ is identified with $\til{T}z'$. Then $\til{T}$ and $\til{g}$ induce homeomorphisms $\til{T}'$ and $\til{g}'$ on $\til{\T}^2$ where $\til{T}'$ is a rotation by $1/2$ on the identified circle $\til{\Delta}$ and $\til{g}'$ coincides with $\til{T}'$ on $\til{\Delta}$ (See Figure \ref{fig:deck-together}).

\begin{figure}[ht]
\includegraphics[width=\textwidth]{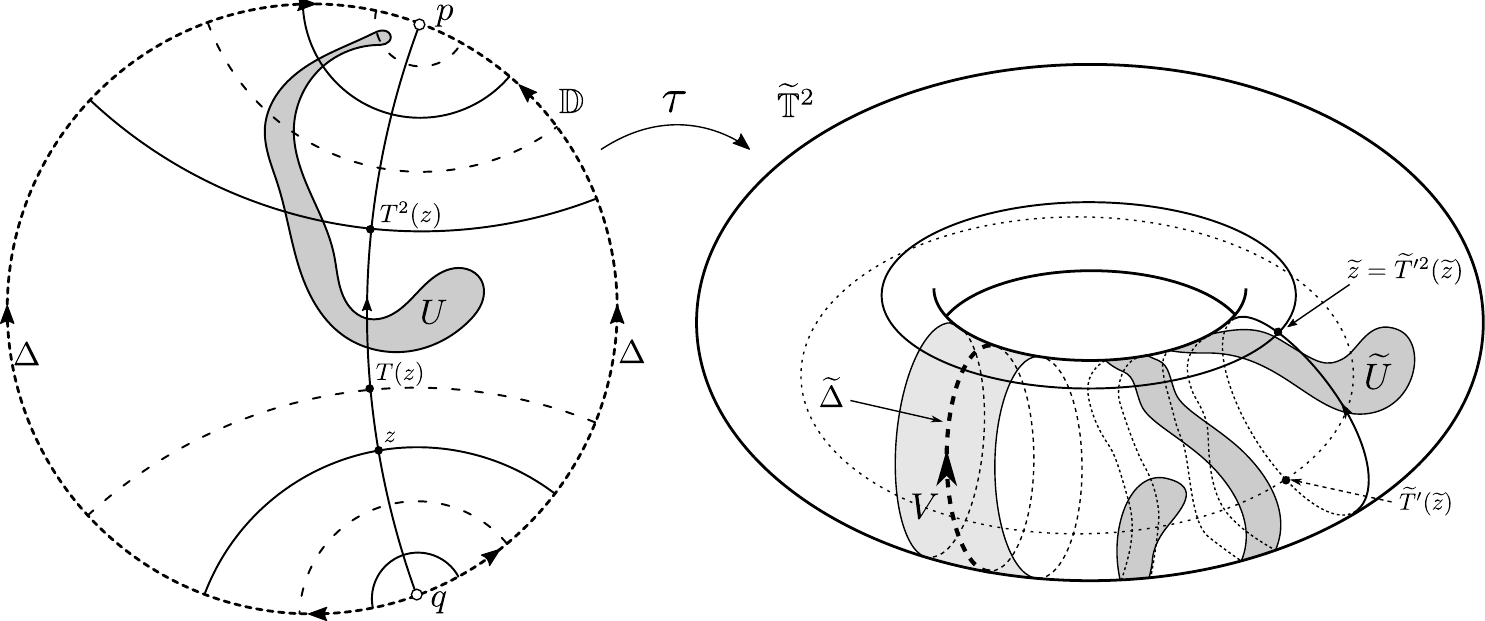}
\caption{The map $\tau\colon \D\to \til{\T}^2\sm \til{\Delta}$}
\label{fig:deck-together}
\end{figure}

We note that $\til{g}'$ is isotopic to the identity in $\til{T}'$: indeed, extending each $f_t$ so that it fixes $\bd \D$ pointwise, if $g_t = T\hat{f}_t$ then $(g_t)_{t\in [0,1]}$ defines an isotopy between $T$ and $g$ on $\D\cup \Delta$, and each $g_t$ commutes with $T^2$ so it also induces an isotopy $(\til{g}_t)$ from $\til{T}$ to $\til{g}$ on $\til{\A}$ such that $\til{g}_t$ coincides with $\til{T}$ on $\bd \til{\A}$. The latter condition implies that $\til{g}_t$ respects the identification of the boundary circles and so it induces a map $\til{g}_t'$ on $\til{\T}^2$ which provides an isotopy from $\til{T}'$ to $\til{g}'$. But $\til{T}'$ is clearly isotopic to the identity, hence so is $\til{g}'$, as claimed.

We also observe that $\fix(\til{g}')$ is inessential. Indeed, since no point of $\til{\Delta}$ is fixed, $\fix(\til{g}') \subset \til{\T}^2\sm \til{\Delta} = \til{\A}\sm \bd \til{\A}$ and then $$\pi(\tau^{-1}(\fix(\til{g}'))) = \pi(\fix(\hat{g})) = \pi(\fix(T\hat{f}))\subset \fix(f).$$
From the fact that $\fix(f)$ is inessential, there is an inessential open set $W\subset S$ containing $\fix(f)$, and it is easy to verify that $\tau(\pi^{-1}(W))$ is inessential in $\til{\A}$ (and contains $\fix(\til{g}')$, so $\fix(\til{g}')$ is inessential as claimed).

Thus the map $\til{g}'$ satisfies the hypotheses of Proposition \ref{pro:main-contractible}, which means that there exists $M'>0$ such that any $\til{g}'$-invariant open topological disk $\til{U}$ without wandering cross-cuts must satisfy $\diamup'(\til{U})\leq M'$, where $\diamup'$ denotes the covering diameter using an appropriate metric on the torus $\til{\T}^2$.

Note that $\til{g}'|_{\til{\Delta}}$ is conjugate to a rotation by $1/2$ (mod $\Z$). A straightforward argument implies that there is a neighborhood of $V$ of $\til{\Delta}$ with the property that if $\gamma$ is any simple arc joining a fixed point of $\til{g}'$ to a point of $V$, then some iterate of $\gamma$ by $\til{g}'$ lifts to the universal covering to an arc of diameter greater than $M$. In particular, since any $\til{g}'$-invariant open topological disk $\til{U}$ without wandering cross-cuts must contain a fixed point (by Proposition \ref{pro:nwcc-fix}), it follows that any such disk must be disjoint from $V$. We may thus find a closed annulus $A_0\subset \til{\T}^2\sm \til{\Delta}$ with the property that any such disk must be contained in $A_0$. 

Since $A_0$ compact and disjoint from $\til{\Delta}$, the metric on $\D$ projects by $\tau$ to a Riemannian metric on $A_0$ under which $\diamup(\til{U}) = \diam(U)$ whenever $\til{U}\subset A_0$ is a topological disk and $U$ is a connected component of $\tau^{-1}(U)$. Due to the compactness of $A_0$, this metric must be equivalent to the metric of $A_0$ regarded as a subset of $\til{\T}^2$. Hence there is a constant $c>0$ such that $\diamup(\til{U})\leq c\diamup'(\til{U})$.

Finally, suppose that $U$ is an open topological disk such that that $\hat{f}(U)=T^{-1}(U)$ and projects to an $f$-invariant topological disk without wandering cross-cuts. Then $U$ is $g$-invariant and has no wandering cross-cuts for $g$, and so $\til{U} = \tau(U)$ is a $\til{g}$-invariant open topological disk. The fact that $\til{U}$ has no wandering cross-cuts needs some care and will be shown below; but assuming that fact, it follows from the previous comments that $\til{U}\subset A_0$ and $\diamup'(\til{U})\leq M'$, hence $\diamup(\til{U})\leq M := cM'$ as we wanted.

We finish the proof showing that $\til{U}$ has no wandering cross-cuts: if $C$ is a cross-cut of $\til{U}$ with endpoints in $\til{\A}\sm \bd \til{\A}$, then $\pi(\tau^{-1}(C))$ is a cross-cut of $U$ (hence nonwandering for $f$), and it follows easily that $C$ is nonwandering for $\til{g}$. But we also need to consider the case where one of the endpoints of $C$ is in $\bd \til{\A}$. Suppose that some such $C$ is wandering. Then the connected component $\hat{C}$ of $\tau^{-1}(C)$ contained in $U$ is a wandering for $g$, and since $C$ has an endpoint in $\bd \til{\A}$, $\hat{C}$ has an endpoint in $\Delta$. Since $g$ coincides with $T$ on $\Delta$, the endpoint of $\hat{C}$ in $\Delta$ is not periodic by $g$, and therefore Proposition \ref{pro:nwcc-nwcs} implies that there is a wandering cross-section $W$ of $U$ for $g$. We claim that $\bd W$ contains some point of $\bd_\D U$. Indeed if this were not the case then $W$ would be a cross-section of $\D$, but since $\pi|_W$ is injective this would be a contradiction: since $S$ is compact, any neighborhood of a point of $\bd \D$ projects onto $S$. Thus $\bd W$ contains some boundary point of $U$ in $\D$, and intersecting $W$ with a small disk around this point we may produce a cross-cut $\hat{C}'$ of $U$ contained in $W$ with endpoints in $\D$. But then $\pi(\hat{C}')$ is a wandering cross-cut for $f$, contradicting our hypothesis.
\end{proof}

\subsection{Proof of Theorem \ref{th:main-disk}}
Recall that $S$ is a closed orientable surface, $f\colon S\to S$ is a homeomorphism homotopic to the identity, and $\fix(f)$ is inessential. Let $U$ be an open $f$-invariant topological disk without wandering cross-cuts. We want to show that $\diamup(U)$ is bounded by a constant independent of $U$. If $S = \T^2$, then there always exists a natural lift $\hat{f}$ of $f$ such that any connected component of $\pi^{-1}(U)$ is $\hat{f}$-invariant, so the claim of the theorem follows immediately from Proposition \ref{pro:main-contractible}. 

Now suppose that $S$ is a hyperbolic surface, and let $\hat{f}$ be a natural lift of $f$. Then there is a constant $N>0$ such that $d(z, \hat{f}(z))\leq N$ for all $z\in \hat{S}$, and so if $T^{-1}\hat{f}$ has a fixed point for some $T\in \deck(\pi)$ then there exists  $z$ such that $d(z, \hat{f}(z)) = d(z, Tz)\leq N$. Since $S$ is compact, only finitely many such $T$'s may exist, so the set $\Sigma = \{T\in \deck(\pi) : \fix(T^{-1}\hat{f})\neq \emptyset\}$ is finite. 
We know from Proposition \ref{pro:nwcc-fix} that there is a fixed point $z_0\in U$. If $\hat{z}_0\in \pi^{-1}(z_0)$, then $\hat{f}(\hat{z}_0)=T\hat{z}_0$ for some $T\in \deck(\pi)$, and so $T\in \Sigma$. If $\hat{U}$ is the connected component of $\pi^{-1}(U)$ containing $\hat{z}_0$, then $\hat{f}(\hat{U})=T\hat{U}$ and so by Proposition \ref{pro:twist} it follows that $\diamup(U)\leq M_T \leq M := \max_{T\in \Sigma} M_T$. \qed

\subsection{A version for surfaces with boundary}\label{sec:main-disk-bd}

For a compact surface with boundary $S$, we say that a compact subset $K\subset S$ is inessential if $K$ is contained in some closed topological disk in $S$. This coincides with the definition for closed surfaces given in Section \ref{sec:essential}, but we remark that some properties stated earlier for inessential sets may fail to hold if $S$ has nonempty boundary (for instance, the complement of an inessential set may be simply connected). Nevertheless, Theorem \ref{th:main-disk} remains valid:

\begin{theorem}\label{th:main-disk-bd} If $S$ is a compact surface with boundary and $f\colon S\to S$ a homeomorphism such that $\fix(f)$ is inessential, then there is a constant $M>0$ such that any open $f$-invariant topological disk $U$ without wandering cross-cuts has covering diameter at most $M$.
\end{theorem}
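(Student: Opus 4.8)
The plan is to reproduce the proof of Theorem \ref{th:main-disk}, since every tool it uses — Le Calvez's equivariant foliations via Jaulent's Proposition \ref{pro:jaulent}, the Linking Lemmas of Section \ref{sec:linking}, the Brouwer-theoretic Lemma \ref{lem:brouwer-free}, and the collapsing and slicing arguments of \S\ref{sec:fix-collapse} — remains available when $S$ has boundary. We may assume $\chi(S)\le 0$ (if $S$ is a disk then $\hat{S}=S$ and $\diamup(U)\le\diam(S)$), fix a hyperbolic metric on $S$ with geodesic boundary so that $\hat{S}$ is identified with a closed convex subset of the Poincar\'e disk $\D$ and $\deck(\pi)$ is a finitely generated free Fuchsian group of the second kind, and — as in the application in Section \ref{sec:main-gen}, and as we shall assume throughout — take $f$ isotopic to the identity, which is what lets us speak of a natural lift $\hat{f}$ and invoke Proposition \ref{pro:jaulent}; as before $\hat{f}$ displaces points of $\hat{S}$ a uniformly bounded amount and hence extends to $\overline{\hat{S}}\subset\overline{\D}$ fixing its limit set pointwise.

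\textbf{Step 1: reduce to $\fix(f)$ totally disconnected.} Since $\fix(f)$ is inessential it lies in a closed disk; let $K=\fill(\fix(f))$ in the surface-with-boundary sense, which is $f$-invariant. Proposition \ref{pro:collapse} (which holds for any surface) collapses $K$ to a totally disconnected set, and the analogues of Proposition \ref{pro:slice} and Corollary \ref{coro:slice} (again with essentially the same proofs, using a fundamental-domain disk $Q_0\subset\hat{S}$ bounded by a rectifiable loop avoiding $\fix(f)$) show that if $\diamup(U)$ is large then the collapsed map $f'$ admits an invariant disk $U'$ — the filling of the $h$-image of a component of $U\setminus K$ — with $\diamup(U')\ge(\diamup(U)-M_1)/2$ and whose only possible wandering cross-cuts have an endpoint in the now totally disconnected fixed-point set. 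Thus it suffices to bound $\diamup(U)$ when $\fix(f)=X$ is totally disconnected and every wandering cross-cut of $U$ has an endpoint in $X$.

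\textbf{Step 2: the linking count.} With $X$ totally disconnected, Proposition \ref{pro:jaulent} provides a Brouwer--Le Calvez pair $(\mc{I},\mc{F})$ for $f$ with singularities in $X$, which lifts to one for $\hat{f}$ on $\hat{S}\setminus\pi^{-1}(X)$. Embed $\hat{S}$ as a closed domain in $\R^2$ and extend both $\hat{f}$ and the $\deck(\pi)$-action equivariantly over $\R^2\setminus\hat{S}$; when $U$ meets $\partial S$, also thicken a component $\hat{U}$ of $\pi^{-1}(U)$ across $\partial\hat{S}$ into a $\deck(\pi)$-equivariant family of open disks, so that in all cases $\hat{U}$ may be regarded as an open $\hat{f}$-invariant topological disk in $\R^2$. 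Now one runs the argument of Proposition \ref{pro:main-contractible} and Claim \ref{claim:td} essentially verbatim: if $\hat{f}(\hat{U})=\hat{U}$ (the contractible case), then, using that $\hat{U}$ contains a fixed point of $\hat{f}$ (Proposition \ref{pro:nwcc-fix}) and applying the planar Linking Lemma \ref{lem:linking-plane} to the translates $T\hat{U}$ meeting neighborhoods of the finitely many leaves $\Gamma_1,\dots,\Gamma_m$ crossing the boundary of a fundamental-domain disk $Q\subset\hat{S}$, one bounds $\#\{T\in\deck(\pi):\hat{U}\cap TQ\ne\emptyset\}$ by a constant $N$ independent of $U$. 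Since $Q$ may be taken with compact closure in $\hat{S}$, this gives $\diam_{\hat{S}}(\hat{U})=\diamup(U)\le N\diam(Q)$ — it is the count of deck transformations of $S$, not the ambient metric on $\R^2$, that matters here, which is why the reduction to $\R^2$ causes no loss.

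\textbf{Step 3: the non-contractible case, and the main obstacle.} If instead $\hat{f}(\hat{U})=T\hat{U}$ with $T\ne\id$ (possible even though $U$ is a disk), one reproduces Proposition \ref{pro:twist}: quotient $\hat{S}$ together with the domain of discontinuity of $\langle T\rangle$ by $\langle T^2\rangle$, compactify to a closed annulus, glue its two boundary circles to form a torus $\til{\T}^2$ on which the induced map is isotopic to the identity with inessential fixed set, apply Step 2's conclusion on $\til{\T}^2$, and translate back, comparing the various metrics exactly as in the proof of Proposition \ref{pro:twist}; throughout, the property "$U$ has no wandering cross-cuts" is propagated through the surgeries near $\bd\D$ and near $\partial S$ by Proposition \ref{pro:nwcc-nwcs}, as in that proof. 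One concludes as at the end of the proof of Theorem \ref{th:main-disk}: there is a fixed point of $\hat{f}$ in $\hat{U}$ by Proposition \ref{pro:nwcc-fix}, so $\hat{f}(\hat{U})=T\hat{U}$ for some $T$ in the finite set $\Sigma=\{T\in\deck(\pi):\fix(T^{-1}\hat{f})\ne\emptyset\}$, and hence $\diamup(U)\le\max_{T\in\Sigma}M_T$. I expect the main obstacle to be Step 3: one must verify that every ingredient of Proposition \ref{pro:twist} — the extension of the natural lift to the circle at infinity, the dynamics of $T$ on $\overline{\D}$, the passage to the annulus and then to the torus, and the equivalence of the metrics involved — survives when $\deck(\pi)$ is only a Fuchsian group of the second kind, i.e. when $\hat{S}$ has boundary; a subsidiary point of care is checking, in Step 2, that the equivariant extension to $\R^2$ and the thickening across $\partial\hat{S}$ do not create wandering cross-cuts, which is once more handled via Proposition \ref{pro:nwcc-nwcs}.
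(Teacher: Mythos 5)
There is a genuine gap: what you give is a program for redoing the whole proof of Theorem \ref{th:main-disk} on a surface with boundary, and the two places where that program actually has to be carried out are left as assertions. In Step 2, the planar Linking Lemma \ref{lem:linking-plane} requires a Brouwer--Le Calvez pair \emph{on all of $\R^2$} with totally disconnected singular set; Proposition \ref{pro:jaulent} only gives you one on $\hat{S}\sm\pi^{-1}(X)$, where now $\hat{S}$ is a proper closed subdomain of $\D$. "Extend $\hat{f}$, the deck action, the isotopy and the foliation equivariantly over $\R^2\sm\hat{S}$, and thicken $\hat{U}$ across $\bd\hat{S}$" is not a construction: a free Fuchsian group of the second kind does not act properly discontinuously on any neighborhood of its limit set, so the counting steps of Claim \ref{claim:td} (finiteness of $\{T:TE'\cap E'\neq\emptyset\}$, with $E'$ built from fixed points of the \emph{extended} map, which need not lie in $\hat{S}$) are not automatic; the extended foliation must be dynamically transverse to the extended isotopy; and the thickened disk must be invariant under the extension and must retain the property that its wandering cross-cuts have endpoints in $X$ --- Proposition \ref{pro:nwcc-nwcs} does not give any of this (it only converts certain wandering cross-cuts into wandering cross-sections). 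In Step 3 you yourself flag the non-contractible case as the main obstacle and do not resolve it; indeed the surgery of Proposition \ref{pro:twist} (compactifying $\D\cup\Delta$ modulo $T^2$ to a closed annulus and gluing to a torus) does not transfer verbatim when $\hat{S}$ has boundary geodesics accumulating on a Cantor limit set. So as written the argument is incomplete at exactly the points where the boundary matters.

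The paper avoids all of this with a short reduction that you missed: double $S$ along its boundary to obtain a closed orientable surface $S'$, in which $S$ sits as a \emph{filled} subsurface (its complement is a single copy of the interior of $S$, not a disk since $\chi(S)\leq 0$), and extend $f$ arbitrarily to $f'\colon S'\to S'$ with only finitely many fixed points in $S'\sm S$, so that $\fix(f')$ is still inessential. Theorem \ref{th:main-disk} applied to $f'$ on the closed surface $S'$ bounds $\diamup_{S'}(U)$ for any $f$-invariant disk $U\subset S$ without wandering cross-cuts (the hypotheses persist verbatim for $f'$). Because $S$ is filled in $S'$, any connected component $\hat{S}$ of the preimage of $S$ in the universal cover of $S'$ is simply connected, hence is itself a universal cover of $S$, and equivariance of the two lifted metrics converts the bound in $S'$ into a bound on $\diamup_S(U)$. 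If you adopt this doubling step, your Steps 1--3 become unnecessary; without it, you would have to supply the missing constructions above, which is essentially reproving Theorem \ref{th:main-disk} in a setting it was not stated for.
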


\begin{proof} We assume that $\chi(S)\leq 0$, otherwise the theorem is trivial. Consider the surface $S'$ obtained by ``doubling'' $S$, \ie $S'$ is obtained from $S$ and a copy $S_0$ of $S$ by joining each boundary component of $S$ to the corresponding boundary component of $S_0$ in an appropriate way so that $S'$ is an orientable surface.
We note that $S\subset S'$ is a filled subset of $S'$, since $S'\sm S$ consists of a unique component homeomorphic to the interior of $S$ (which is not a topological disk because $\chi(S)\leq 0$). 
We may extend $f$ arbitrarily to a homeomorphism $f'\colon S'\to S'$ such that $f'|_S=f$ and $f'$ has finitely many fixed points in $S'\sm S$. This implies that $\fix(f')$ is inessential in $S'$, so Theorem \ref{th:main-disk} applies to $f'$, \ie there exists $M'>0$ such that any open $f'$-invariant topological disk without wandering cross-cuts has covering diameter at most $M'$ in $S'$. 

Let $\pi'\colon \hat{S}'\to S'$ be the universal covering map of $S'$. If $\hat{S}$ is any connected component of $\pi'^{-1}(S)$, then $\hat{S}$ is simply connected (due to the fact that $S$ is filled in $S'$) and covers $S$. Thus $\pi=\pi'|_{\hat{S}}$ is a universal covering map of $S$, and if $U\subset S$ is an open $f$-invariant topological disk without wandering cross-cuts then it satisfies the same hypotheses for $f'$ in $S'$, so $\diamup_{S'}(U)\leq M'$; in particular any connected component $\hat{U}$ of $\pi^{-1}(U)$ satisfies $\diam(\hat{U})\leq M'$. Note that the distance $d$ induced in $\hat{S}$ by the lifted Riemannian metric may not coincide with the restriction to $\hat{S}$ of the distance $d'$ induced by the lifted metric in $\hat{S}'$; however, since the metrics are equivariant, there exists $M>0$ such that if $x,y\in \hat{S}$ and $d'(x,y)\leq M'$ then $d(x,y)\leq M$, and therefore $\diamup_S(U)\leq M$.
\end{proof}

\section{General open invariant sets: proof of Theorem \ref{th:main-gen}}\label{sec:main-gen}

Let $S$ be a closed orientable surface, $f\colon S\to S$ an area-preserving homeomorphism such that $\fix(f)$ is inessential, and $U$ an arbitrary connected $f$-invariant open set. In order to show that $\diamup(U)<\infty$, it suffices to consider the case where $U$ is filled, due to Proposition \ref{pro:diam-fill}; hence we assume that $U=\fill(U)$. This means that $U$ has at most finitely many topological ends, and we may find a surface with boundary $U_0\subset U$ such that each connected component of $U\sm U_0$ is a collar of some topological end. For each topological end of $U$, we may choose a simple loop $\gamma$ in $U\sm U_0$ surrounding the corresponding end close enough so that both $\gamma$ and $f(\gamma)$ are in the same connected component of $U\sm U_0$. If $A\subset U\sm U_0$ is a closed annulus containing $\gamma$ and $f(\gamma)$ in its interior, we may find an area-preserving homeomorphism $h\colon S\to S$ homotopic to the identity, which is the identity outside $A$ and maps $f(\gamma)$ back to $\gamma$ (using an area-preserving variation of Schoenflies' theorem; see for instance \cite{beguin-schonflies}). Thus $hf$ fixes the loop $\gamma$, preserves area, and leaves $U$ invariant. Furthermore, we may assume that $\fix(hf)$ is still inessential. 
Repeating this on each topological end of $U$, if $\mathfrak{p}_1,\dots,\mathfrak{p}_k$ denote these topological ends, we obtain an area-preserving map $g\colon S\to S$ homotopic to the  identity with an inessential set of fixed points, such that $g(U)=U$ and for each $i$ there is an invariant loop $\gamma_i$ bounding a collar $A_i$ of $\mathfrak{p}_i$ in $U$.
We may further assume that the collars $A_1,\dots A_k$ are pairwise disjoint.

For each $i$, choose a rectifiable essential loop $\gamma_i'$ in $A_i$, so $\gamma_i'$ bounds in $U$ a smaller collar $A_i'\subset A_i$ of $\mathfrak{p}_i$, and choose another rectifiable loop $\gamma_i'' \subset A_i\sm \ol{A}_i'$ bounding a collar $A_i''$ of $\mathfrak{p}_i$. Denote by $V_i$ the annulus $A_i\sm \ol{A}_i''$, and let $S_i'$ denote the surface obtained from the connected component of $S\sm V_i$ which contains $A_i''$ (which may be all of $S\sm V_i$ if $\gamma_i''$ is non-separating) by collapsing the boundary component $\gamma_i''$ to a point $\mathfrak{e}_i$ (See Figure \ref{fig:surgery-new}). 
\begin{figure}[ht]
\includegraphics[width=\linewidth]{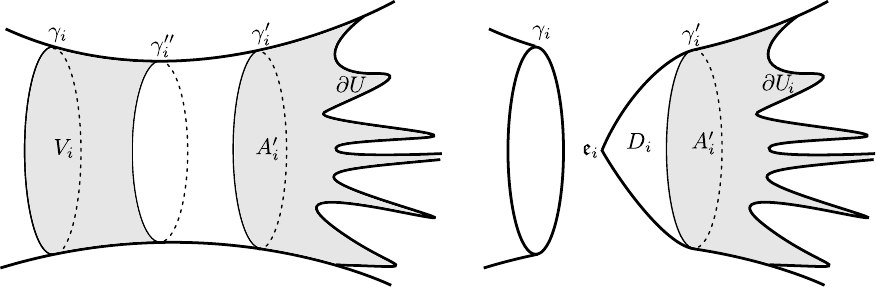}
\caption{Construction in the proof of Theorem \ref{th:main-gen}}
\label{fig:surgery-new}
\end{figure}
Then $U_i:=\mathfrak{e}_i \cup A_i''$ is an open topological disk in $S_i'$, and by Theorem \ref{th:main-disk-bd} it follows that $\diamup_{S_i'}(U_i)<\infty$. Moreover, if $D_i=\mathfrak{e}_i\cup (A_i''\sm A_i')$ then $\ol{D}_i$ is a closed topological disk in $S_i'$, so by Proposition \ref{pro:cover-disk} we have that $\diamup_{S_i'\sm D_i}(A_i')=\diamup_{S_i'\sm D_i}(U_i\sm \ol{D}_i)<\infty$. Since $S_i'\sm D_i$ is a surface with rectifiable boundary in $S$, it follows from Proposition \ref{pro:diam-sub} that $\diamup_S(A_i')\leq \diamup_{S_i'\sm  D_i}(A_i')<\infty$. Finally, we note that $U_0=U\sm \bigcup_{i=1}^k {A_i'}$ is a compact surface with rectifiable boundary in $S$, so $\diamup_S(U_0)<\infty$, and $U=U_0\cup \bigcup_{i=1}^k A_i'$, so by Proposition \ref{pro:diam-partition} 
$$\diamup_S(U)\leq \diamup_S(U_0) + \sum_{i=1}^k\diamup_S(A_i') < \infty,$$
completing the proof of Theorem \ref{th:main-gen}.
\qed

\section{Essential and inessential dynamics}\label{sec:essine}
Let $S$ be a closed orientable surface and $f\colon S\to S$ a homeomorphism.
A point $x\in S$ is said to be \emph{dynamically inessential} if there is a neighborhood $U$ of $x$ such that $\orb_f(U)= \bigcup_{n\in \Z} f^n(U)$ is an inessential subset of $S$. The set of all such points is denoted $\ine(f)$. If $x\notin \ine(f)$, \ie if $\orb_f(U)$ is an essential set for every neighborhood $U$ of $x$, then we call $x$ a \emph{dynamically essential} point. 
We say that $x$ is dynamically \emph{fully essential} if $\orb_f(U)$ is a fully essential set for every neighborhood $U$ of $x$. We denote the set of all dynamically fully essential points by $\fess(f)$. 
Note that whenever $x$ is a nonwandering point, the connected component $U_0$ of $\orb_f(U)$ containing $x$ is a periodic open set. 

\begin{proposition}\label{pro:trans} The following properties hold:
\begin{itemize}
\item[(1)] $\ine(f)$ is open and $f$-invariant;
\item[(2)] $\ess(f)$ and $\fess(f)$ are closed and $f$-invariant;
\item[(3)] $\fess(f)$ is externally syndetically transitive, \ie for any pair of neighborhoods (in $S$) $U,V$ of points of $\fess(f)$, the set $\{n\in \Z : f^n(U)\cap V\}\neq \emptyset$ has uniformly bounded gaps.
\end{itemize}
\end{proposition}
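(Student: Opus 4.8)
The plan is to read off parts (1) and (2) directly from the definitions, and to reduce part (3) to a single observation about finitely many iterates of $U$.

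For (1) and (2) I would first record two elementary facts. First, for every $k\in\Z$ one has $\orb_f(f^k(U))=\orb_f(U)$, since $\bigcup_{n}f^n(f^k(U))=\bigcup_{n}f^{n+k}(U)=\bigcup_{n}f^n(U)$; in particular each $\orb_f(U)$ is itself $f$-invariant. Second, if a neighbourhood $U$ of $x$ witnesses $x\in\ine(f)$ (i.e. $\orb_f(U)$ is inessential), then for every $y\in U$ that same $U$ is an open neighbourhood of $y$, so $y\in\ine(f)$; the analogous statement holds with ``$x\notin\fess(f)$'' in place of ``$x\in\ine(f)$''. The second fact shows that $\ine(f)$ and $S\sm\fess(f)$ are open, so $\ine(f)$ is open and $\ess(f),\fess(f)$ are closed. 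The first fact gives invariance: if $U$ witnesses $x\in\ine(f)$ then $f^{\pm1}(U)$ witnesses $f^{\pm1}(x)\in\ine(f)$; and if $x\in\fess(f)$ and $V$ is any neighbourhood of $f(x)$, then $f^{-1}(V)$ is a neighbourhood of $x$, so $\orb_f(V)=\orb_f(f^{-1}(V))$ is fully essential and $f(x)\in\fess(f)$, and symmetrically for $f^{-1}$. Here I would also note, once and for all, that any homeomorphism of $S$ carries inessential sets to inessential sets (being ``inessential'' means ``contained in a disjoint union of open topological disks'', which is preserved by homeomorphisms), hence also preserves essentiality and full essentiality.

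For (3) the step I would isolate is: \emph{if $x\in\fess(f)$ and $U$ is a neighbourhood of $x$, then $W_N(U):=\bigcup_{|j|\le N}f^j(U)$ is fully essential for some $N=N(U)$.} To prove this I set $E=\orb_f(U)$, which is fully essential by hypothesis, so $K=S\sm E$ is a compact inessential set; by definition $K$ has an inessential open neighbourhood, which is a disjoint union of open topological disks, and by compactness $K$ lies in finitely many of them, $K\subset D_1\sqcup\cdots\sqcup D_m$. Then $S_0:=S\sm(D_1\cup\cdots\cup D_m)$ is compact, is contained in $E$, and has inessential complement, so $S_0$ is fully essential; a finite subcover of the compact set $S_0$ by the open sets $f^j(U)$ gives $S_0\subset W_N(U)$ for $N$ large, and since $W_N(U)\supset S_0$ it is fully essential. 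With this in hand, given $x,y\in\fess(f)$ and neighbourhoods $U\ni x$, $V\ni y$, I pick $N=N(U)$ and $N'=N(V)$, set $L=N+N'$, and note that for every $k\in\Z$ the open sets $W_N(U)$ and $f^{-k}(W_{N'}(V))$ are both fully essential, hence must intersect: two disjoint fully essential open sets would make one of them a subset of the inessential complement of the other, hence inessential, contradicting that it is essential (here one uses that $S$ is not a sphere). Intersecting them produces $|j|\le N$ and $m$ with $|m+k|\le N'$ and $f^j(U)\cap f^m(V)\neq\emptyset$, i.e. $f^{\,j-m}(U)\cap V\neq\emptyset$ with $|(j-m)-k|\le L$; as $k$ was arbitrary, $\{n:f^n(U)\cap V\neq\emptyset\}$ meets $\{k-L,\dots,k+L\}$ for every $k$, which is exactly external syndetic transitivity (and in particular external transitivity).

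The routine parts are (1), (2) and the index bookkeeping at the end of (3); the only place with genuine content is the finite-orbit claim, and the crux there is the use of compactness of $S$ to replace the a priori infinite set $\orb_f(U)$ by a compact fully essential ``core'' $S_0$ — concretely, the fact that a compact inessential subset of $S$ lies inside finitely many disjoint open disks. Everything else is formal manipulation of the definitions of $\ine(f)$, $\ess(f)$ and $\fess(f)$.
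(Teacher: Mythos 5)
Your proof is correct and takes essentially the same route as the paper: (1) and (2) are read directly from the definitions, and (3) is proved by extracting, via compactness, a finite union $\bigcup_{|j|\le N}f^j(U)$ that is fully essential and then using that any two fully essential open sets in a genus $\geq 1$ surface must intersect, which yields the bounded gaps. The only cosmetic difference is how the compact fully essential ``core'' inside $\orb_f(U)$ is produced — the paper takes a fully essential loop as in Remark \ref{rem:Q}, while you take the complement of finitely many disjoint disks covering the compact inessential complement — and this changes nothing of substance.
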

\begin{proof} (1) and (2) follow directly from the definitions. For (3), if $U$ and $V$ are neighborhoods in $S$ of points $x,y\in \fess(f)$, then $\orb_f(U)$ and $\orb_f(V)$ are fully essential open sets, and by choosing an essential loop in each set and using a compactness argument one may find $N>0$ such that $U'=\bigcup_{n=-N}^N f^n(U)$ and $V'=\bigcup_{n=-N}^N f^n(V)$ are fully essential. Since any two fully essential open sets must intersect and for each $k\in \Z$ the set $f^{2Nk}(U')$ is fully essential, it intersects $V'$. This implies that for some $i_k$ with $2N(k-1)\leq i_k\leq 2N(k+1)$ one has $f^{i_k}(U)\cap V\neq \emptyset$, and $|i_{k+1}-i_k|\leq 4N$.
\end{proof}

As a direct consequence of Theorem \ref{th:main-disk} we have the following:
\begin{corollary}\label{cor:island} Let $f\colon S\to S$ be a nonwandering homeomorphism isotopic to the identity, and assume that $\fix(f^n)$ is inessential for all $n>0$. Then every $x\in \ine(f)$ is contained in some periodic open topological disk which is homotopically bounded (by a bound depending only on its period).
\end{corollary}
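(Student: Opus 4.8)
The plan is to enclose $x$ in a suitable periodic open disk and then apply Theorem~\ref{th:main-disk} to an iterate of $f$. First I would fix $x\in\ine(f)$ and a connected neighborhood $V$ of $x$ with $\orb_f(V)$ inessential, and let $U_0$ be the connected component of $\orb_f(V)$ containing $x$. Since $x$ is a nonwandering point of $f$, the set $U_0$ is a periodic open set; let $k\geq 1$ be its period, so $f^k(U_0)=U_0$ and $U_0,f(U_0),\dots,f^{k-1}(U_0)$ are pairwise disjoint components of $\orb_f(V)$.

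Next I would replace $U_0$ by $D:=\fill(U_0)$ and check that $D$ is an \emph{open topological disk}. Since $U_0$ is inessential and connected, $S\sm U_0$ is fully essential and closed, hence has a unique essential connected component $C_e$; consequently $D=S\sm C_e$ is open, connected and inessential. It is simply connected: any simple loop $\gamma\subset D$ is nullhomotopic in $S$, so it bounds a closed disk $\delta\subset S$; as $S\sm\delta$ is connected and $C_e$ is disjoint from $\gamma=\bd\delta$, we get $C_e\subset S\sm\ol\delta$, whence $\delta\subset D$ and $\gamma$ is nullhomotopic in $D$. Since $\fill$ commutes with homeomorphisms, $f^k(D)=D$; let $p\mid k$ be the exact period of $D$. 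Clearly $x\in U_0\subset D$.

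The heart of the argument is to show that $D$ has no wandering cross-cuts for $f^p$. By the fact recorded after Theorem~\ref{th:main-disk} — a homeomorphism that is nonwandering on an open disk has no wandering cross-cuts there — it suffices to prove that $f^p|_D$ is nonwandering. Given a nonempty open $W\subset D$, nonwandering of $f$ yields $n>0$ with $f^n(W)\cap W\neq\emptyset$, hence $f^n(D)\cap D\neq\emptyset$; so it is enough to show $f^j(D)\cap D=\emptyset$ for $0<j<p$, as this forces $p\mid n$. Suppose not. Iterating $f^j$ produces a cyclic family $D=E_0,E_1=f^j(D),\dots,E_{d-1}$ of pairwise distinct open disks with $d\geq 2$, $f^j(E_i)=E_{i+1}$ (indices mod $d$), and $E_i\cap E_{i+1}\neq\emptyset$. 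If some consecutive pair were nested, applying $f^j$ repeatedly would give a strictly monotone cycle $E_0\subsetneq E_1\subsetneq\cdots\subsetneq E_0$, which is absurd; so each consecutive pair intersects with neither set containing the other. But $E_i=\fill(f^{ij}U_0)$, the sets $f^{ij}U_0$ $(0\leq i<d)$ are pairwise disjoint connected (inessential) open sets, and the fillings of two disjoint connected open subsets of $S$ are always disjoint or nested — an elementary fact obtained by writing each filling as the complement of the unique essential component of the complementary set and running a short connectedness argument. This contradiction proves the claim, so $f^p|_D$ is nonwandering.

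Finally: $f^p$ is isotopic to the identity and $\fix(f^p)$ is inessential by hypothesis, so Theorem~\ref{th:main-disk} applied to $f^p$ gives a constant $M_p$ (depending only on $p$ and $f$) bounding the covering diameter of every $f^p$-invariant open topological disk without wandering cross-cuts; in particular $\diamup(D)\leq M_p$, and $D$ is a periodic open topological disk through $x$ whose covering diameter is bounded in terms of its period. I expect Step~3 to be the main obstacle: the real work is converting the hypothesis "$f$ nonwandering on $S$" into "$f^p$ nonwandering on $D$", which rests on the purely topological lemma that fillings of disjoint connected open sets are disjoint or nested (so that $D,f(D),\dots,f^{p-1}(D)$ are pairwise disjoint); one must also take some care, in Steps~2--3, to see that passing from $U_0$ to its filling does not introduce new "swallowed" cross-cuts of $D$.
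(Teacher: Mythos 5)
Your proof is correct and follows essentially the same route as the paper's (very short) proof: take the connected component $U_0$ of an inessential set $\orb_f(V)$ containing $x$, use the nonwandering hypothesis to see that $U_0$ is periodic, pass to the open topological disk $\fill(U_0)$, and apply Theorem \ref{th:main-disk} to the appropriate iterate of $f$. The additional work in your Step 3 --- showing the fillings of the disjoint components cannot overlap, so that $f^p$ is nonwandering on $D$ and hence $D$ has no wandering cross-cuts --- is a sound verification of a hypothesis that the paper leaves implicit (relying on its remark after Theorem \ref{th:main-disk} that nonwandering behaviour rules out wandering cross-cuts), rather than a different method.
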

\begin{proof} 
For each $x\in \ine(f)$ there exists some disk $B$ containing $x$ such that $\orb_f(B)$ is inessential. If $U_0$ is the connected component of $\orb_f(B)$ containing $x$, then from the fact that $\orb_f(B)$ is invariant and $f$ is nonwandering we deduce that $f^n(U_0)=U_0$ for some $n>0$, and so if $U=\fill(U_0)$, we have that $U$ is an open $f^n$-invariant topological disk, so $\diamup(U)<\infty$ by Theorem \ref{th:main-disk}.
\end{proof}

\subsection{Proof of Theorem \ref{th:essine}}

Assume that cases (1)-(3) from the statement of Theorem \ref{th:essine} do not hold. This implies that every open $f$-invariant set $U$ is either inessential or fully essential. Indeed, otherwise $U$ has a connected component which is essential but not fully essential, and since $f$ preserves area $U$ must be periodic, so Theorem \ref{th:main-gen} applied to $f^n$ (where $n$ is the period of $U$) tells us that $U$ is homotopically bounded, contradicting our assumption that case (3) from the statement of Theorem \ref{th:essine} does not hold.

In particular, if $z\in \ess(f)$ then $\orb_f(U)$ is fully essential for any neighborhood $U$ of $z$, so $\fess(f)=\ess(f)$. Moreover, being an open $f$-invariant set, $\ine(f)$ is either fully essential or inessential.

We will show that $\ine(f)$ is inessential. Assume for a contradiction that $\ine(f)$ is fully essential, so ($\ine(f)$ being open) we may find a loop $\gamma\subset \hat{S}$ bounding a region $Q\subset \hat{S}$ such that $\pi(Q)= S$ and $\pi(\gamma)\subset \ine(f)$ (see Remark \ref{rem:Q}). By Corollary \ref{cor:island}, the loop $\gamma$ may be covered by finitely many bounded disks $U_1,\dots, U_m$ such that for each $i$ the set $\pi(U_i)$ is $f$-periodic, and we assume that each $U_i$ intersects $\gamma$. Choose $n>0$ such that $f^n(\pi(U_i))=\pi(U_i)$ for all $i\in\{1,\dots, m\}$, and let $\hat{f}$ be the natural lift of $f$. Then, for each $i$ we may find $T_i\in \deck(\pi)$ such that $\hat{f}^n(U_i) = T_iU_i$. We claim that if $U_i\cap U_j\neq \emptyset$ then $T_i=T_j$. Indeed, if $U_i\cap U_j\neq \emptyset$, then $\hat{f}^{kn}(U_i)\cap \hat{f}^{kn}(U_j) = T_i^k U_j\cap T_j^k U_j \neq \emptyset$. But since both $U_i$ and $U_j$ are bounded, taking any $z\in U_i\cap U_j$ we have that $d(T_i^kz, T_j^kz)$ remains bounded for all $k\in \N$. Since $T_i$ and $T_j$ are hyperbolic isometries, this implies $T_i=T_j$. 

For each $x\in \gamma$ choose the smallest $i$ such that $x\in U_i$ and let $T_x = T_i$. As we just saw, the map $x\mapsto T_x$ is locally constant, and since $\gamma$ is connected conclude that $T_x$ is constant. Since every set $U_i$ intersects $\gamma$, this means that there exists $T\in \deck(\pi)$ such that $\hat{f}^n(U_i)=TU_i$ for all $i\in \{1,\dots, m\}$.
If $W=\fill(\bigcup_{i=1}^m U_i)$ we see that $\hat{f}^n(W)= TW$. Since $S$ is hyperbolic and $f$ is homotopic to the identity, its natural lift $\hat{f}$ has a fixed point, and in particular it has a fixed point in $Q\subset W$. Since $W$ is bounded, we conclude that $T=\id$, and therefore $\hat{f}^{nk}(Q)\subset W$ for all $k\in \Z$, which implies that $d(\hat{f}^{nk}(z),z)\leq \diam(W):=M$ for all $k\in \Z$ and $z\in Q$. Since $\hat{f}$ commutes with elements of $\deck(\pi)$, the same property holds for all $z\in \hat{S}$. Thus $\hat{f}^n$ has uniformly bounded displacement, and since $\hat{f}$ is uniformly continuous we conclude that $\hat{f}$ has uniformly bounded displacement as well. But this contradicts our assumption that case (2) of the theorem does not hold.

Thus $\ine(f)$ is inessential, and therefore $\fess(f)=\ess(f)$ is fully essential. To show that $\fess(f)$ is connected, note that if $C$ is the fully essential connected component of $\fess(f)$ (which is necessarily unique, and therefore invariant) and $U$ is a neighborhood of any point $z\in \fess(f)$, then $\orb_f(U)$ is a fully essential open set and therefore intersects $C$. But since $C$ is invariant, it follows that $C\cap U\neq \emptyset$, and since $C$ is closed and $U$ was an arbitrary neighborhood of $z\in \fess(f)$ we conclude that $z\in C$. Thus $C=\fess(f)$ showing the connectedness.

From the fact that $\fess(f)$ is connected and fully essential we also conclude that $\ine(f)$ is an union of topological disks, which must be periodic since they are permuted and $f$ preserves area. Moreover, by Theorem \ref{th:main-disk} these disks are homotopically bounded by a bound that depends only on the periods of the disks. The fact that $\fess(f)$ is externally syndetically transitive was already established in Proposition \ref{pro:trans}. Therefore we conclude that item (4) of Theorem \ref{th:essine} holds, completing the proof.
\qed

\subsection{The case of the torus}

The following result is a version of Theorem \ref{th:essine} for the case of the torus, which slightly improves \cite[Theorem A]{kt-strictly}:

\begin{theorem}\label{th:torus}
If $f\colon \T^2\to \T^2$ is a nonwandering homeomorphism homotopic to the identity, then one of the following holds:
\begin{itemize}
\item[(1)] There is $k\in \N$ such that $\fix(f^k)$ is fully essential;
\item[(2)] There is $k\in \N$ such that $f^k$ has uniformly bounded displacement;
\item[(3)] There is $k\in \N$ such that $f^k$ has an invariant essential  open annulus;
\item[(4)] $\ess(f)=\fess(f)$ and is connected, fully essential, and externally transitive, while $\ine(f)$ is the union of a family of pairwise disjoint periodic bounded disks (by a bound that depends only on the period of the disks).
\end{itemize}
\end{theorem}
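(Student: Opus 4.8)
The plan is to mirror the proof of Theorem~\ref{th:essine} as closely as possible, with two adaptations forced by the weaker hypotheses. First, since $f$ need not preserve area, I replace Theorem~\ref{th:main-gen} by Theorem~\ref{th:main-disk} together with Corollary~\ref{cor:island}, which require only the nonwandering condition. Second, I use that a filled essential but not fully essential open subset of $\T^2$ is, by Proposition~\ref{pro:ends} applied with $g=1$, an open annulus, so that the ``annular'' invariant sets produced along the way are exactly the content of alternative~(3). Throughout, assume that none of (1), (2), (3) holds; the goal is to deduce (4).

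\emph{Step 1: $\fix(f^k)$ is inessential for every $k\ge 1$.} If not, fix $k$ with $\fix(f^k)$ essential; since (1) fails it is not fully essential, so $X:=\fill(\fix(f^k))$ is a closed, $f^k$-invariant, essential, non-fully-essential set (the complement of a non-fully-essential closed set is open and essential, hence has an essential component, which the filling does not absorb). Since $X$ is fixed pointwise by the orientation-preserving $f^k$, one checks that each essential connected component $V$ of $\T^2\sm X$ is $f^k$-invariant; then $\fill(V)$ is an $f^k$-invariant essential open annulus, contradicting~(3). With $\fix(f^k)$ inessential for all $k$, Corollary~\ref{cor:island} gives that every point of $\ine(f)$ lies in a periodic, homotopically bounded open topological disk, with bound depending only on the period.

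\emph{Step 2: every open $f$-invariant set is inessential or fully essential; in particular $\ine(f)$ is.} If some open $f$-invariant $U$ were essential but not fully essential, then $\fill(U)$ is open, $f$-invariant, essential and not fully essential, and extracting from it a periodic essential connected piece and filling it would give an invariant essential open annulus for some power of $f$, contradicting~(3). Hence $\ine(f)$ is inessential or fully essential, and it remains to rule out the latter. Assume $\ine(f)$ is fully essential. Being open, it contains (Remark~\ref{rem:Q}) a loop $\gamma\subset\R^2=\hat S$ bounding a set $Q$ with $\pi(Q)=\T^2$ and $\pi(\gamma)\subset\ine(f)$; cover $\gamma$ by finitely many bounded disks $U_1,\dots,U_m$, each projecting to a periodic disk by Corollary~\ref{cor:island}. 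Pick $n$ with every $\pi(U_i)$ $f^n$-invariant, let $\hat f$ be a lift of $f$, and write $\hat f^{\,n}(U_i)=U_i+v_i$ with $v_i\in\Z^2$. Exactly as in Proposition~\ref{pro:main-contractible}, $U_i\cap U_j\ne\emptyset$ forces $v_i=v_j$ (for such $i,j$ the bounded sets $U_i+kv_i$ and $U_j+kv_j$ meet for every $k\in\Z$, which is impossible unless $v_i=v_j$), so $v_i\equiv v$ along the connected set $\gamma$. Then $W=\fill(\bigcup_i U_i)$ is bounded, contains $Q$ (hence $W+\Z^2=\R^2$), and satisfies $\hat f^{\,n}(W)=W+v$; by equivariance of $\hat f$ this yields $d(\hat f^{\,n}(z),z)\le 2\diam(W)+|v|$ for all $z\in\R^2$, so $\hat f^{\,n}$, and therefore $\hat f$, has uniformly bounded displacement, contradicting~(2). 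Thus $\ine(f)$ is inessential.

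\emph{Step 3: conclusion.} Since $\ine(f)$ is inessential, $\ess(f)$ is a fully essential closed set, and the dichotomy of Step~2 gives $\fess(f)=\ess(f)$: for $z\in\ess(f)$ and any neighborhood $U$, $\orb_f(U)$ is essential, hence fully essential, so $z\in\fess(f)$. Connectedness of $\fess(f)$ follows verbatim from the corresponding part of the proof of Theorem~\ref{th:essine}: its unique fully essential component is closed, $f$-invariant, and meets every neighborhood of every point of $\fess(f)$, hence equals $\fess(f)$. Then $\ine(f)$, the complement of a connected fully essential closed set, is a disjoint union of open topological disks; each is periodic because $f$ is nonwandering (the component of an invariant open set through a nonwandering point has nontrivial stabilizer), and homotopically bounded with period-dependent bound by Corollary~\ref{cor:island}. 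Finally, external transitivity of $\fess(f)$ — in fact syndetically so — is Proposition~\ref{pro:trans}(3). This establishes (4).

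\emph{Main obstacle.} The crux is the annulus extraction in Steps~1 and~2: producing a genuine $f^k$-invariant essential open annulus from an essential, non-fully-essential invariant set. For a pointwise-fixed closed set this is immediate, but for a general invariant open set one must know that some essential component of its filling is periodic; this can fail only if $f$ permutes infinitely many ``parallel'' essential annuli, in which case one argues that $f$ induces a circle homeomorphism in the transverse direction and recovers alternative~(3) by classical torus arguments (a periodic essential annulus when the induced rotation number is rational, and an invariant essential annulus coming from the rotation-set structure otherwise). The other genuinely new feature, absent in the hyperbolic case treated in Theorem~\ref{th:essine}, is that on $\T^2$ the natural lift is not unique and need not have a fixed point, which is precisely why the translation vector $v$ must be carried along explicitly when ruling out ``$\ine(f)$ fully essential''.
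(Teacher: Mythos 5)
Your outline follows the paper's own route (deal with essential fixed-point sets via an invariant essential annulus, split on whether $\ine(f)$ is inessential, essential-but-not-fully-essential, or fully essential, and run the proof of Theorem \ref{th:essine} with Theorem \ref{th:main-disk}/Corollary \ref{cor:island} replacing the area-preserving input), but one step fails as written. In Step 2 you deduce from $\hat f^{\,n}(W)=W+v$ that $d(\hat f^{\,n}(z),z)\le 2\diam(W)+|v|$ for all $z$, and conclude that $\hat f^{\,n}$, \emph{and therefore $\hat f$}, has uniformly bounded displacement. Uniformly bounded displacement requires a bound on $d(\hat g^{\,m}(z),z)$ for \emph{all} $m\in\Z$ for some lift $\hat g$; your inequality controls a single iterate, and if $v\neq 0$ then $\hat f^{\,mn}(W)=W+mv$, so the displacement of $\hat f^{\,mn}$ grows linearly in $m$ and neither $\hat f^{\,n}$ nor $\hat f$ has uniformly bounded displacement. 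The repair is exactly the torus-specific point you flagged but then mishandled: absorb $v$ into the choice of lift. The map $\hat g = T_{-v}\circ\hat f^{\,n}$, where $T_{-v}$ is the translation by $-v$, is a lift of $f^n$ with $\hat g(W)=W$, hence $d(\hat g^{\,m}(z),z)\le \diam(W)$ for all $m\in\Z$ and all $z\in W$, and by equivariance for all $z\in\R^2$; thus $f^n$ has uniformly bounded displacement, contradicting the failure of (2). The further claim ``and therefore $\hat f$'' is false in general and is not needed: this is precisely why item (2) of the statement is phrased for a power $f^k$ rather than for $f$.

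The ``main obstacle'' you describe in Steps 1--2 is not an obstacle. Since $f$ is nonwandering, every connected component $U_0$ of an $f$-invariant open set is periodic: any $x\in U_0$ is nonwandering, so some $f^m(U_0)$ meets $U_0$, and since $f^m(U_0)$ is again a component of the same invariant set, $f^m(U_0)=U_0$ (the paper records exactly this observation at the beginning of Section \ref{sec:essine}). Hence a periodic essential component exists outright, its filling is an essential open annulus invariant under the corresponding power, and case (3) follows; the sketched fallback via a transverse circle homeomorphism and rotation numbers is unnecessary and, as written, is not a proof. A smaller point in Step 1: $\fill(\fix(f^k))$ need not be pointwise fixed, so the invariance of the essential components of its complement does not come from pointwise fixedness; rather, those components coincide with the essential components of $\T^2\sm\fix(f^k)$, whose $f^k$-invariance is the Brown--Kister theorem already cited in the paper. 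With these repairs your argument coincides with the paper's proof.
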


\begin{proof}
The proof is the same of \cite[Theorem A]{kt-strictly}, except that we do not need to assume that $f$ is \emph{non-annular} to apply Theorem \ref{th:main-disk} which allows to unfold the first two cases of \cite[Theorem A]{kt-strictly} into cases (1)-(3). We sketch the idea of the proof: If $\fix(f)$ is essential but not fully essential, then $S\sm \fix(f)$ has an essential connected component whose filling is an $f$-invariant essential annulus, so case (3) holds. Now suppose that $\fix(f)$ is inessential. If $\ine(f)$ is essential but not fully essential, then there is a periodic topological annulus (namely the filling of an essential connected of $\ine(f)$) so case (3) holds. If $\ine(f)$ is fully essential then the exact same argument used in the proof of Theorem \ref{th:essine} (using the fact that each point of $\ine(f)$ belongs to a homotopically bounded periodic disk) implies that $f^k$ has uniformly bounded displacement for some $k$, so case (2) holds. Finally, if $\ine(f)$ is inessential and neither (1)-(3) holds, case (4) follows exactly as in Theorem \ref{th:essine}.
\end{proof}

\subsection{Proof of Corollary \ref{cor:trans}}

Assume $f$ has fully essential dynamics. If there are no periodic homotopically bounded disks, then by Theorem \ref{th:main-gen} we have that $\ess(f)=\fess(f)=S$ and the fact that $f$ is externally syndetically transitive in $\fess(f)$ implies that $f$ is syndetically transitive in the whole space $S$. 

Conversely, suppose for a contradiction that $f$ is transitive but $\ine(f)\neq \emptyset$. If $U$ is a connected component of $\ine(f)$, then $U$ is a periodic bounded disk and its orbit is dense in $S$. Thus there is $k\in \N$ such that $\bigcup_{i=0}^{k-1}f^i(\ol{U}) = S$. If $\mc{U}$ is a connected component of $\pi^{-1}(U)$ and $\hat{f}$ is the natural lift of $f$, then there is $T\in \deck(\pi)$ such that $\hat{f}^k(\mc{U})=T(\mc{U})$, and for all $z\in S$ the set $\pi^{-1}(z)$ intersects $K = \bigcup_{i=0}^{k-1}\hat{f}^i(\ol{\mc{U}})$. Since, being a natural lift, $\hat{f}$ has a fixed point, this means that there is a fixed point of $\hat{f}$ in $K$. Noting that $\hat{f}(K)=TK$ and $K$ is compact, it follows that $T=\id$, so $\hat{f}^k(\mc{U})=\mc{U}$ and for any $z\in K$ and $n\in \Z$ we have $d(\hat{f}^n(z), z)\leq M=\diam(K)$. Since $\hat{f}$ commutes with deck transformations and $\pi(K)=S$, the same property holds for arbitrary $z\in \hat{S}$, so $f$ has uniformly bounded displacement contradicting our assumption that $f$ has fully essential dynamics.
\qed

\section{Rotation sets and fully essential dynamics}
\label{sec:rot}

Throughout this section we assume that $S$ is a hyperbolic closed surface of genus $g>1$, and $\mc I = (f_t)_{t\in [0,1]}$ is an isotopy from the identity to $f=f_1\colon S\to S$. We denote by $\hat{f}\colon \hat{S}\to \hat{S}$ the natural lift of $f$. Given a function $\Phi\colon S\to V$, where $V$ is a vector space, we will abbreviate Birkhoff sums of $\Phi$ as $$\Phi^n_f(x)=\sum_{k=0}^{n-1} \Phi(f^k(x)).$$

Given a loop $\alpha$ in $S$, we denote by $[\alpha]\in H_1(S,\Z)\subset H_1(S,\R)$ its homology class. Recal that $H_1(S,\Z)\simeq \Z^{2g}$ and $H_1(S,\R) \simeq \R^{2g}$ accordingly. 
For any deck transformation $T\in \deck(\pi)$, an arc $\hat{\gamma}_T$ joining any point $x\in \hat{S}$ to $Tx$ projects into a loop $\gamma_T=\pi(\hat{\gamma}_T)$ whose free homotopy class (and in particular its homology class) is determined only by $T$. We denote by $[T]:=[\gamma_T]$ this homology class. 
Recall that $\deck(\pi)$ is isomorphic to the fundamental group $\pi_1(S)$. The kernel of $T\mapsto [T]$ is precisely the commutator group of $\deck(\pi)$, so we may regard $T\mapsto [T]$ as the quotient map from $\pi_1(S)$ to its abelianization. 

We endow $H_1(S,\R)$ with the \emph{stable norm} $\norm{\cdot}$ \cite[\S4]{gromov}, which has the property that for any rectifiable loop $\gamma$
$$\norm{[\gamma]}\leq \length(\gamma).$$
We will frequently use this property in the next subsections.

\subsection{Rotation vectors and rotation set}

Denote by $\mc{I}_x$ the arc $t\mapsto f_t(x)$ joining $x$ to $f(x)$, and for $n\in \N$ define 
$$\mc{I}^n_x = \mc{I}_x*\mc{I}_{f(x)}*\cdots*\cdots \mc{I}_{f^{n-1}(x)},$$
 which is an arc joining $x$ to $f^n(x)$.

Fix a base point $b\in S$ and a family $\mc{A}=\{\gamma_x : x\in S\}$ of rectifiable arcs such that $\gamma_x$ joins $b$ to $x$ and the length of $\gamma_x$ is bounded by a uniform constant $C_\mc{A}$. We note that the map $x\mapsto \gamma_x$ may be chosen to be measurable (see \cite{franks-rot} for details).

For each $x\in S$, define $\alpha_x = \gamma_x*\mc{I}_x*\gamma_{f(x)}^{-1}$ and for $n\in \N$, let $\alpha^n_x = \gamma_x*\mc{I}^n_x*\gamma_{f^n(x)}^{-1}$ which is a loop with base point $b$.  We may then define the (discretized) \emph{homological displacement function} by $\Phi_f(x) = [\alpha_x]$. Note that since $\alpha^n_x$ is homotopic to $\alpha_x*\alpha_{f(x)}*\cdots*\alpha_{f^{n-1}(x)}$, one has
$$[\alpha^n_x] =\sum_{k=0}^{n-1} [\alpha_{f^k(x)}] = \sum_{k=0}^{n-1} \Phi_f(f^k(x)) = \Phi^n_f(x)$$
We remark that in these definitions, the arc $\mc{I}_x^n$ can be replaced by any arc joining $x$ to $f^n(x)$ and homotopic with fixed endpoints to $\mc{I}_x^n$. This implies that $\Phi_f$ depends only $f$ and on the choice of $\mc{A}$, but not on the isotopy $\mc{I}$, since we may replace $\mc{I}_x^n$ by the projection of a geodesic segment in $\hat{S}$ joining $\hat{x}$ to $\hat{f}^n(\hat{x})$ where $\hat{x}\in \pi^{-1}(x)$. This also shows that $\norm{\Phi_f(x)}$ is bounded by $2C_{\mc A} + L$, where $L= \sup\{d(z, \hat{f}(z)) : z\in \hat{S}\}<\infty$.

Of course the map $\Phi_f$ depends on the choices of the basepoint $b$ and the arcs $\gamma_x$. However, given a different basepoint $b'$ and a family $\mc{A}'=\{\gamma_x':x\in S\}$ of rectifiable arcs whose lengths are uniformly bounded  by $C_{\mc A}'$ such that $\gamma_x'$ joins $b'$ to $x$, and defining $\alpha_x'^n$ analogously one has 
\begin{equation}\label{eq:basechange}
[\alpha_x'^n]=[\gamma_x'*\mc{I}^n_x*\gamma_{f^n(x)}'^{-1}]=[\alpha_x^n*\delta^n_x] = [\delta_x^n]+ \Phi^n_f(x),
\end{equation}
where  $\delta^n_x = \gamma_{f^n(x)}*\gamma_{f^n(x)}'^{-1}*\gamma_x'*\gamma_x^{-1}$. Indeed, the loop $\alpha_x'^n$ is freely homotopic to $\mc{I}_x^n*\delta_x^n$.
In particular, if $\Phi_f'(x) = [\alpha_x']$, we have that 
\begin{equation}\label{eq:cohomologous}
\norm{\Phi_f'^n(x)-\Phi_f^n(x)} = \norm{\delta^n_x}\leq M := 2C_{\mc{A}}+2C_{\mc{A}'}.
\end{equation}

If the limit $$\rho(f,x) = \lim_{n\to \infty} \frac{1}{n}\Phi_f^n(x)\in H_1(S,\R)$$ exists, we say that $x$ has a well-defined (homological) rotation vector. In \cite{franks-rot}, Franks defines the rotation set of $f$ as the set of all rotation vectors of this kind. A more general definition given by Pollicott \cite{pollicott} considers all accumulation points of the sequence $\frac{1}{n}\Phi_f^n(x)$. We will give an even more general definition, similar to the Misiurewicz-Ziemian rotation set of a toral homeomorphism \cite{m-z}: The (Misiurewicz-Ziemian) rotation set of $f$ over a set $E\subset S$ is defined as the set $\rotmz(f,E)$ consisting of all limits of the form
\begin{equation}\label{eq:rot-def}
v=\lim_{k\to \infty} \frac{1}{n_k} \Phi_f^{n_k}(x_k)\in H_1(S,\R),
\end{equation}
where $x_k\in E$ and $n_k\to \infty$. The rotation set of $f$ is then defined as $\rotmz(f) = \rotmz(f,S)$.  By (\ref{eq:cohomologous}), the rotation set depends only on $f$, but not on the choice of the isotopy, the basepoint $b$ or the arcs $\gamma_x$. 
Note that our definition coincides with
$$\rotmz(f,E) = \bigcap_{m\geq 0} \ol{\bigcup_{n\geq m} \{\Phi^n_f(x)/n : x\in E\}}$$
which shows that the rotation set is compact and bounded (since $\Phi_f$ is bounded).

Note that, using a computation similar to (\ref{eq:basechange}), if one chooses a rectifiable arc $\beta$ joining $f^n(x)$ to $x$ one has 
\begin{equation}\label{eq:rot-alt}
[\mc{I}^n_x*\beta] = [\gamma_{x}^{-1}*\alpha_{x}^{n}*\gamma_{f^{n}(x)}*\beta]= \Phi_f^{n}(x) + [\gamma_{f^{n}(x)}*\beta*\gamma_{x}^{-1}].
\end{equation}
Thus,
$\snorm{[\mc{I}^{n}_{x}*\beta] - \Phi_f^{n}(x)} \leq 2C_\mc{A}+\length(\beta)$.
As a consequence, an alternate but equivalent definition of rotation vectors and rotation sets is obtained by considering all limits of the form 
$$v=\lim_{k\to \infty} \frac{1}{n_k}[\mc{I}^{n_k}_{x_k}*\beta_k]$$ 
where $x_k\in S$, $n_k\to \infty$ and $\beta_k$ are rectifiable arcs joining $f^{n_k}(x_k)$ to $x_k$ such that $\sup_k \length(\beta_k)<\infty$.

We remark that the same definition applied to $S=\T^2$ is also meaningful, but the resulting rotation set depends on the choice of the isotopy $\mc{I}= (f_t)_{t\in [0,1]}$. However, the rotation set thus obtained is essentially the Misiurewicz-Ziemian rotation set of the natural lift $\hat{f}$ of $f$ associated to $\mc{I}$. More precisely, there is an isomorphism $\R^2 \to H_1(\T^2, \R)$ which maps the basis $\{(1,0), (0,1)\}$ to the homology classes corresponding to the two deck transformations $z\mapsto z+(1,0)$ and $z\mapsto z+(0,1)$, and this isomorphism maps the classical Misiurewicz-Ziemian rotation set to the set $\rotmz(f)$ as defined above (using the isotopy $\mc{I}$). In the case of $\T^2$, it is known that this rotation set is convex \cite{m-z}. This is no longer the case when $S$ is hyperbolic.

The following properties follow easily from the definitions:
\begin{itemize}
\item $\rotmz(f)$ is compact;
\item $\rotmz(f^n) = n\cdot\rotmz(f)$;
\item If $S=\bigcup_{i=1}^m E_i$, then $\rotmz(f) = \bigcup_{i=1}^m \rotmz(f,E_i)$.
\item $0\in \rotmz(f)$;
\end{itemize}
the later being due to the fact that $f$ is homotopic to the identity and $S$ is hyperbolic, so there exists a contractible fixed point (\ie a fixed point $x$ such that $\mc{I}_x$ is a homotopically trivial loop).

\subsection{Mean rotation vectors}\label{sec:rot-meas}

If the arcs $\gamma_x$ in the definition of $\Phi_f$ are chosen adequately, one may guarantee that $x\mapsto \Phi_f(x)$ is not only bounded but also Borel measurable (see \cite{franks-rot}). Denote by $\mc{M}(f)$ the set of invariant Borel probability measures. We may define the mean rotation vector associated to $\mu\in \mc{M}(f)$ by 
$$\rotmeas(f,\mu) = \int \Phi_f d\,\mu \in H_1(S,\R).$$
Since rotation vectors are calculated as limits of Birkhoff averages of $\Phi_f$, the Birkhoff Ergodic Theorem guarantees that $\mu$-almost every point has a well defined rotation vector $\rho(f,x)$, and 
$\rotmeas(f,\mu) = \int \rho(f,x)\,d\mu(x)$, and if $\mu$ is ergodic then $\rho(f,x)=\rotmeas(f,\mu)$ for $\mu$-almost every $x$.
Due to these facts and (\ref{eq:cohomologous}), the mean rotation vector is independent of any choices made in the definitions.
Denote by $\rotmeas(f)$ the set of all mean rotation vectors of elements of $\mc{M}(f)$ and $\roterg(f)$ the corresponding set for ergodic measures. The proof of \cite[Theorem 2.4]{m-z}, without requiring any modifications, implies that
$$\rotmeas(f) = \conv(\roterg(f)) = \conv(\rotmz(f)).$$
In particular, every extremal point of the convex hull of $\rotmz(f)$ is the rotation vector of some ergodic measure (and therefore, it is the rotation vector of some recurrent point).

\subsection{Homotopically bounded sets and bounded homological displacement}\label{sec:displacement}

If $U_0\subset S$ is a connected surface with boundary, we denote by $\cH(U_0)\subset H_1(S,\R)$ the image of the map $H_1(U_0,\R)\to H_1(S,\R)$ induced by the inclusion $U_0\to S$. Note that $\cH(U_0)$ is a vector subspace of $H_1(S,\R)$  and $\cH(\fill(U_0)) = \cH(U_0)$. Moreover, if $U_0$ is not fully essential, then $\cH(U_0)$ is a proper subspace of $H_1(S,\R)$.
If $U_1,\dots U_m$ are the connected components of $S\sm U_0$, we let $\cV(U_0) = \bigcup_{i=0}^m \cH(U_i)$. If $U_0$ is essential but not fully essential, then $\cV(U_0)$ is a union of proper subspaces of $H_1(S,\R)$. 

If $U\subset S$ is a filled connected open set, we may find a surface with boundary $U_0\subset U$ such that $U\sm U_0$ is a finite union of topological annuli which are essential in $S$ (see Section \ref{sec:ends} and Proposition \ref{pro:ends}). We then define $\cV(U) = \cV(U_0)$, which is independent of the choice of $U_0$.
Finally, for an arbitrary connected open set, define $\cV(U) = \cV(\fill(U))$. Note that by Proposition \ref{pro:ends}, the set $S\sm U_0$ has at most $g$ connected components (where $g$ is the genus of $S$). Therefore, $\cV(U)$ is the union of at most $g+1$ proper subspaces of $H_1(S,\R)$ (which are in fact generated by elements of $H_1(S,\Z)$).

For a set $E\subset H_1(S,\R)$ and $r>0$ denote by $B_r(E) = \bigcup_{v\in E} \{w\in H_1(S,\R): \norm{v-w}<r\}$ the $r$-neighborhood of $E$. We begin with a general lemma.

\begin{lemma} \label{lem:desvio-hyp}If there exists an open invariant homotopically bounded set $U$ which is essential but not fully essential, then there is a constant $r>0$ such that $\Phi_f^n(x)\in B_r(\cV(U))$ for all $x\in S$ and $n\in \N$. In particular, $\rotmz(f) \subset \cV(U)$. 
\end{lemma}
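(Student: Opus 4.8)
The plan is to exploit the homotopically bounded structure of $U$ to control where orbits can travel in homology. Since $U$ is filled and essential but not fully essential, write $U = \fill(U)$ and pick a compact essential subsurface $U_0 \subset U$ such that $U \setminus U_0$ is a finite union of essential annuli (one collar per topological end of $U$), as permitted by Proposition~\ref{pro:ends}; by definition $\cV(U) = \cH(U_0) \cup \bigcup_i \cH(U_i)$ where $U_1,\dots,U_m$ are the components of $S\setminus U_0$. Because $U$ is homotopically bounded, $\diamup(U)$ is finite, so there is a constant $D$ such that any two points of $U$ can be joined inside $U$ (up to fixed-endpoint homotopy in $S$) by an arc of length $\leq D$. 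The key point is that any loop based in $U$ and freely homotopic to a loop contained in $U$ represents a class in $\cH(U) := \cH(\fill(U))$, and moreover — using the collars — a loop that lives in $U$ but wanders far out one end is homologous (in $S$) to a loop in the corresponding component $U_i$ of the complement plus a bounded correction, hence lies within bounded distance of $\cH(U_i) \subseteq \cV(U)$.

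First I would fix $\hat f$ the natural lift and a point $\hat x_0 \in \hat S$ with $\pi(\hat x_0) \in U$; since $f$ is homotopic to the identity and $S$ is hyperbolic there is a contractible fixed point, which after adjusting basepoints I may take to lie in $U$ (or simply track the displacement from an arbitrary point of $U$ — the difference is uniformly bounded by (\ref{eq:cohomologous})). Let $\hat U$ be the component of $\pi^{-1}(U)$ containing $\hat x_0$. The invariance $f(U) = U$ together with Lemma~\ref{lem:annulus-lift}/Lemma~\ref{lem:hyp-isotopy} shows that the natural lift maps $\hat U$ into itself (here we use that $U$ being essential and invariant forces, via Lemma~\ref{lem:periodic-inter}, that $\hat f(\hat U) \cap \hat U \neq \emptyset$, hence $\hat f(\hat U) = \hat U$). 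Consequently for any $x \in U$ and $n \in \N$, the lifted orbit point $\hat f^n(\hat x)$ lies in $\hat U$, so the arc $\mc I^n_x$ followed by a short return arc inside $U$ (of length $\leq D$) is a loop whose lift stays in $\hat U$; such a loop projects into $U$ and therefore represents a class in $\cH(U)$. Combined with (\ref{eq:rot-alt}), this gives $\Phi_f^n(x) \in B_{r_0}(\cH(U))$ for all $x \in U$, $n \in \N$, with $r_0$ depending only on $D$ and $C_{\mc A}$.

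Next I would handle an arbitrary $x \in S$. Here I use compactness of $S$: there is $N$ such that every point of $S$ is within distance $1$ of $U$, hence we can find a bounded arc $\beta$ from $x$ to some point $x' \in U$ and from $f^n(x)$ back to $U$; but this only works if the whole orbit segment stays near $U$, which need not happen. The correct argument instead decomposes the orbit segment according to which component of $S \setminus U_0$ (or $U_0$ itself) it sits in, using that each component $U_i$ is itself a subsurface with $\cH(U_i)$ a proper subspace, and that transitions between components must pass through the finitely many bounded annuli $U \setminus U_0$. Concretely: $\{U_0\} \cup \{U_i\}$ together with the collars cover $S$, and a point whose orbit stays in a single $U_i$ produces displacement within $B_{r_1}(\cH(U_i))$ by the same argument as above applied inside the surface $U_i$; a point whose orbit enters $U$ falls under the previous paragraph. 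Since every point is in $U$ or in some $U_i$, and $f(U)=U$ forces an orbit that leaves $U_i$ to enter $U$ and stay there, we get $\Phi_f^n(x) \in B_r(\cV(U))$ for all $x, n$ with a uniform $r = \max(r_0, r_1, \dots)$. Dividing by $n$ and letting $n \to \infty$ in (\ref{eq:rot-def}) yields $\rotmz(f) \subseteq \cV(U)$, since $\cV(U)$ is a finite union of linear subspaces, hence closed and scale-invariant, so $B_r(\cV(U))/n \to \cV(U)$.

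The main obstacle I anticipate is the bookkeeping for an orbit that genuinely oscillates between $U$ and the complementary pieces $U_i$ — one must argue that once it is in $U_i$ it cannot return to a \emph{different} $U_j$ without passing through $U$, and that the bounded annular collars absorb the homology of any such transition. This is where the precise topology from Proposition~\ref{pro:ends} (that each topological end of $U$ is fixed, so the collars are essentially invariant up to bounded error) and the homotopical boundedness of $U$ are both essential; the estimate $\norm{[\gamma]} \leq \length(\gamma)$ for the stable norm then converts the bounded geometry into the bounded homological displacement.
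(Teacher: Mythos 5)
Your first case ($x\in U$) is essentially the paper's argument: invariance of $U$ together with Lemma \ref{lem:hyp-isotopy} lets you homotope $\mc{I}^n_x$ with fixed endpoints into $U$, homotopical boundedness supplies a closing arc of length at most $\diamup(U)$, and (\ref{eq:rot-alt}) then puts $\Phi_f^n(x)$ within a uniform distance of $\cH(U)$. The genuine gap is in the case $x\in S\sm U$, and it begins with a dynamical slip: since $f(U)=U$, the complement $S\sm U$ is also invariant, so no orbit ever ``leaves $U_i$ to enter $U$ and stay there'', and the worry about orbits oscillating between $U$ and the complementary pieces is vacuous -- the two regimes are disjoint from the start. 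What actually needs proof, and what you explicitly defer as an ``anticipated obstacle'', is the heart of this case: (i) that an orbit in $S\sm U$ is confined, together with its isotopy trajectory up to homotopy, to a single component $W_i$ of $S\sm U_0$, and (ii) that for such $x$ the arc $\mc{I}^n_x$ is homotopic with fixed endpoints to an arc in $W_i$, so that $\Phi_f^n(x)$ lies within bounded distance of $\cH(W_i)$. Neither follows from ``the same argument as above applied inside $U_i$'': $U_i$ is not invariant, so Lemmas \ref{lem:periodic-inter}/\ref{lem:hyp-isotopy} do not apply to it directly, and its homotopical boundedness is not part of the hypothesis (it holds only because $\ol{W_i}$ is a compact surface with rectifiable boundary, a point you would also need to record).

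The paper closes exactly this gap as follows: by Proposition \ref{pro:ends} the topological ends of $U$ are fixed, so one can choose nested compact essential subsurfaces $U_0\subset U_0'\subset U$ with $f(U_0)\subset U_0'$; each component $W_i'$ of $S\sm U_0'$ then contains an essential loop homotopic in $W_i'$ to its $f$-image, which forces $f(W_i')\subset W_i$, and Lemma \ref{lem:hyp-isotopy} applied to the pair $W_i'\subset W_i$ shows that for every $x\in (S\sm U)\cap W_i'$ (whose whole forward orbit stays in $W_i$) the arc $\mc{I}^n_x$ is homotopic with fixed endpoints to an arc in $W_i$; closing with an arc of length at most $\diamup(W_i)<\infty$ gives $\Phi_f^n(x)\in B_{r_i}(\cH(W_i))$. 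Your appeal to ``collars essentially invariant up to bounded error'' is the right intuition, but as written the second case is asserted rather than proved, and the incorrect claim about orbits entering $U$ indicates the structure of that case was not pinned down. The final step, passing from the uniform bound to $\rotmz(f)\subset \cV(U)$ by dividing by $n$ and using that $\cV(U)$ is a closed, scale-invariant finite union of subspaces, is fine.
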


\begin{proof}
We may assume that $U$ is filled, replacing it by $\fill(U)$ if necessary.
Since $U$ is homotopically bounded, for each $x\in U$ and $n\in \N$ there exists a rectifiable arc $\beta$ joining $f^{n}(x)$ to $x$ of length at most $\diamup(U)$ such that $\beta$ is homotopic with fixed endpoints to an arc contained in $U$. This implies by (\ref{eq:rot-alt}) that $\snorm{[\mc{I}^n_x*\beta] - \Phi_f^{n}(x)}\leq M_0 = 2C_{\mc{A}}+\diamup(U)$. Since $U$ is invariant and essential, by Lemma \ref{lem:hyp-isotopy} the arc $\mc{I}^n_x$ is homotopic with fixed endpoints to an arc contained in $U$.  Thus $\mc{I}^n_x*\beta$ is homotopic to a loop in $U$, and $\Phi_f^n(x)\in B_{M_0}(\cH(U))$.

Let $U_0$ be a filled essential connected compact surface with boundary such that $U\sm U_0$ is a union of topological annuli. Increasing $U_0$ we may find $U_0'$ with the same properties such that $U_0\subset U_0'$ and $f(U_0)\subset U_0'$. Let $W_1,\dots W_m$ be the connected components of $S\sm U_0$, and $W_1',\dots W_m'$ the corresponding components of $U_0'$ such that $W_i'\subset W_i$. The fact that the topological ends of $U$ are fixed by $f$ (see Proposition \ref{pro:ends}) implies that $W_i'$ contains an essential loop homotopic in $W_i'$ to its own image. In particular $W_i'$ intersects $f(W_i')$ and so $f(W_i')\subset W_i$ as well. By Lemma \ref{lem:hyp-isotopy},  for each $x\in W_i'$ the arc $\mc{I}_x$ is homotopic with fixed endpoints to an arc contained in $W_i$. If $x\in (S\sm U)\cap W_i'$, then $f^n(x)\in W_i$ for all $n$ so $\mc{I}_x^n$ is homotopic with fixed endpoints to an arc in $W_i$, and we may choose an arc $\beta$ of length at most $\diamup(W_i)$ connecting $f^n(x)$ to $x$ and homotopic with fixed endpoints to an arc in $W_i$. This implies that $\Phi_f^n(x)\in B_{r_i}(\cH(W_i))$, where $r_i=2C_{\mc{A}}+\diamup(W_i)$. Thus the statements of the lemma hold setting $r=\max\{r_1,\dots, r_m, M_0\}$.
\end{proof}

The previous lemma does not hold if one drops the condition that $U$ be homotopically bounded, as the example in Figure \ref{fig:rotbad} shows. In the example, the map $f$ is a full rotation along each circle $b_1$ and $b_2$, and there exists an orbit with $\alpha$-limit on $b_1$ and $\omega$-limit on $b_2$. It is easy to show that $\rotmz(f)$ contains the line segment $\{t[b_1]+(1-t)[b_2] : t\in [0,1]\}$, which is not contained in $\mc{V}(U)$. However, the claim about $\rotmz(f)$ holds in the special case that $\bd U$ is fixed pointwise.
\begin{figure}[ht]
\includegraphics[height=3.5cm]{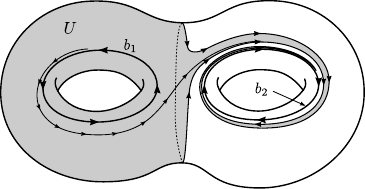}
\caption{$\rotmz(f)$ contains a segment joining $[b_1]$ to $[b_2]$}
\label{fig:rotbad}
\end{figure}

\begin{lemma}\label{lem:desvio-fix} If $U$ is an essential but not fully essential $f$-invariant open connected set and $\bd U\subset \fix(f)$, then $\rotmz(f)\subset \cV(U)$.
\end{lemma}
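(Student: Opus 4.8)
The plan is to reduce Lemma~\ref{lem:desvio-fix} to Lemma~\ref{lem:desvio-hyp} by showing that in this special situation, the decomposition of $S$ along $\bd U$ already controls every orbit's homological displacement, \emph{even though $U$ need not be homotopically bounded}. The point is that orbits do not actually travel through large portions of $U$ in a way that matters: since $\bd U$ is fixed pointwise, an orbit that enters $U$ stays on one ``side'' of the fixed-point structure in a controlled way.

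\medskip

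First I would replace $U$ by $\fill(U)$ (which is still $f$-invariant, still essential but not fully essential, and whose boundary is still contained in $\fix(f)$ since $\fill$ only adds inessential complementary components which have their boundary in $\bd U$), so assume $U$ is filled. Choose a filled essential compact surface with boundary $U_0\subset U$ such that $U\sm U_0$ is a disjoint union of essential topological annuli (collars of the ends of $U$), as in Section~\ref{sec:ends} and Proposition~\ref{pro:ends}; let $W_1,\dots,W_m$ be the connected components of $S\sm U_0$. The key structural observation is that $\bd U\subset\fix(f)$ forces $f(\cl U)=\cl U$ and $f(S\sm U)=S\sm U$; since each $W_i$ is a collar neighborhood of a fixed end of $U$ together with a complementary component of $S$, and $\bd W_i\subset U$ near the collar part while the rest of $\bd W_i$ lies in $\bd U\subset\fix(f)$, one checks that $f(W_i)$ and $W_i$ are ``essentially nested'': $f(W_i)$ contains an essential loop freely homotopic in $W_i$ to its image. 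This is exactly the hypothesis that lets Lemma~\ref{lem:hyp-isotopy} apply.

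\medskip

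With that in hand I would argue as follows. For $x\in U$, since $U$ is invariant and essential, Lemma~\ref{lem:hyp-isotopy} (applied with $U_0$ replaced by a suitable filled connected open set inside $U$ carrying an essential loop homotopic to its image — which exists because the ends of $U$ are fixed) shows $\mc{I}^n_x$ is homotopic rel endpoints to an arc in $U$, so there is a lift question, but one must still bound $\norm{\Phi_f^n(x)}$ away from $\cH(U)$ by a constant \emph{independent of $x$ and $n$}. This is where the fixed-boundary hypothesis does real work that homotopic boundedness did in Lemma~\ref{lem:desvio-hyp}: because $\bd U$ is fixed pointwise, the natural lift $\hat f$ fixes every point of $\pi^{-1}(\bd U)$, hence fixes every endpoint at infinity of every extended lift of a boundary loop; this traps the lift of any orbit that stays in a component of $\pi^{-1}(U)$ between two such geodesics. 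So there is a connected component $\hat U$ of $\pi^{-1}(U)$, bounded by lifts of boundary geodesics all having endpoints fixed by $\hat f$, with $\hat f(\hat U)=\hat U$; the displacement $d(\hat f^n(\hat x),\hat x)$ projected to homology then lands within a bounded neighborhood of $\cH(U)$ — but now the bound comes from the ``width'' of the region cut out by those geodesics in the direction transverse to $\cH(U)$, not from $\diamup(U)$. The same reasoning applied to each $W_i$ (whose boundary geodesics are also $\hat f$-fixed at infinity) gives $\Phi_f^n(x)\in B_{r_i}(\cH(W_i))$ for $x$ whose orbit stays in $W_i$. Every point of $S$ either has its whole orbit in one $W_i$ (since $S\sm U$ is invariant and $U\supset U_0$ is invariant, an orbit meeting $S\sm U_0$ but... ) — more carefully, decomposing $S=\cl U\cup\bigcup_i \cl W_i$ and noting each piece is invariant up to the controlled nesting above, one concludes $\Phi_f^n(x)\in B_r(\cV(U))$ for all $x,n$ with $r=\max\{r_i, r_U\}$, whence $\rotmz(f)\subset\cV(U)$.

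\medskip

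The main obstacle I anticipate is making precise the claim that orbits are ``trapped'' transversally to $\cH(U)$ with a uniform bound, given that $U$ itself is homotopically unbounded: one cannot invoke $\diamup(U)$, so the bound must be extracted purely from the geometry of the finitely many boundary geodesics of $U$ (all fixed at infinity by $\hat f$) and the fact that $\hat f$ commutes with deck transformations. Concretely, the subtlety is that $\hat f$ may move points of $\hat U$ arbitrarily far \emph{within} $\hat U$ (parallel to the geodesic boundary), but Lemma~\ref{lem:hyp-isotopy} together with the fixed-endpoint-at-infinity property should confine the displacement in homology to a bounded neighborhood of $\cH(U)$; I expect this to require a careful choice of the compact surfaces $U_0, U_0'$ with $f(U_0)\subset U_0'$, exactly paralleling the second half of the proof of Lemma~\ref{lem:desvio-hyp}, but with the homotopic-boundedness input replaced by the observation that each relevant end has an $f$-fixed collar loop so the relevant arc $\beta$ joining $f^n(x)$ to $x$ can be taken of uniformly bounded length \emph{inside the appropriate piece}. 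Once that length bound is in place, the estimate $\snorm{[\mc{I}^n_x*\beta]-\Phi_f^n(x)}\leq 2C_{\mc A}+\length(\beta)$ from~(\ref{eq:rot-alt}) finishes the argument verbatim.
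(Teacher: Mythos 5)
Your proposal has a genuine gap, and it sits exactly at the point you lean on hardest. The claim that ``because $\bd U$ is fixed pointwise, the natural lift $\hat f$ fixes every point of $\pi^{-1}(\bd U)$, hence fixes the endpoints at infinity of the lifted boundary loops'' is false: a fixed point $y\in\bd U$ need not be a contractible fixed point, so in general $\hat f(\hat y)=R\hat y$ for a nontrivial deck transformation $R$ (equivalently, the isotopy loop $\mc{I}_y$ may be essential). For example, let $U$ be an essential invariant open annulus filled by closed orbits of a flow whose two boundary circles are periodic orbits of period exactly $1$: then $\bd U\subset\fix(f)$, yet lifts of boundary points are translated by the deck transformation of the core curve. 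This is not a pathological case to be excluded --- it is the main case the lemma must handle, since such boundary points contribute the nonzero classes $v_i=[\mc{I}_y]$ to $\rotmz(f)$, and the whole content of the lemma is that these classes (and the asymptotic displacements they govern) land in $\cV(U)$. What is true, and what needs a real argument, is that $v_i=[R]\in\cH(U)$; the paper proves this by taking an arc $\sigma$ crossing a collar $A$ of the corresponding end with endpoint $y_0\in\bd U$, lifting, and applying Brouwer's lemma (Lemma \ref{lem:brouwer-free}) to $\eta=\hat\sigma^{-1}*\alpha*\hat f^2(\hat\sigma)$ to conclude $R\hat A=\hat A$, hence $[R]\in\cH(A)\subset\cH(U)$. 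Your outline contains no substitute for this step once the false ``$\hat f$ fixes $\pi^{-1}(\bd U)$'' premise is removed.

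The second gap is quantitative. Even granting some trapping statement, you must show $\Phi_f^n(x)$ stays within $o(n)$ of $\cH(U)$ for $x\in U$, and your suggestion that the closing arc $\beta$ from $f^n(x)$ to $x$ ``can be taken of uniformly bounded length inside the appropriate piece'' cannot work as stated: $U$ and its end collars are themselves homotopically unbounded, so no such uniform length bound exists for orbit segments that drift far along the boundary. The paper's mechanism is different and is the heart of the proof: for $x$ within $\delta$ of a boundary component $K_i$, the $m$-step arc $\mc{I}^m_x$ closed by a short geodesic is \emph{freely homotopic} to the fixed-point loop $\mc{I}^m_y$ with $y\in K_i$, giving $\norm{\Phi_f^m(x)-mv_i}\leq M'$ with $M'$ independent of $m$; an arbitrary orbit in $U$ is then decomposed into boundary-hugging blocks (error rate $M'/m$ per iterate) and excursions that begin and end in a fixed compact core $U_\delta$ (error at most $2C_{\mc A}+\diamup(U_\delta)$, finite because $U_\delta$ is compact even though $U$ is not homotopically bounded), and letting $m\to\infty$ yields $\rotmz(f,U)\subset\cH(U)$. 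Without this two-scale estimate, or an equivalent replacement, your argument does not reach the conclusion. Your treatment of points of $S\sm U$ by repeating the last paragraph of the proof of Lemma \ref{lem:desvio-hyp} (using that the ends of $U$ are fixed, via Proposition \ref{pro:ends} and Lemma \ref{lem:hyp-isotopy}) is correct and is indeed how the paper finishes.
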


\begin{proof}

As usual it suffices to consider the case where $U$ is filled. By Proposition \ref{pro:ends}, the boundary of $U$ has at most $2g$ connected components, where $g$ is the genus of $U$. Let $K_1,\dots, K_l$ be the connected components of $\bd U$.

Since $\bd U\subset \fix(f)$, for each $x\in \bd U$ the isotopy arc $\mc{I}_x$ is a loop. The map $\bd U\to H_1(S,\Z)$ defined by $x\mapsto [\mc{I}_x]$ is continuous and therefore constant on each connected component of $\bd U$. For $1\leq i\leq l$, let $v_i$ be the value of $[\mc{I}_x]$ on each set $K_i$. 

Fix $\epsilon>0$ such that the $\epsilon$-neighborhoods of the sets $K_i$ are pairwise disjoint. Let $0<\delta<\epsilon/2$ be such that $d(f(x), f(y))< \epsilon/2$ whenever $d(x,y)<\delta$. Let $U_\delta\subset U$ be a compact connected filled manifold with boundary such that $S\sm U_\delta$ is contained in the $\delta$-neighborhood of $S\sm U$ and $U\sm U_\delta$ is a union of topological annuli. Note that $d(f(x),x)<\epsilon$ for all $x\in U\sm U_\delta$ due to our choice of $\delta$ and the fact that $\bd U\subset \fix(f)$.

We claim that $\{v_1,\dots v_l\}\subset \cH(U)$. Indeed, fix $i\in \{1,\dots,l\}$ and choose a simple arc $\sigma$ joining a point of $U\sm U_\delta$ to some point of $y_0\in K_i$ such that $\sigma\sm\{y_0\} \subset U\sm U_\delta$.
Let $A$ be the connected component of $U\sm U_\delta$ containing $\sigma\sm\{y_0\}$ (so $A$ is a collar of some topological end of $U$).
Since the ends of $U$ are fixed, $A$ contains some smaller collar $A'\subset A$ of the same topological end such that $f(A')\subset A$.  By Lemma \ref{lem:hyp-isotopy}, for any $x\in A'$ the arc $\mc{I}_x$ is isotopic with fixed endpoints in $S$ to a an arc contained in $A$. This implies that, if $\hat{A}$ is a connected component of $\pi^{-1}(A)$ and $\hat{A}'$ is the component of $\pi^{-1}(A')$ in $A$, then $\hat{f}(\hat{A}') \subset \hat{A}$. 
Replacing $A'$ by a smaller annulus we may further assume that $\hat{f}^2(\hat{A}')\subset \hat{A}$. We will also assume $\sigma\subset A'$, by reducing $\sigma$ if necessary.
Let $\hat{\sigma}$ be a lift of $\sigma$ with initial point in $\hat{A}'$, and let $y\in \pi^{-1}(y_0)$ be the endpoint of $\hat{\sigma}$. Then $\hat{\sigma}\sm \{y\}\subset \hat{A}'$, and since $\pi(y) = y_0\in K_i$, we have that $\hat{f}(y)=Ry$ for some deck transformation $R$. Note that $\hat{f}^2(\sigma)\sm \{R^2y\}$ is an arc in $\hat{A}$. Let $\alpha$ be an arc in $\hat{A}$ joining the initial point $x$ of $\hat{\sigma}$ to $\hat{f}^2(x)$, and let $\eta = \hat{\sigma}^{-1}*\alpha*\hat{f}^2(\hat{\sigma})$. See Figure \ref{fig:lema-fix1}. Since $\eta$ joins $y$ to $R^2y$, by Lemma \ref{lem:brouwer-free} we have $\eta\cap R\eta\neq \emptyset$, but since the endpoints of $\eta$ and $R\eta$ are disjoint it follows that the arcs $\eta$ and $R\eta$ with their endpoints removed intersect. Since $\eta$ lies in $\hat{A}$ except for its endpoints, this implies that $\hat{A}\cap R\hat{A}\neq \emptyset$, so $\hat{A}=R\hat{A}$. This means that $[R]\in \cH(A)$ (using the notation introduced at the beginning of Section \ref{sec:rot}). Since $\mc{I}_{y_0}$ lifts to an arc joining $y$ to $Ry$, it follows that $v_i=[\mc{I}_{y_0}]=[R]\in \cH(A)\subset \cH(U)$.

\begin{figure}[ht]
\includegraphics[width=.9\linewidth]{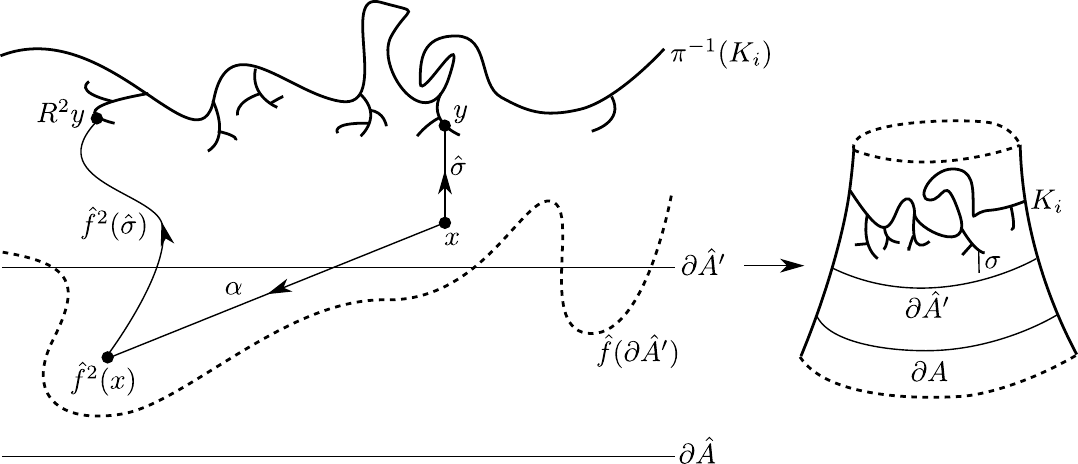}
\caption{Construction of the arc $\eta$.}
\label{fig:lema-fix1}
\end{figure}
Now fix $m\in \N$. Reducing $\delta$ (and increasing $U_\delta$ accordingly), we may assume that $d(f^i(x), f^i(y))\leq \epsilon/2$ whenever $d(x,y)<\delta$ and $0\leq i\leq m$. If $x\in U\sm U_\delta$, we may find $y\in \bd U$ such that $d(x,y)<\delta$. Since $y\in \fix(f)$, this implies that $d(f^m(x),x)<\epsilon$.  Let $i\in \{1,\dots, l\}$ be such that $y\in K_i$. If $\beta_x$ is the geodesic arc joining $f^m(x)$ to $x$ and $t\mapsto \sigma(t)$ is the geodesic arc joining $x$ to $y$ (hence contained in the $\delta$-ball around $y$), then $\mc{I}_{\sigma(t)}^m*\beta_{\sigma(t)}$ is a free homotopy between $\mc{I}_x^m*\beta_x$ and the loop $\mc{I}_y^m$. Hence $[\mc{I}_x^m*\beta_x] = [\mc{I}_y^m] = m[v_i]$, which means by (\ref{eq:rot-alt}) that, assuming $\epsilon<1$, 
$$\norm{\Phi_f^m(x)-m[v_i]}\leq M':=2C_{\mc A} + 1.$$

Suppose that $x\in U$ is such that $f^k(x)\in U\sm U_\delta$ for $0\leq k\leq n$. Recall that we chose $\epsilon$ such that the $\epsilon$-neighborhoods of the sets $K_i$ are pairwise disjoint, so by our choice of $\delta$ this implies that there exists $i\in \{1,\dots,l\}$ such that $d(f^k(x), K_i)<\delta$ for $0\leq k \leq n$. Writing $n=mj+r$ with $0\leq r<m$ and $k\geq 0$, recalling that $x\mapsto \norm{\Phi_f(x)}$ is bounded, we have that 
$$\Phi_f^n(x) - nv_i = \Phi_f^r(x)-rv_i+ \sum_{i=0}^{j-1}\Phi_f^m(f^{mi}(x))-mv_i,$$
which has a norm bounded by $C_m+M'j\leq C_m+M'n/m$, where 
$$C_m=\sup\{\norm{\Phi_f^r(x)-rv_i} :x\in S,\, 1\leq i\leq l,\, 0\leq r \leq m \}<\infty.$$
Since $v_i\in \cH(U)$ for each $i$, we have that $\Phi_f^n(x)\in B_{C_m+M'n/m}(\cH(U)).$

Now suppose that $x\in U$ is such that $x\in U_\delta$ and $f^n(x)\in U_\delta$. Then by Lemma \ref{lem:hyp-isotopy} the arc $\mc{I}^n_x$ is homotopic with fixed endpoints to an arc contained in $U$, and we may find an arc $\beta$ of length at most $\diamup(U_\delta)$ joining $f^n(x)$ to $x$ and isotopic with fixed endpoints to an arc in $U$. Thus $\mc{I}_x^n*\beta$ is homotopic to an arc in $U$ and again by (\ref{eq:rot-alt}), 
$$\norm{\Phi_f^n(x) - \mc{I}_x^n*\beta}\leq M_m :=2C_{\mc{A}}+\diamup(U_\delta).$$
which means that $\Phi_f^n(x) \in B_{M_{m}}(\cH(U))$ (we write $M_m$ to emphasize the dependence of $\delta$ on $m$).

For an arbitrary $x\in U$, let $a$ and $b$ be the smallest and largest positive integers such that $f^a(x)\in U_\delta$ and $f^b(x)\in U_\delta$, respectively. Then applying the previous estimations,
$$\Phi_f^n(x) = \Phi_f^{a}(x) + \Phi_f^{b-a}(f^a(x)) + \Phi_f^{n-b}(f^{b}(x)) \in B_{r_m}(\cH(U)),$$
where 
$$r_m = C_m+a\frac{M'}{m} + M_m+ C_m+(n-b)\frac{M'}{m}\leq 2C_m + M_m + M'\frac{n}{m}.$$
It follows that $\rotmz(f,U) \subset B_{M'/m}(\cH(U))$, and since this holds for any $m\in \N$, we conclude that $\rotmz(f,U)\subset \cH(U)\subset \cV(U)$. On the other hand, for $x\in S\sm U$, the argument used in the last paragraph of the proof of Lemma \ref{lem:desvio-hyp} applies exactly in the same way, implying that there exists $r$ such that $\Phi_f^{n}(x)\in B_r(\cV(U))$ for all $n\in\N$, and therefore $\rotmz(f, S\sm U)\subset \cV(U)$. Thus $\rotmz(f,S)=\rotmz(f, S\sm U)\cup \rotmz(S, U) \subset \cV(U)$.
\end{proof}

Finally we consider the case where $\fix(f)$ is fully essential.

\begin{lemma}\label{lem:rot0} If $\fix(f)$ is fully essential, then $\rotmz(f)=\{0\}$. 
\end{lemma}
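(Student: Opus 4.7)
The plan is to show $\rotmz(f)=\{0\}$ by reducing it to $\roterg(f)=\{0\}$. Since $\rotmz(f)\subset \rotmeas(f)=\conv(\roterg(f))$ and $0\in \rotmz(f)$ (the natural lift of an isotopic-to-identity homeomorphism of a hyperbolic surface always has fixed points), once every ergodic invariant measure has rotation vector zero the conclusion follows.

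The key step is to prove that for every $y\in\fix(f)$, $\Phi_f(y)=0$---equivalently, that the natural lift $\hat f$ fixes $\pi^{-1}(\fix(f))$ pointwise. For a lift $\hat z\in \pi^{-1}(\fix(f))$ write $\hat f(\hat z)=T_{\hat z}\hat z$ with $T_{\hat z}\in\deck(\pi)$; a continuity argument, based on the uniform positive lower bound (systole) on displacements of nontrivial hyperbolic deck transformations of $\D$, shows that $\hat z\mapsto T_{\hat z}$ is locally constant. Using the full essentialness of $\fix(f)$, one locates a connected piece of $\pi^{-1}(\fix(f))$ (arising from a fully essential subset of $\fix(f)$ whose $\pi_1$-image surjects onto $\pi_1(S)$) on which $T_{\hat z}$ is constant equal to some $T_0$; the natural-lift relation $\hat f R=R \hat f$ for every $R\in \deck(\pi)$ then forces $T_0$ to lie in the center of $\pi_1(S)$, which is trivial for a hyperbolic surface, so $T_0=e$. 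For inessential connected components of $\fix(f)$ possibly nested inside inessential components $U$ of $S\sm\fix(f)$, one propagates inward: the outer boundary of a lift $\hat U$ lies in a piece where $\hat f$ has already been shown to act trivially, so $\hat f(\hat U)=T\hat U$ with $T$ fixing the bounded set $\bd\hat U$ setwise; since hyperbolic deck transformations have no bounded invariant subset of $\D$, $T=e$, and cascading this reasoning onto inner boundaries identifies $T_{\hat z}$ with $e$ on every component of $\pi^{-1}(\fix(f))$.

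Given the key step, ergodic measures split into two cases. If $\mu(\fix(f))=1$ then $\mu=\delta_y$ for some $y\in\fix(f)$, and $\rho(f,\mu)=\Phi_f(y)=0$. Otherwise $\mu(\fix(f))=0$; invariance and finiteness of $\mu$ preclude $\mu$-a.e.\ orbits from visiting infinitely many distinct components of $S\sm \fix(f)$ (equal mass on each would sum to infinity), so there are a period $p$ and components $U_{i_1},\dots,U_{i_p}$ cyclically permuted by $f$ with $\mu$ supported in their union. Applying the key step to $f^p$ (whose fixed set contains $\fix(f)$ and is thus fully essential), $\hat f^p$ fixes $\bd \hat U_{i_1}\subset\pi^{-1}(\fix(f))$ pointwise; writing $\hat f^p(\hat U_{i_1})=T\hat U_{i_1}$ and observing that $T$ stabilizes the bounded set $\bd\hat U_{i_1}$ setwise, the rigidity argument again gives $T=e$, so $\hat U_{i_1}$ is $\hat f^p$-invariant. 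The displacement of $\hat f^n$ on $\hat U_{i_1}$ is thus uniformly bounded in $n$, giving $\Phi_f^n(x)/n\to 0$ and hence $\rho(f,\mu)=0$.

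The hard part is the key step, specifically propagating the vanishing of $T_{\hat z}$ across the (possibly nested) inessential components of $\fix(f)$: each propagation is a short application of the same rigidity principle---a nontrivial hyperbolic deck transformation cannot preserve a bounded subset of $\D$---but traversing an intricate nested structure of components of $\fix(f)$ and $S\sm\fix(f)$ and organizing the induction so that every component is eventually reached requires care.
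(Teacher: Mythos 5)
Your overall reduction (pass to ergodic measures via $\conv(\roterg(f))$, plus $0\in\rotmz(f)$) matches the paper, but your ``key step'' rests on a claim that is not justified and is false as a general principle. You argue that, since $\fix(f)$ is fully essential, one can find a connected piece of $\pi^{-1}(\fix(f))$ whose deck stabilizer is large enough (``$\pi_1$-image surjects onto $\pi_1(S)$'') so that the relation $\hat f R=R\hat f$ forces the constant value $T_0$ to be central, hence trivial. But $T_{\hat z}$ is only constant on connected components of $\pi^{-1}(\fix(f))$, and full essentialness of a closed set $K$ controls its \emph{complement}, not the connectivity or the stabilizers of components of $\pi^{-1}(K)$: for instance a filling minimal geodesic lamination is a fully essential continuum all of whose preimage components in $\D$ have trivial stabilizer, so the commutation/centrality argument has nothing to act on. The paper proves exactly this point by a different idea: it takes a fixed point $z_0$ of the natural lift, the component $U$ of $\hat S\sm\pi^{-1}(K)$ containing it (invariant because it contains a fixed point), and, assuming a boundary point $z$ satisfies $\hat f(z)=Tz$ with $T\neq\id$, applies the Brouwer free-arc lemma (Lemma \ref{lem:brouwer-free}) to the fixed-point-free homeomorphism $T$ and the arc $\sigma^{-1}*\hat f^2(\sigma)$ to conclude $TU=U$, which makes $\pi(U)\subset S\sm K$ essential and contradicts full essentialness of $K$. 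Some such argument is needed; your proposal does not contain it.

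The propagation and the ergodic case have a second, recurring gap: you repeatedly use that $\bd\hat U$ (or $\hat U_{i_1}$ itself) is \emph{bounded} to conclude $T=\id$ or to get uniformly bounded displacement. Lifts of inessential open sets and of their boundaries need not be bounded --- homotopically unbounded inessential sets are precisely the phenomenon this paper is about (Denjoy-type examples), and the boundedness theorems (A and B) are unavailable here since $\fix(f)$ is essential and the lemma does not assume area preservation. Likewise, the identity $\hat f(\hat U)=T\hat U$ presupposes $f(U)=U$ downstairs, which is itself unproved. The paper sidesteps all of this: invariance of each component of $\hat S\sm\pi^{-1}(K)$ comes from the Brown--Kister theorem (a homeomorphism fixing a closed set pointwise leaves complementary components invariant), and the vanishing of ergodic rotation vectors comes not from bounded displacement but from recurrence: $\mu$-a.e.\ point is recurrent, its lift lies in an invariant component on which $\pi$ is injective, hence is recurrent for $\hat f$, and $\Phi^{n_k}_f(x)/n_k\to 0$ along return times, which suffices since the rotation vector exists $\mu$-a.e. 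If you replace your rigidity/boundedness steps by these two tools, your outline can be repaired, but as written the centrality step and every appeal to boundedness are genuine gaps.
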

\begin{proof}
For each $T\in \deck(\pi)$ let $K_T=\pi(\fix(T^{-1}\hat{f}))$. These sets partition $\fix(f)$ into finitely many compact sets, which are the Nielsen classes of $\hat{f}$ (see \cite{nielsen-book}). 
If $\fix(f)$ is fully essential, then some connected component $K$ of $\fix(f)$ is fully essential, and since it is connected $K\subset K_R$ for some $R$. We claim that $R = \id$.  Suppose on the contrary that $R\neq \id$. Since $\hat{f}$ is a natural lift, it has a fixed point $z_0$. Let $U$ be the connected component of $\hat{S}\sm \pi^{-1}(K)$ containing $z_0$. Let $\sigma$ be an arc joining $z_0$ to a point $z\in \bd U$ such that $\sigma$ lies entirely in $U$ except for its endpoint $z$. Then there is a deck transformation $T\neq \id$ such that $\hat{f}(z)=Tz$, so $\hat{f}^2(\sigma)$ is an arc joining $z_0$ to $T^2z$, and since $\hat{f}(U)=U$ the arc $\hat{f}^2(\sigma)$ is also in $U$ except for its endpoint. Let $\ol{\alpha}=\sigma^{-1}*\hat{f}^2(\sigma)$, which is a compact arc joining $z$ to $T^2 z$, and denote by $\alpha$ the arc $\ol{\alpha}$ with its endpoints removed, so $\alpha\subset U$. Since $T^2\ol{\alpha}$ intersects $\ol{\alpha}$ (at $T^2z$), it follows from classical Brouwer theory (see Lemma \ref{lem:brouwer-free}) that $T\ol{\alpha}$ intersects $\ol{\alpha}$ as well. But the endpoints of $T\ol{\alpha}$ are disjoint from the endpoints of $\ol{\alpha}$, so $\alpha$ intersects $T\alpha$. Hence $U\cap TU\neq \emptyset$, which implies that $U=TU$ and so $\pi(U)$ is essential, contradicting the fact that $K$ is fully essential and disjoint from $\pi(U)$.

Thus $K = \pi(\fix(\hat{f}))$, and this implies that $\pi^{-1}(K)\subset \fix(\hat{f})$, and every connected component of $\hat{S}\sm \pi^{-1}(K)$ is invariant (see \cite{brown-kister}). Since any such component projects injectively to a topological disk, it follows that for any recurrent point $x$ of $f$, every $\hat{x}\in \pi^{-1}(x)$ is recurrent for $\hat{f}$. 

Since $\conv(\rotmz(f))=\conv(\rotmeas(f))$ (see \S\ref{sec:rot-meas}), to conclude that $\rotmz(f)=\{0\}$ it suffices to show that $\rotmeas(f,\mu)=0$ for any ergodic measure $\mu$. But for any ergodic measure, $\mu$-almost every point (in particular some point $x\in S$) is recurrent and has a well-defined rotation vector $\rho(f,x)=\rotmeas(f,\mu)$. If $\hat x\in \pi^{-1}(x)$, then $\hat{x}$ is recurrent for $\hat{f}$, so we may find $n_k\to \infty$ such that $\hat{f}^{n_k}(\hat{x})\to \hat{x}$. Letting $\beta$ be the geodesic segment joining $f^n(x)$ to $x$ we see that $[\mc{I}_x^{n_k}*\beta]=0$ for large enough $k$, and it follows from (\ref{eq:rot-alt}) that $\rho(f,x)=0$, as we wanted to show.
\end{proof}

Finally,

\begin{lemma}\label{lem:rot-bounded}
For any $x\in \hat{S}$ and $n\in \N$,
 $$\norm{\Phi_f^n(\pi(x))} \leq 2C_{\mc{A}}+d(\hat{f}^n(x),x).$$
In particular, if $f$ has uniformly bounded displacement then $\rotmz(f)=\{0\}$.
\end{lemma}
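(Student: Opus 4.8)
The plan is to realize $\Phi_f^n(\pi(x))$ as the homology class of a short loop and then apply the stable norm inequality $\norm{[\gamma]}\le\length(\gamma)$. Write $y=\pi(x)$, let $\hat\sigma$ be the geodesic segment of $\hat S$ joining $x$ to $\hat f^n(x)$ (so $\length(\hat\sigma)=d(\hat f^n(x),x)$), and set $\sigma=\pi(\hat\sigma)$, a rectifiable arc in $S$ from $y$ to $f^n(y)$ of the same length. As observed in the discussion following the definition of $\Phi_f$, the arc $\mc I^n_y$ may be replaced in the definition of $\alpha^n_y$ by precisely such a $\sigma$ (the projection of a geodesic segment joining a preimage of $y$ to its image by the natural lift). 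Hence $\Phi_f^n(y)=[\gamma_y*\sigma*\gamma_{f^n(y)}^{-1}]$, and $\gamma_y*\sigma*\gamma_{f^n(y)}^{-1}$ is a rectifiable loop based at $b$ of length at most $2C_{\mc A}+d(\hat f^n(x),x)$. The stable norm estimate then gives $\norm{\Phi_f^n(y)}\le 2C_{\mc A}+d(\hat f^n(x),x)$, which is the first assertion.

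For the ``in particular'' statement the only thing that needs care is that, when $f$ has uniformly bounded displacement, the bound is witnessed by the \emph{natural} lift $\hat f$. Suppose $\hat f'$ is a lift of $f$ with $d(\hat f'^n(z),z)\le M$ for all $z\in\hat S$ and $n\in\Z$. Every lift of $f$ extends continuously to $\ol{\D}$ (it differs from $\hat f$ by a deck transformation, and both extend), and the extension of $\hat f'$ fixes each point of $\bd\D$: a bounded-distance perturbation of a sequence in $\D$ converging to a boundary point converges to the same boundary point. Writing $\hat f'=T\hat f$ with $T\in\deck(\pi)$ we get $T|_{\bd\D}=\hat f'|_{\bd\D}=\id$, and since a nontrivial deck transformation of a hyperbolic surface moves all but two points of $\bd\D$, this forces $T=\id$, i.e.\ $\hat f'=\hat f$. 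Consequently $d(\hat f^n(z),z)\le M$ for all $z$ and $n$, and the first part gives $\norm{\Phi_f^n(\pi(x))}\le 2C_{\mc A}+M$ for all $x$ and $n$. Dividing by $n_k$ and passing to the limit in the definition of $\rotmz(f)$ shows that $\rotmz(f)\subset\{0\}$; combined with the already-noted fact $0\in\rotmz(f)$, this yields $\rotmz(f)=\{0\}$.

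The computation is essentially immediate once the stable norm inequality is available, so there is no real obstacle in the first part beyond the bookkeeping of identifying $\Phi_f^n(y)$ with the class of $\gamma_y*\sigma*\gamma_{f^n(y)}^{-1}$ (and being careful that it is the natural lift that makes $\sigma$ homotopic rel endpoints to $\mc I^n_y$). For the second statement the only non-formal point is the reduction to the natural lift, and that is handled by the short boundary-at-infinity argument above.
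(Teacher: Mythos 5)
Your proof is correct and follows the same route as the paper's one-line argument: replace $\mc{I}^n_{\pi(x)}$ by the projection of the geodesic from $x$ to $\hat{f}^n(x)$ (legitimate because $\hat f$ is the natural lift) and apply the stable norm inequality $\norm{[\gamma]}\leq\length(\gamma)$ to the loop $\gamma_y*\sigma*\gamma_{f^n(y)}^{-1}$. Your additional boundary-at-infinity argument showing that on a hyperbolic surface any lift with uniformly bounded displacement must coincide with the natural lift is a correct and welcome piece of care that the paper leaves implicit.
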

\begin{proof}
It follows immediately from the fact that the arc $\mc{I}^n_{\pi(x)}$ is homotopic with fixed endpoints to a rectifiable arc of length $d(\hat{f}^n(x), x)$, since $\hat{f}$ is a natural lift.
\end{proof}

\begin{theorem}\label{th:rational} Suppose that $f$ is area-preserving. Then either $f$ has a fully essential dynamics, or $\rotmz(f)$ is contained in a union of at most $g+1$ proper rational subspaces of $H_1(S,\R)$.
\end{theorem}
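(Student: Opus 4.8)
The plan is to deduce the statement directly from the \emph{definition} of fully essential dynamics together with the displacement lemmas of \S\ref{sec:displacement}. Suppose $f$ does not have fully essential dynamics; then, by definition, at least one of cases (1), (2), (3) of Theorem~\ref{th:essine} holds, and I will treat each case separately. Throughout I will use three elementary facts: $\rotmz(f^{k})=k\cdot\rotmz(f)$ for every $k\geq1$; a finite union of linear subspaces of $H_1(S,\R)$ is invariant under scaling (so replacing $f$ by an iterate never hurts); and the trivial subspace $\{0\}$ is a proper rational subspace of $H_1(S,\R)$, since $\dim H_1(S,\R)=2g>0$. Consequently it suffices, in each case, to produce --- after possibly passing to an iterate of $f$ --- either the conclusion $\rotmz=\{0\}$ or an inclusion $\rotmz\subseteq\cV(U)$ for a suitable connected open set $U$, because by the discussion preceding Lemma~\ref{lem:desvio-hyp} (relying on Proposition~\ref{pro:ends}) the set $\cV(U)$ is a union of at most $g+1$ proper rational subspaces.

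Cases (2) and (3) are immediate from the lemmas already proved. In case (2), $f$ has uniformly bounded displacement and Lemma~\ref{lem:rot-bounded} gives $\rotmz(f)=\{0\}$. In case (3) one is given an invariant (if the given $U$ is only periodic, pass to the appropriate iterate $f^{k}$) homotopically bounded connected open set $U$ that is essential but not fully essential, and Lemma~\ref{lem:desvio-hyp} gives $\rotmz(f)\subseteq\cV(U)$ at once.

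Case (1) is the one requiring care. Here $\fix(f^{n})$ is essential for some $n$. If some such $\fix(f^{n})$ is in fact \emph{fully} essential, then Lemma~\ref{lem:rot0} applied to $f^{n}$ gives $\rotmz(f^{n})=\{0\}$, whence $\rotmz(f)=\{0\}$. Otherwise $S\sm\fix(f^{n})$ is essential, so it has an essential connected component $C$; since $f$ is area-preserving and $S$ has finite total area, the sets $C,f^{n}(C),f^{2n}(C),\dots$, being connected components of $S\sm\fix(f^{n})$ (hence pairwise disjoint or equal, and all of the same positive measure), cannot all be distinct, so $f^{nm}(C)=C$ for some $m\geq1$. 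If $\fix(f^{nm})$ is fully essential we fall back to the previous subcase applied to $f^{nm}$; otherwise $\fill(C)$ is an $f^{nm}$-invariant, connected, open, essential, not fully essential set --- it cannot be fully essential, for then $\fill(C)=S$ would force the essential set $\fix(f^{n})\subseteq S\sm C$ to be inessential --- and moreover $\bd\bigl(\fill(C)\bigr)\subseteq\bd C\subseteq\fix(f^{n})\subseteq\fix(f^{nm})$. Lemma~\ref{lem:desvio-fix} applied to $f^{nm}$ then yields $\rotmz(f^{nm})\subseteq\cV(\fill(C))$, hence $\rotmz(f)\subseteq\cV(\fill(C))$, again a union of at most $g+1$ proper rational subspaces.

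I expect the only genuinely delicate points to lie in case (1): first, that forcing $C$ to become invariant by passing to $f^{nm}$ may make the fixed-point set fully essential, which is exactly why the detour through Lemma~\ref{lem:rot0} is kept as a safety net; and second, checking that $\fill(C)$ satisfies the precise hypotheses of Lemma~\ref{lem:desvio-fix}, in particular that $\bd(\fill(C))\subseteq\fix(f^{nm})$ --- this holds because the boundary of a connected component of the open set $S\sm\fix(f^{n})$ lies in the closed set $\fix(f^{n})$, and passing to the filling only removes inessential complementary components without creating new boundary. Everything else is a direct appeal to the lemmas of \S\ref{sec:displacement}.
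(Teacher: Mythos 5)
Your proof is correct and follows essentially the same route as the paper: split according to cases (1)--(3) of Theorem \ref{th:essine} and invoke Lemmas \ref{lem:rot0}, \ref{lem:desvio-fix}, \ref{lem:rot-bounded} and \ref{lem:desvio-hyp}, using $\rotmz(f^k)=k\cdot\rotmz(f)$. The only (harmless) deviations are in case (1), where the paper applies Lemma \ref{lem:desvio-fix} to $f^{n}$ and an essential-but-not-fully-essential connected component $U$ of $S\sm\fix(f^{n})$ directly (such a component is automatically $f^{n}$-invariant, since $f^n$ fixes its boundary pointwise), whereas you obtain only periodicity via area preservation, pass to a further iterate, and work with $\fill(C)$ instead of $C$ --- note that $C$ itself already satisfies the hypotheses of Lemma \ref{lem:desvio-fix} because $S\sm C\supset\fix(f^{n})$ is essential, and that your claim that $\fill(C)$ is not fully essential is precisely the preliminary fact that an essential closed set has an essential connected component.
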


\begin{proof}
If $f$ does not have fully essential dynamics, then one of items (1) to (3) from Theorem \ref{th:essine} holds. In case (1), \ie if $\fix(f^n)$ is essential, then either some connected component $U$ of $S\sm \fix(f^n)$ is essential but not fully essential, or $\fix(f^n)$ is fully essential. Since $\rotmz(f^n) = n\rotmz(f)$, by Lemmas \ref{lem:desvio-fix} and \ref{lem:rot0}, either $\rotmz=\{0\}$ or $\rotmz(f)\subset \cV(U)$, which implies the claims of the theorem.
For case (2), \ie $f$ has uniformly bounded displacement, then Lemma \ref{lem:rot-bounded} implies that $\rotmz(f)=\{0\}$. For case (3), \ie there is homotopically bounded invariant open connected set $U$ which is essential but not fully essential, Lemma \ref{lem:desvio-hyp} implies that $\rotmz(f)\subset \cV(U)$ and again our claims hold.
\end{proof}

\subsection{A uniform property of displacement of fully essential points}\label{sec:shadow}

\begin{lemma}\label{lem:localrot-uniform} If $z\in \fess(f)$, then for any neighborhood $U$ of $z$ there exists $r>0$ such that
$$\{\Phi_f^n(x) : x\in S\} \subset B_r(\{\Phi_f^n(y):y\in U\}).$$
for all $n\in \N$. In particular, $\rotmz(f,U) = \rotmz(f)$.
\end{lemma}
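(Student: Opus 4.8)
The statement says that near any fully essential point $z$, every homological displacement vector $\Phi_f^n(x)$ (for arbitrary $x\in S$) is $r$-shadowed by the displacement of some point of $U$, with $r$ uniform in $n$. The natural strategy is to exploit the key geometric feature of fully essential points: for any neighborhood $U$ of $z\in\fess(f)$, the orbit $\orb_f(U)$ is a fully essential open set, so some finite piece $U' = \bigcup_{k=-N}^{N} f^k(U)$ is already fully essential (a compactness argument, exactly as in the proof of Proposition \ref{pro:trans}(3)). Being open and fully essential, $U'$ contains a loop $\gamma_0$ that carries all of $\pi_1(S)$ up to the commutator — more precisely, by Remark \ref{rem:Q}, for any bounded disk $D\subset\hat S$ there is a disk $Q$ bounded by a simple loop $\gamma$ with $\pi(\gamma)\subset U'$ and $D\subset Q$; equivalently, the translates $\{TQ\}_{T\in\deck(\pi)}$ cover $\hat S$.

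\textbf{Main steps.} First, reduce the problem to a statement in the universal cover: $\norm{\Phi_f^n(\pi(\hat x))}$ is comparable, up to an additive constant depending only on $\mc A$, to the distance $d(\hat f^n(\hat x),\hat x)$ along a geodesic (cf.\ \eqref{eq:rot-alt} and Lemma \ref{lem:rot-bounded}); more generally, for two lifts $\hat x,\hat y$ in the same fundamental domain, $\norm{\Phi_f^n(\pi(\hat x))-\Phi_f^n(\pi(\hat y))}$ is controlled by $d(\hat f^n(\hat x),\hat f^n(\hat y))$ plus a bounded term, provided we track which deck translate sends one orbit endpoint near the other. So it suffices to show: given $\hat x\in\hat S$, one can find $\hat y$ lying over a point of $U'$ (hence over a point of $U$, after applying a bounded iterate $f^{-j}$, $|j|\le N$, which only changes the displacement by a uniformly bounded amount) such that $\hat y$ is within bounded distance of $\hat x$ \emph{and} $\hat f^n(\hat y)$ is within bounded distance of (a deck translate of) $\hat f^n(\hat x)$, with the \emph{same} deck translate. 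Concretely: cover $\hat S$ by the translates $TQ$ of a disk $Q$ with $\pi(\bd Q)\subset U'$; pick $T_0$ with $\hat x\in T_0 Q$ and $T_1$ with $\hat f^n(\hat x)\in T_1 Q$; set $\hat y$ to be a point of $T_0(\bd Q)$ (so $\pi(\hat y)\in U'$ and $d(\hat x,\hat y)\le\diam Q$), and use that $\hat f^n(\hat y)$ lies within $\sup_z d(z,\hat f^n(z))$... — no, that diverges with $n$; instead, the correct move is: choose $\hat y\in T_0(\bd Q)$ and let $T = T_1 T_0^{-1}$; since $\hat f^n$ commutes with deck transformations and $\hat f^n(\hat x)\in T_1 Q = T T_0 Q$, the point $T^{-1}\hat f^n(\hat x)$ lies in $T_0 Q$, a bounded neighborhood of $\hat x$; but we want $\hat f^n(\hat y)$ near $T\hat y'$ for some good $\hat y'$. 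The cleaner route: observe $\hat f^n(T_0 Q)$ contains $\hat f^n(\hat x)\in T_1 Q$, and since $\hat f^n(T_0 Q)$ is a bounded disk containing a point of $T_1 Q$ and $\hat f^n$ has bounded displacement on each compact piece... — this is where care is needed.

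\textbf{The heart of the argument.} The right formulation is to use \emph{both} $\hat x$ and $\hat f^n(\hat x)$ to select the comparison point: choose $\hat y\in\hat S$ with $\pi(\hat y)\in U'$, $d(\hat x,\hat y)$ bounded, and such that there is a deck transformation $T$ with $d(\hat f^n(\hat x), T\hat y)$ bounded — then, using that $\pi(\hat y)\in U'$ and $U'$ fully essential, one can further arrange (via Remark \ref{rem:Q} applied on the target side, or by an arc-chasing argument inside $\orb_f(U)$) that in fact $T\hat y = \hat f^n(\hat y')$ for $\hat y'$ over a point of $U'$ at bounded distance from $\hat x$. A slick way: let $\alpha$ be an arc in $\hat S$ from $\hat x$ to a lift $\hat y$ of a point of $U'$ with $\length\alpha\le\diam Q$; then $\hat f^n(\alpha)$ is an arc from $\hat f^n(\hat x)$ to $\hat f^n(\hat y)$, and its projection together with an arc in $U'$ realizing the relevant homology shows $\norm{\Phi_f^n(\pi(\hat y)) - \Phi_f^n(\pi(\hat x))}$ is bounded by $2\,\length\alpha + (\text{diam of }\mc A) + \diamup(\text{relevant piece})$ — but $\hat f^n(\alpha)$ has unbounded diameter in general, so instead one pushes $\alpha$ off to $\pi^{-1}(U')$ using that $U'$ is fully essential so $\hat x$ and $\hat f^n(\hat x)$ both lie in translates $T_0 Q$, $T_1 Q$ with $\pi(\bd T_i Q)\subset U'$, and one connects through the $U'$-carried skeleton. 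I expect the main obstacle to be exactly this: making the "shadowing" uniform in $n$, i.e.\ controlling $\Phi_f^n(\pi(\hat y)) - \Phi_f^n(\pi(\hat x))$ by a constant independent of $n$ despite $\hat f^n(\alpha)$ growing — the resolution being that one never needs to iterate $\alpha$; one re-chooses $\hat y$ at each $n$ so that a \emph{short} path from $\hat x$ to $\hat y$ and a \emph{short} path from $\hat f^n(\hat x)$ to $\hat f^n(\hat y)$ both exist and are homotopic (rel endpoints, up to a $U'$-carried correction) to paths through $\pi^{-1}(U')$, after which \eqref{eq:rot-alt} gives the bound $r$. Once this is established, the final sentence $\rotmz(f,U)=\rotmz(f)$ follows immediately by dividing by $n_k$ and passing to the limit in the definition \eqref{eq:rot-def}, and the $f^{-j}$ adjustment replacing $U'$ by $U$ costs only an extra additive $N\cdot(2C_{\mc A}+L)$ in $r$.
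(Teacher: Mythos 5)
Your setup matches the paper's: replace $U$ by the fully essential finite union $W=\bigcup_{|i|\le m}f^i(U)$, take (Remark \ref{rem:Q}) a closed disk $Q\subset\hat S$ with $\bd Q\subset\pi^{-1}(W)$ whose deck translates cover $\hat S$, put a lift $\hat x$ of $x$ inside a translate of $Q$, let $T$ be the deck transformation with $\hat f^n(\hat x)\in TQ$, and reduce via (\ref{eq:rot-alt}) to showing that some point of $U$ has a lift $\hat y$ with $d(\hat f^n(\hat y),T\hat y)$ bounded uniformly in $n$ (so that both $\Phi_f^n(x)$ and $\Phi_f^n(y)$ are within a uniform distance of $[T]$). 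The bookkeeping you describe around this reduction (the $f^{-j}$ adjustment from $W$ back to $U$, the $2C_{\mc A}$ error from (\ref{eq:rot-alt})) is fine. But the existence of such a $\hat y$ is precisely the content of the lemma, and your proposal never proves it: you write ``this is where care is needed,'' ``I expect the main obstacle to be exactly this,'' and then assert that ``one re-chooses $\hat y$ at each $n$'' so that short paths from $\hat x$ to $\hat y$ and from $\hat f^n(\hat x)$ to $\hat f^n(\hat y)$ exist and are carried by $\pi^{-1}(W)$ -- with no argument. A priori nothing prevents $\hat f^n$ from sending every lift of every point of $W$ near $\hat x$ far from $TQ$ even though $\hat f^n(\hat x)\in TQ$, so the claim genuinely needs a proof.

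The missing idea in the paper is a dichotomy on how the disk $\hat f^n(Q)$ (the translate of $Q$ containing $\hat x$, normalized to be $Q$ itself) meets $TQ$. Either $\bd\hat f^n(Q)\cap TQ\neq\emptyset$: then some $\hat y_0\in\bd Q\subset\pi^{-1}(W)$ has $\hat f^n(\hat y_0)\in TQ$, and replacing $\hat y_0$ by $\hat y=\hat f^i(\hat y_0)$ with $|i|\le m$ gives a lift of a point of $U$ with $d(\hat f^n(\hat y),T\hat y)\le 2mL+\diam(Q)$. Or else, since $\hat f^n(Q)$ is a disk containing the point $\hat f^n(\hat x)$ of the connected set $TQ$ and its boundary misses $TQ$, one has $TQ\subset\hat f^n(Q)$; here the key observation is that $\pi(TQ)=S$, so $TQ$ contains a lift $w$ of a point of $f^n(U)$, and $\hat y=\hat f^{-n}(w)$ lies in $Q\cap\pi^{-1}(U)$ with $d(\hat f^n(\hat y),T\hat y)\le\diam(Q)$. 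Your text circles around pieces of both cases (you note $\hat f^n(T_0Q)\cap T_1Q\neq\emptyset$, and you guess that ``$T\hat y=\hat f^n(\hat y')$ for $\hat y'$ over a point of $U'$'' can be arranged, which is essentially case two), but without the case split and without invoking $\pi(TQ)=S$ in the containment case, the shadowing point is never produced, so the proof is incomplete at its central step.
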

\begin{proof}

Reducing $U$ if necessary, we may assume that $U$ is a small topological disk. Since $z\in \fess(f)$, there exists $m$ such that $W = \bigcup_{i=-m}^m f^i(U)$ is fully essential. This implies (see Remark \ref{rem:Q}) that there is a loop $\gamma \subset \hat{S}$ bounding a closed topological disk $Q$ such that $\pi(\gamma)\subset W$ and $\pi(Q) = S$. 
Let $\hat f$ be the natural lift of $f$. Since the natural lift is a bounded distance away from the identity, there exists a constant $L>0$ such that $d(\hat{f}(w), w) < L$ and $d(\hat{f}^{-1}(w), w) < L$ for all $w\in \hat S$.

Given any $x\in S$ and $n\in \N$, choose $\hat{x}\in \pi^{-1}(Q)$ and let $T\in \deck(\pi)$ be such that $\hat{f}^n(x) \in TQ$. Since $\hat{f}^n(Q)\cap TQ\neq \emptyset$, there are two possibilities: either $\bd \hat{f}^n(Q) \cap TQ\neq \emptyset$ or $\hat{f}^n(Q) \supset TQ$. We claim that in both cases there exists $\hat{y}\in \pi^{-1}(U)$ such that $d(\hat{f}^n(\hat y), T\hat y) < 2mL + \diam(Q)$. 
Assume first that $\bd \hat f^n(Q)\cap TQ\neq \emptyset$. Then there is $\hat{y}_0\in \bd Q$ such that $\hat{f}^n(\hat y_0)\in TQ$, and since $\bd Q\subset \pi^{-1}(W)$ we have that $\hat y := \hat f^{i}(\hat y_0) \in \pi^{-1}(U)$ for some $i$ with $|i|\leq m$. Since $\hat f^{n}(\hat y_0) \in TQ$, it follows that $d(\hat{f}^{n-i}(\hat y), T\hat y_0) \leq \diam(TQ) = \diam(Q)$, and since $|i|\leq m$ it follows that $d(\hat{f}^n(\hat y), T\hat y_0) < mL + \diam(Q)$. Moreover, for a similar reason $d(T\hat y_0, T\hat y) = d(\hat y_0, \hat y) \leq mL$, so we conclude that $d(\hat{f}^n(\hat y), T\hat y) \leq 2mL+\diam(Q)$ as claimed.

Now assume that $\hat{f}^n(Q)\supset TQ$. Since $\pi(TQ) = S$ we know that there is some $w\in \pi^{-1}(f^n(U)) \cap TQ \subset \hat{f}^n(Q)$, so $\hat{y} = \hat{f}^{-n}(w)$ is a point of $\pi^{-1}(U)\cap Q$ such that $\hat{f}^n(\hat y)\in TQ$. Thus $d(\hat{f}^n(\hat y), T\hat y) \leq \diam(Q) \leq 2mL+\diam(Q)$ as claimed.

Hence in any case there exists $\hat{y}\in \pi^{-1}(\hat U)$ such that $d(\hat{f}^n(\hat y), T\hat y) < 2mL + \diam(Q)$. To complete the proof, let $y=\pi(\hat y)\in U$ and note that the arc $\mc{I}_y^n$ lifts to an arc $\hat {\mc{I}}_y^n$ joining $\hat{y}$ to $\hat{f}^n(\hat{y})$. If $\beta_y$ denotes the projection of the geodesic segment joining $\hat f^n(\hat y)$ to $T\hat y$, we see that $[\mc{I}_y^n * \beta_y] = [T]$ and the length of $\beta_y$ is at most $\diam(Q) + 2mL$. Similarly if $\beta_x$ is the projection of the geodesic segment joining $\hat{f}^n(\hat{x})$ to $T\hat{x}$ we have $[\mc{I}_x^n * \beta_x] = [T]$, and since $\hat{x}\in Q$ and $\hat{f}^n(\hat x)\in TQ$ the length of $\beta_x$ is at most $\diam(Q)$. By (\ref{eq:rot-alt}) we have 
$$\norm{[T]-\Phi_f^n(y)} = \norm{[\mc{I}_y^n*\beta_y]-\Phi_f^n(y)}\leq 2C_{\mc A} + \length(\beta_y)\leq 2C_{\mc A}+\diam(Q) + 2mL,$$
and a similar property holds with $x$ in place of $y$, so setting $r = 2(2C_{\mc A} + \diam(Q) + 2mL)$ we have 
$$\norm{\Phi_f^n(y) - \Phi_f^n(x)} < r.$$
Since the choice of $r$ does not depend on $x$, this completes the proof.
\end{proof}

\begin{lemma}\label{lem:diffusion} If $U\subset S$ is a topological disk containing $x\in \fess(f)$, then $$\liminf_{n\to \infty} \diamup(f^n(U))/n \geq \diam(\rotmz(f)).$$
\end{lemma}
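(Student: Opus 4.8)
The goal is to show that for a topological disk $U$ containing a fully essential point $x$, the covering diameter of $f^n(U)$ grows at least linearly with slope $\diam(\rotmz(f))$ (the diameter measured in the stable norm on $H_1(S,\R)$). The natural strategy is to exploit Lemma~\ref{lem:localrot-uniform}: since $x\in\fess(f)$, for any neighborhood the ``local'' rotation set equals $\rotmz(f)$, and more precisely the displacement cocycle $\Phi_f^n$ over points of $U$ uniformly shadows (up to an additive constant $r$ independent of $n$) the displacement cocycle over all of $S$. The idea is then: pick two rotation vectors $v,w\in\rotmz(f)$ realizing (nearly) the diameter, realize them by orbit segments starting in $S$, use Lemma~\ref{lem:localrot-uniform} to find orbit segments starting in $U$ with nearly the same homological displacement, and then translate a difference of homological displacements into a lower bound on the covering diameter of $f^n(U)$ via the stable-norm inequality $\norm{[\gamma]}\le\length(\gamma)$.

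\textbf{Key steps.} First I would fix $\epsilon>0$ and choose $v,w\in\rotmz(f)$ with $\norm{v-w}>\diam(\rotmz(f))-\epsilon$. By definition of $\rotmz$ there are points $x_k,x_k'\in S$ and $n_k\to\infty$ with $\Phi_f^{n_k}(x_k)/n_k\to v$ and $\Phi_f^{n_k}(x_k')/n_k\to w$ (passing to a common subsequence of times, which is permissible since for large $n$ the sets $\{\Phi_f^n(y)/n:y\in S\}$ are close to $\rotmz(f)$; alternatively just take two possibly different sequences of times and handle them separately, as only the $\liminf$ matters). Second, apply Lemma~\ref{lem:localrot-uniform} to $U$: there is $r>0$ (independent of $n$) and points $y_n,y_n'\in U$ with $\norm{\Phi_f^n(y_n)-\Phi_f^n(x_n)}<r$ and likewise for the primed sequence. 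Hence $\norm{\Phi_f^{n_k}(y_{n_k})-\Phi_f^{n_k}(y_{n_k}')}\ge\norm{\Phi_f^{n_k}(x_{n_k})-\Phi_f^{n_k}(x_{n_k}')}-2r$, which for large $k$ is at least $n_k(\norm{v-w}-\epsilon)-2r$. Third — the crucial translation step — I would show that for any two points $y,y'\in U$, the covering diameter $\diamup(f^n(U))$ controls $\norm{\Phi_f^n(y)-\Phi_f^n(y')}$ up to an additive constant: recall $\Phi_f^n(y)=[\gamma_y*\mathcal{I}_y^n*\gamma_{f^n(y)}^{-1}]$, and since $U$ (hence $f^n(U)$) is arcwise connected one can join $f^n(y)$ to $f^n(y')$ inside $f^n(U)$ by an arc homotopic (rel endpoints, in $S$) to an arc of length $\le\diamup(f^n(U))$, and join $y$ to $y'$ inside $U$ by an arc of length bounded by $\diamup(U)$ (a constant). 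Concatenating $\gamma_y$, $\mathcal{I}_y^n$, the short arc in $f^n(U)$, $(\mathcal{I}_{y'}^n)^{-1}$, the short arc in $U$, and $\gamma_y^{-1}$ gives a loop whose homology class is $\Phi_f^n(y)-\Phi_f^n(y')$, so by the stable-norm inequality $\norm{\Phi_f^n(y)-\Phi_f^n(y')}\le\diamup(f^n(U))+\diamup(U)+2C_{\mathcal A}$. Combining: $\diamup(f^{n_k}(U))\ge n_k(\norm{v-w}-\epsilon)-2r-\diamup(U)-2C_{\mathcal A}$, so dividing by $n_k$ and letting $k\to\infty$ gives $\liminf \diamup(f^n(U))/n\ge\diam(\rotmz(f))-2\epsilon$; since $\epsilon$ was arbitrary, we are done.

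\textbf{Main obstacle.} The step I expect to require the most care is the translation step, i.e.\ correctly producing the loop whose class is $\Phi_f^n(y)-\Phi_f^n(y')$ while keeping the length of all the non-isotopy pieces under control. One must verify that $\mathcal{I}_y^n$ can be replaced, up to homotopy with fixed endpoints in $S$, by (the lift-projection of) a geodesic joining $y$ to $\hat f^n$ of a lift — which is exactly the remark following the definition of $\Phi_f$ — and that the arc realizing the covering distance inside $f^n(U)$ is genuinely available, which is the definition of $\diamup$; a minor subtlety is that $f^n(U)$ is still a topological disk, so there is a unique homotopy class of arcs joining two of its points inside it, and the covering distance gives the length bound for that class. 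There is also a small bookkeeping issue in aligning the time sequences for $v$ and $w$, but since the statement only asserts a $\liminf$, one may simply treat each of $v,w$ with its own sequence and note that for the $\liminf$ it suffices to have, along \emph{some} sequence $n_k\to\infty$, a lower bound approaching $n_k\diam(\rotmz(f))$; choosing $v,w$ that are limits along a \emph{common} sequence is possible because $\{\Phi_f^{n}(y)/n : y\in S\}$ Hausdorff-converges to $\rotmz(f)$, so one can find, for each large $n$, points whose normalized displacements are within $\epsilon$ of $v$ and of $w$ respectively.
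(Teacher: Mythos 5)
Your outline follows the paper's strategy (use Lemma \ref{lem:localrot-uniform} to transfer extremal rotational behaviour into $U$, then convert a difference of homological displacements into growth of $\diamup(f^n(U))$ via the stable norm), but there are two genuine gaps. First, the quantifier on $n$: the $\liminf$ requires the estimate for \emph{every} sufficiently large $n$, while realizing $v,w\in\rotmz(f)$ through the Misiurewicz--Ziemian definition only produces (possibly sparse, possibly different) subsequences of times. Neither of your fixes closes this: a lower bound along a subsequence controls the $\limsup$, not the $\liminf$; and of the claimed Hausdorff convergence of $\{\Phi_f^n(y)/n:y\in S\}$ to $\rotmz(f)$ only the upper inclusion (eventual containment in a neighborhood of $\rotmz(f)$) is immediate --- the lower inclusion, that every element of $\rotmz(f)$ is approximated at every large time, is not proved in the paper and is not obvious for higher-genus surfaces, where $\rotmz(f)$ need not even be convex. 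The paper sidesteps this: $\diam(\rotmz(f))=\diam(\conv(\rotmz(f)))$ is attained at two extremal points $v_1\neq v_2$ of the convex hull, and by \S\ref{sec:rot-meas} ($\conv(\rotmz(f))=\conv(\roterg(f))$) these are rotation vectors of ergodic measures, hence of points $x_1,x_2$ with \emph{well-defined} rotation vectors, so $\Phi_f^n(x_i)=nv_i+o(n)$ along all $n$; Lemma \ref{lem:localrot-uniform} then gives points of $U$ with comparable displacement at every time $n$. Some such realization step is indispensable.

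Second, the translation step is wrong as written. The loop you concatenate contains both isotopy arcs $\mc I_y^n$ and $(\mc I_{y'}^n)^{-1}$, so its length is of order $n\sup_z d(z,\hat f(z))$; the stable-norm inequality $\snorm{[\gamma]}\leq\length(\gamma)$ applied to that loop cannot yield a bound of the form $\diamup(f^n(U))+\diamup(U)+2C_{\mc A}$ (moreover its class equals $\Phi_f^n(y)-\Phi_f^n(y')$ only up to the junction classes coming from the $\gamma$-arcs, and $\diamup(U)$ may be infinite unless you first shrink $U$, which is legitimate since shrinking does not increase $\diamup(f^n(\cdot))$). The inequality you want is true, but its proof is a covering-space argument, which is exactly what the paper does: lifting $y,y'$ to the same component $\hat U$ of $\pi^{-1}(U)$ and using (\ref{eq:rot-alt}), one finds deck transformations $T,T'$ with $[T]=\Phi_f^n(y)$, $[T']=\Phi_f^n(y')$ and $d(\hat f^n(\hat y),T\hat y)\leq M$, $d(\hat f^n(\hat y'),T'\hat y')\leq M$ with $M=\diamup(S)+2C_{\mc A}$; then the projection of the geodesic from $\hat y'$ to $T'^{-1}T\hat y'$ is a loop of length at most $\diam(\hat f^n(\hat U))+3M$ whose class is exactly $[T]-[T']$, and applying the stable-norm inequality to \emph{this} short loop gives $\norm{\Phi_f^n(y)-\Phi_f^n(y')}\leq\diamup(f^n(U))+3M$. (Equivalently, one can repair your loop by showing that $\mc I_y^n*\beta_1*(\mc I_{y'}^n)^{-1}*\beta_2$, with $\beta_1,\beta_2$ homotopic rel endpoints into $f^n(U)$ and $U$, lifts to a closed curve in $\hat S$ and is therefore null-homologous; but that is again an argument with lifts, absent from your write-up.) The remaining ingredients --- using only the easy direction of the stable-norm inequality and the identification $\diamup(f^n(U))=\diam(\hat f^n(\hat U))$ --- are fine.
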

\begin{proof}

We begin with an observation which follows from (\ref{eq:rot-alt}): if $M=\diamup(S)+2C_{\mc{A}}$, then for every $\hat{x}\in \hat{S}$ and $n\in \N$ there exists $T\in \deck(\pi)$ such that $[T]=\Phi^n_f(x)$ and $d(\hat{f}^n(\hat{x}), T\hat{x})<M$. 

We will assume that $\diamup(U)<M$, by reducing $U$ if necessary. We may further assume that $\rotmz(f)\neq\{0\}$, since otherwise there is nothing to be done. Since $\rotmz(f)$ is compact and bounded, the diameter of $\rotmz(f)$ is realized as the distance of two extremal points $v_1 \neq v_2$ of its convex hull. As explained in Section \ref{sec:rot-meas}, such extremal points are realized by ergodic measures, and in particular by points. Hence there exist $x_1, x_2\in S$ such that $\rho(f,x_i) = v_i$. By Lemma \ref{lem:localrot-uniform}, for each $n\in \N$ we may find $y_1, y_2\in U$ such that $\Phi^n_f(y_i)\in B_r(\Phi^n_f(x_i))$. Fix a connected component $\hat{U}$ of $\pi^{-1}(U)$, and let $\hat{y}_i$ be the lift of $y_i$ in $\hat{U}$. By the observation at the beginning of the proof, there exist deck transformations $T_i$ such that $[T_i]= \Phi^n_f(y_i)$ and $d(\hat{f}^n(\hat{y}_i), T_i\hat{y}_i)\leq M$. 
Thus, 
$$d(T_1^{-1}T_2\hat{y}_2, \hat{y}_1) = d(T_2\hat{y}_2, T_1\hat{y}_1) \leq d(\hat{f}^n(\hat y_2), \hat{f}^n(\hat y_1))+2M\leq \diam(\hat{f}^n(\hat{U}))+2M.$$
Since $d(\hat y_1, \hat y_2) < \diam(\hat U) = \diamup(U) < M$, this means that $$d(T_1^{-1}T_2\hat y_2, \hat y_2) \leq \diam(\hat f^n(\hat U)) + 3M.$$
Let $\eta_n$ be the projection into $S$ of the geodesic segment joining a point $\hat{y}_2$ to $T_2T_1^{-1}\hat{y}_2$ in $\hat{S}$. Then 
$$\diamup(f^n(U)) = \diam(\hat{f}^n(\hat{U})) \geq \length(\eta_n) -3M.$$
In addition,
 $$[\eta_n] = [T_2T_1^{-1}] = [T_2]-[T_1] = \Phi^n_f(y_2)-\Phi^n_f(y_1)\in B_{2r}(\Phi^n_f(x_2) -\Phi^n_f(x_1)).$$ 
Thus 
\begin{equation}\label{eq:diff1}
\lim_{n\to \infty} \snorm{[\eta_n]}/n = \snorm{\rho(f,x_2)-\rho(f,x_1)}=\diam(\rotmz(f))>0.
\end{equation}
Since we are using the stable norm and $\eta_n$ is the shortest rectifiable loop with basepoint $y_2$ in its homology class (and $\snorm{[\eta_n]}\to \infty$),
\begin{equation}\label{eq:diff2}
\lim_{n\to \infty} \snorm{[\eta_n]}/\length(\eta_n)= 1
\end{equation}
(see \cite[Lemma 4.20$\frac{1}{2}\text{bis}_+$]{gromov}, noting that changing the basepoint amounts to adding a uniform constant to the loop length). 
Thus we conclude that 
$$\frac{\diamup(f^n(U))}{n} \geq \frac{\length(\eta_n)-3M}{n} = \frac{\length(\eta_n)}{\snorm{[\eta_n]}}\cdot\frac{\snorm{[\eta_n]}}{n} - \frac{3M}{n}$$
which, using (\ref{eq:diff1}) and (\ref{eq:diff2}), implies our claim.
\end{proof}

\subsection{Proof of Theorem \ref{th:interior-essential}}
For the sake of clarity we restate the theorem here, with some additional detail. Recall that throughout this section $S$ is a closed hyperbolic surface and $f\colon S\to S$ is a homeomorphism homotopic to the identity.

\begin{theorem}\label{th:rational-all}
If $f$ is area-preserving and the rotation set of $f$ is not contained in the union of $g+1$ rational proper subspaces of $H_1(S,\R)$, then $f$ has fully essential dynamics and in addition:
\begin{itemize}
\item[(1)] $\fess(f)=\ess(f)$, which is a fully essential continuum, and $\ine(f)$ is a disjoint union of periodic bounded disks;
\item[(2)] $\fess(f)$ is externally syndetically transitive and sensitive on initial conditions;
\item[(3)] If $U$ is any neighborhood of  $x\in \fess(f)$, then $\rotmz(f,U)=\rotmz(f)$ and 
$$\liminf_{n\to \infty}\diamup(f^n(U))/n \geq \diam(\rotmz(f)) > 0.$$
\end{itemize}
\end{theorem}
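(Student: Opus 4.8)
The plan is to deduce everything from the machinery already assembled. First I would observe that the hypothesis ``$\rotmz(f)$ is not contained in a union of $g+1$ rational proper subspaces of $H_1(S,\R)$'' is exactly the negation of the alternative in Theorem \ref{th:rational}, so that theorem immediately gives that $f$ has fully essential dynamics: none of cases (1), (2), (3) of Theorem \ref{th:essine} can hold. (One should note here that $\cV(U)$ in Lemma \ref{lem:desvio-hyp}, and the subspaces produced by Lemmas \ref{lem:desvio-fix} and \ref{lem:rot0}, are always rational, being generated by integer homology classes, so Theorem \ref{th:rational} indeed applies with the word ``rational'' in place; this is already recorded in the statement of Theorem \ref{th:rational}.) Thus we are in case (4) of Theorem \ref{th:essine}.

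From case (4) of Theorem \ref{th:essine}, items (1) and the syndetic transitivity part of (2) follow directly: $\fess(f)=\ess(f)$ is fully essential and connected, hence a continuum (it is closed and bounded in the compact surface $S$), $\ine(f)$ is a disjoint union of periodic open topological disks, and by Theorem \ref{th:main-disk} each of these disks is homotopically bounded — so ``bounded disks'' as claimed — and $\fess(f)$ is externally syndetically transitive by Proposition \ref{pro:trans}(3). For item (3), the equality $\rotmz(f,U)=\rotmz(f)$ for every neighborhood $U$ of a point $x\in\fess(f)$ is exactly the conclusion of Lemma \ref{lem:localrot-uniform}, and the growth estimate $\liminf_{n\to\infty}\diamup(f^n(U))/n\geq \diam(\rotmz(f))$ is exactly Lemma \ref{lem:diffusion}. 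Finally, $\diam(\rotmz(f))>0$ because a single point (or more generally any set contained in a single rational proper subspace, e.g.\ $\{0\}$) trivially lies in a union of $g+1$ rational proper subspaces, contradicting the hypothesis; so $\rotmz(f)$ has at least two points, hence positive diameter in the stable norm.

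It remains to derive external sensitivity on initial conditions, the only part of item (2) not yet handed to us verbatim. Here I would argue: let $c=\diam(\rotmz(f))>0$. Given $x\in\fess(f)$ and any neighborhood $U$ of $x$, shrink $U$ to a topological disk; by Lemma \ref{lem:diffusion}, $\diamup(f^n(U))/n$ has $\liminf\geq c$, so in particular $\diamup(f^n(U))\to\infty$, and since the covering diameter dominates (up to a fixed multiplicative constant coming from the compactness of $S$ and Proposition \ref{pro:diam-sub}/the standard comparison between $d$ on $\hat S$ and its projection) the ordinary diameter $\diam(f^n(U))$ in $S$, one gets $\diam(f^n(U))\geq c'$ for some uniform $c'>0$ and all large $n$ — in fact $\diam(f^n(U))$ is eventually as large as the $S$-diameter, so one may even take $c'$ to be (a fixed fraction of) the diameter of $S$. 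This gives sensitivity with the uniform constant $c'$ independent of $x$, as required. The main (and essentially only) obstacle is bookkeeping: making sure that ``covering diameter tends to infinity'' is correctly converted into a lower bound on the genuine diameter in the compact surface $S$ — but this is routine, since in a compact surface a set of large covering diameter must in particular have diameter bounded below (once the covering diameter exceeds, say, twice $\diam(S)$, the set cannot be contained in any embedded disk of controlled size, forcing its $S$-diameter to be comparable to $\diam(S)$). Everything else is an immediate citation of the lemmas and theorems proved earlier in the section.
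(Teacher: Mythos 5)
Your route is the paper's route almost verbatim: Theorem \ref{th:rational} gives fully essential dynamics, case (4) of Theorem \ref{th:essine} gives item (1) and the syndetic transitivity, Lemmas \ref{lem:localrot-uniform} and \ref{lem:diffusion} give item (3), and sensitivity is extracted from item (3) by comparing the covering diameter of $f^n(U)$ with its diameter in $S$ (the paper phrases this as: $\diam_S(V)=\diamup(V)$ whenever $V$ is a sufficiently small topological disk, so once $\diamup(f^n(U))$ exceeds that scale the $S$-diameter is bounded below by a uniform constant). Your conversion argument is correct in substance, though the parenthetical claim that one may take $c'$ to be a fixed fraction of $\diam(S)$ overreaches: a connected set of huge covering diameter can be squeezed into a thin neighborhood of a short essential geodesic, so all one gets (and all one needs) is a lower bound at the injectivity-radius scale.

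There is one step whose justification, as written, is wrong: your argument for $\diam(\rotmz(f))>0$. It is not true that ``a single point trivially lies in a union of $g+1$ rational proper subspaces'' -- a vector whose coordinates satisfy no nontrivial rational linear relation lies in no rational proper subspace at all, so the hypothesis by itself does not exclude a singleton rotation set, and a singleton has zero diameter. What rules this out is the fact, recorded among the basic properties in Section \ref{sec:rot}, that $0\in\rotmz(f)$ (on a hyperbolic surface a map homotopic to the identity has a contractible fixed point); since $\{0\}$ is contained in every rational proper subspace, the hypothesis forces $\rotmz(f)$ to contain some nonzero vector in addition to $0$, whence $\diam(\rotmz(f))>0$. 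This is exactly the paper's argument, and inserting it repairs your proof with no other changes.
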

\begin{proof}
That $f$ has fully essential dynamics follows from Theorem \ref{th:rational}. Part (1) and the first claim of part (2) follow from Theorem \ref{th:essine}. Part (3) follows from Lemma \ref{lem:localrot-uniform} and Lemma \ref{lem:diffusion}. Note that $\diam(\rotmz(f))>0$ because, $S$ being hyperbolic, $\rotmz(f)$ contains $0$ and by our assumptions it contains some nonzero vector as well. It remains to prove the sensitive dependence on initial conditions, but this follows from part (3), noting that diameter of $U$ in $S$ coincides with $\diamup(U)$ whenever $U$ is a sufficiently small topological disk.
\end{proof}

\subsection{Periodic points and measures: proof of Theorem \ref{th:reali-ess}}\label{sec:reali-ess}

For a fixed point $f(z)=z$, the rotation vector $\rho(f,z)$ exists and is an integer vector, meaning that $\rho(f,z)\in H_1(S,\Z)\subset H_1(S,\R)$.  If $f^n(z) = z$ is a periodic point, then $\rho(f,z)$ also exists and is rational, \ie $\rho(f,z)\in H_1(S,\Q)\subset H_1(S,\R)$.  

A periodic point $z$ of $f$ of is of type $(n,T)$ for $n\in \N$ and $T\in \deck(\pi)$ if some $\hat{z}\in \pi^{-1}(z)$ satisfies $\hat{f}^n(\hat{z}) = T\hat{z}$. The set of all periodic points of type $(n,T)$ is a Nielsen class for $f^n$. Two points in the same Nielsen class are said to be Nielsen equivalent.
It is easy to see from the definitions that if $z$ is a periodic point of type $(n,T)$, then $\rho(f,z)=[T]/n$. 
We omit the proof of the following lemma, which is equally straightforward:
\begin{lemma}\label{lem:ine-rational}
If $U\subset S$ is a homotopically bounded invariant topological disk, then $\rotmz(f,U)$ consists of a single element of $H_1(S,\Z)$.
\end{lemma}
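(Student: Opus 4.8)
The plan is to work in the universal cover and exploit the fact that the natural lift $\hat f$ commutes with all deck transformations. First I would fix a connected component $\hat U$ of $\pi^{-1}(U)$. Since $U$ is a topological disk it is inessential, so the hypothesis that $U$ is homotopically bounded means exactly that $\diam(\hat U)=\diamup(U)<\infty$. As $\pi|_{\hat U}$ is a homeomorphism onto $U$ and $f(U)=U$, the image $\hat f(\hat U)$ is again a connected component of $\pi^{-1}(U)$, so $\hat f(\hat U)=T\hat U$ for a unique $T\in\deck(\pi)$ (uniqueness holds because $U$ being simply connected forces the stabilizer of $\hat U$ in $\deck(\pi)$ to be trivial). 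Since $\hat f$ commutes with deck transformations, a one-line induction then gives $\hat f^n(\hat U)=T^n\hat U$ for every $n\in\Z$.

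The next step is a uniform bound on the homological displacement over $U$. Given $x\in U$, pick its lift $\hat x\in\hat U$; then $\hat f^n(\hat x)$ and $T^n\hat x$ both lie in $T^n\hat U$, whence $d(\hat f^n(\hat x),T^n\hat x)\le\diam(T^n\hat U)=\diam(\hat U)=\diamup(U)$ since $T^n$ is an isometry. Letting $\beta$ be the projection of a geodesic segment in $\hat S$ from $\hat f^n(\hat x)$ to $T^n\hat x$, the loop $\mc{I}^n_x*\beta$ lifts, with basepoint $\hat x$, to an arc ending at $T^n\hat x$, so $[\mc{I}^n_x*\beta]=[T^n]=n[T]$ (recall that $T\mapsto[T]$ is a homomorphism). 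Formula (\ref{eq:rot-alt}) then yields
$$\norm{\Phi_f^n(x)-n[T]}=\norm{[\mc{I}^n_x*\beta]-\Phi_f^n(x)}\le 2C_{\mc A}+\length(\beta)\le 2C_{\mc A}+\diamup(U)=:M',$$
a bound independent of $x\in U$ and of $n\in\N$.

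To finish, observe that for any $x_k\in U$ and $n_k\to\infty$ one has $\norm{\tfrac1{n_k}\Phi_f^{n_k}(x_k)-[T]}\le M'/n_k\to0$, so every element of $\rotmz(f,U)$ equals $[T]$; taking a fixed $x\in U$ and $n_k=k$ shows conversely that $[T]\in\rotmz(f,U)$. Hence $\rotmz(f,U)=\{[T]\}$, and since $T$ is a deck transformation we have $[T]\in H_1(S,\Z)$, which is precisely the assertion.

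I do not expect a genuine obstacle here: the only points needing a little care are the identification $\hat f^n(\hat U)=T^n\hat U$ (which rests on the natural lift commuting with $\deck(\pi)$) and the bookkeeping with the base-point arcs in the definition of $\Phi_f$ when invoking (\ref{eq:rot-alt}); everything else is routine.
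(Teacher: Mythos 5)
Your proof is correct, and it is exactly the ``straightforward'' argument the paper has in mind: the paper omits the proof of Lemma \ref{lem:ine-rational}, noting it is analogous to the observation that a periodic point of type $(n,T)$ has rotation vector $[T]/n$, and your argument (lifting $U$, using $\hat f(\hat U)=T\hat U$ with the natural lift commuting with $\deck(\pi)$, and then the uniform bound $\norm{\Phi_f^n(x)-n[T]}\leq 2C_{\mc A}+\diamup(U)$ via (\ref{eq:rot-alt})) is precisely that intended reasoning. All the delicate points are handled correctly: triviality of the stabilizer of $\hat U$, the identity $[T^n]=n[T]$ coming from the abelianization homomorphism, and the fact that $\diamup(U)=\diam(\hat U)$ since a disk is inessential.
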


\begin{proof}[Proof of Theorem \ref{th:reali-ess}]
For Part (2) note that from the observations from Section \ref{sec:rot-meas}, for an ergodic measure $\mu$ we have that $\mu$-almost every point $z$ is recurrent and has a well-defined rotation vector $\rho(f,z)$ equal to $\rotmeas(f,\mu)$, so if the support of $\mu$ intersects $\ine(f)$ (which is a union of periodic disks), Lemma \ref{lem:ine-rational} implies that $\rotmeas(f^n,\mu)\in H_1(S,\Z)$ for some $n$, so $\rotmeas(f,\mu)=\rotmeas(f^n,\mu)/n\in H_1(S,\Q)$.

 For part (1), it suffices to show that for any $T\in \deck(\pi)$ such that there is a periodic point of type $(n,T)$, there exists a periodic point of type $(n,T)$ in $\fess(f)$.
Suppose for a contradiction that there is at least one periodic point of type $(n,T)$, but no such point lies in $\fess(f)$. Then $E_T=\pi(\fix(T^{-1}\hat{f}^n))$ is disjoint from $\ess(f)$, hence it is contained in $\ine(f)$. Let $g=f^n$ and $\hat{g}=\hat{f}^n$. Since $\ine(f)$ is $g$-invariant and $E_T\subset \fix(g)$, each $z\in E_T$ belongs to some $g$-invariant open topological disk $D$, which is the connected component of $\ine(f)$ containing $z$. Since $E_T$ is compact, there exist finitely many invariant connected components $D_1,\dots, D_m$ of $\ine(f)$ such that $E_T\subset D_1\cup \cdots \cup D_m$ and each $D_i$ intersects $E_T$. For each $i$, let $\hat{D}_i$ be a lift of $D_i$ intersecting $\fix(T^{-1}\hat{g})$, so $G(\hat{D}_i)=\hat{D}_i$ where $G=T^{-1}\hat{g}$.  Then $\bd \hat{D}_i\subset \pi^{-1}(\ess(f))$, so by our assumption there are no fixed points of $G$ in $\bd \hat{D}_i$. Since $g|_{D_i}$ is area-preserving and $\pi|_{\hat{D}_i}$ is injective, $G|_{\hat{D}_i}$ is nonwandering. Moreover, $\hat{D}_i$ is bounded and has no fixed points of $G$ in its boundary, so by \cite[Lemma 4.2]{koro} the fixed point index of $G$ in $\hat{D}_i$ is $1$. This implies that the fixed point index of $g$ in $D_i$ is $1$ for each $i\in \{1,\dots,m\}$, which in turn means that the Nielsen class $E_T$ of $g$ has index $m>0$. 

But since $g$ is isotopic to the identity, by the Lefschetz-Nielsen theory the index of the Nielsen class $E_T$ is zero except when $T=\id$, in which case it is the Euler characteristic $\chi(S)\leq 0$ (see for instance \cite[Chapter 4]{nielsen-book}). In either case, we arrive to a contradiction, completing the proof of the first claim.
\end{proof}

\section{A final example}\label{sec:example}

Given a nonwandering homeomorphism $f\colon S\to S$ such that $\fix(f^n)$ is inessential for all $n\in \N$, it follows from Theorem \ref{th:main-disk} that for every periodic open topological disk $U$ is such that $\diamup(U)$ is bounded by a constant depending on the period of $U$. Let us describe an example to show that in general one may not expect this bound to be independent of the period of $U$. To obtain such an example, it suffices to prove the following:
\begin{claim*}
If $\alpha$ is an irrational number and $M>0$ is given, there exists an area-preserving diffeomorphism $f\colon \T^1\times [0,1]\to \T^1\times [0,1]$ arbitrarily $C^1$-close to the rigid rotation $R_\alpha\colon (x,y)\mapsto (x+\alpha, y)$ such that $f$ coincides with $R_\alpha$ on the two boundary circles, the fixed point set of $f^n$ is inessential for each $n\in \N$, and there is an $f$-periodic open topological disk $U$ in the interior of the annulus such that $\diamup(U)>M$.
\end{claim*}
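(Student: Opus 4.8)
The plan is to reduce the statement to producing, for a given $M$ and a given $\varepsilon>0$, a \emph{single} area-preserving diffeomorphism $f$ which is $\varepsilon$-close to $R_\alpha$ in the $C^1$ topology and has the stated features; the period of the periodic disk will be allowed to depend on $M$. Write $S=\T^1\times[0,1]$, with universal covering $\pi\colon\hat S=\R\times[0,1]\to S$ and deck group generated by $(x,y)\mapsto(x+1,y)$. For an open topological disk $U\subset S$ one has $\diamup(U)=\diam(\hat U)$, where $\hat U$ is a connected component of $\pi^{-1}(U)$; so a thin spiral tube that winds $N$ times around the annulus without joining a boundary circle to itself is an inessential disk with $\diamup(U)\ge N-1$. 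Hence it is enough to exhibit, for $N>M+1$, an $f$ that is $\varepsilon$-$C^1$-close to $R_\alpha$, equal to $R_\alpha$ on $\partial S$, with $\fix(f^n)$ inessential for all $n$, and with an $f$-periodic disk $U$ whose lift spans $N$ fundamental domains.

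First I would fix the geometric model for $U$. Choose a convergent $p/q$ of $\alpha$ with $p>M+1$ (possible since the numerators of the convergents tend to infinity), and let $U$ be a tubular neighbourhood of width $w$ of an embedded arc in $\T^1\times(1/3,2/3)$ that winds $p$ times around the annulus while rising monotonically in the $y$-coordinate. A three-distance / equidistribution estimate shows that if $w$ is small enough then $U,R_\alpha(U),\dots,R_\alpha^{q-1}(U)$ are pairwise disjoint; lifting $U$ to a component $\hat U\subset\hat S$, its core arc lifts to an arc of horizontal extent $p$, so $\diamup(U)=\diam(\hat U)\ge p-1>M$.

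Next comes the construction of $f$, which is of Anosov--Katok type, localized near the $R_\alpha$-orbit of $U$. Take $f=R_\alpha\circ g$ with $g$ area-preserving, $C^1$-close to the identity, and equal to the identity outside a thin neighbourhood $\mathcal W_0$ of $U$; the last property gives $f=R_\alpha$ near $\partial S$ automatically, and since $f$ differs from $R_\alpha$ only on $\mathcal W_0$ one has $\fix(f^n)\subset\bigcup_{i=0}^{n-1}R_\alpha^{-i}(\mathcal W_0)$ for every $n$, whose right-hand side is a disjoint union of topological disks (hence inessential) as long as $n\le q$. Using that $f^n=R_{n\alpha}\circ G_n$ with $G_n=\big(R_\alpha^{-(n-1)}gR_\alpha^{n-1}\big)\circ\cdots\circ\big(R_\alpha^{-1}gR_\alpha\big)\circ g$, making $U$ a periodic disk amounts to choosing $g$ so that, after the appropriate number of steps, $G_n$ cancels the residual rotation $R_\alpha^{q}=R_{q\alpha-p}$ along $U$. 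Since $\alpha$ may be badly approximable, this residual need not be small compared with $w$ and the prescribed $C^1$ budget, so one iterates: one passes to a period $n_0=q\,q_2\cdots q_k$, where $q_2,\dots,q_k$ are suitably large denominators of the successive residuals, at each stage choosing a perturbation supported in $\mathcal W_0$ and small enough that the accumulated correction is realizable by a $C^1$-small area-preserving map (produced by an area-preserving Schoenflies argument, as in the proof of Theorem \ref{th:main-gen}; cf.\ \cite{beguin-schonflies}). After boundedly many stages one obtains $f^{n_0}(U)=U$, so $U$ is an $f$-periodic open topological disk with $\diamup(U)>M$, and $f$ is $\varepsilon$-$C^1$-close to $R_\alpha$ and equal to $R_\alpha$ on $\partial S$.

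The main obstacle, and the part that absorbs most of the work, is showing that $\fix(f^m)$ is inessential for \emph{every} $m$, not merely for $m$ up to the period. For $m$ larger than the period, the translates $R_\alpha^{-i}(\mathcal W_0)$ start to overlap and their union is essential, and a priori the residual rotations could organize $\fix(f^m)$ along an essential loop. I would control this by keeping $\mathcal W_0$ confined to a thin spiral tube and by taking the perturbation much smaller than the relevant Diophantine scales of $\alpha$, so that $f^m$ has no fixed point at all unless $m$ is a resonant iterate (one for which $\|m\alpha\|_{\T}$ is abnormally small), and so that for such $m$ the set $\fix(f^m)$ remains confined to finitely many disjoint topological disks which do not chain around the annulus. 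This Diophantine bookkeeping is the crux; the construction of the spiral disk itself is essentially explicit.
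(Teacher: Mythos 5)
Your proposal takes a genuinely different route from the paper (a localized perturbation of the rigid rotation around a prescribed spiral tube), but as written it has two genuine gaps, and they are exactly the two points on which the whole claim rests. First, the step that makes the spiral disk $U$ \emph{exactly} periodic under a $C^1$-small area-preserving map is not actually constructed. Your own estimate shows the obstruction: a single correction must displace $f^q(U)\approx R_{q\alpha-p}(U)$ back onto $U$ inside a tube of width $w\lesssim 1/q$, so its $C^1$ size is of order $|q\alpha-p|/w$, which is bounded away from $0$ for badly approximable $\alpha$. The proposed multi-stage ``Anosov--Katok'' remedy is only named: all your corrections are pieces of one autonomous map supported in the same thin region $\mathcal{W}_0$, hence they act at \emph{every} passage of the orbit through $\mathcal{W}_0$, their $C^1$ costs do not simply add, and no mechanism is given that makes the accumulated effect cancel the residual rotation exactly while keeping the total perturbation below the prescribed $\varepsilon$. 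Second, you explicitly defer the verification that $\fix(f^m)$ is inessential for \emph{every} $m$ (``the Diophantine bookkeeping is the crux''), but this is precisely the hard part of the statement. Note that resonant iterates necessarily do have fixed points: if $f^{n_0}(U)=U$ then $f^{n_0}|_U$ is an area-preserving homeomorphism of a finite-area open disk, hence has a fixed point; and for $m$ beyond the period the union $\bigcup_{i=0}^{m-1}f^{-i}(\mathcal{W}_0)$ containing $\fix(f^m)$ is essential, so nothing in your argument excludes a fixed-point set that chains around the annulus. Asserting that non-resonant iterates are fixed-point free and that resonant ones have ``confined'' fixed sets is a statement of the goal, not a proof.

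For comparison, the paper avoids both difficulties by not perturbing $R_\alpha$ at all on the middle annulus. It takes an area-preserving flow $(\phi_t)$ with finitely many singularities and no essential closed orbit whose lift preserves the unit square, conjugates by the fixed diffeomorphism $H(x,y)=(x/q,y)+((M+1)\sin(2\pi y),0)$, and sets $f_t=R_{p/q}H\phi_tH^{-1}$ with $p/q$ close to $\alpha$. Then the long thin disk $U=\pi(H((0,1)^2))$ is periodic by construction ($\hat f_t^{\,q}=T^{p}\hat f_{qt}$), $C^1$-closeness to $R_{p/q}$ (hence to $R_\alpha$) follows simply by taking $t$ small since $H$ is fixed, and the structure of the flow (finitely many singularities, no essential closed orbit) controls $\fix(f^m)$ for \emph{all} $m$ at once. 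The interpolation with $R_\alpha$ on the boundary circles is then performed on adjacent annuli via the gluing statement of Remark \ref{rem:gluing} (adapted from \cite{centralizer}), followed by rescaling. To salvage your approach you would have to supply complete proofs of the exact periodicity and of the inessentiality for all iterates; as it stands, the proposal is a plausible plan rather than a proof.
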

Indeed, one may obtain an example with arbitrarily large periodic disks by fixing an irrational number $\alpha$ and defining on each annulus $A_n = \T^1\times [1/n, 1/(n+1)]$ a map as in the claim above, using $M=n$ and choosing the map $1/n$-close to the identity. This defines a homeomorphism on $\T^1\times (0,1]$ which extends to the circle $\T^1\times \{0\}$ as the rigid rotation by $\alpha$, and which has the required properties (to get an example on any surface we may do the above construction on some essential annulus and extend outside the annulus to a map with finitely many fixed points of each period)

Before proving the claim, we state a lemma which can be obtained by a straightforward adaptation of the proof of Proposition 2.2 from \cite{centralizer}.  

\begin{lemma} For every $\epsilon>0$ there exists $\delta>0$ such that, if $g\colon \T^1 \to \T^1$ is a diffeomorphism of the circle which is $\delta$-close to the identity in the $C^1$-topology,  there exists an area preserving diffeomorphism $f\colon \T^1\times [0,1]\to \T^1\times [0,1]$ which is $\epsilon$-close to the identity in the $C^1$-topology satisfying $f(x,1)= (g(x),1)$ and $f(x, 0)= (x,0)$ for all $x\in \T^1$.
\end{lemma}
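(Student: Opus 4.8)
The plan is to obtain $f$ as a composition $f=h\circ\psi$ of two diffeomorphisms of $A:=\T^1\times[0,1]$ that are both $C^1$-close to the identity: a map $h$ that already realizes the prescribed boundary behaviour but only \emph{almost} preserves area, followed by a corrector $\psi$, produced by Moser's deformation argument, which fixes $\bd A$ pointwise and repairs the area defect of $h$.

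First I would set up notation. Write $g=\mathrm{id}+u$ with $u$ a smooth $1$-periodic function satisfying $\|u\|_{C^1}<\delta$; shrinking $\delta$ so that $\|u'\|_\infty<1$, the maps $g_t:=\mathrm{id}+tu$ ($t\in[0,1]$) form an isotopy of circle diffeomorphisms from $\mathrm{id}$ to $g$ with $\|g_t-\mathrm{id}\|_{C^1}<\delta$ throughout. Fixing once and for all a smooth nondecreasing $\rho\colon[0,1]\to[0,1]$ with $\rho\equiv0$ near $0$ and $\rho\equiv1$ near $1$, I would set $h(x,y)=(g_{\rho(y)}(x),y)$. Since the $x$-derivative of the first coordinate equals $1+\rho(y)u'(x)>0$, $h$ is a diffeomorphism of $A$; moreover $h(x,0)=(x,0)$, $h(x,1)=(g(x),1)$, and $\|h-\mathrm{id}\|_{C^1}\le C_1\delta$ with $C_1$ depending only on $\rho$. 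A direct computation gives $h^*(dx\wedge dy)=J\,dx\wedge dy$ with $J(x,y)=1+\rho(y)u'(x)$, so $J>0$, $\|J-1\|_{C^1}\le C_2\delta$, and $\int_A J=\operatorname{Area}(A)$ because $\int_{\T^1}u'\,dx=0$.

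Next I would correct the area. The $2$-form $\beta:=(J-1)\,dx\wedge dy$ has $\int_A\beta=0$, so it admits a primitive $\eta$ vanishing on $\bd A$: the obstruction to such a relative primitive is the class of $\beta$ in $H^2(A,\bd A;\R)\cong\R$, which equals $\int_A\beta$. One can choose $\eta$ to depend boundedly (indeed linearly) on $\beta$, so $\|\eta\|_{C^1}\le C_3\delta$. Moser's argument applied to the path of volume forms $\omega_t=dx\wedge dy+t\beta$ (each nondegenerate since $\delta$ is small), with the time-dependent vector field $Z_t$ defined by $\iota_{Z_t}\omega_t=-\eta$, yields $\psi$ as the time-$1$ flow of $(Z_t)$: since $Z_t$ vanishes on $\bd A$ one gets $\psi|_{\bd A}=\mathrm{id}$, while $\psi^*(J\,dx\wedge dy)=dx\wedge dy$ and $\|\psi-\mathrm{id}\|_{C^1}\le C_4\delta$. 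Then $f:=h\circ\psi$ is an area-preserving diffeomorphism with $f(x,0)=(x,0)$ and $f(x,1)=(g(x),1)$, and $\|f-\mathrm{id}\|_{C^1}\le C_5\delta$ by the standard estimate for compositions of $C^1$-small maps; it suffices to pick $\delta<\epsilon/C_5$.

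The main obstacle is the construction of the corrector $\psi$: one needs a single map that simultaneously normalizes the \emph{non-constant} Jacobian $J$, is the identity on \emph{all} of $\bd A$ even though $h^*(dx\wedge dy)\neq dx\wedge dy$ near $y=1$, and is quantitatively $C^1$-small with a bound linear in $\delta$. This is exactly a Moser/Dacorogna--Moser statement with boundary constraints and norm estimates, and it --- together with the (routine, in the $C^\infty$ category) regularity bookkeeping --- is what I would import from the proof of Proposition~2.2 of \cite{centralizer}. Everything else reduces to elementary control of $C^1$-norms under composition.
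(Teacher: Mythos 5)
Your interpolation $h(x,y)=(g_{\rho(y)}(x),y)$ and the bookkeeping around it (boundary values, $h^*(dx\wedge dy)=J\,dx\wedge dy$, $\int_A J=\operatorname{Area}(A)$, $C^1$-closeness of $h$) are fine, but the corrector step contains a genuine gap, and it sits exactly where the whole difficulty of the lemma lies. The estimate $\norm{J-1}_{C^1}\leq C_2\delta$ is false: $\partial_x(J-1)=\rho(y)u''(x)$, and $\delta$-closeness of $g$ to the identity in the $C^1$ topology gives no control on $u''$ (take $u(x)=\tfrac{\delta}{4\pi N}\sin(2\pi Nx)$ with $N$ large: $\norm{u}_{C^1}<\delta$ while $\norm{u''}_\infty\sim\pi N\delta$). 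The same second derivative contaminates everything downstream: the constraint $d\eta=\beta$ only pins the combination $\partial_x c-\partial_y a$ of the coefficients of $\eta=a\,dx+c\,dy$, and in the primitives one actually writes down (and which are compatible with $\eta|_{\bd A}=0$) the $dx$-component is of the form $u'(x)\cdot(\text{function of }y)$, so $\norm{\eta}_{C^1}$ is of order $\norm{u''}$, not $\delta$; the Moser field $Z_t$ moreover involves $\partial_x\rho_t=t\rho(y)u''(x)$. Consequently the asserted bounds $\norm{\eta}_{C^1}\leq C_3\delta$ and $\norm{\psi-\id}_{C^1}\leq C_4\delta$ are unjustified, and for the oscillatory example above the flow produced by this construction has a $y$-displacement depending on $x$ through $u'(x)$, so $\psi$, and hence $f=h\circ\psi$, is not $\epsilon$-$C^1$-close to the identity for fixed $\delta$; the failure is in the construction, not merely in the way it is estimated.

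Appealing to Proposition~2.2 of \cite{centralizer} does not close the gap as you frame it. What would be needed is not an off-the-shelf Moser/Dacorogna--Moser theorem ``with boundary constraints and norm estimates'' --- such statements bound $\norm{\psi-\id}$ in terms of a $C^{0,\alpha}$ or $C^1$ norm of $J-1$, which is precisely the quantity that is large here --- but the assertion that the area defect of $h$ can be repaired by a map whose $C^1$ size is controlled by $\norm{g-\id}_{C^1}$ alone, uniformly over all admissible $g$; that uniform-in-$g''$ statement is essentially the lemma itself. Note that the paper gives no independent argument either: its proof is the single remark that the lemma follows by adapting the proof of Proposition~2.2 of \cite{centralizer}. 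So your write-up consists of the easy reduction (interpolate the boundary behaviour, then correct the area) together with an incorrect claim that the correction is routine; to turn it into a proof one must supply the actual mechanism of the cited proposition, designed so that no second derivatives of $g$ enter the $C^1$ estimates, rather than a generic Moser argument.
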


\begin{remark}\label{rem:gluing} If $g$ is $\delta$-close to some rigid rotation $R_\theta$ of $\T^1$, then applying the above lemma to $R_\theta^{-1}g$ one concludes that there exists an area-preserving diffeomorphism $f$ of the annulus which is $\epsilon$-close to the rotation by $\theta$ such that $f(x,1) = (g(x), 1)$ and $f(x,0) = (R_\theta(x), 0)$ for all $x\in \T^1$.
\end{remark}

To prove the claim, let $(\phi_t)_{t\in \R}$ denote a topological flow on the closed annulus, lifting to a flow $(\hat\phi_t)_{t\in \R}$ of $\R\times [0,1]$ such that:
\begin{itemize}
\item $\phi_t$ is area-preserving for each $t$;
\item The unit square $\hat{U}_0=(0,1)^2$ is $\hat{\phi}_t$-invariant;
\item There are finitely many singularities and no essential closed orbit.
\end{itemize}

Let $A_i = \T^1\times [i, i+1]$, and, given $\epsilon>0$, let $\delta<\epsilon/2$ be as in Remark \ref{rem:gluing}.
Fix a rational number $p/q$ such that $|\alpha-p/q|<\delta/2$ and consider
$$H(x,y) = (x/q, y) + ((M+1)\sin(2\pi y), 0).$$
The projection of the topological disk $\hat U = H(\hat U_0)$ onto the first coordinate has diameter greater than $M$, so in particular $\diam(\hat U)>M$. Consider the maps $T\colon(x,y)\mapsto (x+1,y)$ and $\hat{R}_{p/q}\colon (x,y)\mapsto (x+p/q, y)$. Note that $R_{p/q}H = HT$. Letting
$$\hat{f}_t =  R_{p/q}H\hat \phi_t H^{-1} = HT\hat \phi_t H^{-1}.$$
it is easy to verify that $\hat{f}_t^{q} = T^{p}\hat{f}_{qt}$,  so $\hat{f}_t^{q}(\hat{U}) = T^{p} \hat{U}$. Denote by $f_t$ the map induced by $\hat{f}_t$ on $\T^1\times \R$, and by $U$ the projection of $\hat{U}$. Note that $U$ is $f_t$-periodic.

Define $f$ on $A_0$ as $f=f_t$, where $t$ is chosen small enough so as to guarantee that $f_t$ is $\delta/2$-close to the rotation by $p/q$ on $A_0$ in the $C^1$ topology. This implies that the restrictions of $f$ to the two boundary circles of $A_0$ are $\delta$-close to the rotation by $\alpha$ in the $C^1$-topology, so using Remark \ref{rem:gluing} on the annuli $A_{-1}$ and $A_1$ we may extend $f$ to an area-preserving $C^1$-diffeomorphism of $\T^1\times [-1,2]$ which is $\epsilon$-close to the rotation by $\alpha$ and which coincides with that rotation on the two boundary circles $\T^1\times \{-1\}$ and $\T^1\times \{2\}$. Moreover, inside the annulus $A_0$ the set of fixed points of $f^n$ is inessential for each $n>0$, and there is an open periodic topological disk $U$ which lifts to a disk of diameter at least $M$. After a small perturbation of $f$ supported on the interior of the complement of $A_0$ we may assume that the set of fixed points of $f^n$ outside of $A_0$ is totally disconnected for each $n$, so rescaling the annulus vertically we obtain a map which satisfies the conditions of the claim. \qed

\subsection*{Acknowledgements}

The authors would like to thank Carolina Puppo and the anonymous referee for the useful corrections that helped improve this paper.

\bibliographystyle{koro} 
\bibliography{essine}

\providecommand{\bysame}{\leavevmode\hbox to3em{\hrulefill}\thinspace}
\providecommand{\MR}{\relax\ifhmode\unskip\space\fi MR }
\providecommand{\MRhref}[2]{%
  \href{http://www.ams.org/mathscinet-getitem?mr=#1}{#2}
}
\providecommand{\href}[2]{#2}
\begin{thebibliography}{KLCN15}

\bibitem[{Add}15]{zanata-disk}
S.~{Addas-Zanata}, \emph{Area-preserving diffeomorphisms of the torus whose
  rotation sets have non-empty interior}, Ergodic Theory and Dynamical Systems
  \textbf{35} (2015), 1--33.

\bibitem[AZ02]{zanata-vertical}
S.~Addas-Zanata, \emph{On the existence of a new type of periodic and
  quasi-periodic orbits for twist maps of the torus}, Nonlinearity \textbf{15}
  (2002), no.~5, 1399--1416. \MR{1925420 (2004b:37076)}

\bibitem[BCLR06]{beguin-schonflies}
F.~B{\'e}guin, S.~Crovisier, and F.~Le~Roux, \emph{Pseudo-rotations of the open
  annulus}, Bull. Braz. Math. Soc. (N.S.) \textbf{37} (2006), no.~2, 275--306.
  \MR{2266384 (2008b:37074)}

\bibitem[BCW08]{centralizer}
C.~Bonatti, S.~Crovisier, and A.~Wilkinson, \emph{$c^1$-generic conservative
  diffeomorphisms have trivial centralizer}, Journal of Modern Dynamics
  \textbf{2} (2008), no.~2, 359--373.

\bibitem[BK84]{brown-kister}
M.~Brown and J.~M. Kister, \emph{Invariance of complementary domains of a fixed
  point set}, Proc. Amer. Math. Soc. \textbf{91} (1984), no.~3, 503--504.
  \MR{744656 (86c:57014)}

\bibitem[Bro85]{brown}
M.~Brown, \emph{Homeomorphisms of two-dimensional manifolds}, Houston J. Math.
  \textbf{11} (1985), no.~4, 455--469. \MR{837985 (87g:57020)}

\bibitem[CB88]{casson}
A.~J. Casson and S.~A. Bleiler, \emph{Automorphisms of surfaces after {N}ielsen
  and {T}hurston}, London Mathematical Society Student Texts, vol.~9, Cambridge
  University Press, Cambridge, 1988. \MR{964685 (89k:57025)}

\bibitem[D{\'a}v13]{davalos}
P.~D{\'a}valos, \emph{On torus homeomorphisms whose rotation set is an
  interval}, Math. Z. \textbf{275} (2013), no.~3-4, 1005--1045. \MR{3127045}

\bibitem[Eps66]{epstein}
D.~B.~A. Epstein, \emph{Curves on {$2$}-manifolds and isotopies}, Acta Math.
  \textbf{115} (1966), 83--107. \MR{0214087 (35 \#4938)}

\bibitem[FM12]{farb-margalit}
B.~Farb and D.~Margalit, \emph{A primer on mapping class groups}, Princeton
  Mathematical Series, vol.~49, Princeton University Press, Princeton, NJ,
  2012. \MR{2850125 (2012h:57032)}

\bibitem[Fra88]{franks-gen}
J.~Franks, \emph{Generalizations of the {P}oincar\'e-{B}irkhoff theorem}, Ann.
  of Math. (2) \textbf{128} (1988), no.~1, 139--151. \MR{951509 (89m:54052)}

\bibitem[Fra89]{franks-reali}
\bysame, \emph{Realizing rotation vectors for torus homeomorphisms}, Trans.
  Amer. Math. Soc. \textbf{311} (1989), no.~1, 107--115. \MR{958891
  (89k:58239)}

\bibitem[Fra96]{franks-rot}
\bysame, \emph{Rotation vectors and fixed points of area preserving surface
  diffeomorphisms}, Trans. Amer. Math. Soc. \textbf{348} (1996), no.~7,
  2637--2662. \MR{1325916 (96i:58143)}

\bibitem[GKT14]{kt-annular}
N.~Guelman, A.~Koropecki, and F.~A. Tal, \emph{A characterization of annularity
  for area-preserving toral homeomorphisms}, Math. Z. \textbf{276} (2014),
  no.~3-4, 673--689. \MR{3175156}

\bibitem[GKT15]{kt-transitive}
N.~{Guelman}, A.~{Koropecki}, and F.~A. {Tal}, \emph{Rotation sets with
  non-empty interior and transitivity in the universal covering}, Ergodic
  Theory and Dynamical Systems \textbf{35} (2015), no.~3, 883--894.

\bibitem[Gro07]{gromov}
M.~Gromov, \emph{Metric structures for {R}iemannian and non-{R}iemannian
  spaces}, english ed., Modern Birkh\"auser Classics, Birkh\"auser Boston,
  Inc., Boston, MA, 2007, Based on the 1981 French original, With appendices by
  M. Katz, P. Pansu and S. Semmes, Translated from the French by Sean Michael
  Bates. \MR{2307192 (2007k:53049)}

\bibitem[Ham66]{hamstrom}
M.-E. Hamstrom, \emph{Homotopy groups of the space of homeomorphisms on a
  {$2$}-manifold}, Illinois J. Math. \textbf{10} (1966), 563--573. \MR{0202140
  (34 \#2014)}

\bibitem[Han90]{handel-annulus}
M.~Handel, \emph{The rotation set of a homeomorphism of the annulus is closed},
  Comm. Math. Phys. \textbf{127} (1990), no.~2, 339--349. \MR{1037109
  (91a:58102)}

\bibitem[J{\"a}g11]{jager-elliptic}
T.~J{\"a}ger, \emph{Elliptic stars in a chaotic night}, J. Lond. Math. Soc. (2)
  \textbf{84} (2011), no.~3, 595--611. \MR{2855792 (2012k:37102)}

\bibitem[{Jau}14]{jaulent}
O.~{Jaulent}, \emph{{Existence d'un feuilletage positivement transverse \`a un
  hom\'eomorphisme de surface}}, {Annales de l'institut Fourier} \textbf{64}
  (2014), no.~4, 1441--1476.

\bibitem[JM06]{nielsen-book}
J.~Jezierski and W.~Marzantowicz, \emph{Homotopy methods in topological fixed
  and periodic points theory}, Topological Fixed Point Theory and Its
  Applications, vol.~3, Springer, Dordrecht, 2006. \MR{2189944 (2006i:55003)}

\bibitem[KLCN15]{kln}
A.~Koropecki, P.~Le~Calvez, and M.~Nassiri, \emph{Prime ends rotation numbers
  and periodic points}, Duke Math. J. \textbf{164} (2015), no.~3, 403--472.
  \MR{3314477}

\bibitem[Kor10]{koro}
A.~Koropecki, \emph{Aperiodic invariant continua for surface homeomorphisms},
  Math. Z. \textbf{266} (2010), no.~1, 229--236. \MR{2670681 (2011j:37075)}

\bibitem[KT14a]{kt-example}
A.~Koropecki and F.~A. Tal, \emph{Area-preserving irrotational diffeomorphisms
  of the torus with sublinear diffusion}, Proc. Amer. Math. Soc. \textbf{142}
  (2014), no.~10, 3483--3490. \MR{3238423}

\bibitem[KT14b]{kt-pseudo}
\bysame, \emph{Bounded and unbounded behavior for area-preserving rational
  pseudo-rotations}, Proc. Lond. Math. Soc. (3) \textbf{109} (2014), no.~3,
  785--822. \MR{3260294}

\bibitem[KT14c]{kt-strictly}
\bysame, \emph{Strictly toral dynamics}, Invent. Math. \textbf{196} (2014),
  no.~2, 339--381. \MR{3193751}

\bibitem[LC05]{lecalvez-equivariant}
P.~Le~Calvez, \emph{Une version feuillet\'ee \'equivariante du th\'eor\`eme de
  translation de {B}rouwer}, Publ. Math. Inst. Hautes \'Etudes Sci. (2005),
  no.~102, 1--98. \MR{2217051 (2007m:37100)}

\bibitem[LM91]{llibre-mackay}
J.~Llibre and R.~S. MacKay, \emph{Rotation vectors and entropy for
  homeomorphisms of the torus isotopic to the identity}, Ergodic Theory Dynam.
  Systems \textbf{11} (1991), no.~1, 115--128. \MR{1101087 (92b:58184)}

\bibitem[MZ89]{m-z}
M.~Misiurewicz and K.~Ziemian, \emph{Rotation sets for maps of tori}, J. London
  Math. Soc. (2) \textbf{40} (1989), no.~3, 490--506. \MR{1053617 (91f:58052)}

\bibitem[Pol92]{pollicott}
M.~Pollicott, \emph{Rotation sets for homeomorphisms and homology}, Trans.
  Amer. Math. Soc. \textbf{331} (1992), no.~2, 881--894. \MR{1094554
  (92h:58116)}

\bibitem[Ric63]{richards}
I.~Richards, \emph{On the classification of noncompact surfaces}, Trans. Amer.
  Math. Soc. \textbf{106} (1963), 259--269. \MR{0143186 (26 \#746)}

\bibitem[{Tal}15]{tal2}
F.~A. {Tal}, \emph{On non-contractible periodic orbits for surface
  homeomorphisms}, Ergodic Theory and Dynamical Systems \textbf{FirstView}
  (2015), 1--12.

\bibitem[Whi33]{flow-foliation}
H.~Whitney, \emph{Regular families of curves}, Ann. of Math. (2) \textbf{34}
  (1933), no.~2, 244--270. \MR{1503106}

\end{thebibliography}

\end{document}